\numberwithin{figure}{section}
\theoremstyle{plain}
\newtheorem{thm}{\protect\theoremname}
  \theoremstyle{plain}
  \newtheorem{prop}[thm]{\protect\propositionname}
  \theoremstyle{definition}
  \newtheorem{defn}[thm]{\protect\definitionname}
  \theoremstyle{plain}
  \newtheorem{lem}[thm]{\protect\lemmaname}
  \theoremstyle{remark}
  \newtheorem{rem}[thm]{\protect\remarkname}
  \theoremstyle{remark}
  \newtheorem{claim}[thm]{\protect\claimname}
  \theoremstyle{plain}
  \newtheorem{cor}[thm]{\protect\corollaryname}
  \theoremstyle{definition}
  \newtheorem{example}[thm]{\protect\examplename}
  \theoremstyle{remark}
  \newtheorem*{claim*}{\protect\claimname}
\DeclareFontFamily{OT1}{pzc}{}
\DeclareFontShape{OT1}{pzc}{m}{it}{<-> s * [1.10] pzcmi7t}{}
\DeclareMathAlphabet{\mathpzc}{OT1}{pzc}{m}{it}
\newcommand{\rr}{\mathbb{R}}
\newcommand{\oo}{\mathcal{O}}
\newcommand{\pp}{\mathbb{P}}
\newcommand{\ff}{\mathcal{F}}
\newcommand{\hh}{\mathcal{H}}
\newcommand{\cc}{\mathbb{C}}
\newcommand{\zz}{\mathbb{Z}}
\newcommand{\qq}{\mathbb{Q}}
\newcommand{\ii}{\mathcal{I}}
\newcommand{\lc}{\mathcal{L}}
\newcommand{\xx}{\mathcal{X}}
\newcommand{\yy}{\mathcal{Y}}
\newcommand{\mm}{\mathcal{M}}
\newcommand{\im}{\operatorname{Im}}
\newcommand{\Hom}{\mathpzc{Hom}}
\newcommand{\nn}{\mathbb{N}}
\newcommand{\bu}{\mathfrak{bu}}
\newcommand{\manc}{\textbf{Man}^c}
\newcommand{\tb}{\mathbb{T}}
\newcommand{\zc}{\mathcal{Z}}
\newcommand{\sfr}{\mathfrak{s}}
\newcommand{\basic}{\mathfrak{b}}
\newcommand{\tc}{\mathcal{T}}
\newcommand{\ts}{\mathscr{T}}
\newcommand{\aru}[2]{\ar[#1]^-{#2}}
\newcommand{\ard}[2]{\ar[#1]_-{#2}}
\newcommand{\lf}{\mathsf{l}}
\newcommand{\kf}{\mathsf{k}}
\newcommand{\wc}{\mathcal{W}}
\newcommand{\fibp}[2]{\tensor[_{#1\thinspace}]{\times}{_{\thinspace #2}}}
\newcommand{\ev}{\operatorname{ev}}
\newcommand{\evi}{\operatorname{evi}}
\newcommand{\Sym}{\operatorname{Sym}}
\newcommand{\For}{\operatorname{For}}
\newcommand{\Or}{\operatorname{Or}}
\newcommand{\ed}{\operatorname{ed}}
\newcommand{\id}{\operatorname{id}}
\newcommand{\pt}{\operatorname{pt}}
\newcommand{\pr}{\operatorname{pr}}
\newcommand{\Aut}{\operatorname{Aut}}
\newcommand{\sstar}{\smallstar}
\newcommand{\sgn}{\operatorname{sgn}}
\newcommand{\jc}{\mathcal{J}}
\newcommand{\no}{n_\text{odd}}
\newcommand{\vect}[1]{\boldsymbol{\mathbf{#1}}}
\newcommand{\vp}{\vect \phi}
\newcommand{\vpb}{\vect \phi_\bullet}
\newcommand{\pc}{\mathcal{P}}
\newcommand{\cont}{\operatorname{Cont}}
\newcommand{\cnt}{\operatorname{cnt}}
\newcommand{\mv}{\operatorname{mv}}
\newcommand{\rc}{\mathcal{R}}
\newcommand*\circled[1]{\tikz[baseline=(char.base)]{
            \node[shape=circle,draw,inner sep=2pt] (char) {#1};}}
\newcommand{\vc}{\mathcal{V}}
\newcommand{\codim}{\operatorname{cd}}
  \providecommand{\claimname}{Claim}
  \providecommand{\corollaryname}{Corollary}
  \providecommand{\definitionname}{Definition}
  \providecommand{\examplename}{Example}
  \providecommand{\lemmaname}{Lemma}
  \providecommand{\propositionname}{Proposition}
  \providecommand{\remarkname}{Remark}
\providecommand{\theoremname}{Theorem}
\begin{document}

\title{Fixed-point Localization for $\rr\pp^{2m}\subset\cc\pp^{2m}$}

\author{Amitai Netser Zernik}
\begin{abstract}
We derive a fixed-point formula for integrals on moduli spaces of
stable maps to projective spaces of even dimension. This gives a formula
for the equivariant open Gromov-Witten invariants of $\rr\pp^{2m}\subset\cc\pp^{2m}$,
and the structure constants of the equivariant $A_{\infty}$ algebra
$\operatorname{End}{}_{Fuk^{\tb}\left(\cc\pp^{2m}\right)}\left(\rr\pp^{2m}\right)$
with bulk deformations. The formula involves contributions from Givental's
correlators for the closed theory and the descendent integrals of
discs, and specializes to give a new expression for the Welschinger
count of real rational curves in the plane passing through some real
and conjugation invariant pairs of points.
\end{abstract}

\maketitle
\tableofcontents{}

\section{Introduction}

\subsection{Overview}

In this paper we derive a fixed-point formula for the equivariant
open Gromov-Witten theory of $\rr\pp^{2m}\subset\cc\pp^{2m}$, as
a special case of a more general integration technique on moduli spaces
of discs. 

The action of a torus group $\tb$ on the moduli space of \emph{closed}
stable maps $\overline{\mm}_{0,l}\left(\cc\pp^{n},d\right)$ can be
used to reduce the computation of an integral on $\overline{\mm}_{0,l}\left(\cc\pp^{n},d\right)$
to tractable calculations near the fixed points of the $\tb$-action
\cite{original-loc,kontsevich-loc}. This technique has become a
general, powerful tool which has found many applications.

Fixed-point localization has also been applied to \emph{open }Gromov-Witten
theory, which is concerned with integrals on the moduli spaces parameterizing
stable maps of discs. Some notable examples include Katz and Liu \cite{katz+liu},
Pandharipande, Solomon and Walcher \cite{disc-enumeration}, Georgieva
and Zinger \cite{penka+zinger} and Tehrani and Zinger\footnote{The last two works study \emph{real }invariants, but these are closely
related to disc invariants, as we will see.} \cite{tehrani+zinger}. Typically, moduli spaces of discs have boundary,
so the geometric setup in these works is constrained by the requirement
that the boundary contributions vanish. 

Our approach is different, in that the boundary contributions are
not required to \emph{vanish} but are rather \emph{canceled }by boundary
contributions from integrals on additional spaces. One novel feature
is that this allows one to introduce boundary constraints. The construction
of these additional spaces is closely related to the homological perturbation
lemma for $A_{\infty}$ algebras. 

On the same note, let $\operatorname{End}{}_{Fuk^{\tb}\left(\cc\pp^{2m}\right)}\left(\rr\pp^{2m}\right)$
denote the quantum deformation of the De Rham differential graded
algebra $\left(\Omega\left(\rr\pp^{2m}\right),d,\wedge\right)$ over
the Novikov ring with bulk and equivariant deformation parameters
included. This algebra admits a natural rigid minimal model  and
our formula computes the structure constants of this model explicitly.

\subsection{Equivariant open Gromov-Witten invariants}

Fix some $m\geq1$ and let $\left(X,L\right)=\left(\cc\pp^{2m},\rr\pp^{2m}\right)$.
We review the definition of equivariant open Gromov-Witten invariants
for $L\subset X$ from \cite{equiv-OGW-invts}. See $\S$\ref{subsec:Review-of-Resolutions}
for more details.

For $\basic=\left(k,l,\beta\right)\in\zz_{\geq0}^{3}$ such that $k+2l+3\beta$
is an odd integer $\geq3$, we construct a sequence of orbifolds with
corners and maps
\[
\left(\widetilde{\mm}_{b}^{r}\to\mm_{\basic}^{r}\to\check{\mm}_{b}^{r}\right)_{r\geq0}
\]
We have 
\[
\widetilde{\mm}_{\basic}^{0}=\mm_{\basic}^{0}=\check{\mm}_{\basic}^{0}=\overline{\mm}_{0,k,l}\left(X,L,\beta\right),
\]
the moduli space of stable discs of degree $\beta\in H_{2}\left(X,L\right)$
with $k$ boundary marked points and $l$ interior marked points.
More generally, $\mm_{\basic}^{r}$ is a disjoint union of products
of moduli spaces, modeled on trees with $r$ oriented edges. The endpoints
of the edges correspond to $2r$ additional boundary marked points.
$\check{\mm}_{\basic}^{r}$ is obtained from $\mm_{\basic}^{r}$ by
forgetting the markings corresponding to the tails of the edges, and
$\widetilde{\mm}_{\basic}^{r}$ is obtained from $\mm_{\basic}^{r}$
by blowing up the locus where two endpoints of an edge map to the
same point in $L$. All of the spaces and maps come equipped with
a natural action of the rank $m$ torus $\tb=U\left(1\right)^{m}$.

We use these spaces to define an integration map 
\[
\int_{\basic}:\Omega_{\basic}\to\rr\left[\lambda_{1},...,\lambda_{m}\right]
\]
Here $\left(\Omega_{b},D\right)$ is the complex of \emph{extended
forms}:\emph{ }an extended form is specified by a sequence $\left(\check{\omega}_{r}\right)_{r\geq0}$,
where each $\check{\omega}_{r}$ is a $\tb$-invariant $\rr\left[\lambda_{1},...,\lambda_{m}\right]$-valued
form on $\check{\mm}_{b}^{r}$, subject to some coherence conditions.
The Cartan-Weil differential
\[
D=d-\sum_{j=1}^{m}\lambda_{j}\iota_{\xi_{j}}
\]
acts on $\Omega_{\basic}$ level-wise. By definition,
\begin{equation}
\int_{b}\omega=\sum_{r\geq0}\frac{1}{r!}\int_{\widetilde{\mm}_{b}^{r}}\tilde{\omega}_{r}\cdot\Lambda^{\boxtimes r}\label{eq:integration}
\end{equation}
where $\Lambda$ is an equivariant form on the diagonal blow up 
\begin{equation}
\widetilde{L\times L}=\left(L\times L\backslash\Delta\right)\bigcup_{S\left(N_{\Delta}\right)\times\left(0,\infty\right)}S\left(N_{\Delta}\right)\times[0,\infty),\label{eq:L x L blow up}
\end{equation}
representing a homotopy retraction, and $\tilde{\omega}_{r}$ is the
pullback of $\check{\omega}_{r}$ to $\widetilde{\mm}_{\basic}^{r}$.
The main result about this construction is Stokes' theorem,
\begin{equation}
\int_{b}D\omega=0.\label{eq:stokes theorem}
\end{equation}
For $k,\beta\in\zz_{\geq0}$ and $\boldsymbol{l}=\left(l_{0},...,l_{2m}\right)\in\zz_{\geq0}^{2m+1}$
with $\sum\boldsymbol{l}=l$, we define the equivariant open Gromov-Witten
invariant

\[
I_{m}\left(k,\boldsymbol{l},\beta\right)=\int_{\basic}\omega,
\]
where $\omega$ is given by pulling back equivariant closed forms
along the interior and boundary evaluation maps,
\begin{equation}
{\check{\omega}_{r}=\prod_{0\leq d\leq2m}\,\prod_{\boldsymbol{l}\left[:d\right]+1\leq a\leq\boldsymbol{l}\left[:d+1\right]}\left(\evi{}_{a}^{\basic,r}\right)^{*}\mathcal{H}^{d}\,\cdot\,\prod_{1\leq i\leq k}\left(\ev{}_{i}^{\basic,r}\right)^{*}p_{0}^{L}}.\label{eq:omega of ogw invts}
\end{equation}
Here we've set $\boldsymbol{l}\left[:d\right]:=\left(\sum_{j=0}^{d-1}l_{j}\right)$
and let $\hh$ and $p_{0}^{L}$ denote equivariant forms representing
the hyperplane class in $\cc\pp^{2m}$ and the point class in $\rr\pp^{2m}$,
respectively (see $\S$\ref{subsec:RP^2m in CP^2m}). 

If we assume $m=1$, $\deg\check{\omega}=\dim\check{\mm}_{\basic}^{0}$,
and we take $\mathcal{H}$ to be conjugation invariant, we can show
the contribution of $\check{\mm}_{\basic}^{r}$ for $r>0$ vanishes,
and the contribution of $\check{\mm}_{\basic}^{0}$ matches the invariants
defined by Solomon \cite{JT}. By a reflection principle argument
\cite{JT}, these invariants are shown to be twice Welschinger's signed
count \cite{welschinger} of real rational planar curves through a
generic configuration of $k$ real points and $l$ conjugate pairs
of complex points. In sum, we have
\begin{equation}
W_{k,l}=\frac{1}{2}I_{1}\left(k,\left(0,0,l\right),\frac{k+2l+1}{3}\right),\label{eq:welschinger-solomon}
\end{equation}
so the Welschinger invariants are instances of the open Gromov-Witten
invariants.

\subsection{\label{subsec:open fp formula}Statement of main results}

Our geometric result can be stated as follows. 
\begin{thm}
\label{thm:general fp intro}Let $\omega\in\Omega_{\basic}$ satisfy
$D\omega=0$. Then
\end{thm}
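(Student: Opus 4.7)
The plan is to apply Atiyah-Bott equivariant localization level by level to each orbifold $\widetilde{\mm}_{\basic}^{r}$ in the tower defining $\int_{\basic}$, and to use the coherence data of the extended form together with the Stokes identity \eqref{eq:stokes theorem} to absorb the boundary terms that normally obstruct Atiyah-Bott on an orbifold with corners. Classical localization produces residues at the $\tb$-fixed loci; the $A_\infty$-style tower $\widetilde{\mm}_{\basic}^{r}\to\mm_{\basic}^{r}\to\check{\mm}_{\basic}^{r}$ with its propagator $\Lambda$ is designed precisely to convert the corner terms on level $r$ into contributions coming from level $r+1$, so the naive Atiyah-Bott argument can be pushed through in the extended-form category.

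First I would describe the $\tb$-fixed locus on each $\widetilde{\mm}_{\basic}^{r}$: it decomposes as a disjoint union of closed substacks indexed by decorated trees whose vertices are labeled by the $2m+1$ torus-fixed points of $\cc\pp^{2m}$, whose edges are labeled by $\tb$-invariant lines with multiplicities, and whose $r$ extra oriented edges carry the combinatorial data recorded by the diagonal blow-up in \eqref{eq:L x L blow up}. The restriction of $\Lambda$ to the relevant fixed components in $\widetilde{L\times L}$ is an explicit equivariant expression controlled by the normal bundle to $\Delta\subset L\times L$, and the normal bundle to each fixed locus decomposes as a direct sum over vertices, edges, and flags in the familiar Kontsevich fashion.

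Second I would build an equivariant primitive off the fixed locus. Choose a $\tb$-invariant connection $\theta$ on $\widetilde{\mm}_{\basic}^{r}$ away from the fixed components; the Cartan-Weil formalism yields a form $\alpha^{r}$ with $D\alpha^{r}=1$ there. Since $D\omega=0$, off the fixed locus we may write $\omega=D(\alpha\cdot\omega)$, and \eqref{eq:stokes theorem} reduces $\int_{\basic}\omega$, after shrinking to a tubular neighborhood of the fixed locus, to a sum of residues. By the product decomposition of each fixed component, every residue factorizes into vertex contributions (descendent integrals on discs at real vertices and Givental-type correlators at complex vertices), edge contributions (inverse equivariant Euler classes of the normal bundles to $\tb$-invariant lines) and, at the $r$ extra oriented edges, insertions of the fixed-point restriction of $\Lambda$.

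The main obstacle is that $\alpha^{r}$ cannot be made smooth across the corners of $\widetilde{\mm}_{\basic}^{r}$, so Atiyah-Bott produces uncontrolled boundary error terms on each level. The crucial step is to organize the primitives $\{\alpha^{r}\}_{r\geq0}$ so that the boundary error on level $r$ matches, via the forgetful map $\widetilde{\mm}_{\basic}^{r+1}\to\check{\mm}_{\basic}^{r+1}$ and the propagator $\Lambda$, the interior contribution of a new decorated edge on level $r+1$. This is the homological-perturbation-lemma mechanism mentioned in the introduction, and I would carry it out by induction on $r$, using the compatibility of the coherence conditions defining $\Omega_{\basic}$ with the stratification by trees. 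Once this cancellation is in place, the surviving residues reassemble into the claimed fixed-point formula.
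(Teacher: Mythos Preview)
Your outline captures the spirit of equivariant localization but has two substantial gaps when compared with what the paper actually does.

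\textbf{First gap: the primitive is not an extended form.} You propose a Cartan-style primitive $\alpha^{r}$ with $D\alpha^{r}=1$ away from the fixed locus, and then invoke the extended Stokes identity \eqref{eq:stokes theorem}. But \eqref{eq:stokes theorem} applies only to globally defined extended forms in $\Omega_{\basic}$; a form defined only on the complement of the fixed locus is not such an object, so the theorem does not apply. The paper avoids this by constructing a \emph{global} extended localizing form $\eta=\{\check\eta_r\}$ (Proposition~\ref{prop:regular fps and extended localizing exists}), using a nontrivial backward induction and the auxiliary symmetry group $E_{\kf}^{r}$ to enforce the coherence conditions. One then considers $I(s)=\int_{\basic}e^{s^{2}D\eta}\omega$, which \emph{is} an extended integral, and \eqref{eq:stokes theorem} gives $I(s)=I(0)$ exactly; the limit $s\to\infty$ is computed by steepest descent (Section~\ref{sec:steepest descent}, Theorem~\ref{thm:singular localization formula}). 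Your inductive matching of ``boundary error on level $r$'' with ``interior contribution on level $r+1$'' is effectively what coherence of $\eta$ accomplishes, but you have not indicated how to build such coherent primitives, and the naive $\alpha^{r}$ do not satisfy these conditions.

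\textbf{Second gap: localization does not yet give the formula.} Even granting a correct localization argument, what you obtain is the \emph{singular} fixed-point expression of Theorem~\ref{thm:singular localization formula}, a sum over all profiles $\vp$ (including even--even edges, $r'>0$) with integrands involving the propagator factor $\theta_{0}$. Section~\ref{subsec:problems with singular} explains why this is not the claimed formula: the $r'>0$ terms do not vanish, the $r'=0$ term depends on $\eta$, and the relevant short exact sequence \eqref{eq:M-N-S check gamma} does not split compatibly. The entire resummation of Section~\ref{sec:Resummation}---constructing the modified splittings $\check\zeta_{\basic}^{r,r'}$ (Proposition~\ref{prop:splitting construction}), the isotopies $A_{\vpb}$, and the flag argument of Proposition~\ref{prop:regularizing}---is needed to collapse this to a sum over \emph{odd--even} profiles $\underline{\vp}$ with canonical Euler forms. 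Your final sentence ``the surviving residues reassemble into the claimed fixed-point formula'' is precisely the step where the bulk of the work lies, and it is absent from your plan.
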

\begin{equation}
\int_{\basic}\omega=\sum_{\vp\in\mathcal{W}}\frac{\xi_{\vp}}{\operatorname{Aut}\vp}\int_{\check{F}_{\vp}}\frac{\check{\omega}_{\vp}}{e\left(\check{N}_{\vp}\right)}.\label{eq:extended fp formula abridged}
\end{equation}
Here $\coprod_{\vp\in\mathcal{W}}\check{F}_{\vp}$ is a finite cover
of the $\tb$-fixed points $C^{\tb}$ of a clopen component $C\subset\coprod_{r\geq0}\check{\mm}_{\basic}^{r}$
parameterizing configurations where all the edges connect an odd degree
disc to an even degree disc. $e\left(\check{N}_{\vp}\right)$ is
an equivariant Euler form for the normal bundle to $\check{F}_{\vp}\to\check{\mm}_{\basic}^{r=r\left(\vp\right)}$,
subject to certain boundary conditions. $\xi_{\vp}$ represents the
contribution of $\Lambda^{\boxtimes r}$. See Theorem \ref{thm:general fixed point formula}
and Remark \ref{rem:sum over orbits} for more details.

We turn now to a purely algebraic statement, succinctly expressing
the equivariant open Gromov-Witten invariants in terms of known generating
functions. Hereafter, when a formal variable is introduced we use
a number in overbrace to denote its $\zz$-grading. Let $\cc\left[\vect\lambda\right]=\cc\left[\overbrace{\lambda_{1}}^{2},...,\overbrace{\lambda_{m}}^{2}\right]$
denote the equivariant $\tb$-cohomology of a point. Let 
\[
R=\cc\left[\vect\lambda\right]\left[\left[\eta_{0},...,\overbrace{\eta_{i}}^{2-2i},...,\eta_{2m}\right]\right].
\]
We define a generating function
\[
F_{OGW}\in R\left[\left[\overbrace{q}^{2m+1},\overbrace{s}^{1-2m}\right]\right]
\]
for the equivariant open Gromov-Witten invariants by 

\[
F_{OGW}=\sum_{k,\boldsymbol{l},\beta}I\left(k,\boldsymbol{l},\beta\right)\,\left(\sqrt{-1}\right)^{\left(m+1\right)w_{1}\left(\beta\right)}\,q^{\beta}\,\frac{\vect\eta{}^{\boldsymbol{l}}}{\boldsymbol{l}!}\,\frac{s^{k}}{k!}
\]
where sum ranges over all $k,\beta\in\zz_{\geq0}$ and $\boldsymbol{l}\in\zz_{\geq0}^{2m+1}$,
${\frac{\vect\eta^{\boldsymbol{l}}}{\boldsymbol{l}!}:=\frac{\eta_{0}^{l_{0}}\cdot....\cdot\eta_{2m}^{l_{2m}}}{l_{0}!\cdots l_{2m}!}}$,
and
\[
w_{1}\left(\beta\right)=\beta\mod2\in\left\{ 0,1\right\} .
\]
$F_{OGW}$ is supported in degree $3-2m$. 

Let 
\[
F_{0}^{o}\in\rr\left[\left[\overbrace{s_{0}}^{1},t_{0},t_{1},...,\overbrace{t_{i}}^{2-2i},...\right]\right]
\]
denote the generating function for the descendent integrals of discs,
developed in \cite{JRR} (we review the definition in $\S$\ref{subsec:descendent integrals}).
We introduce auxiliary formal variables $u$ and $\nu_{a,d}$ for
$a=1,...,2m$ and $d>0$ a positive integer.

We set $\alpha_{0}=0$ and, for $1\leq i\leq m$, $\alpha_{i}=\lambda_{i}$
and $\alpha_{2m+1-i}=-\lambda_{i}$. We denote by $Z_{a}\left(\frac{\alpha_{a}}{d}\right)$
the value of the quantum correlator with constraints, see $\S$\ref{subsec:Constraint-correlators},
which encapsulates the Gromov-Witten theory of closed curves in $\cc\pp^{2m}$.
\begin{thm}
\label{thm:open localization}Let 
\[
E\in\mathcal{R}\left[\left[u^{-1},u,\nu_{a,d},q,\eta_{0},...,\eta_{2m},s,s_{0},t_{0},t_{1},...\right]\right]
\]
 be defined by 
\begin{multline*}
E=\exp\left(\sum_{a\neq0}\sum_{d>0}Z_{a}\left(\frac{\alpha_{a}}{d}\right)\partial_{\nu_{a,d}}\right)\\
\exp\left(\left(\sqrt{-1}\right)^{\left(1+m\right)}\sum_{a\neq0}\sum_{d\mbox{ odd}}uq^{d}\nu_{a,d/2}\frac{d^{d-1}}{d!\left(2\alpha_{a}\right)^{d}}\frac{\exp\left(\frac{d}{2}\cdot\frac{\lambda_{1}\cdots\lambda_{m}}{u\alpha_{a}}\cdot\partial_{s_{0}}\right)}{\prod_{0\leq c\leq\frac{d-1}{2},b\neq a,\overline{a}}\left(\frac{2c-d}{d}\alpha_{a}-\alpha_{b}\right)}\right)\\
\exp\left(\sum_{a\neq0}\sum_{d>0}q^{2d}\nu_{a,d}\frac{\left(-1\right)^{d}d^{2d-1}\cdot\left(\prod_{a\in\left\{ 1,...2m\right\} }\alpha_{a}\right)\cdot\left(\sum_{i\geq0}\left(-\frac{d}{\alpha_{a}}\right)^{i+1}\partial_{t_{i}}\right)}{\left(d!\right)^{2}\alpha_{a}^{2d}\prod_{0\leq c\leq d,b\neq a,0}\left(\frac{c}{d}\alpha_{a}-\alpha_{b}\right)}\right)\\
\exp\left(\frac{u}{\lambda_{1}\cdots\lambda_{m}}\exp\left(s\cdot\lambda_{1}\cdots\lambda_{m}\cdot\partial_{s_{0}}\right)\exp\left(\eta_{0}\partial_{t_{0}}\right)F_{0}^{o}\right)
\end{multline*}
Here  $\sum_{a\neq0}$ means $a$ ranges over $1,...,2m$. 

The OGW-generating function is given by

\textbf{
\begin{equation}
F_{OGW}=\partial_{u}|_{u=0}\log\left(E|_{s_{0}=0,\nu_{a,d}=0,t_{0}=0,...,t_{i}=0,...}\right).\label{eq:ogw loc formula}
\end{equation}
}
\end{thm}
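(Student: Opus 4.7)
The plan is to derive \eqref{eq:ogw loc formula} by applying Theorem \ref{thm:general fp intro} to each integral $I_m(k,\boldsymbol{l},\beta)$ defining $F_{OGW}$ and then explicitly evaluating the resulting vertex and edge contributions. Since the integrand $\omega$ in \eqref{eq:omega of ogw invts} is a product of pullbacks of the equivariantly closed forms $\hh$ and $p_0^L$, it satisfies $D\omega = 0$, so the hypothesis of Theorem \ref{thm:general fp intro} holds. This reduces each $I_m(k,\boldsymbol{l},\beta)$ to a finite sum over decorated trees $\vp \in \wc$, each contributing $\int_{\check F_\vp} \check\omega_\vp / e(\check N_\vp)$ weighted by the propagator contribution $\xi_\vp$ and the automorphism factor $\operatorname{Aut}\vp$.

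Next I would analyze the combinatorial data of a $\vp \in \wc$ and compute the associated integrals. A standard Kontsevich-style graph decomposition of the $\tb$-fixed loci inside each sphere moduli factor shows that each $\vp$ is a tree whose vertices are of three types: $(i)$ a sphere moduli collapsed to a $\tb$-fixed point $p_a \in \cc\pp^{2m}$, $(ii)$ a multiple cover of a $\tb$-fixed line $\overline{p_a p_b}$, and $(iii)$ a disc with boundary on $\rr\pp^{2m}$, with the extra edges from the $\mm_\basic^r$-structure restricted (by the clopen component $C$) to connect odd-degree discs to even-degree discs. Vertices of type $(i)$ contribute a Givental-type closed correlator $Z_a(\alpha_a/d)$, the $\psi$-insertion $\alpha_a/d$ reflecting the tangent weight on the adjacent edge. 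Disc vertices of type $(iii)$ contribute the descendent integral generating function $F_0^o$ of \cite{JRR}, with interior markings tracked by $\eta_0$ and boundary markings by $s_0$. Edges of type $(ii)$ together with the propagator $\xi_\vp$ from $\Lambda^{\boxtimes r}$ and the denominators from $e(\check N_\vp)$ produce the explicit rational functions of $\alpha_a$ appearing as coefficients in the second and third exponentials of $E$.

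The third step is to package these contributions into the exponential formula for $E$. By the standard Feynman exponential identity, a sum over labelled decorated graphs factors as the exponential of the sum over connected pieces, so $\log$ isolates connected graphs. The auxiliary variables $\nu_{a,d}$ serve as formal markers for an attachment of a degree-$d$ sphere bubble at the fixed point $p_a$; the outer shift operator $\exp\bigl(\sum_a\sum_d Z_a(\alpha_a/d)\partial_{\nu_{a,d}}\bigr)$, combined with the substitution $\nu_{a,d}=0$, effects the replacement $\nu_{a,d}\mapsto Z_a(\alpha_a/d)$, inserting the closed correlator at each attachment site. Analogously, $\exp(\eta_0\partial_{t_0})$ plants the interior constraint insertions on each disc, and $\exp(s\lambda_1\cdots\lambda_m\partial_{s_0})$ converts the $s$-counted boundary insertions (weighted by the equivariant point class $\lambda_1\cdots\lambda_m$ of $\rr\pp^{2m}$) into $s_0$-insertions inside $F_0^o$. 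Finally, $\partial_u|_{u=0}\log$ extracts exactly those connected configurations carrying a single distinguished open component: the variable $u$ appears once in the innermost disc exponential and once per odd edge, while the $1/u$ shifts in the odd-edge propagators subtract one $u$ per attachment, so that on a connected graph the total $u$-degree equals one precisely when the graph contains a single "main" disc, matching the structure of the $F_{OGW}$ integrand.

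The principal obstacle will be the detailed combinatorial bookkeeping. Matching $\operatorname{Aut}\vp$ with the multinomial factors that arise from expanding the exponentials, identifying the sign $(\sqrt{-1})^{(m+1)w_1(\beta)}$ with contributions from Euler-form denominators on odd-degree edges, and carefully unwinding $\xi_\vp$ into the explicit edge weights of the second exponential will require substantial care, as will confirming that the formal expansion of $E$ is well-defined despite the mixed $u^{\pm 1}$ powers: one must verify that only finitely many $\vp$ contribute to each coefficient of $q^\beta$ in $F_{OGW}$ and that the $1/u$-shifts cancel the superfluous $u$-factors from odd edges on any connected configuration that is not a single disc.
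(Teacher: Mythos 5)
Your proposal follows essentially the same route as the paper: apply the fixed-point formula of Theorem \ref{thm:general fp intro}, evaluate each vertex and edge contribution in terms of the correlators $Z_{a}$ and the disc descendent potential $F_{0}^{o}$ (this is exactly Proposition \ref{prop:ogw contribution by fp profile}), and then resum via the exponential/diagram identity with $\partial_{u}|_{u=0}\log$ isolating connected tree diagrams. One small correction: the total $u$-degree of a connected diagram is the Euler characteristic of its disc--edge skeleton, so it equals one precisely when the diagram is a \emph{tree}, not when it contains a single distinguished disc component --- a contributing tree may well contain several even-degree discs joined by odd-degree disc edges.
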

Here the logarithm is defined by Taylor expansion around 1:
\begin{equation}
\log x=\left(x-1\right)-\frac{\left(x-1\right)^{2}}{2}+\frac{\left(x-1\right)^{3}}{3}-\cdots.\label{eq:log taylor expansion}
\end{equation}
The proof of this theorem is given in $\S$\ref{subsec:Equivariant-open-Gromov-Witten}.

The formulae in the theorem are nothing more than a compact
description of a diagrammatic sum; the four factors comprising $E$
generate four types of labeled vertices, with the inner exponents
generating edges of various types incident to these vertices. The
substitution $E|_{s_{0}=0,\nu_{a,d}=0,t_{0}=0,...,t_{i}=0,...}$ places
some restrictions on the labels, and $\partial_{u}|_{u=0}\log\left(-\right)$
replaces the sum over all diagrams with a sum over tree diagrams. 

As a special case, this gives a combinatorial expression for the Solomon-Welschinger
invariants (\ref{eq:welschinger-solomon}). We've used a computer
to compute all of the invariants with $k+2l\leq17$ and checked the
results agree with values computed using WDVV, see \cite{klm}. For
example, we have 
\[
W_{17,0}=2,845,440.
\]
Another validation for this formula comes from the obvious fact 
\[
\int_{\basic}\omega=0\text{ when }\deg\omega<\dim\check{\mm}_{\basic}^{0},
\]
though the individual fixed-point contributions to Theorem \ref{thm:general fp intro}
need not vanish. In \cite{descendents+loc} we show that such relations
are in fact sufficient to determine the intersection theory of discs.
Simple examples of both types of sanity checks are given in $\S$\ref{subsec:Examples-and-applications.}.

\subsection{The failure of naive localization}

Corollary \ref{cor:fixed point formula} reproves the classical fixed-point
formula of Atiyah and Bott \cite{AB}. It shows that if $\xx$ is
a closed orbifold and $\omega$ is a closed equivariant form, then
\[
\int_{\xx}\omega=\int_{F}\frac{\omega|_{F}}{e\left(N\right)}
\]
where $F\subset\xx$ denotes the fixed-point suborbifold and $e\left(N\right)$
is the equivariant Euler to the normal bundle of $N$. 

Set $m=1$, so $\left(X,L\right)=\left(\cc\pp^{2},\rr\pp^{2}\right)$,
and consider 
\[
1=W_{2,0}=\frac{1}{2}I_{1}\left(2,\left(0,0,0\right),1\right),
\]
the number of lines through a pair of distinct points in the plane.
This invariant is given by an integral on ${\overline{\mm}_{0,2,0}\left(1\right)}$.
$S^{1}$ acts on $\left(\cc\pp^{2},\rr\pp^{2}\right)$ by 
\[
\begin{pmatrix}1 & 0 & 0\\
0 & \cos\theta & -\sin\theta\\
0 & \sin\theta & \cos\theta
\end{pmatrix}
\]
and thus on $\overline{\mm}_{0,2,0}\left(1\right)$, but it is easy
to see that $\overline{\mm}_{0,2,0}\left(1\right)$ has \emph{no }fixed
points, which would seem to imply $W_{2,0}$ vanishes. In reality,
the fixed points fail to ``see'' the line because 
\[
\partial\overline{\mm}_{0,2,0}\left(1\right)=\overline{\mm}_{0,1,0}\left(1\right)\times_{L}\overline{\mm}_{0,0,3}\left(0\right)\neq\emptyset.
\]
Theorem \ref{thm:general fp intro} recovers the correct answer by
incorporating the fixed points of the resolution ${\check{\mm}_{\basic}^{1}=\overline{\mm}_{0,0,0}\left(1\right)\times\overline{\mm}_{0,3,0}\left(0\right)}$
of $\partial\overline{\mm}_{0,2,0}\left(1\right)$. These fixed points
can be schematically represented by 
\begin{equation}
\circled{1}\to\circled{0}:\label{eq:simple diagram}
\end{equation}
where
\begin{itemize}
\item $\circled{1}$ depicts a fixed point of $\overline{\mm}_{0,0,0}\left(1\right)$,
represented by a degree one disc with no markings, whose boundary
maps to the line at infinity $\rr\pp^{1}\subset L$ (there are two
such fixed-points)
\item $\to$ depicts an oriented edge, which gives the propagator factor
$\xi_{\vp}$ in (\ref{eq:extended fp formula abridged})
\item $>\circled{0}:$ depicts a fixed point of $\overline{\mm}_{0,0,3}\left(0\right)$,
represented by a degree zero disc carrying the two original markings
as well as a marking for the head of the edge $\to$. Such a disc
must map to the origin $p_{0}\in L$.
\end{itemize}
See Example \ref{exa:lines thru 2pts} for the full computation. More
generally, even though we can choose $\omega$ so 
\[
W_{k,l}=\int_{\basic}\omega=\int_{\check{\mm}_{\basic}^{0}}\omega_{0}
\]
with contributions only from the $r=0$ term in (\ref{eq:integration}),
the fixed-point formula (\ref{eq:extended fp formula abridged}) for
$W_{k,l}$ will involve contributions from $\check{\mm}_{\basic}^{r}$
with $r$ arbitrarily large, representing contributions of the $r$-codimension
corners $\partial^{r}\overline{\mm}_{0,k,l}\left(\beta\right)$.

\subsection{Outline of proof}

The structure of the paper is as follows. In Section \ref{sec:statement}
we review and adapt to our needs some related theories. In particular
we discuss quantum constraint correlators and the intersection theory
of discs.

In Section \ref{sec:fp for generalized} we discuss the resolution
fixed points, and use this to formulate Theorem \ref{thm:general fp intro}
more precisely as Theorem \ref{thm:general fixed point formula}.
We then compute some examples, and deduce the generating function
statement, Theorem \ref{thm:open localization}.

The next three sections are devoted to the proof of Theorem \ref{thm:general fp intro}.
Consider 
\[
I\left(t\right)=\int_{\basic}e^{tD\eta}\omega=\sum_{r\geq0}\frac{1}{r!}\int_{\widetilde{\mm}_{\basic}^{r}}e^{tD\widetilde{\eta}_{r}}\,\widetilde{\omega}_{r}\,\Lambda^{\boxtimes r}.
\]
On the one hand, by Stokes' theorem (\ref{eq:stokes theorem}), this
is independent of $t$. On the other hand, for suitable $\eta$ we
show that 
\begin{equation}
\int_{\basic}\omega=I\left(0\right)=\lim_{t\to\infty}I\left(t\right)=\sum_{r\geq0}\frac{1}{r!}\int_{\widetilde{N}_{\basic}^{r}}e^{D\tilde{\eta}_{r}^{\left(2\right)}}\cdot\tilde{\rho}^{*}\left(\tilde{\omega}_{r}\boxtimes\Lambda^{\boxtimes r}\right),\label{eq:sing loc abridged}
\end{equation}
see Theorem \ref{thm:singular localization formula}. Here $\widetilde{N}_{\basic}^{r}$
is a blow up of the normal bundle $N_{\basic}^{r}$ to $F_{\basic}^{r}\to\mm_{\basic}^{r}$,
where $F_{\basic}^{r}=\left(\check{\mm}_{\basic}^{r}\right)^{\tb}\times_{\check{\mm}_{\basic}^{r}}\mm_{\basic}^{r}$
is the inverse image of the fixed points of $\check{\mm}_{\basic}^{r}$.
Using a tubular neighborhood, $\widetilde{N}_{\basic}^{r}$ becomes
an open subset of $\widetilde{\mm}_{\basic}^{r}$. $\tilde{\eta}_{r}^{\left(2\right)}$
is the quadratic part of $\check{\eta}_{r}$, and $\tilde{\rho}$
is a certain retraction. The computation of the limit (\ref{eq:sing loc abridged})
uses steepest descent analysis, but the singularities of $\Lambda$
demand special attention. We discuss a fairly general formalism for
such computations in Section \ref{sec:steepest descent} (as a special
case, we obtain the Atiyah-Bott localization formula for closed orbifolds).
The derivation of (\ref{eq:sing loc abridged}) is carried out in
Section \ref{sec:singular loc}. 

The final step in the proof of Theorem \ref{thm:general fp intro}
is carried out in Section \ref{sec:Resummation}. Note that (\ref{eq:sing loc abridged})
already reduces $\int_{\basic}\omega$ to local quantities near the
fixed points of $\coprod_{r}\check{\mm}_{\basic}^{r}$, but the integrals
are hard to evaluate in this form (see $\S$\ref{subsec:problems with singular}
for a detailed explanation). This is related to the singularities
of $\left(\tilde{\rho}\right)^{*}\Lambda^{\boxtimes r}$ near the
diagonal, which we now discuss. Consider a fixed-point component 
\[
\check{F}_{T}=\prod_{v\in T_{0}}\overline{\mm}_{0,k_{v},l_{v}}\left(\beta_{v}\right)^{\tb}\subset\left(\check{\mm}_{\basic}^{r}\right)^{\tb}
\]
corresponding to a tree $T=\left(T_{0},T_{1}\right)$ with vertex
set $T_{0}$ and $r$ oriented edges $T_{1}$. Consider $\left(v_{\text{tail}},v_{\text{head}}\right)\in T_{1}$.
We must have $\beta_{v_{\text{head}}}=0\mod2$ whereas $\beta_{v_{\text{tail}}}$
may be odd or even, so
\[
T_{1}=T_{\text{odd-even}}\coprod T_{\text{even-even}}.
\]
The two boundary markings corresponding to an odd-even edge $e\in T_{\text{odd-even}}$
are separated in $L$. In contrast, both endpoints of an even-even
edge map to the unique real fixed point $p_{0}\in L$. It follows
that

\[
\pm\tilde{\rho}^{*}\Lambda^{\boxtimes r}=\left(\tilde{\rho}^{*}\Lambda^{\boxtimes T_{\text{odd-even}}}\right)\boxtimes\left(\tilde{\rho}^{*}\Lambda^{\boxtimes T_{\text{even-even}}}\right)=R\boxtimes S
\]
where $R=\tilde{\rho}^{*}\Lambda^{\boxtimes T_{\text{odd-even}}}$
is regular, in the sense that it is pulled back along the blow up
map $\widetilde{N}_{\basic}^{r}\to N_{\basic}^{r}$, while
\[
S=\tilde{\rho}^{*}\Lambda^{\boxtimes T_{\text{even-even}}}=\theta_{0}^{\boxtimes T_{\text{even-even}}}
\]
is singular: $\theta_{0}$ is an angular form for
\[
S\left(T_{p_{0}}L\right)\simeq S\left(N_{\Delta/L\times L}|_{p_{0}}\right),
\]
giving the rescaled limit of $\Lambda$ near $\left(p_{0},p_{0}\right)\in L\times L$. 

Every $e\in T_{\text{even-even}}$ corresponds to a boundary component
$\partial^{T}\check{F}_{T'}$ of some other fixed-point component
$F_{T'}$. $T'$ is obtained from $T$ by contracting $e$. Let us
fix some tree $\underline{T}$ with only odd-even edges, and consider
the total contribution $C_{\underline{T}}$ to (\ref{eq:sing loc abridged})
of the fixed-point components $\left\{ \check{F}_{T}|\cnt T=\underline{T}\right\} $
of trees $T$ whose odd-even skeleton, obtained by contracting all
even-even edges, is $\underline{T}$. The central result of Section
\ref{sec:Resummation}, Theorem \ref{thm:resummation}, says that
$C_{\underline{T}}$ can be regularized. Roughly speaking, the integrals
on $\widetilde{N}_{\basic}^{r}|_{\check{F}_{T}}$ are replaced by
integrals on 
\[
\partial^{T}\check{F}_{\underline{T}}\times\left[0,1\right]^{T_{\text{even-even}}},
\]
where $\partial^{T}\check{F}_{\underline{T}}$ is the codimension
$\left|T_{\text{even-even}}\right|$ corner of $\check{F}_{\underline{T}}$
corresponding to $T$. Thus we find that
\[
C_{\underline{T}}=\int_{P_{\underline{T}}}\Upsilon
\]
where the domain
\[
P_{\underline{T}}=\bigcup_{c\geq0}\partial^{c}\check{F}_{\underline{T}}\times_{\Sym\left(c\right)}\left[0,1\right]^{c}
\]
is obtained by gluing $\partial\check{F}_{\underline{T}}\times\left[0,1\right]$
to $\check{F}_{\underline{T}}$ along $\partial\check{F}_{\underline{T}}\times\left\{ 0\right\} $,
then filling in the missing squares at the codimension two corners,
then the missing cubes at codimension 3 corners, and so on. The integrand
$\Upsilon$ satisfies boundary conditions, making it a well-defined
cohomology class relative to $\partial P_{\underline{T}}$ . In particular,
$C_{\underline{T}}$ is invariant under $\omega\to\omega+D\epsilon$
and the choice of $\Lambda$. Theorem \ref{thm:resummation} allows
us to replace the sum $\sum_{\underline{T}}C_{\underline{T}}$ over
odd-even trees $\underline{T}$, appearing on the right hand side
of (\ref{eq:sing loc abridged}), with the right hand side of (\ref{eq:extended fp formula abridged}),
and conclude the proof of Theorem \ref{thm:general fp intro}.

Some definitions and basic results about orbifolds with corners are
given in the Appendix.

\subsection{Quantum $A_{\infty}$ deformations and the superpotential}

Though we will not use the $A_{\infty}$ formalism in this paper,
it is closely related to what we do, and in this subsection we briefly
outline this relationship. The equivariant open Gromov-Witten invariants
$\left\{ I_{m}\left(k,\boldsymbol{l},\beta\right)\right\} $ encode
the structure constants of a natural, rigid minimal model for the
Fukaya endomorphism algebra of $\rr\pp^{2m}\subset\cc\pp^{2m}$ with
bulk and equivariant deformations. The question of whether or not
the Calabi-Yau structure is given by the standard pairing on this
minimal model requires further study. We will mention a condition
on $\Lambda$ that should guarantee this.

Consider the $\left(\zz\oplus\zz/2\right)$-graded $R$-module
\[
C=\overbrace{\Omega}^{\left(\zz,0\right)}\left(\rr\pp^{2m},\overbrace{R}^{\left(2\zz,0\right)}\otimes_{\zz}\left(\overbrace{\zz}^{\left(0,0\right)}\oplus\overbrace{\Or\left(T\rr\pp^{2m}\right)}^{\left(0,1\right)}\right)\right).
\]
so the $\zz$ component of the grading, denoted $\codim x$, is the
total grading of differential forms with values in the graded ring
$R=\rr\left[\vect\lambda\right]\left[\left[\vect\eta\right]\right]$,
where now $\deg\lambda_{i}=\left(2,0\right)$ and $\deg\eta_{i}=\left(2-2i,0\right)$.

$C$ is equipped with a pairing

\begin{equation}
\left\langle x,y\right\rangle =\left(-1\right)^{\epsilon\left(x,y\right)}\int x\wedge y,\;\epsilon\left(x,y\right)=\codim x+\codim x\cdot\codim y.\label{eq:pairing}
\end{equation}
For $\left(k,\beta\right)\in\zz_{\geq0}^{2}\backslash\left\{ \left(1,0\right)\right\} $
we define an operation 
\[
m_{k,\beta}:C^{\otimes k}\to C\left[2-k-\mu\left(\beta\right),\mu\left(\beta\right)\mod2\right],\;\mu\left(\beta\right)=\left(2m+1\right)\cdot\beta
\]
by

\begin{align*}
m_{k,\beta}\left(x_{1},...,x_{k}\right):= & \sum_{\boldsymbol{l}\in\zz_{\geq0}^{2m+1}}\frac{\boldsymbol{\eta}^{\boldsymbol{l}}}{\boldsymbol{l}!}\left(\sqrt{-1}\right)^{\star}\times\\
\times & \ev_{0*}^{\left(k,\sum\boldsymbol{l},\beta\right)}\left(\ev_{1}^{*}x_{1}\,\ev_{2}^{*}x_{2}\,\cdots\,\ev_{k}^{*}x_{k}\,\prod_{d=0}^{2m}\prod_{\boldsymbol{l}\left[:d\right]+1\leq a<\boldsymbol{l}\left[:d+1\right]}\evi_{a}^{*}\hh^{i}\right)
\end{align*}
where ${\sqrt{-1}^{*}=\left(-1\right)^{\sum_{i=1}^{k}i\left(\codim x_{i}+1\right)}\cdot\begin{cases}
1 & \beta=0\mod2\\
\sqrt{-1} & \beta=1\mod2
\end{cases}}$, $\mathcal{H}$ is an equivariant hyperplane form $\S$\ref{subsec:RP^2m in CP^2m},
and the pushforward is defined using the local system map $\check{\jc}_{\left(k,\sum\boldsymbol{l},\beta\right)}^{0}$
(\ref{eq:check J}). We set 
\[
m_{1,0}=D=d-\sum_{j=1}^{m}\lambda_{j}\iota_{\xi_{j}},
\]
the Cartan-Weil differential. The operations $\left\{ m_{k,\beta}\right\} $
define a $\left(G=\left(1,\left(2m+1\right)\right)\cdot\zz_{\geq0}\right)$-gapped
unital cyclic twisted $A_{\infty}$ algebra over the complete local
ring $\left(R,\mathfrak{m}=R\cdot\left(\eta_{0},...,\eta_{2m}\right)\right)$\footnote{This is defined similarly to the case $R$ is discrete, except we
require only that $m_{0,0}\in\mathfrak{m}$ in place of $m_{0,0}=0$;
this suffices to establish convergence in the proof of the homological
perturbation lemma. See \cite{twA8} for more details.}. This type of algebra was studied in \cite{twA8}, where a homological
perturbation lemma is proved. This allows us to classify all such
algebras on $C$, by transferring the structure to cohomology:
\[
HC=R\cdot\overbrace{1}^{0,0}\oplus R\cdot\overbrace{\pt}^{2m,1}.
\]
Note (\ref{eq:pairing}) induces a pairing on $HC$. A unital cyclic
twisted $G$-gapped $A_{\infty}$ algebra structure on $HC$ is specified
by a collection of operations 
\[
\left\{ \mu_{k,\beta}:HC^{\otimes k}\to HC\left[2-k-\mu\left(\beta\right),\mu\left(\beta\right)\mod2\right]\right\} .
\]
Following Fukaya \cite{fukaya-superpotential}, we define the superpotential\footnote{Note we do \emph{not }consider an inhomogeneous term, and below we
omit the Gromov-Witten invariants $I_{m}\left(0,\boldsymbol{l},\beta\right)$
with $k=0$, to streamline the presentation.}
\[
\Phi\left(\left\{ \mu_{k,\beta}\right\} \right):=\sum_{k\geq0,\beta\geq0}\frac{1}{k+1}\left\langle \mu_{k,\beta}\left(p_{0}^{L},\hdots,p_{0}^{L}\right),p_{0}^{L}\right\rangle q^{\beta}s^{k+1}.
\]
$\Phi\left(\left\{ \mu_{k,\beta}\right\} \right)\in s\cdot R\left[\left[q,s\right]\right]$
is supported in degree $3-2m$. 
\begin{prop}
\label{prop:rigidity}(a) The superpotential induces a bijection 
\[
\left\{ \text{unital cyclic }A_{\infty}\text{ structures }\left\{ \mu_{k,\beta}\right\} \right\} \to\left\{ \Phi\in s\cdot R\left[\left[q,s\right]\right]\big|\begin{array}{c}
\Phi\mod s^{2}\in\mathfrak{m}\text{ and}\\
\deg\Phi=3-2m
\end{array}\right\} .
\]
(b) if $\left\{ f_{k,\beta}\right\} :\left(HC,\left\{ \mu_{k,\beta}\right\} \right)\to\left(HC,\left\{ \mu_{k,\beta}'\right\} \right)$
is a unital cyclic isomorphism between two such algebras, then $f_{k,\beta}=\id$.
\end{prop}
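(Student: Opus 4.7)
The plan for part (a) is to show that a unital cyclic $A_\infty$ structure on the rank-2 module $HC = R\cdot 1 \oplus R\cdot\pt$ is rigidly determined by the correlators $c_{k,\beta} := \langle \mu_{k,\beta}(\pt, \ldots, \pt), \pt\rangle \in R$, which are precisely the coefficients of $\Phi$ (up to the factors $\tfrac{1}{k+1}$). Strict unitality kills $\mu_{k,\beta}(x_1, \ldots, x_k)$ whenever some $x_i = 1$ (for $(k,\beta)\neq(2,0)$), so each such operation is determined by its value on the all-$\pt$ input. Cyclic symmetry combined with unitality then gives, for $k \geq 1$ and $(k,\beta) \neq (2,0)$,
\[
\langle \mu_{k,\beta}(\pt, \ldots, \pt), 1\rangle \;=\; \pm\langle \mu_{k,\beta}(1, \pt, \ldots, \pt), \pt\rangle \;=\; 0,
\]
so $\mu_{k,\beta}(\pt, \ldots, \pt) = \pm c_{k,\beta}\cdot 1$. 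The low-arity cases $k=0,1$ are handled by a $\zz$-parity check: since all generators of $R$ live in even $\zz$-degree, any component of $\mu_{k,\beta}$ whose $R$-coefficient would need to sit in odd $\zz$-degree must vanish. Tracking the bidegree shifts shows that $\mu_1 = 0$ identically and that $\mu_{0,\beta}()$ carries only a $1$-component (equal to $\pm c_{0,\beta}$) when $\beta$ is even, and vanishes when $\beta$ is odd. Thus the full $A_\infty$ datum is rigidly encoded by $\{c_{k,\beta}\}$, matching the coefficients of $\Phi$, giving injectivity of the superpotential map.

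For surjectivity, given any $\Phi$ of degree $3-2m$ with $\Phi \bmod s^2 \in \mathfrak{m}$, I would read off the $c_{k,\beta}$ from its coefficients, define $\mu_{k,\beta}$ by the rigid prescriptions above, and verify all gapped cyclic $A_\infty$ relations. The key combinatorial observation is that $\mu_{2,0}(\pt,\pt) = 0$ combined with $\mu_{k,\beta}(\pt,\ldots,\pt) \in R\cdot 1$ causes every nested composition $\mu_{k_1,\beta_1}(\ldots, \mu_{k_2,\beta_2}(\ldots), \ldots)$ to collapse: the inner operation returns a scalar multiple of $1$, which the outer operation annihilates unless $(k_1,\beta_1) = (2,0)$ acting via the strict unit. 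The two surviving contributions (the inner $\mu$ landing in the left or right slot of $\mu_{2,0}$) cancel by the $A_\infty$ sign conventions, so every relation reduces trivially to $0 = 0$ and places no net constraint on $\Phi$. I expect the main obstacle to be the careful bookkeeping of signs under the $(\zz \oplus \zz/2)$-grading and the $\mu(\beta)$-twists of the gapped structure, especially in reconciling the arity-$1$ curvature relations involving $\mu_0$ with the $\zz$-parity constraints.

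Part (b) follows a parallel rigidity argument. Let $\{f_{k,\beta}\}$ be a unital cyclic isomorphism. Unitality forces $f_{1,0}(1) = 1$ and $f_{k,\beta}(\ldots, 1, \ldots) = 0$ for $(k,\beta) \neq (1,0)$, so each $f_{k,\beta}$ is determined by $f_{k,\beta}(\pt, \ldots, \pt)$. Writing $f_{1,0}(\pt) = a\cdot 1 + b\cdot \pt$, compatibility with the pairing yields $\langle f_{1,0}(1), f_{1,0}(\pt)\rangle = b = 1$ and $\langle f_{1,0}(\pt), f_{1,0}(\pt)\rangle = 2a = 0$, forcing $f_{1,0}(\pt) = \pt$. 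For $k \geq 2$, the cyclic/unitality device kills the $\pt$-component of $f_{k,\beta}(\pt, \ldots, \pt)$, and a $\zz$-parity check of the degree shift $[1 - k - \mu(\beta), \mu(\beta) \bmod 2]$ of $f_{k,\beta}$ shows that the remaining $1$-component has incompatible $\zz$-parity and must also vanish. A parallel check on the curvature term, using the arity-$0$ morphism equation to pin it down, eliminates any potential $f_{0,\beta}$ contribution. Hence $f_{k,\beta} = \id$, as asserted.
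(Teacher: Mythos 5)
Your strategy---strict unitality reduces each $\mu_{k,\beta}$ to its all-$\pt$ value, cyclicity plus $\langle\pt,\pt\rangle=0$ kills the $\pt$-component of that value, and the $(\zz\oplus\zz/2)$-parity of $R$ handles what remains---is exactly the kind of ``elementary consideration using cyclic symmetry and unitality'' the paper invokes without further detail. But one concrete step is wrong: $\mu_{1,\beta}$ is \emph{not} zero identically, and the parity check does not show that it is. For $\beta$ odd, $\mu_{1,\beta}(\pt)$ sits in bidegree $\left((2m+1)(1-\beta),\,(1+\beta)\bmod 2\right)=(\text{even},0)$, so a component $\mu_{1,\beta}(\pt)=c_{1,\beta}\cdot 1$ with $c_{1,\beta}\in R$ is perfectly admissible; only the $\pt$-component is excluded (by $\langle\mu_{1,\beta}(\pt),1\rangle=\pm\langle\mu_{1,\beta}(1),\pt\rangle=0$, i.e.\ the same device as for $k\geq 2$). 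These $c_{1,\beta}$ are precisely the $s^{2}q^{\beta}$-coefficients of $\Phi$, which the target set of the bijection allows to be nonzero and which \emph{are} nonzero geometrically: the $s^{2}q$-term of $F_{OGW}$ records $I_{1}(2,(0,0,0),1)=2$, the count of lines through two points. So if $\mu_{1}=0$ were forced, surjectivity would fail, and your statement is internally inconsistent with the bijection you are proving. The repair is immediate---treat $k=1$ exactly like $k\geq 2$---but as written this step fails.

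In part (b) the conclusion is right but the mechanism is underspecified in the two places where it matters. The trick that kills the $\pt$-component of $f_{k,\beta}(\pt,\dots,\pt)$ cannot be the operation-level one (there is no output slot to cycle a unit into); what works is the pairing-preservation identity for cyclic morphisms evaluated on an input tuple containing one unit: for $(k+1,\beta)\neq(2,0)$ the only surviving term of $\sum\pm\langle f_{k_{1},\beta_{1}}(\dots),f_{k_{2},\beta_{2}}(\dots)\rangle=0$ applied to $(\pt,\dots,\pt,1)$ is $\langle f_{k,\beta}(\pt,\dots,\pt),f_{1,0}(1)\rangle$, which forces the $\pt$-component to vanish. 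Relatedly, your appeal to the arity-zero morphism equation to eliminate $f_{0,\beta}$ does not do the job: with $\mu_{0,\bullet}()\in R\cdot 1$ and $f_{\bullet}(\dots,1,\dots)=0$ both sides of that equation lie in $R\cdot 1$, so it constrains curvatures, not the ($\pt$-valued, parity-admissible for $\beta$ odd) component of $f_{0,\beta}$; you need the cyclic identity (or a convention excluding $f_{0,\beta}$) here as well. Finally, $\langle f_{1,0}(\pt),f_{1,0}(\pt)\rangle=ab\left(\langle 1,\pt\rangle\pm\langle\pt,1\rangle\right)$ need not equal $2a$; the clean way to kill $a$ is the $\zz/2$-parity of $R$, which you already use elsewhere.
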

\begin{proof}
This follows from elementary considerations which use the definition
of cyclic symmetry and unitality directly.
\end{proof}
In other words, the homotopy category of unital cyclic $A_{\infty}$
algebra structures on $HC$ is discrete and parameterized by the superpotential,
which is just the generating function for the nontrivial structure
constants. These structure constants are not subject to any constraints.
Here we consider the structure constants which are forced to vanish,
as well as ${m_{2,0}\left(p_{0}^{L},1\right)=m_{2,0}\left(1,p_{0}^{L}\right)=p_{0}^{L}}$,
to be trivial.

Consider the unital cyclic minimal model $\left(HC,\left\{ m_{k,\beta}^{can}\right\} \right)$,
which is homotopy equivalent to $\left(C,\left\{ m_{k,\beta}\right\} \right)$
through a unital cyclic morphism. We can also consider the unital
cyclic $A_{\infty}$ algebra $\left(HC,\left\{ m_{k,\beta}^{!}\right\} \right)$
whose structure constants are given by the open Gromov-Witten invariants,
so
\[
\Phi\left(HC,\left\{ m_{k,\beta}^{!}\right\} \right)=F_{OGW}^{s>0}=F_{OGW}-\left(F_{OGW}\mod s\right).
\]

\begin{prop}
\label{prop:unital isomorphism}There exists a unital (but not necessarily
cyclic) $A_{\infty}$ isomorphism
\[
\left(HC,m_{k,\beta}^{!}\right)\xrightarrow{f}\left(HC,m_{k,\beta}^{can}\right).
\]
\end{prop}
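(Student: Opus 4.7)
The plan is to realize $(HC, m^!_{k,\beta})$ as a unital transferred $A_\infty$ model of $(C, \{m_{k,\beta}\})$ and then invoke the uniqueness of unital minimal models up to unital $A_\infty$ isomorphism.

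First, I would apply the homological perturbation lemma for filtered twisted $A_\infty$ algebras from \cite{twA8} to $(C, \{m_{k,\beta}\})$, choosing the unital retraction whose propagator is exactly the equivariant form $\Lambda$ on $\widetilde{L\times L}$ that drives the extended-forms integration (\ref{eq:integration}). The output is a unital transferred $A_\infty$ structure $\tilde m_{k,\beta}$ on the cohomology $HC$, whose structure constants are expressible as a sum of Feynman tree diagrams: each vertex $v$ is decorated by an operation $m_{k_v,\beta_v}$ and each internal edge by the propagator $\Lambda$.

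Second, I would match this tree sum term-by-term with the defining expansion $\int_\basic\omega=\sum_{r\geq 0}\frac{1}{r!}\int_{\widetilde{\mm}_\basic^r}\tilde\omega_r\cdot\Lambda^{\boxtimes r}$. The space $\mm_\basic^r$ is by construction a disjoint union, indexed by trees with $r$ oriented edges, of products of moduli spaces of discs, and the integrand $\tilde\omega_r\cdot\Lambda^{\boxtimes r}$ factorizes over the vertex and edge decorations of each such tree. Plugging in $\omega$ of the form (\ref{eq:omega of ogw invts}) and summing over interior insertions with weight $\vect\eta^{\boldsymbol{l}}/\boldsymbol{l}!$ and boundary multiplicity $s^{k}/k!$ identifies the diagonal structure constants $\langle\tilde m_{k,\beta}(p_0^L,\ldots,p_0^L),p_0^L\rangle$ with the OGW invariants $I_m(k,\boldsymbol{l},\beta)$. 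Thus the superpotential $\Phi(\tilde m)$ equals $F_{OGW}^{s>0}$.

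Third, by the general unital homological perturbation lemma there is a unital $A_\infty$ quasi-isomorphism $(C,\{m_{k,\beta}\})\to(HC,\tilde m)$. Composing with the unital cyclic quasi-isomorphism $(C,\{m_{k,\beta}\})\to(HC,\{m_{k,\beta}^{can}\})$ exhibiting the canonical minimal model, and invoking the standard obstruction-theoretic uniqueness of unital minimal models of a filtered $A_\infty$ algebra, one obtains a unital $A_\infty$ isomorphism $(HC,\tilde m)\xrightarrow{\sim}(HC,m^{can})$. If $\Lambda$ happens to satisfy the cyclic side conditions, then $\tilde m$ is itself unital cyclic; step two combined with the rigidity statement in Proposition \ref{prop:rigidity}(a) then forces $\tilde m=m^!$ and composition yields $f$. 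If not, the equality of superpotentials from step two together with the proof technique of Proposition \ref{prop:rigidity}(b) applied in the unital (rather than unital cyclic) category produces a unital $A_\infty$ isomorphism $(HC,m^!)\xrightarrow{\sim}(HC,\tilde m)$ constructed order by order in the $(q,s,\vect\eta)$-filtration, which is then composed with the previous step.

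The main obstacle is step two: the tree-by-tree identification requires careful accounting of the $(\sqrt{-1})^\star$ prefactors, the orientation and $(\zz\oplus\zz/2)$-grading sign rules coming from the twisted $A_\infty$ conventions, the automorphism factors for trees with nontrivial symmetry (matching the $1/r!$ against the sum over oriented tree shapes with $r$ edges), and convergence in the $\mathfrak m$-adic topology on $R$. It is important not to expect $\tilde m$ and $m^!$ to agree as full operations beyond their diagonal superpotential values; this asymmetry is exactly the reason the resulting isomorphism $f$ is unital but not in general cyclic.
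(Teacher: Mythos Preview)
Your overall strategy matches the paper's: transfer the $A_\infty$ structure from $C$ to $HC$ using a homotopy built from $\Lambda$, identify the tree-sum structure constants with the extended integral $\sum_r\frac{1}{r!}\int_{\widetilde{\mm}_\basic^r}\tilde\omega_r\Lambda^{\boxtimes r}$, and then pass to the canonical model via the roof of unital quasi-isomorphisms. The paper carries out exactly this, and your ``main obstacle'' in step two is precisely what the paper's proof sketch addresses (the cancellation of unstable-vertex contributions via the analog of $\tau_\basic^r$, the sign accounting through $\jc_\basic^r$ coherence and $\Sym(r)$-equivariance, and the passage from ribbon to non-ribbon trees).

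There is, however, a genuine gap in your step four. You condition on whether $\Lambda$ satisfies the side conditions (\ref{eq:side conditions}) in order to decide whether $\tilde m$ is cyclic. This is the wrong criterion: cyclicity of the transferred structure requires only \emph{self-adjointness} (\ref{eq:self adjoint}) of the homotopy, and the paper observes that $h_\Lambda$ is always self-adjoint by construction (it is built from $\Lambda+\tau^*\Lambda$). Hence $\tilde m$ is automatically unital cyclic, and Proposition~\ref{prop:rigidity}(a) forces $\tilde m=m^!$ on the nose. Your ``if not'' branch is therefore never needed --- which is fortunate, because the argument you sketch there does not work: equality of superpotentials does not determine a unital (non-cyclic) structure on $HC$, since without cyclicity the component $\langle m_k(p_0^L,\ldots,p_0^L),1\rangle$ is unconstrained, and Proposition~\ref{prop:rigidity}(b) concerns unital \emph{cyclic} isomorphisms, so its ``proof technique'' does not transfer to the unital-only setting in the way you suggest.

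In short: insert the self-adjointness observation, drop the case split, and conclude $\tilde m = m^!$ directly.
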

Before we prove this proposition, we make a few remarks. First, note
that $\left\{ m_{k,\beta}^{!}\right\} $ is well-defined independent
of all choices: invariance on choice of the forms $p_{0}^{L}$ and
$\mathcal{H}$ follows from Stokes' theorem (\ref{eq:stokes theorem}),
and the more subtle invariance on $\Lambda$ follows from the fixed-point
formula (this is Corollary \ref{cor:Lambda independence}).

The notion of Calabi-Yau structure \cite[\S 10.2]{k+sA8} should extend
to the twisted algebra case, and then Proposition \ref{prop:rigidity}
implies $\left\{ m_{k,\beta}^{can}\right\} $ is also well-defined
independent of all choices. The question of whether $m_{k,\beta}^{!}=m_{k,\beta}^{can}$
depends on whether we can represent a unital cyclic homotopy retraction
by a kernel $\Lambda$ on $\widetilde{L\times L}$, as we now explain.

Consider the inclusion $i:HC\to C$ and the projection $\pi:C\to HC$,
defined using the unit and point classes $1,p_{0}^{L}\in C$ and the
pairing $\left\langle -,-\right\rangle $ in the obvious way. In order
to transfer the unital cyclic structure to $HC$, we need an operator
$h:C\to C\left[-1\right]$ satisfying 

\begin{equation}
\left\langle h\,x,y\right\rangle =\left(-1\right)^{\codim x}\left\langle x,h\,y\right\rangle ,\label{eq:self adjoint}
\end{equation}
\begin{equation}
D\,h+h\,D=\id_{C}-i\circ\pi\text{ and}\label{eq:retraction}
\end{equation}
\begin{equation}
h^{2}=0\qquad h\,i=0\qquad\pi\,h=0.\label{eq:side conditions}
\end{equation}

By construction, $-\Lambda|_{S\left(N_{\Delta}\right)}$ is an equivariant
angular form for the sphere bundle of the normal bundle to the diagonal
$N_{\Delta}$, and $D\Lambda=\widetilde{\pr}_{2}^{*}p_{0}^{L}$, where
for $i=1,2$, $\widetilde{\pr}_{i}:\widetilde{L\times L}\to L\times L\to L$
denotes the projection. It follows that if we define ${h_{\Lambda}:C\to C\left[-1\right]}$
by
\begin{equation}
h_{\Lambda}\,x=\widetilde{\pr}_{2*}\left(\left(\Lambda+\tau^{*}\Lambda\right)\widetilde{\pr}_{1}^{*}x\right),\label{eq:smooth retraction}
\end{equation}
where $\tau:\widetilde{L\times L}\to\widetilde{L\times L}$ denotes
the swap, then $h=h_{\Lambda}$ satisfies (\ref{eq:self adjoint},\ref{eq:retraction}),
but we do not know if we can make it satisfy (\ref{eq:side conditions})
also. The modified operator 
\[
h=h_{\Lambda}\,D\,h_{\Lambda}
\]
satisfies all of the required conditions (\ref{eq:self adjoint},\ref{eq:retraction},\ref{eq:side conditions}),
and we use it to prove the existence of $\left\{ m_{k,\beta}^{can}\right\} $
(of course it may not be of the form (\ref{eq:smooth retraction})).

To some extent, Proposition \ref{prop:unital isomorphism} guided
our construction of the resolutions $\mm_{\basic}^{r}$, their orientations,
and the integration map. Thus, the reader may find that the proof
below explains and motivates the constructions discussed in $\S$\ref{subsec:Review-of-Resolutions}.
\begin{proof}
[Proof of Proposition \ref{prop:unital isomorphism} (Sketch)]Since
$h_{\Lambda}$ satisfies (\ref{eq:retraction}), we can use it to
construct a minimal model $\left(HC,\left\{ m_{k,\beta}'\right\} \right)$
for $\left(C,m_{k,\beta}\right)$. This is done as in \cite{fukayacyclic},
except we need to adjust the signs for the twisted case\footnote{Note without the side conditions (\ref{eq:side conditions}), we cannot
work with the geometric series representation of the bar coalgebra
differential, and must use summation over trees. Compare \cite[Remark 29]{twA8}.}. In fact, $\left(HC,\left\{ m_{k,\beta}'\right\} \right)$ is cyclic,
since $h_{\Lambda}$ is self-adjoint (\ref{eq:self adjoint}) (see
\cite[Lemma 10.3]{fukayacyclic}), but the equivalence
\[
\left(HC,\left\{ m_{k,\beta}^{'}\right\} \right)\xrightarrow{f'}\left(C,\left\{ m_{k,\beta}\right\} \right)
\]
may not respect the pairing, i.e. it may not be a cyclic morphism
in the sense of \cite[Definition 19]{twA8}. We have $\left\langle h_{\Lambda}1,x\right\rangle =\left\langle 1,h_{\Lambda}x\right\rangle =0$
since the De Rham degree of $h_{\Lambda}x$ is less than $2m$, so
$\left(HC,\left\{ m_{k,\beta}'\right\} \right)$ and $f'$ are unital.
It follows that the roof
\[
\left(HC,\left\{ m_{k,\beta}'\right\} \right)\xrightarrow{f'}\left(C,\left\{ m_{k,\beta}\right\} \right)\xleftarrow{f^{can}}\left(HC,\left\{ m_{k,\beta}^{can}\right\} \right)
\]
is represented by a unital isomorphism $f$. To complete the proof,
we must show that $m'_{k,\beta}=m_{k,\beta}^{!}$. It is enough to
check this for the non-trivial structure constants, so we will now
prove that for all $k,\beta\geq0$ with $\left(k+1\right)+\beta=1\mod2$
we have
\begin{equation}
\left\langle m_{k,\beta}'\left(p_{0}^{L},...,p_{0}^{L}\right),p_{0}^{L}\right\rangle =\left\langle m_{k,\beta}^{!}\left(p_{0}^{L},...,p_{0}^{L}\right),p_{0}^{L}\right\rangle .\label{eq:equal potential}
\end{equation}

Note that $m_{k,\beta}'$ is given by a sum over labeled ribbon trees
$\Gamma$ with $k+1$ external edges, one of which is called the root,
and whose vertices $\Gamma_{0}$ are labeled by integers $\beta_{v},l_{v}\geq0$
so that $\sum_{v\in\Gamma_{0}}\beta_{v}=\beta$ and $k_{v}+2l_{v}+3\beta_{v}\geq3$,
where $k_{v}$ is the valency of $v$. It follows from the projection
formula that the contribution $C_{\Gamma}$ of $\Gamma$ to $\left\langle m_{k,\beta}'\left(p_{0}^{L},...,p_{0}^{L}\right),p_{0}^{L}\right\rangle $
is given by an integral over the blow up of 
\[
\prod_{v\in\Gamma_{0}}\overline{\mm}_{0,k_{v},l_{v}}^{main}\left(\beta_{v}\right)
\]
(more precisely, over ${\left(\widetilde{L\times L}\right)^{\Gamma_{1}}\times_{\left(L\times L\right)^{\Gamma_{1}}}\prod_{v\in\Gamma_{0}}\overline{\mm}_{0,k_{v},l_{v}}^{main}\left(\beta_{v}\right)}$),
where $\overline{\mm}_{0,k_{v},l_{v}}^{main}\left(\beta_{v}\right)\subset\overline{\mm}_{0,k_{v},l_{v}}\left(\beta_{v}\right)$
is the component of the moduli space where the markings are required
to appear around the boundary of the disc in the cyclic order specified
by $\Gamma$. Since $\left(k+1\right)+\beta=1\mod2$, there's a natural
orientation on the edges $\Gamma_{1}$ so that $k_{v}^{\text{in}}+\beta_{v}=1\mod2$,
where $k_{v}^{\text{in}}$ denotes the number of incoming edges. The
contribution of trees $\Gamma$ with vertices which become unstable
when we forget the tails of edges cancel in pairs (this is just a
property of $m_{2,0}$, the analog of (\ref{eq:or rev involution})).
Fix some set $\underline{\Gamma}$ of ribbon trees which differ only
by the cyclic ordering of the edges near the vertices. We can think
of $\underline{\Gamma}$ as a non-ribbon labeled rooted tree. Pick
an arbitrary numbering of the $r=\left|\underline{\Gamma}_{1}\right|$
edges, an arbitrary partition of $\left\{ 1,...,l\right\} $ into
parts of sizes $\left\{ l_{v}\right\} $, and an arbitrary labeling
of the external edges by $\left\{ 1,...,k+1\right\} $ so that $1$
is the root. This gives a labeled tree $\tc\in\ts_{\basic}^{r}$,
$\basic=\left(k+1,l=\sum l_{v},\beta\right)$ in the sense of Definition
\ref{def:labeled tree}, and it is not hard to see that 
\[
\frac{1}{k+1}\cdot\sum_{\Gamma\in\underline{\Gamma}}C_{\Gamma}=\frac{1}{\left(k+1\right)!}\frac{1}{r!}\frac{1}{l!}\int_{\widetilde{\mm}_{\tc}}\tilde{\omega}_{r}\,\Lambda^{\boxtimes r},
\]
at least up to signs. Here $\tilde{\omega}_{r}$ is given by (\ref{eq:omega of ogw invts}),
and $\widetilde{\mm}_{\tc}\subset\widetilde{\mm}_{\basic}^{r}$ is
the clopen component corresponding to $\tc$, see $\S$\ref{subsec:Resolutions}.
The local system map $\jc_{\basic}^{r}$ was computed recursively
so that (\ref{eq:J_b^r coherence}) holds. Using this and its equivariance
w.r.t. the $\Sym\left(r\right)$ action (\ref{eq:J_s^rho equivariance})
and the behavior under permutation of boundary markings near a vertex
\cite[Eq (29), (31)]{equiv-OGW-invts}, we find that the signs are
in fact the same. Equation (\ref{eq:equal potential}) follows.
\end{proof}
We remark that originally, we used $\Phi\left(HC,\left\{ m_{k,\beta}^{can}\right\} \right)$
as the definition of open Gromov-Witten invariants, motivated by the
work of Solomon and Tukachinsky \cite{jake+sara} on the odd-dimensional
projective spaces. They define invariants for $\rr\pp^{2m+1}\subset\cc\pp^{2m+1}$
by constructing recursively a weak bounding cochain directly on $\left(C,\left\{ m_{k,\beta}\right\} \right)$.
One way to construct such a weak bounding cochain is to take the
image of the point class\footnote{strictly speaking, since a weak bounding cochain has to have degree
one, we need to take $b=p_{0}^{L}\cdot\epsilon^{-2m+1}$ where $\epsilon$
is a generator of the Novikov ring, see \cite{twA8}.} under the $A_{\infty}$ morphism $\left(HC,\left\{ m_{k,\beta}^{can}\right\} \right)\to\left(C,\left\{ m_{k,\beta}\right\} \right)$.
This way, the choice of operator $h$ fixes all the choices involved
in the recursive construction of the weak bounding cochain.

\subsection{Acknowledgments}

I am deeply grateful to my teacher, Jake Solomon, for suggesting the
problem and contributing important ideas, advice, and encouragement.
I'd also like to thank Mohammed Abouzaid, Rahul Pandharipande, Amit
Solomon, Ran Tessler, Lior Yanovski and Jingyu Zhao for engaging conversations
and useful suggestions. I was partially supported by ERC starting
grant 337560, ISF Grant 1747/13 and NSF grant DMS-1128155.

\section{\label{sec:statement}Related Theories}

\subsection{\label{subsec:RP^2m in CP^2m}Equivariant cohomology of $\rr\pp^{2m}\subset\cc\pp^{2m}$}

We set some notation. The unitary group $U\left(2m+1\right)$ acts
on $X=\cc\pp^{2m}$. The action of the subgroup of diagonal matrices,
$\tb^{\cc}=U\left(1\right)^{2m+1}<U\left(2m+1\right)$, is given by
\[
X=\cc\pp^{2m}=\pp_{\cc}\left(V_{0}\oplus V_{1}\oplus\cdots\oplus V_{2m}\right).
\]
where $\left(u_{0},...,u_{2m}\right)$ acts on $V_{i}$ by $z\mapsto u_{i}\cdot z$.
$\mathbb{T}^{\cc}$-equivariant cohomology is defined over the ring
\[
H_{\mathbb{T}^{\cc}}^{\bullet}=H_{\mathbb{T}^{\cc}}^{\bullet}\left(\mbox{pt}\right)=\rr\left[\alpha_{0},...,\alpha_{2m}\right],\;\deg\alpha_{i}=2.
\]
Let $\mathcal{H}=c_{1}\left(\mathcal{O}\left(1\right)\right)\in\Omega\left(X;\rr\left[\vec{\alpha}\right]\right)^{\tb^{\cc}}$
denote an equivariant form representing the Chern class of the dual
to the tautological line bundle, so
\begin{equation}
H_{\tb^{\cc}}^{\bullet}\left(X\right)=\rr\left[\left[\mathcal{H}\right],\alpha_{0},....,\alpha_{2m}\right]/\left(\prod_{i=0}^{2m}\left(\left[\mathcal{H}\right]-\alpha_{i}\right)\right).\label{eq:cohomology of CP^2m}
\end{equation}

Let $c:\cc^{2m+1}\to\cc^{2m+1}$ be the involution given by 
\begin{equation}
\left(z_{0},...,z_{2m}\right)\mapsto\left(\overline{z}_{0},\overline{z}_{2m},\overline{z}_{2m-1},...,\overline{z}_{1}\right).\label{eq:affine conjugation}
\end{equation}
It induces involutions $c_{X}:X\to X$  and ${c_{U}:U\left(2m+1\right)\to U\left(2m+1\right)}$,
the latter given by conjugation.We define $O\left(2m+1\right)$ and
$\tb$ to be the $c_{U}$-fixed subgroups of $U\left(2m+1\right)$
and $\tb^{\cc}$, respectively. We identify $U\left(1\right)^{m}\simeq\tb$
by 
\[
U\left(1\right)^{m}\ni\left(u_{1},...,u_{m}\right)\mapsto\mbox{diag}\left(1,u_{1},....,u_{m},\overline{u}_{m},...,\overline{u}_{1}\right)\in\tb<\tb^{\cc}<U\left(2m+1\right)
\]
so that the homomorphism $\tb\to\tb^{\cc}$ corresponds to the map
\begin{equation}
H_{\tb^{\cc}}^{\bullet}=\rr\left[\alpha_{0},...,\alpha_{2m}\right]\overset{\rho_{\tb}}{\longrightarrow}H_{\tb}^{\bullet}=\rr\left[\lambda_{1},...,\lambda_{m}\right]\label{eq:weight loss}
\end{equation}
given by $\alpha_{0}\mapsto0$, $\alpha_{i}\mapsto\lambda_{i}$ and
$\alpha_{2m+1-i}\mapsto-\lambda_{i}$, for $1\leq i\leq m$. Let $W_{0}=\rr$
denote the trivial representation of $\tb$, and let $W_{i}=\rr^{2}$
denote the real representations of $\tb$, on which $\left(e^{\sqrt{-1}t_{1}},...,e^{\sqrt{-1}t_{m}}\right)$
acts by 
\[
\begin{pmatrix}\cos t_{i} & -\sin t_{i}\\
\sin t_{i} & \cos t_{i}
\end{pmatrix}.
\]
As $\zz/2\times\tb$ representations we have 
\[
V_{0}\simeq\cc\otimes W_{0}
\]
and for $1\leq i\leq m$,
\[
V_{i}\oplus V_{2m+1-i}\simeq\cc\otimes W_{i},
\]
where the $\zz/2$ action is the usual conjugation action on the $\cc$
factor. We have
\[
L=X^{\zz/2}=\pp_{\rr}\left(W_{0}\oplus W_{1}\oplus\cdots\oplus W_{m}\right).
\]
The induced action of $O\left(2m+1\right)$ on $L$ is transitive.

We identify ${H_{2}\left(X;\zz\right)=\zz}$ using the complex structure
and  ${H_{2}\left(X,L\right)=\zz}$ so that $H_{2}\left(X\right)\to H_{2}\left(X,L\right)$
corresponds to multiplication by $+2$.

We let 
\begin{equation}
\mathcal{R}=S^{-1}\rr\left[\lambda_{1},...,\lambda_{m}\right]\label{eq:graded rational functions}
\end{equation}
(respectively, $\mathcal{R}^{\cc}=S_{\cc}^{-1}\rr\left[\alpha_{0},....,\alpha_{2m}\right]$)
denote the localization of $H_{\tb}^{\bullet}$ (resp., $H_{\tb^{\cc}}^{\bullet}$)
by the multiplicative subset of nonzero homogeneous polynomials.

\subsection{\label{subsec:Constraint-correlators}Constraint correlators}

The information in the \emph{closed }Gromov-Witten theory of $\cc\pp^{2m}$
relevant for the computation of the equivariant OGW invariants is
encapsulated by \emph{constraint correlators}. These are generating
functions fashioned after the quantum correlators used by Givental
\cite{givental}. Our approach follows \cite{panda->givental}, with
two main differences. First, we allow any number of constrained marked
points, by introducing a formal variable $\eta_{i}$ that records
the number of marked points carrying $\mathcal{H}^{i}$. Second, although
the recursion specifying the correlators is computed over $\mathcal{R}^{\cc}$
to avoid non-rigid maps (see Remark \ref{rem:moving fixed points}),
we work with partially localized rings that admit an extension of
the ring homomorphism $\rr\left[\alpha_{0},...,\alpha_{2m}\right]\xrightarrow{\rho_{\tb}}\rr\left[\lambda_{1},...,\lambda_{m}\right]$.

Consider the two rings

\[
R_{\text{rat}}^{\cc}=M^{-1}\rr\left[\alpha_{0},...,\alpha_{2m},\overbrace{\hbar}^{2}\right]\left[\left[\overbrace{Q}^{4m+2},\eta_{0},...,\overbrace{\eta_{i}}^{2-2i},...,\eta_{2m}\right]\right],
\]

\[
R_{\text{an}}^{\cc}=\rr\left[\alpha_{0},...,\alpha_{2m},\hbar\right]\left[\left[Q,\boldsymbol{\eta},\hbar^{-1}\right]\right].
\]
Here $M\subset\rr\left[\boldsymbol{\alpha},\hbar\right]$ is the multiplicative
subset generated by $\hbar$ and $\hbar+\frac{\alpha_{i}-\alpha_{j}}{n}$
for all $i\neq j$ and $n\geq1$. We think of $R_{\text{rat}}^{\cc}$
as a subring of $R_{\text{an}}^{\cc}$ using Taylor expansion in powers
of $\hbar^{-1}$. Recall $S\subset\rr\left[\boldsymbol{\alpha}\right]$
denotes the multiplicative subset of homogeneous nonzero polynomials.
For $i\neq j$ and $n\geq1$, the map
\[
\hbar\mapsto\frac{\alpha_{i}-\alpha_{j}}{n}
\]
extends to an injective, degree-preserving ring homomorphisms $S^{-1}R_{\text{rat}}^{\cc}\to\mathcal{R}^{\cc}\left[\left[Q,\boldsymbol{\eta}\right]\right]$.
This is the motivation for considering the subring $R_{\text{rat}}^{\cc}\subset R_{\text{an}}^{\cc}$.

For non-negative integers $d,n$ with $d+n\geq3$ we let 
\[
\overline{\mm}_{0,n}\left(X,d\right)
\]
denote the moduli space parameterizing stable maps to $X$ of genus
0 and degree $d$, with $n$ marked points. $\tb^{\cc}$ acts on $\overline{\mm}_{0,n}\left(d\right)$
by translating maps. 

By abuse of notation, for $0\leq a\leq2m$ we let $p_{a}=\pp_{\cc}\left(V_{a}\right)\in X$
denote the $\tb^{\cc}$-fixed-point, as well as the equivariant Poincare
dual form
\begin{equation}
p_{a}=\prod_{i\neq a}\left(\mathcal{H}-\alpha_{i}\right)\label{eq:equivariant point class}
\end{equation}

\begin{defn}
For $0\leq a\leq2m$ the \emph{constraint correlator }$Z_{a}^{\cc}\left(\hbar\right)\in R_{\text{an}}^{\cc}$
is defined by 

\begin{equation}
Z_{a}^{\cc}\left(\hbar\right)=Z_{a}^{0}\left(\hbar\right)+\sum_{d>0}Q^{d}\sum_{n\geq0}\int_{\overline{\mm}_{0,n+1}\left(X,d\right)}\frac{1}{n!}\prod_{i=1}^{n}\left(ev_{i}^{*}\left(\sum_{j=0}^{2m}\eta_{j}\mathcal{H}^{j}\right)\right)\cdot\frac{\ev{}_{n+1}^{*}p_{a}}{\hbar-\psi_{n+1}}\label{eq:correlator def}
\end{equation}
where $\psi_{n+1}=c_{1}^{\tb^{\cc}}\left(\mathbb{L}_{n+1}\right)$,
the first Chern class of the cotangent line to the last marked point,
and we set
\begin{equation}
Z_{a}^{0}\left(\hbar\right)=\sum_{n\geq0}\hbar^{1-n}\frac{1}{n!}\left(\sum_{j=0}^{2m}\eta_{j}\alpha_{a}^{j}\right)^{n}.\label{eq:Z^0}
\end{equation}
\end{defn}

We use closed localization to compute the image of $Z_{a}^{\cc}\left(\hbar\right)$
in $S^{-1}R_{\text{an}}^{\cc}$. 
\begin{prop}
\label{prop:correlators recursion}We have $Z_{a}^{\cc}\left(\hbar\right)\in S^{-1}R_{\text{rat}}^{\cc}\subset S^{-1}R_{\text{an}}^{\cc}$,
and the following equations hold for $0\leq a\leq2m$.

\begin{multline}
Z_{a}^{\cc}\left(\hbar\right)=Z_{a}^{0}\left(\hbar\right)+\\
+\sum\frac{1}{n!}\left(\sum_{i=0}^{2m}\eta_{i}\alpha_{a}^{i}\right)^{n}\prod_{b\neq a}\left(\alpha_{b}-\alpha_{a}\right)^{s}\left(\frac{\hbar^{-1}}{s!}\prod_{r=1}^{s}\frac{d_{r}}{\alpha_{a}-\alpha_{j_{r}}}\right)\left(\frac{d_{1}}{\alpha_{a}-\alpha_{j_{1}}}+\cdots+\frac{d_{s}}{\alpha_{a}-\alpha_{j_{s}}}+\hbar^{-1}\right)^{s+n-2}\times\\
\times\prod_{r=1}^{s}\frac{\left(-1\right)^{d_{r}}d_{r}^{2d_{r}-1}Q^{d_{r}}}{\left(d_{r}!\right)^{2}\left(\alpha_{a}-\alpha_{j_{r}}\right)^{2d_{r}}\prod_{\begin{array}{c}
0\leq w\leq d_{r}\\
k\neq j_{r},a
\end{array}}\left(\frac{w}{d_{r}}\alpha_{a}+\frac{d_{r}-w}{d_{r}}\alpha_{j_{r}}-\alpha_{k}\right)}Z_{j_{r}}^{\cc}\left(\frac{\alpha_{j_{r}}-\alpha_{a}}{d_{r}}\right)+\\
+\sum_{j\neq a,d>0}\frac{\prod_{b\neq a}\left(\alpha_{a}-\alpha_{b}\right)}{\hbar+\frac{\alpha_{a}-\alpha_{j}}{d}}\cdot\frac{\left(-1\right)^{d}d^{2d-1}Q^{d}}{\left(d!\right)^{2}\left(\alpha_{a}-\alpha_{j_{r}}\right)^{2d}\prod_{\begin{array}{c}
0\leq w\leq d\\
k\neq j,a
\end{array}}\left(\frac{w}{d}\alpha_{a}+\frac{d-w}{d}\alpha_{j}-\alpha_{k}\right)}Z_{j}^{\cc}\left(\frac{\alpha_{j}-\alpha_{i}}{d}\right).\label{eq:closed recursion}
\end{multline}
where the undecorated sum ranges over $s,n\in\zz_{\geq0}$ satisfying
$s+n+1\geq3$, and over pairs of $s$-tuples $\left(j_{1},...,j_{s}\right)\in\left\{ 0,...,\hat{a},...,2m\right\} ^{s}$
and $\left(d_{1},...,d_{s}\right)\in\zz_{>0}^{s}$. 

In other words, $Z^{\cc}=\left(Z_{a}^{\cc}\left(\hbar\right)\right)_{a=0}^{2m}$
is the unique fixed point of a map 
\[
f:\left(\mathcal{R}_{\hbar}^{\cc}\left[\left[Q,\eta_{0},...,\eta_{2m}\right]\right]\right)^{2m+1}\to\left(\mathcal{R}_{\hbar}^{\cc}\left[\left[Q,\eta_{0},...,\eta_{2m}\right]\right]\right)^{2m+1}
\]
which is $S^{-1}R_{\text{rat}}^{\cc}\left\langle Q,\boldsymbol{\eta}\right\rangle $-adically
contracting, so $Z^{\cc}=\lim_{n\to\infty}f^{n}\left(\mathbf{0}\right)$.
\end{prop}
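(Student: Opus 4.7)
The argument is by Atiyah--Bott localization for the $\tb^{\cc}$-action on $\overline{\mm}_{0,n+1}(X,d)$, applied termwise to the expansion of $(\hbar-\psi_{n+1})^{-1}$ as a polynomial in $\hbar^{-1}$ (valid since $\psi_{n+1}$ is nilpotent). The $\tb^{\cc}$-fixed loci are the Kontsevich decorated trees: vertices $v$ labelled by fixed points $p_{a(v)}\in X$ and edges $e$ carrying degrees $d(e)\in\zz_{>0}$ specifying multi-covers of $\tb^{\cc}$-invariant lines, with the marked points distributed among the vertices. The constraint $\ev_{n+1}^{*}p_{a}$ forces only trees whose $(n+1)$-th marked point sits at a vertex with $a(v)=a$ to contribute, and each insertion $\ev_{i}^{*}\sum_{j}\eta_{j}\hh^{j}$ restricts to $\sum_{j}\eta_{j}\alpha_{a(v_{i})}^{j}$ at the vertex carrying the $i$-th marked point.

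I would then organise the sum by the local structure at the distinguished vertex $v_{0}$ carrying the $(n+1)$-th marked point, producing three pieces matching the three lines of (\ref{eq:closed recursion}). In \emph{Case A}, $v_{0}$ is stable in the tree, carrying $n'$ of the remaining markings and $s$ incident edges of degrees $d_{1},\ldots,d_{s}$ to vertices labelled $p_{j_{1}},\ldots,p_{j_{s}}$ with $j_{r}\neq a$. The contribution factorises into the local Euler class at $p_{a}$ giving $\prod_{b\neq a}(\alpha_{b}-\alpha_{a})$; an integral on $\overline{\mm}_{0,n'+s+1}$ coupling $(\hbar-\psi_{n+1})^{-1}$ with smoothing-node factors $(w_{r}-\psi_{r})^{-1}$, where $w_{r}=(\alpha_{a}-\alpha_{j_{r}})/d_{r}$; and, for each edge $e_{r}$, the classical Kontsevich edge contribution times the sum over the subtree past $e_{r}$. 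The point integral is evaluated via $\int_{\overline{\mm}_{0,k}}\prod_{i}(x_{i}-\psi_{i})^{-1} = (\sum_{i}x_{i}^{-1})^{k-3}/\prod_{i}x_{i}$, producing the $\bigl(\hbar^{-1}+\sum_{r}d_{r}/(\alpha_{a}-\alpha_{j_{r}})\bigr)^{s+n-2}$ factor. The crucial induction, on $\sum d_{r}$, identifies the subtree sum past $e_{r}$ with $Z_{j_{r}}^{\cc}\bigl((\alpha_{j_{r}}-\alpha_{a})/d_{r}\bigr)$: the attaching node plays the role of the constrained marked point of the subtree, with $\hbar'$ dictated by the opposite of the smoothing weight.

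In \emph{Case B}, $v_{0}$ is unstable, bearing only the $(n+1)$-th marked point and one incident edge, a degree-$d$ cover of the line $p_{a}p_{j}$. The cotangent line at the marked point is then identified with the tangent of this cover at its preimage of $p_{a}$, of weight $(\alpha_{a}-\alpha_{j})/d$, so $\psi_{n+1} = -(\alpha_{a}-\alpha_{j})/d$ and $(\hbar-\psi_{n+1})^{-1} = 1/(\hbar+(\alpha_{a}-\alpha_{j})/d)$. Combined with the subtree past the edge giving $Z_{j}^{\cc}\bigl((\alpha_{j}-\alpha_{a})/d\bigr)$, this produces the last line of (\ref{eq:closed recursion}). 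The initial term $Z_{a}^{0}(\hbar)$ collects the remaining case of an isolated $v_{0}$: the genuinely unstable $n\in\{0,1\}$ contributions together with the $s=0$, $d=0$ stable pieces for $n\geq 2$, matching (\ref{eq:Z^0}) via $\int_{\overline{\mm}_{0,n+1}}\psi_{n+1}^{n-2}=1$.

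Matching the Kontsevich edge contribution to the explicit coefficient $(-1)^{d_{r}}d_{r}^{2d_{r}-1}/\bigl((d_{r}!)^{2}(\alpha_{a}-\alpha_{j_{r}})^{2d_{r}}\prod_{w,k}(\tfrac{w}{d_{r}}\alpha_{a}+\tfrac{d_{r}-w}{d_{r}}\alpha_{j_{r}}-\alpha_{k})\bigr)$ amounts to computing the equivariant Euler class of the normal bundle to the multi-cover, as in Givental and Pandharipande. The ring-theoretic claim $Z_{a}^{\cc}(\hbar)\in S^{-1}R_{\text{rat}}^{\cc}$ is then immediate because every denominator produced is either $\hbar+(\alpha_{i}-\alpha_{j})/n\in M$ or a nonzero homogeneous polynomial in $S$; the contracting property is immediate because each non-initial term carries a factor $Q^{d_{r}}$ with $d_{r}>0$, yielding the unique fixed point by $\langle Q,\boldsymbol{\eta}\rangle$-adic iteration from $\mathbf{0}$. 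I expect the main obstacle to be not the localization itself but the combinatorial bookkeeping --- the graph automorphism factors, the $1/s!$ for ordering the edges at $v_{0}$, the distribution of the $n$ unconstrained insertions --- and the verification that the inductive substitution cleanly packages the subtree sums into $Z_{j_{r}}^{\cc}$.
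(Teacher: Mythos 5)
Your proposal is correct and follows essentially the same route as the paper, whose entire proof is a one-sentence citation of the Givental--Pandharipande localization argument from \cite[Section 3]{panda->givental}. Your organization of the fixed loci by the distinguished vertex carrying the constrained marked point, the repackaging of subtree sums as $Z_{j_{r}}^{\cc}\bigl(\tfrac{\alpha_{j_{r}}-\alpha_{a}}{d_{r}}\bigr)$, and the degree induction yielding both the rationality claim and the contracting-map formulation are precisely that standard argument adapted to the additional $\eta$-insertions.
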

\begin{proof}
This is a simple variation of the arguments given in \cite[Section 3]{panda->givental},
to accommodate additional marked points with constraints.
\end{proof}
In practice, using the recursion $f$ is inefficient. One can use
$\Sym\left(s\right)$ invariance to reduce the computation of a coefficient
of $Z_{a}^{\cc}$ to a sum over certain isomorphism types of labeled
trees, in a straightforward manner. Of course, the same sum can be
obtained by directly applying the fixed-point formula to (\ref{eq:correlator def});
we're just using algebra to concisely summarize some well-known computations,
see for example \cite[Chapter 27]{clay-MS}.

The following result says that $Z_{a}^{\cc}\left(\hbar\right)$ is
integral, so we can change coefficients $\rr\left[\vect\alpha\right]\xrightarrow{\rho_{\tb}}\rr\left[\vect\lambda\right]$.
\begin{lem}
We have $Z_{a}^{\cc}\left(\hbar\right)\in R_{\text{rat}}^{\cc}\subset S^{-1}R_{\text{rat}}^{\cc}$.
\end{lem}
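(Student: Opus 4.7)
The plan is to prove the lemma directly from definition (\ref{eq:correlator def}), bypassing the closed-form expression of Proposition \ref{prop:correlators recursion} entirely. The closed-form expression exhibits apparent $S$-denominators arising from equivariant Euler classes at closed fixed loci, but these must cancel out, and the cleanest way to see this is to observe that the original integral presentation manifestly yields values with polynomial dependence on $\vect{\alpha}$.

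First I would expand $\frac{1}{\hbar - \psi_{n+1}} = \sum_{k \geq 0} \hbar^{-k-1}\psi_{n+1}^{k}$ as a formal power series in $\hbar^{-1}$ and substitute into (\ref{eq:correlator def}). Extracting the coefficient of $Q^d\vect{\eta}^{\vect{l}}$ (with necessarily $n = |\vect{l}|$) yields a sum
\[
\sum_{k \geq 0} \hbar^{-k-1} \int_{\overline{\mm}_{0,n+1}(X,d)} \Bigl(\text{product of }\ev_i^*\mathcal{H}^{\bullet}\text{ factors}\Bigr) \cdot \ev_{n+1}^* p_a \cdot \psi_{n+1}^{k}.
\]
Since the moduli of closed stable maps $\overline{\mm}_{0,n+1}(X,d)$ is a finite-dimensional compact Deligne--Mumford orbifold, the classes $\psi_{n+1}^{k}$ contribute trivially once $k$ exceeds its dimension, so only finitely many $k$ survive. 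Each surviving term is an equivariant pushforward along the proper structure map $\overline{\mm}_{0,n+1}(X,d) \to \pt$, hence lies in $H_{\tb^{\cc}}^{\bullet}(\pt) = \rr[\vect{\alpha}]$, with no polynomial denominators in $\vect{\alpha}$.

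Thus each $Q^d\vect{\eta}^{\vect{l}}$-coefficient of $Z_a^{\cc}(\hbar) - Z_a^0(\hbar)$ is a polynomial in $\hbar^{-1}$ with $\rr[\vect{\alpha}]$-coefficients, and a direct inspection of (\ref{eq:Z^0}) puts the analogous coefficient of $Z_a^0(\hbar)$ in $\rr[\vect{\alpha}][\hbar, \hbar^{-1}]$. Both are contained in $M^{-1}\rr[\vect{\alpha}, \hbar]$ because $\hbar \in M$, so $Z_a^{\cc}(\hbar) \in R_{\text{rat}}^{\cc}$.

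I do not foresee any real obstacle: the content of the lemma is simply that compact equivariant integration produces polynomial-in-$\vect{\alpha}$ answers, in sharp contrast with the open invariants, where $\widetilde{\mm}_{\basic}^{r}$ has corners and honest regularization is required. The resulting integrality is what permits coefficient-wise application of $\rho_{\tb}: \rr[\vect{\alpha}] \to \rr[\vect{\lambda}]$, which does not extend to $S^{-1}\rr[\vect{\alpha}]$ (for instance $\alpha_0 \in S$ but $\alpha_0 \mapsto 0$), and this is the substantive use of the lemma later on.
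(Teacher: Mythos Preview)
There is a genuine gap. Your claim that ``the classes $\psi_{n+1}^{k}$ contribute trivially once $k$ exceeds its dimension'' is false in the equivariant setting. The paper defines $\psi_{n+1} = c_1^{\tb^{\cc}}(\mathbb{L}_{n+1})$ as the \emph{equivariant} first Chern class, and equivariant classes on a positive-dimensional $\tb^{\cc}$-space with fixed points are not nilpotent: at a torus-fixed point of $\overline{\mm}_{0,n+1}(X,d)$ the class $\psi_{n+1}$ restricts to a nonzero weight in $\rr[\vect{\alpha}]$, so $\psi_{n+1}^k$ restricts to a nonzero degree-$k$ polynomial in $\vect{\alpha}$, and the equivariant integral $\int_{\overline{\mm}} (\cdots)\,\psi_{n+1}^k$ is in general a nonzero element of $\rr[\vect{\alpha}]$ for arbitrarily large $k$. (Concretely, the recursion in Proposition~\ref{prop:correlators recursion} exhibits factors $\bigl(\hbar+\tfrac{\alpha_a-\alpha_j}{d}\bigr)^{-1}$, whose $\hbar^{-1}$-expansions are infinite.) So your argument only shows that each $Q^d\vect{\eta}^{\vect{l}}$-coefficient lies in $\rr[\vect{\alpha},\hbar][[\hbar^{-1}]]$, which is strictly larger than $M^{-1}\rr[\vect{\alpha},\hbar]$.

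This weaker conclusion is exactly the ``by definition'' input in the paper's proof. The second input, which you are missing, is Proposition~\ref{prop:correlators recursion}: it places the same coefficient in $S^{-1}M^{-1}\rr[\vect{\alpha},\hbar]$, supplying the rational-in-$\hbar$ structure (morally by resumming $\sum_k \hbar^{-k-1}\psi^k=(\hbar-\psi)^{-1}$ at the fixed loci). The lemma then follows from the elementary identity
\[
S^{-1}M^{-1}\rr[\vect{\alpha},\hbar]\;\cap\;\rr[\vect{\alpha},\hbar][[\hbar^{-1}]]\;=\;M^{-1}\rr[\vect{\alpha},\hbar],
\]
proved by clearing the $M$-denominator and comparing $\hbar$-coefficients. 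Your integrality-of-compact-equivariant-pushforward insight is correct and is precisely what kills the $S$-denominators in this intersection; what your argument lacks is a source for the $\hbar$-rationality, and the recursion is what provides it.
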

\begin{proof}
Fix some monomial $Q^{d}\eta_{0}^{r_{0}}\cdots\eta_{2m}^{r_{2m}}$
and consider the coefficient $C$ of this monomial in $Z_{a}^{\cc}\left(\hbar\right)$.
By definition, $C\in\rr\left[\vect\alpha,\hbar\right]\left[\left[\hbar^{-1}\right]\right]$,
and by Proposition \ref{prop:correlators recursion} it is in $S^{-1}M^{-1}\rr\left[\vect\alpha,\hbar\right]$.
We thus reduce to proving that the intersection of these two rings
in $S^{-1}\rr\left[\vect\alpha,\hbar\right]\left[\left[\hbar^{-1}\right]\right]$
satisfies
\[
M^{-1}\rr\left[\vect\alpha,\hbar\right]=S^{-1}M^{-1}\rr\left[\vect\alpha,\hbar\right]\cap\rr\left[\vect\alpha,\hbar\right]\left[\left[\hbar^{-1}\right]\right].
\]
Indeed, the right-hand-side consists of elements $C\in\rr\left[\vect\alpha,\hbar\right]\left[\left[\hbar^{-1}\right]\right]$
such that there exists some $q\in M$ so that $p:=q\cdot C\in S^{-1}\rr\left[\vect\alpha,\hbar\right]$;
but then $p\in\rr\left[\vect\alpha,\hbar\right]$ and $C=p/q\in M^{-1}\rr\left[\vect\alpha,\hbar\right]$.
\end{proof}
Consider

\[
R_{\text{rat}}=\check{M}^{-1}\rr\left[\lambda_{1},...,\lambda_{m},\hbar\right]\left[\left[\overbrace{q}^{2m+1},\boldsymbol{\eta}\right]\right]
\]
where $\check{M}^{-1}\subset\rr\left[\boldsymbol{\lambda},\hbar\right]$
is the multiplicative subset generated by $\hbar+\rho_{\tb}\left(\frac{\alpha_{i}-\alpha_{j}}{n}\right)$
for all $i\neq j$ and $n\geq1$ (i.e., by $\hbar,\hbar\pm\frac{\lambda_{i}}{n},\hbar\pm\frac{2\lambda_{i}}{n}$
and $\hbar\pm\frac{\lambda_{i}\pm\lambda_{j}}{n}$ for all $i\neq j$
and $n\geq1$). We denote by
\[
R_{\text{rat}}^{\cc}\xrightarrow{\widehat{\rho}_{\tb}}R_{\text{rat}}
\]
the unique extension of $\rho_{\tb}:\rr\left[\boldsymbol{\alpha}\right]\to\rr\left[\boldsymbol{\lambda}\right]$
that sends $Q\mapsto q^{2},\eta_{i}\mapsto\eta_{i}$ and $\hbar\mapsto\hbar$. 
\begin{defn}
We define 
\[
Z_{a}\left(\hbar\right)=\hat{\rho}_{\tb}\left(Z_{a}^{\cc}\left(\hbar\right)\right).
\]
\end{defn}
\begin{rem}
\label{rem:moving fixed points}Theorem \ref{thm:open localization}
uses $Z_{a}\left(\hbar\right)$, but the full $\tb^{\cc}$ action
is useful for the computation of the correlators $Z_{a}\left(\hbar\right)$,
as Proposition \ref{prop:correlators recursion} shows.

Although $Z_{a}^{\cc}\left(\hbar\right)\in R_{\text{rat}}^{\cc}$
so we can make sense of $\hat{\rho}_{\tb}\left(Z_{a}^{\cc}\left(\hbar\right)\right)$,
individual summands in the recursion (\ref{eq:closed recursion})
will not in general lie in $R_{\text{rat}}^{\cc}$. Indeed, we see
that some of the recursion denominators may vanish if we apply $\hat{\rho}_{\tb}$;
also, the substitution $Z_{a}\left(\rho_{\tb}\left(\frac{\alpha_{i}-\alpha_{j}}{n}\right)\right)$
need not be well-defined.

On the other hand, the $\hbar^{-1}$-Taylor expansion of $Z_{a}\left(\hbar\right)$
\emph{is} given by a sum of equivariant $\tb$-integrals, obtained
by applying $\rho_{\tb}$ to the integrand of (\ref{eq:correlator def}),
and one can compute these integrals directly using $\tb$ localization.
What is happening is that different $\tb^{\cc}$-fixed-point components
of $\overline{\mm}_{0,n+1}\left(X,d\right)$ may be part of a single
fixed-point component for the $\tb$-action. For a prototypical example,
let $F_{1}$ and $F_{2}$ be two isolated $\tb^{\cc}$ fixed-points
in $\overline{\mm}_{0,2}\left(\cc\pp^{2},2\right)$. $F_{1}$ is represented
by a holomorphic map $\cc\pp^{1}\to\cc\pp^{2}$ which is a degree
2 cover of the line $\overline{p_{1}p_{2}}$, branched over $p_{1}$
and $p_{2}$. $F_{2}$ is represented by an injective holomorphic
map $\Sigma\to\cc\pp^{2}$ where $\Sigma$ has two irreducible components
covering the lines $\overline{p_{1}p_{0}}$ and $\overline{p_{0}p_{2}}$,
respectively. For both $F_{1},F_{2}$ we let the marked points $1$
and $2$ map to $p_{1}$ and $p_{2}$, respectively. There is a single
$\tb$-fixed-point component $F_{12}\subset\overline{\mm}_{0,2}\left(\cc\pp^{2},2\right)$
containing $F_{1}$ and $F_{2}$..

Consider some equivariant integral $\int_{\overline{\mm}_{0,2}\left(\cc\pp^{2},2\right)}\omega$.
If we try to apply $\rho_{\tb}$ to $C_{1}=\int_{F_{1}}^{\tb^{\cc}}\frac{\omega}{E_{F_{1}}}$,
the $\tb^{\cc}$-localization contribution of $F_{1}$, we see that
$\rho_{\tb}$ sends the Euler factor in the denominator corresponding
to the tangent direction $T_{F_{1}}F_{12}\subset T_{F_{1}}\overline{\mm}_{0,2}\left(\cc\pp^{2},2\right)$
to zero, and a similar singularity appears in the contribution $C_{2}$
of $F_{2}$. On the other hand, $\rho_{\tb}\left(C_{1}+C_{2}\right)$
\emph{is} well-defined, since $C_{1}+C_{2}$ can be interpreted as
applying $\tb^{\cc}$ localization to evaluate $\int_{F_{12}}^{\tb}\frac{\omega}{E_{F_{12}}}$,
the $\tb$-fixed-point contribution of $F_{12}$.
\end{rem}
\begin{lem}
\label{lem:Z_a sub well-defined}$Z_{a}\left(\frac{\rho_{\tb}\left(\alpha_{a}\right)}{d}\right)$
is a well-defined element of $\check{M}^{-1}\rr\left[\vect\lambda,\hbar\right]$
for all $a\neq0,d\geq1$. 
\end{lem}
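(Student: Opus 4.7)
The plan is to identify the $\hbar$-dependent denominators of $Z_{a}^{\cc}(\hbar)$ explicitly, and then verify that, after applying $\hat{\rho}_{\tb}$ and specializing $\hbar=\rho_{\tb}(\alpha_a)/d$, none of them vanishes in $\rr[\vect\lambda]$.

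The first step is a structural refinement of the preceding lemma: every $\hbar$-dependent factor appearing in the denominator of a coefficient (as a power series in $Q,\vect\eta$) of $Z_{a}^{\cc}(\hbar)$ belongs to $\{\hbar\}\cup\{\hbar+(\alpha_a-\alpha_j)/n:j\neq a,\,n\geq 1\}$. I would prove this by induction on the total degree in $Q,\vect\eta$ using the recursion (\ref{eq:closed recursion}). The base case is the seed $Z_a^0(\hbar)=\hbar\exp(\hbar^{-1}\sum_j\eta_j\alpha_a^j)$, whose only $\hbar$-dependent denominators are powers of $\hbar$. The inductive step uses the fact that the second and third lines of (\ref{eq:closed recursion}) only introduce the factors $\hbar^{-1}$ and $(\hbar+(\alpha_a-\alpha_j)/d')^{-1}$; the internal correlators $Z_{j_r}^{\cc}(\cdot)$ are evaluated at constants in $\hbar$ (namely $(\alpha_{j_r}-\alpha_a)/d_r$), so they contribute no new $\hbar$-dependent denominators.

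Applying $\hat{\rho}_{\tb}$ and specializing $\hbar=\rho_{\tb}(\alpha_a)/d$, the only factors whose non-vanishing must be checked are $\rho_{\tb}(\alpha_a)/d$ (from $\hbar$) and $\frac{(n+d)\rho_{\tb}(\alpha_a)-d\rho_{\tb}(\alpha_j)}{nd}$ (from $\hbar+\rho_{\tb}((\alpha_a-\alpha_j)/n)$). The first is nonzero because $a\neq 0$. For the second, the algebraic independence of $\lambda_1,\ldots,\lambda_m$ forces vanishing only if $j\in\{a,\overline{a}\}$: the case $j=a$ is excluded from the recursion sum, while $j=\overline{a}$ yields $(n+2d)\rho_{\tb}(\alpha_a)=0$, which is impossible for $a\neq 0$ and $n,d\geq 1$.

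Consequently, the substitution $\hbar\mapsto\rho_{\tb}(\alpha_a)/d$ is well-defined coefficient-by-coefficient in $q,\vect\eta$, producing an element of $\check{M}^{-1}\rr[\vect\lambda,\hbar]$ under the evaluation map sending $\hbar$ to $\rho_{\tb}(\alpha_a)/d$. The main obstacle is the structural refinement carried out in the first step; once the $\hbar$-denominators of $Z_{a}^{\cc}$ are pinned down in this restricted form, the rest reduces to the short non-vanishing check in $\rr[\vect\lambda]$ above.
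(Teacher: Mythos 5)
Your argument is correct and is exactly the approach the paper takes: the paper's proof is the single sentence that the claim ``follows from inspection of the denominators in Proposition \ref{prop:correlators recursion}'', and your write-up is that inspection carried out in detail (the restricted form $\hbar$, $\hbar+\frac{\alpha_a-\alpha_j}{n}$ of the $\hbar$-dependent denominators read off from the recursion, combined with the integrality lemma $Z_a^{\cc}(\hbar)\in R_{\text{rat}}^{\cc}$ to rule out surviving $\hbar$-independent denominators, followed by the non-vanishing check after $\rho_{\tb}$). The only micro-point is that the case analysis for $\frac{(n+d)\rho_{\tb}(\alpha_a)-d\rho_{\tb}(\alpha_j)}{nd}$ should also list $j=0$ alongside $j\in\{a,\overline{a}\}$ as a candidate where $\rho_{\tb}(\alpha_j)$ is proportional to $\rho_{\tb}(\alpha_a)$, but there the expression equals $\frac{n+d}{nd}\rho_{\tb}(\alpha_a)\neq0$, so the conclusion stands.
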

\begin{proof}
This follows from inspection of the denominators in Proposition \ref{prop:correlators recursion}.
\end{proof}

\subsection{\label{subsec:descendent integrals}Descendent integrals of discs}

We now review some aspects of the open intersection theory of discs,
as defined and computed in \cite{JRR}. We reformulate the results
in terms of differential forms rather than multisections.

Let $\kf$ and $\lf$ be finite \emph{sets}, such that $\left|\kf\right|+2\left|\lf\right|\geq3$.
Let $\overline{\mm}_{0,\kf,\lf}$ denote the moduli space of stable
discs with boundary and interior marked points, in bijection with
$\kf$ and $\lf$, respectively. It is a smooth manifold with corners.
For $i\in\lf$ let $\mathbb{L}_{i}\to\overline{\mm}_{0,\kf,\lf}$
denote the cotangent space to the interior point marked $i$; $\mathbb{L}_{i}$
is a complex vector bundle of rank one. Consider 
\[
E=\bigoplus_{i\in\lf}\mathbb{L}_{i}^{\oplus a_{i}}
\]
for some $a_{i}\geq0$. It is a rank $n=2\cdot\left(\sum_{i\in\lf}a_{i}\right)$
vector bundle, canonically oriented by the complex orientation. Assume
that 
\begin{equation}
n=\dim\overline{\mm}_{0,\kf,\lf}=2\left|\lf\right|+\left|\kf\right|-3\label{eq:rank=00003Ddim}
\end{equation}
Let $\pi:S\left(E\right)\to\overline{\mm}_{0,\kf,\lf}$ denote the
associated sphere bundle. Recall (see \cite{bott+tu}) a form $\theta\in\Omega^{n-1}\left(S\left(E\right)\right)$
is called \emph{an angular form} \emph{for $E$ }if $\pi_{*}\theta\equiv1$
and $d\theta=-\pi^{*}e$ for some form $e\in\Omega^{n}\left(\overline{\mm}_{0,\kf,\lf}\right)$,
called the \emph{Euler form associated with $\theta$. }

We now introduce boundary conditions on $\theta$. We have
\[
\partial\overline{\mm}_{0,\kf,\lf}=\coprod\overline{\mm}_{0,\kf'\coprod\left\{ \star'\right\} ,\lf'}\times\overline{\mm}_{0,\kf''\coprod\left\{ \star''\right\} ,\lf''}
\]
where the disjoint union ranges over all pairs of partitions $\kf=\kf'\coprod\kf''$
and $\lf=\lf'\coprod\lf''$; such that $\left|\kf'\right|+1+2\left|\lf'\right|\geq3$,
$\left|\kf''\right|+1+2\left|\lf''\right|\geq3$, and $\left|\kf'\right|$
is odd. We call a pair of partitions as above a \emph{boundary specification}.

Fix a boundary specification and let $B=\overline{\mm}_{0,\kf'\coprod\left\{ \star'\right\} ,\lf'}\times\overline{\mm}_{0,\kf''\coprod\left\{ \star''\right\} ,\lf''}$
denote the corresponding clopen component of $\partial\overline{\mm}_{0,\kf,\lf}$.
Since $\left|\kf'\right|$ is odd we must have $\left|\kf'\right|+1+2\left|\lf'\right|\geq4$
so there's a well-defined forgetful map 
\begin{equation}
B=\overline{\mm}_{0,\kf'\coprod\left\{ \star'\right\} ,\lf'}\times\overline{\mm}_{0,\kf''\coprod\left\{ \star''\right\} ,\lf''}\overset{\mbox{For}_{B}}{\longrightarrow}B_{-}=\overline{\mm}_{0,\kf',\lf'}\times\overline{\mm}_{0,\kf''\coprod\left\{ \star''\right\} ,\lf''}.\label{eq:jrr forgetful}
\end{equation}
For $x\in\lf'$ (respectively, $x\in\lf''$) we denote by $\mathbb{L}'_{x}$
(resp. $\mathbb{L}_{x}''$) the pullback to $B_{-}$ of the corresponding
cotangent line on $\overline{\mm}_{0,\kf',\lf'}$ (resp. $\overline{\mm}_{0,\kf''\coprod\star'',\lf''}$).
We define a complex vector bundle on $B_{-}$ by 
\begin{equation}
E_{-}=\bigoplus_{x\in l'}\left(\mathbb{L}'_{x}\right)^{\oplus a_{x}}\oplus\bigoplus_{x\in l''}\left(\mathbb{L}''_{x}\right)^{\oplus a_{x}}.\label{eq:cotangent direct sum}
\end{equation}
There's a natural isomorphism
\[
E|_{B}\to\mbox{For}_{B}^{*}E_{-}.
\]
Taking the disjoint union over all $B$ we obtain a cartesian map
of vector bundles 
\begin{equation}
\partial E\xrightarrow{f}E'\mbox{ over }\partial\mbox{\ensuremath{\overline{\mm}}}_{0,\kf,\lf}\to\mbox{\ensuremath{\overline{\mm}}}_{0,\kf,\lf}'\label{eq:descendent bdry forget}
\end{equation}
with $\dim\overline{\mm}'_{0,\kf,\lf}<\dim\partial\overline{\mm}_{0,\kf,\lf}$.
\begin{defn}
An angular form $\theta\in\Omega^{n-1}\left(S\left(E\right)\right)$
will be called \emph{canonical }if there exists some $\theta'\in\Omega^{n-1}\left(S\left(E'\right)\right)$
such that 
\[
\left(i_{\overline{\mm}_{0,k,l}}^{\partial}\right)^{*}\theta=f^{*}\theta'.
\]
\end{defn}
\begin{claim}
\label{claim:canangular form existence}A canonical angular form exists.
\end{claim}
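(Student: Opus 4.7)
The plan is to use the Mathai-Quillen construction of angular forms, which produces an angular form canonically from a Euclidean metric and compatible metric connection on an oriented real vector bundle, in a way that commutes with pullback along cartesian bundle maps and is local with respect to restriction to submanifolds. The advantage of working at the level of $(g,\nabla)$ data is that such pairs form a contractible affine space on any fixed bundle, so partition-of-unity constructions are available; whereas the space of angular forms itself does not admit convex combinations in a straightforward manner.

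First I would pick a Euclidean metric $g'$ and a compatible metric connection $\nabla'$ on $E'$ over $\overline{\mm}'_{0,\kf,\lf}$, and let $\theta' := \theta_{MQ}(g',\nabla') \in \Omega^{n-1}(S(E'))$ denote the corresponding Mathai-Quillen angular form. Next I would pull back along $f$: since $f:\partial E \to E'$ is a cartesian map of oriented real vector bundles, the pair $(g_\partial,\nabla_\partial) := (f^*g', f^*\nabla')$ is a well-defined Euclidean metric and compatible connection on $\partial E$, and by naturality of the Mathai-Quillen construction under bundle pullback, $\theta_{MQ}(g_\partial,\nabla_\partial) = f^*\theta'$.

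The next step is to extend $(g_\partial,\nabla_\partial)$ to a global pair $(g,\nabla)$ on $E$ over all of $\overline{\mm}_{0,\kf,\lf}$. I would fix a collar neighborhood $c: \partial \overline{\mm}_{0,\kf,\lf} \times [0,\epsilon) \hookrightarrow \overline{\mm}_{0,\kf,\lf}$; along this collar, $E$ is canonically identified with the pullback of $E|_\partial$, since the cotangent lines $\mathbb{L}_i$ to the interior marked points deform smoothly in the boundary direction. Extending $(g_\partial,\nabla_\partial)$ to the collar by pullback and patching with any globally defined pair $(g_0,\nabla_0)$ via a bump function $\rho$ equal to $1$ near $\partial \overline{\mm}_{0,\kf,\lf}$ and $0$ outside the collar yields the desired global pair $(g,\nabla)$. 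Setting $\theta := \theta_{MQ}(g,\nabla)$ then gives an angular form on $S(E)$, and by the locality of the Mathai-Quillen formula, $\theta|_{\partial S(E)}$ coincides with $\theta_{MQ}(g_\partial,\nabla_\partial) = f^*\theta'$; thus $\theta$ is canonical.

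The main obstacle is to verify the two naturality properties of the Mathai-Quillen construction underlying this argument: pullback compatibility along the cartesian map $f$, and locality under restriction from $\overline{\mm}_{0,\kf,\lf}$ to its boundary. Both follow from the Berezin integral formula for $\theta_{MQ}(g,\nabla)$, which is built from the connection and curvature by a universal polynomial expression and hence commutes with any bundle-preserving operation. A secondary concern is that $\overline{\mm}_{0,\kf,\lf}$ is an orbifold with corners rather than a smooth manifold; but collars and smooth partitions of unity are available in this category (see the Appendix), so the construction carries over without modification.
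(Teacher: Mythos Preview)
Your use of the Mathai--Quillen construction is a good idea and the naturality properties you cite are correct: an angular form built from $(g,\nabla)$ by a universal formula does pull back along cartesian bundle maps and restrict correctly to submanifolds. The gap is in the extension step. The moduli space $\overline{\mm}_{0,\kf,\lf}$ is a manifold with \emph{corners}, not merely with boundary, and for such spaces a collar embedding $\partial\overline{\mm}_{0,\kf,\lf}\times[0,\epsilon)\hookrightarrow\overline{\mm}_{0,\kf,\lf}$ does not exist: the boundary map $i^{\partial}$ is only an immersion, two-to-one over the codimension-two corners, so distinct boundary faces overlap there and cannot carry disjoint collars.

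The correct extension tool is Lemma~\ref{lem:extend inwards}, but that lemma requires the compatibility $i_1^*(g_\partial,\nabla_\partial)=i_2^*(g_\partial,\nabla_\partial)$ on $\partial^2\overline{\mm}_{0,\kf,\lf}$. With your choice $(g_\partial,\nabla_\partial)=f^*(g',\nabla')$ for an \emph{arbitrary} $(g',\nabla')$ on $E'\to\overline{\mm}'_{0,\kf,\lf}$, this fails: at a codimension-two corner the two compositions $\mathrm{For}_{B}\circ i_1$ and $\mathrm{For}_{B}\circ i_2$ land in different connected components $B_-^{(1)},B_-^{(2)}$ of $\overline{\mm}'_{0,\kf,\lf}$ (each obtained by forgetting a different node marking), and there is no reason for the pullbacks of independently chosen metric data on those components to agree. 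The remedy is to construct the pairs $(g,\nabla)$ on \emph{all} the moduli spaces simultaneously by downward induction on $|\kf|+2|\lf|$, so that each $(g',\nabla')$ already satisfies its own canonical boundary condition; the corner compatibility then follows from the inductive hypothesis. This is exactly the scheme the paper invokes (the omitted proof is said to parallel Proposition~\ref{prop:splitting construction}), so once you add the induction your Mathai--Quillen approach goes through and is essentially equivalent to the paper's.
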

\begin{proof}
This is similar to the proof of Proposition \ref{prop:splitting construction},
and is omitted. The corresponding statement for multisections is proven
in \cite{JRR}.
\end{proof}
\begin{defn}
\label{def:open descendent integrals}Let $k,l$ and $a_{1},...,a_{l}$
be non-negative integers. If (\ref{eq:rank=00003Ddim}) holds, the
\emph{open descendent integral }is defined by 
\begin{equation}
\left\langle \tau_{a_{1}}\cdots\tau_{a_{l}}\sigma^{k}\right\rangle _{0}^{o}=2^{-\frac{k-1}{2}}\int_{\overline{\mm}_{0,\left[k\right],\left[l\right]}}e\label{eq:open descendents def}
\end{equation}
where $e$ is any Euler form associated with a canonical angular form
$\theta\in\Omega^{n-1}\left(S\left(E\right)\right)$. Otherwise, we
set $\left\langle \tau_{a_{1}}\cdots\tau_{a_{l}}\sigma^{k}\right\rangle _{0}^{o}=0$.
\end{defn}
The language of multisections was developed in \cite{branched-man},
and used in \cite{JRR}. The following lemma allows us to translate
results to the language of differential forms.
\begin{lem}
\label{lem:zeros of a multisection}Let $\pi:E\to M$ be an oriented
rank $n$ vector bundle over a compact oriented manifold with corners
of the same dimension $n$. Let $\sigma$ be a multi-valued section
of $E$, transverse to the zero section. Let $\theta$ be an angular
form for $E$ with corresponding Euler form $e$. Let $\#\mathfrak{Z}\left(s\right)$
denote the weighted signed count of the oriented zero set $\mathfrak{Z}\left(s\right)$
of $s$. We have 
\[
\#\mathfrak{Z}\left(s\right)=\int_{M}e-\int_{\partial M}\sigma^{*}\theta
\]
\end{lem}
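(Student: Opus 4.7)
The plan is to reduce the claim to a classical Stokes' theorem argument on the complement of small neighborhoods of the zeros of $s$. Since $s$ is transverse to the zero section and $\rank E = \dim M$, the zero set $\mathfrak{Z}(s)$ consists of finitely many isolated points, which (possibly after a compactly supported perturbation inside $M$) we may assume lie in the interior of $M$. For each zero $p$ choose a small closed ball $\overline{B_\epsilon(p)} \subset \operatorname{int}(M)$, mutually disjoint, and set $M_\epsilon = M \setminus \bigsqcup_{p \in \mathfrak{Z}(s)} B_\epsilon(p)$. After fixing a metric on $E$, the normalized section $s/|s|$ is defined on $M_\epsilon$ and lands in $S(E)$, so $s^*\theta := (s/|s|)^*\theta$ is a smooth form on $M_\epsilon$.

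Using $\pi \circ (s/|s|) = \id$ together with the defining relation $d\theta = -\pi^* e$, we have $d(s^*\theta) = -e|_{M_\epsilon}$. Stokes' theorem on the compact manifold with corners $M_\epsilon$ (whose corner strata coincide with those of $M$) then gives
\[
-\int_{M_\epsilon} e \;=\; \int_{M_\epsilon} d(s^*\theta) \;=\; \int_{\partial M} s^*\theta \;-\; \sum_{p \in \mathfrak{Z}(s)} \int_{S_\epsilon(p)} s^*\theta,
\]
where the sign in front of the sphere integrals reflects the fact that the orientation each $S_\epsilon(p)$ inherits from $\partial M_\epsilon$ is opposite to its standard orientation as $\partial B_\epsilon(p)$. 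Letting $\epsilon \to 0$, the left-hand side tends to $-\int_M e$ (the form $e$ is smooth on all of $M$), while $\int_{\partial M}s^*\theta$ is independent of $\epsilon$ since the zeros are away from $\partial M$.

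The heart of the argument is the local computation $\lim_{\epsilon \to 0} \int_{S_\epsilon(p)} s^*\theta = \operatorname{sign}(p)$, the contribution of $p$ to the signed count $\#\mathfrak{Z}(s)$. In a local oriented trivialization of $E$ near $p$, write $s(x) = Ax + O(|x|^2)$ with $A : T_p M \to E_p$ a linear isomorphism. On $S_\epsilon(p)$ the map $s/|s|$ is $C^0$-close to $x \mapsto Ax/|Ax|$, which has degree $\operatorname{sign}(\det A) = \operatorname{sign}(p)$, and the vertical component of $\theta$ restricts on each fiber to a form of fiber integral $1$ by $\pi_*\theta = 1$; the $M$-horizontal contributions to the pullback vanish in the limit because the base sphere shrinks. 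Combining these limits and rearranging (keeping careful track of the orientation signs) yields the stated formula.

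It remains to pass to multi-valued sections. Locally $s$ is presented as a formal weighted sum $\sum_j w_j s_j$ of smooth branches $s_j$ with weights $w_j \in \mathbb{Q}_{>0}$ summing to $1$, and by definition $\#\mathfrak{Z}(s)$ is the sum of $w_j \cdot \operatorname{sign}_{s_j}(p)$ over zeros $p$ of branches. All operations entering the single-valued argument (exterior derivative, pullback, Stokes, local degree at a zero) are local and $\mathbb{R}$-linear in the branch, so the identity extends branch-by-branch with the weights propagating coherently into both $\int_{\partial M} s^*\theta$ (as a weighted pullback) and $\#\mathfrak{Z}(s)$. The main technical work in this step, and the part that is tedious rather than conceptually hard, is the bookkeeping of weights on overlaps of local presentations of the multi-section, which is handled by the formalism of \cite{branched-man}.
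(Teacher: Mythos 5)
Your argument is correct and is essentially the paper's intended proof: the paper simply cites the proof of \cite[Theorem 11.16]{bott+tu}, which is exactly the excision-plus-Stokes argument with the angular form and the local degree computation that you carry out, adapted to allow a boundary term and weighted branches. Note that stratified transversality already forces the zeros into the interior (a transverse zero on a boundary face would require a surjection from an $(n-1)$-dimensional space onto the rank-$n$ fiber), so your perturbation hedge is unnecessary.
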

\begin{proof}
Similar to the proof of \cite[Theorem 11.16]{bott+tu}.
\end{proof}
\begin{cor}
\label{cor:computation of open descendents}Definition \ref{def:open descendent integrals}
agrees with \cite[Definition 3.4]{JRR}, given in terms of multisections.
In particular, the open descendent integrals are integers uniquely
specified by the following two relations.

(i)
\[
\left\langle \tau_{a_{1}}\cdots\tau_{a_{l}}\sigma^{k}\right\rangle _{0}^{o}=\sum_{j}\left\langle \tau_{a_{j}-1}\prod_{i\neq j}\tau_{a_{i}}\sigma^{k}\right\rangle _{0}^{o}.
\]
(ii) In case $a_{i}\geq1$ for all $i$,
\[
\left\langle \tau_{a_{1}}\cdots\tau_{a_{l}}\sigma^{k}\right\rangle _{0}^{o}=\frac{\left(1+\sum_{i=1}^{l}\left(2a_{i}-1\right)\right)!}{\prod_{i=1}^{l}\left(2a_{i}-1\right)!!},
\]
 where for an odd integer $r$ we set $r!!=r\cdot\left(r-2\right)\cdot...\cdot1$.
\end{cor}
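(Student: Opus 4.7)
The plan is to reduce the statement to the corresponding result for multisections in \cite{JRR} by showing that the integral in (\ref{eq:open descendents def}) equals the weighted signed count of zeros of a generic multisection $\sigma$ of $E$ chosen with a compatible boundary behavior. Once this equivalence is established, both relations (i) and (ii), together with the integrality of the invariants, are imported verbatim from \cite{JRR}.

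The key step is an application of Lemma \ref{lem:zeros of a multisection} to $M=\overline{\mm}_{0,\kf,\lf}$, $E=\bigoplus_i \mathbb{L}_i^{\oplus a_i}$, a canonical angular form $\theta$ with associated Euler form $e$ (existing by Claim \ref{claim:canangular form existence}), and a multisection $\sigma$ to be chosen. I would proceed as follows. First pick a generic multisection $\sigma'$ of $E'\to \overline{\mm}'_{0,\kf,\lf}$ transverse to the zero section; since $\mbox{rank } E'=n > \dim\overline{\mm}'_{0,\kf,\lf}$, the zero set of $\sigma'$ is empty. Using the cartesian description of $f$ in (\ref{eq:descendent bdry forget}), pull back $\sigma'$ to a multisection $f^*\sigma'$ of $\partial E\to \partial \overline{\mm}_{0,\kf,\lf}$. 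Then extend to a multisection $\sigma$ of $E$ over all of $M$ in a standard collar neighborhood, and perturb generically in the interior to achieve transversality to the zero section while keeping the boundary restriction fixed. Lemma \ref{lem:zeros of a multisection} yields
\[
\#\mathfrak{Z}(\sigma)=\int_M e-\int_{\partial M}\sigma^*\theta.
\]

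Next I would show that the boundary integral vanishes, which is where the canonicity of $\theta$ is used. By hypothesis $\left(i^\partial\right)^*\theta=f^*\theta'$ for some $\theta'\in\Omega^{n-1}(S(E'))$, hence
\[
\sigma^*\theta\bigr|_{\partial M}=(f\circ \sigma|_{\partial M})^*\theta'=(\sigma'\circ\For)^*\theta',
\]
where $\For:\partial M\to \overline{\mm}'_{0,\kf,\lf}$ is the boundary forgetful map of (\ref{eq:jrr forgetful}). Since this $(n-1)$-form factors through $\overline{\mm}'_{0,\kf,\lf}$, whose dimension is strictly less than $n-1=\dim\partial M$, it must vanish identically. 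Therefore $\#\mathfrak{Z}(\sigma)=\int_M e$, and multiplying both sides by $2^{-(k-1)/2}$ equates Definition \ref{def:open descendent integrals} with \cite[Definition 3.4]{JRR}. Integrality is immediate from the left-hand side, and the recursions (i) and (ii) are then just the corresponding statements proved in \cite{JRR}.

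The main obstacle is the transversality/extension step in the construction of $\sigma$: one must carry out perturbation theory for multisections on an orbifold with corners while preserving a prescribed boundary behavior, and must verify that the pullback $f^*\sigma'$ is suitable as a starting datum on the collar. The appropriate transversality and extension results for multisections on orbifolds with corners are developed in \cite{branched-man} and used in \cite{JRR}, so this step is technical but standard. A secondary check is the matching of the factor $2^{-(k-1)/2}$ and the orientation conventions with those of \cite{JRR}; this is routine since Lemma \ref{lem:zeros of a multisection} is sign-sensitive but our orientations on $E$ and $M$ are the complex and moduli-space orientations used in \cite{JRR}.
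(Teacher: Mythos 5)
Your proposal is correct and follows essentially the same route as the paper: apply Lemma \ref{lem:zeros of a multisection} to a canonical multisection, observe that the boundary term $\int_{\partial M}\sigma^*\theta$ vanishes because the integrand is pulled back from the lower-dimensional space $B_-$, and quote \cite{JRR} for the relations. The only difference is that you spell out the construction of the canonical multisection (pullback of a generic $\sigma'$ plus collar extension), which the paper delegates to \cite[Definition 3.1]{JRR}.
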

\begin{proof}
By Lemma \ref{lem:zeros of a multisection}, to establish the first
claim it suffices to show that if $\sigma$ is a canonical multisection
(see \cite[Definition 3.1]{JRR}) we have 
\[
\int_{\partial\overline{\mm}_{0,\left[k\right],\left[l\right]}}\sigma^{*}\theta=0.
\]
This is obvious, since for each $B=\overline{\mm}_{0,\kf'\coprod\left\{ \star'\right\} ,\lf'}\times\overline{\mm}_{0,\kf''\coprod\left\{ \star''\right\} ,\lf''}$,
$\left(\sigma^{*}\theta\right)|_{B}$ is pulled back from a lower
dimensional space, namely $B_{-}$. The relations are quoted from
\cite[Theorems 1.2, 1.4]{JRR}.
\end{proof}

\begin{rem}
In \cite{JRR}, the open descendent integrals are shown to satisfy
a plethora of relations. Fixed point localization in $\left(\cc\pp^{2m},\rr\pp^{2m}\right)$
produces many additional relations, involving also the closed invariants.
In fact, the relations from $\left(\cc\pp^{2},\rr\pp^{2}\right)$
can be used to produce a recursive algorithm for computing the open
genus zero descendent integrals, independently of Corollary \ref{cor:computation of open descendents}.
This is discussed in \cite{descendents+loc}.
\end{rem}
\begin{defn}
Let $s_{0},t_{0},t_{1},...$ be formal variables. Write $\gamma=\sum_{i=0}^{\infty}t_{i}\tau_{i}$
and $\delta=s_{0}\sigma$. Define the (open, genus zero) \emph{free
energy function }by 
\begin{equation}
F_{0}^{o}\left(s_{0},t_{0},t_{1},...\right)=\sum_{n=0}^{\infty}\frac{2^{\frac{k-1}{2}}\left\langle \gamma^{n}\delta^{k}\right\rangle _{0}^{o}}{n!k!}.\label{eq:descendent generating func}
\end{equation}
\end{defn}

\section{\label{sec:fp for generalized}Fixed-point Formula for Extended Forms}

In this section we state the fixed-point formula for extended forms,
Theorem \ref{thm:general fixed point formula}. This is more general
and more computationally effective than Theorem \ref{thm:open localization},
but requires more geometric preparation. After stating the theorem
in $\S$\ref{subsec:fp for generalized forms}, we will give explicit
formula for the fixed-point contributions to the equivariant open
Gromov-Witten invariants $\S$\ref{subsec:Equivariant-open-Gromov-Witten},
and compute a few examples $\S$\ref{subsec:Examples-and-applications.}.
We will conclude this section with a proof of Theorem \ref{thm:open localization}
using Theorem \ref{thm:general fixed point formula}. The proof of
Theorem will be taken up in the next sections.

\subsection{\label{subsec:Review-of-Resolutions}Review of resolutions}

We briefly review some notation and construction from \cite[\S 2]{equiv-OGW-invts}.

\subsubsection{Moduli spaces}

We let $\nn=\left\{ 1,2,...\right\} $. For any $S\subset\nn$, $\sstar'_{S}=\left\{ \sstar'_{i}\right\} _{i\in S}$,
similarly for $\sstar''_{S}$ and $\star'_{S},\star''_{S},\star'''_{S},...$ 

Hollow stars will be used to denote markings related to boundary nodes,
and solid stars will be used for interior nodes. 
\begin{defn}
\label{def:moduli specifications}A \emph{pre-moduli specification}
$\mathfrak{b}$ is a 3-tuple $\left(\kf,\lf,\beta\right)$ where 

\begin{itemize}
\item $\kf\subset\nn\coprod\sstar''_{\nn}$ and $\lf\subset\nn$ are finite
subsets. Elements of $\kf$ and of $\lf$ are called \emph{orienting
labels }and \emph{interior labels, }respectively.
\item $\beta$ is a non-negative integer, \emph{the degree}, which we think
of as an element of $H_{2}\left(X,L\right)$.
\end{itemize}
A \emph{basic moduli specification }is a pre-moduli specification
$\mathfrak{b}=\left(\kf,\lf,\beta\right)$ that is 

\begin{itemize}
\item \emph{stable,} meaning $k+2l+3\beta\geq3$; henceforth we use standard
Roman letters to denote the sizes of sets labeled by the corresponding
Serif letters, so $k=\left|\kf\right|$ and $l=\left|\lf\right|$.
\item \emph{orientable}, meaning 
\begin{equation}
k+\beta=1\mod2.\label{eq:orientability of moduli specification}
\end{equation}
\end{itemize}
A \emph{moduli specification }$\sfr$ is a pair $\sfr=\left(\mathfrak{b},\sigma\right)$
where $\mathfrak{b}=\left(\kf,\lf,\beta\right)$ is an orientable
pre-moduli specification and $\sigma\subset\sstar'_{\nn}$ is a finite
subset such that $k+\left|\sigma\right|+2l+3\beta\geq3$. We call
$\sigma$ the \emph{superfluous (boundary) labels,} and $\tilde{\kf}=\kf\coprod\sigma$
the \emph{boundary labels}. 

A\emph{ }moduli specification\emph{ }$\sfr=\left(\mathfrak{b},\sigma\right)$
is called \emph{sturdy }if $\mathfrak{b}$ is stable and \emph{wobbly
}otherwise.
\end{defn}
Let $\sfr=\left(\mathfrak{b},\sigma\right)$ be a moduli specification.
If $\sfr$ is sturdy, then $\mathfrak{b}$ is a basic moduli specification;
if it is wobbly, it is necessarily of the form 
\begin{equation}
\left(\left(\kf,\emptyset,0\right),\sigma\right)\text{ with \ensuremath{\left|\kf\right|}=1 and \ensuremath{\left|\sigma\right|\geq}2.}\label{eq:wobbly spec}
\end{equation}
Either way, the \emph{combined moduli specification }$\tilde{\sfr}:=\left(\tilde{\kf},\lf,\beta\right)$
is a basic moduli specification.

A 3-tuple of non-negative integers $b=\left(k,l,\beta\right)$ with
$k+2l+3\beta\geq3$ and $k+\beta=1\mod2$ may be used in place of
a basic moduli specification, taking $\basic=\left(\left[k\right],\left[l\right],\beta\right)$
where we denote $\left[k\right]=\left\{ 1,2,...,k\right\} $.

Let $\tb=U\left(1\right)^{m}$ and $\tb_{\basic}=\tb\times\Sym\left(\kf\right)\times\Sym\left(\lf\right)$.
If $\basic$ is a basic moduli specification, the moduli space of
stable disc maps

\[
\mm_{\basic}=\overline{\mm}_{0,\kf,\lf}\left(\beta\right)
\]
is a $\tb_{\basic}$-orbifold with corners. See the appendix for what
we mean by this and related notions. Henceforth, all orbifolds will
have corners unless explicitly mentioned otherwise. 

The construction of $\mm_{\basic}$ is summarized in the following
diagram (see \cite[\S 2.1]{equiv-OGW-invts}): 
\begin{equation}
\mm_{\basic}\overset{s}{\hookrightarrow}\mm_{\basic}^{::}\xrightarrow{o}\widetilde{\mm}_{\lf^{\cc},\beta}^{\zz/2}\xrightarrow{B}\mm_{\lf^{\cc},\beta}^{\zz/2}\xrightarrow{i}\mm_{\lf^{\cc},\beta}.\label{eq:open-closed relation}
\end{equation}
Here $\mm_{\lf^{\cc},\beta}=\overline{\mm}_{0,k+2l}\left(X,\beta\right)$
is the moduli space of stable genus zero closed curves marked by $\lf^{\cc}:=\kf\coprod\left(\lf\times\left\{ 1,2\right\} \right)$,
and $\mm_{\lf^{\cc},\beta}^{\zz/2}$ denotes its homotopy fixed points
under an antiholomorphic involution which conjugates the map, fixes
$\kf$ and swaps $\lf\times\left\{ 1\right\} $ and $\lf\times\left\{ 2\right\} $.
$B$ is a kind of blowup, which replaces a neighbourhood of each point
$p$ of a real simple normal crossings divisor ${\mathcal{W}\overset{D}{\hookrightarrow}\mm_{\lf^{\cc},\beta}^{\zz/2}}$
by a $2^{c}$ orthants, where $c=\left|D^{-1}\left(p\right)\right|$.
$o$ is a 2-cover corresponding (over interior points) to a choice
of orientation for the boundary of the domain disc, and $s$ is a
clopen component forming a section of the quotient map which identifies
$\left(x,1\right)\sim\left(x,2\right)$ for $x\in\lf$. $\tb_{\basic}$
acts on $\mm_{\basic}$ by translating maps and permuting labels.
This close relationship between the open and closed moduli spaces
turns out to be useful for computations and to avoid some technical
obstacles in the construction of orbifold tubular neighborhoods for
the $\tb$-action fixed points. 

For a sturdy moduli specification $\sfr=\left(\basic,\sigma\right)=\left(\left(\kf,\lf,\beta\right),\sigma\right)$
and $S\subset\nn$ we set

\[
\acute{\mm}_{\sfr}^{S}=\mm_{\left(\kf\coprod\left(\sigma\cap\sstar'_{S}\right),\lf,\beta\right)}
\]
with $\mm_{\sfr}=\acute{\mm}_{\sfr}^{\nn}$ and $\check{\mm}_{\sfr}=\acute{\mm}_{\sfr}^{\emptyset}$.
We denote by $\For_{\sfr}:\mm_{\sfr}\to\check{\mm}_{\sfr}$ the map
that forgets the superfluous markings $\sigma$. It is a b-fibration
(see the appendix). For any $S\subset\nn$ we have a decomposition
\[
\mm_{\sfr}\xrightarrow{\acute{\For}_{\sfr}^{S}}\acute{\mm}_{\sfr}^{S}\xrightarrow{\grave{\For}_{\sfr}^{S}}\check{\mm}_{\sfr}
\]
where $\acute{\For}_{\sfr}^{S}$ (respectively, $\grave{\For}_{\sfr}^{S}$)
is the map that forgets the markings $\sigma\backslash\sstar'_{S}$
(resp. $\sigma\cap\sstar'_{S}$).

We have evaluation maps 
\[
\evi_{x}^{\sfr}:\mm_{\sfr}\to X\text{ for }x\in\lf\text{ and}
\]
\[
\ev_{x}^{\sfr}:\mm_{\sfr}\to L\text{ for }x\in\kf\coprod\sigma.
\]

\subsubsection{\label{subsec:Resolutions}Resolutions}

Let $\basic=\left(\kf,\lf,\beta\right)$ be a basic moduli specification.
Let $\tb_{\basic}^{r}=\tb_{\basic}\times\Sym\left(r\right)$, the
product of a torus group with the group of permutations of $r$ elements.
Our approach for defining invariants and proving the fixed-point
formula involves the construction of \emph{resolutions, }which are
sequences of $\tb_{\basic}^{r}$-orbifolds $\mm_{\basic}^{r},\check{\mm}_{\basic}^{r}$
defined for $r=0,1,...$ with 
\[
\mm_{\basic}^{0}=\check{\mm}_{\basic}^{0}=\mm_{\basic}
\]
and $\mm_{\basic}^{r}=\check{\mm}_{\basic}^{r}=\emptyset$ for sufficiently
large $r$. The idea is simple. Recall we have 
\begin{equation}
\partial\mm_{\basic}=\coprod\mm_{\sfr'}\underset{\ev{}_{\sstar'_{1}}^{\sfr'}\qquad\ev{}_{\sstar''_{1}}^{\sfr''}}{\times}\mm_{\sfr''}\label{eq:boundary decomposition}
\end{equation}
where the disjoint union is taken over all pairs of moduli specifications
$\sfr'=\left(\left(\kf',\lf',\beta'\right),\sigma'=\left\{ \sstar'_{1}\right\} \right)$
and $\sfr''=\left(\left(\kf''=\hat{\kf}''\coprod\sstar''_{1},\lf'',\beta''\right),\sigma''=\emptyset\right)$
for
\begin{gather}
\kf=\kf'\coprod\hat{\kf}'',\;\lf=\lf'\coprod\lf'',\;\mbox{and }\mbox{\ensuremath{\beta}=\ensuremath{\beta}'+\ensuremath{\beta}''},\label{eq:decomposing moduli specifications}
\end{gather}
and where $\sstar'_{1},\sstar''_{1}$ denote two new boundary markings
(representing the special boundary points identified by the node).
Note how (\ref{eq:orientability of moduli specification}) specifies
the order of the fiber factors.

We take $\mm_{\basic}^{1}=\coprod\mm_{\sfr'}\times\mm_{\sfr''}$,
replacing the fiber products in (\ref{eq:boundary decomposition})
by products. This amounts to allowing the two discs in the nodal configuration
to move independently of each other. The space $\check{\mm}_{\basic}^{1}$
is obtained from $\mm_{\basic}^{1}$ by forgetting the superfluous
marking $\sstar'_{1}\in\sigma'$. We denote by $\For_{\basic}^{1}:\mm_{\basic}^{1}\to\check{\mm}_{\basic}^{1}$
the corresponding forgetful map. The space $\mm_{\basic}^{2}$ is
obtained in a similar fashion, replacing fiber products with products
for the clopen component 
\[
\partial_{-}\mm_{\basic}^{1}\subset\partial\mm_{\basic}^{1}
\]
lying over $\partial\check{\mm}_{\basic}^{1}$. We will see that the
complementary part of the boundary, $\partial_{+}\mm_{\basic}^{1}$,
is insignificant in the sense that it admits an orientation-reversing
involution $\tau_{\basic}^{1}:\partial_{+}\mm_{\basic}^{1}\to\partial_{+}\mm_{\basic}^{1}$.
More generally, $\mm_{\basic}^{r}$ is a disjoint union of products
of spaces modeled on labeled trees, and $\check{\mm}_{\basic}^{r}$
is obtained by forgetting $\sstar'_{\left[r\right]}$. The precise
statement is as follows (cf. \cite[Lemma 17]{equiv-OGW-invts}).
\begin{defn}
\label{def:labeled tree}A \emph{$\left(\basic,r\right)$-labeled
tree $\tc$} is a tree with oriented labeled edges $\tc_{1}=\left\{ e_{1},...,e_{r}\right\} $,
labeled vertices $\tc_{0}=\left\{ v_{1},...,v_{r+1}\right\} $ and
a map $\sfr_{\tc}$ which assigns to each $v_{i}$ a sturdy moduli
specification 
\[
\sfr_{\tc}\left(v_{i}\right)=\left(\left(\kf_{\tc}\left(v_{i}\right),\lf_{\tc}\left(v_{i}\right),\beta_{\tc}\left(v_{i}\right)\right),\sigma_{\tc}\left(v_{i}\right)\right).
\]
These are required to satisfy the following properties. 

(a) the head (respectively, the tail) of the edge $e_{j}\in\tc_{1}$
is the vertex $v_{i}$ if and only if $\sstar''_{j}\in\kf_{\tc}\left(v_{i}\right)$
(resp., iff $\sstar'_{j}\in\sigma_{\tc}\left(v_{i}\right)$).

(b) we have
\[
\coprod_{i=1}^{r+1}\kf_{\tc}\left(v_{i}\right)=\kf\coprod\sstar''_{\left[r\right]},\;\coprod_{i=1}^{r+1}\lf_{\tc}\left(v_{i}\right)=\lf,\;\sum_{i=1}^{r+1}\beta_{\tc}\left(v_{i}\right)=\beta,\text{ and }\coprod_{i=1}^{r+1}\sigma_{\tc}\left(v_{i}\right)=\sigma\coprod\sstar'_{\left[r\right]}
\]

(c) Let $1\leq a\leq r$. Remove the edges $\left\{ e_{a},e_{a+1}...,e_{r}\right\} $
from $\tc$, and let $\tc',\tc''$ denote the connected components
of the resultant forest that are joined by $e_{a}$. Then if $v_{i}$
is a vertex of $\tc'$ and $v_{j}$ is a vertex of $\tc''$ then $i<j$.

We denote the set of $\left(\basic,r\right)$-labeled trees by $\ts_{\basic}^{r}$.
\end{defn}
For $S\subset\nn$ we define spaces

\[
\acute{\mm}_{\basic}^{r,S}=\coprod_{\tc\in\ts_{\basic}^{r}}\acute{\mm}_{\tc}^{S}\text{ for }\acute{\mm}_{\tc}^{S}=\prod_{i=1}^{r+1}\acute{\mm}_{\sfr_{\tc}\left(v_{i}\right)}^{S},
\]
with $\mm_{\basic}^{r}:=\acute{\mm}_{\basic}^{r,\nn}$ and $\check{\mm}_{\basic}^{r}:=\acute{\mm}_{\basic}^{r,\emptyset}$.
Note that the products are ordered according to the vertex labels. 

We define a b-fibration
\[
\For_{\basic}^{r}:\mm_{\basic}^{r}\to\check{\mm}_{\basic}^{r}
\]
by setting $\For_{\basic}^{r}:=\coprod_{\tc\in\ts_{\basic}^{r}}\prod_{i=1}^{r+1}\For_{\sfr_{\tc}\left(v_{i}\right)}$.
This defines a decomposition of the boundary 
\[
\partial\mm_{\basic}^{r}=\partial_{+}\mm_{\basic}^{r}\coprod\partial_{-}\mm_{\basic}^{r}
\]
into ``horizontal'' and ``vertical'' components, respectively,
with induced maps 
\[
\left(\For_{\basic}^{r}\right)_{-}:\partial_{-}\mm_{\basic}^{r}\to\partial\check{\mm}_{\basic}^{r}\text{ and}
\]
\[
\left(\For_{\basic}^{r}\right)_{+}:\partial_{+}\mm_{\basic}^{r}\to\check{\mm}_{\basic}^{r}
\]
(see \cite[\S 2.2.1]{equiv-OGW-invts}). For any $S\subset\nn$ we
have a factorization
\[
\For_{\basic}^{r}=\grave{\For}_{\basic}^{r,S}\circ\acute{\For}_{\basic}^{r,S}
\]
through b-fibrations 
\[
\acute{\For}{}_{\basic}^{r,S}=\coprod_{\tc\in\ts_{\basic}^{r}}\prod_{i=1}^{r+1}\acute{\For}{}_{\sfr_{\tc}\left(v_{i}\right)}^{S},\;\grave{\For}{}_{\basic}^{r,S}=\coprod_{\tc\in\ts_{\basic}^{r}}\prod_{i=1}^{r+1}\grave{\For}{}_{\sfr_{\tc}\left(v_{i}\right)}^{S}.
\]
We denote by $\check{\pi}_{\basic}^{r}:\check{\mm}_{\basic}^{r}\to\ts_{\basic}^{r}$
the locally constant map which sends $\check{\mm}_{\tc}$ to $\tc$.
$\Sym\left(r\right)=\Sym\left(\left[r\right]\right)$ acts on $\ts_{\basic}^{r}$
by relabeling the edges $e_{i}$. This action lifts to $\mm_{\basic}^{r}$
by relabeling the special points $\sstar'_{\left[r\right]}\coprod\sstar''_{\left[r\right]}$
accordingly, and then relabeling the vertices so that condition (c)
is preserved. $\Sym\left(r\right)$ also acts on $\check{\mm}_{\basic}^{r}$
making the maps $\For_{\basic}^{r}$ and $\check{\pi}_{\basic}^{r}$
$\Sym\left(r\right)$-equivariant. The $\Sym\left(r\right)$ action
commutes with the $\tb_{\basic}$ action and makes all the maps $\tb_{\basic}^{r}$-equivariant.
We define an action of $\Sym\left(\left[r\right]\cap S\right)\times\Sym\left(\left[r\right]\backslash S\right)$
on $\acute{\mm}_{\basic}^{r,S}$ in a similar fashion, and this makes
the maps into and out of $\acute{\mm}_{\basic}^{r,S}$ ${\left(\Sym\left(\left[r\right]\cap S\right)\times\Sym\left(\left[r\right]\backslash S\right)<\Sym\left(r\right)\right)}$-equivariant.

\subsubsection{Additional structures and properties}

We have evaluation maps
\[
\acute{\ev}_{x}^{\basic,r,S}:\acute{\mm}_{\basic}^{r,S}\to L\text{ for }x\in\kf\coprod\sstar''_{\left[r\right]}\coprod\sstar'_{\left[r\right]\cap S}
\]
and
\[
\acute{\evi}_{x}^{\basic,r,S}:\acute{\mm}_{\basic}^{r,S}\to X\text{ for }x\in\lf.
\]
We set $\ev_{x}^{\basic,r}=\acute{\ev}_{x}^{\basic,r,\nn}$ and $\check{\ev}_{x}^{\basic,r}=\acute{\ev}_{x}^{\basic,r,\emptyset}$
and similarly for the interior evaluation maps, $\evi$.We construct
local system maps
\[
\ff_{\basic}^{r}:\Or\left(T\mm_{\basic}^{r}\right)\to\Or\left(T\check{\mm}_{\basic}^{r}\right)
\]
 and 
\begin{equation}
\check{\mathcal{J}}_{\basic}^{r}:\Or\left(T\check{\mm}_{\basic}^{r}\right)\to\Or\left(TL\right)^{\boxtimes\left(k+r\right)}\label{eq:check J}
\end{equation}
 lying over $\For_{\basic}^{r}$ and ${\prod_{x\in\kf\coprod\left(\sstar''_{i}\right)_{i=1}^{r}}\check{\ev}{}_{x}^{\basic,r}:\check{\mm}_{\basic}^{r}\to L^{k+r}}$,
respectively. We set 
\[
\jc_{\basic}^{r}:=\check{\jc}_{\basic}^{r}\circ\ff_{\basic}^{r}:\Or\left(T\mm_{\basic}^{r}\right)\to\Or\left(TL\right)^{\boxtimes\left(k+r\right)}.
\]

Writing
\[
\ed_{i}^{\basic,r}:=\ev{}_{\sstar_{i}'}^{\basic,i}\times\ev{}_{\sstar_{i}''}^{\basic,r}:\mm_{\basic}^{r}\to L\times L,
\]
there's a cartesian square 
\begin{equation}
\xymatrix{\partial_{-}\mm_{\basic}^{r}\ar[d]\ar[r]^{g_{\basic}^{r+1}} & \mm_{\basic}^{r+1}\ar[d]^{\ed_{r+1}^{\basic,r+1}}\\
L\ar[r]_{\Delta_{L}} & L\times L
}
.\label{eq:g defining square}
\end{equation}
which induces a local system map 
\[
\mathcal{G}_{\basic}^{r+1}:\Or\left(T\partial_{-}\mm_{\basic}^{r}\right)\to\Or\left(T\mm_{\basic}^{r+1}\right)\otimes\left(\ev{}_{\sstar''_{r+1}}^{\basic,r+1}\right)^{-1}\Or\left(TL\right)
\]
lying over $g_{\basic}^{r+1}$. Let 
\[
{\left(\jc_{\basic}^{r+1}\right)^{\to}:\mm_{\basic}^{r+1}\otimes\left(ev_{\sstar''_{r+1}}^{\basic,r+1}\right)^{-1}\Or\left(TL\right)^{\vee}\to\Or\left(TL\right)^{\boxtimes\left(k+r\right)}}
\]
be the local system map derived from ${\mathcal{J}_{\basic}^{r+1}:\mm_{\basic}^{r+1}\to Or\left(TL\right)^{\boxtimes\left(k+\left(r+1\right)\right)}}$.
$\jc_{\basic}^{r}$ is \emph{coherent,} in the sense that 
\begin{equation}
\left(\jc_{\basic}^{r+1}\right)^{\to}\circ\mathcal{G}_{\basic}^{r+1}=\mathcal{J}_{\basic}^{r}\circ\iota_{\mm_{\basic}^{r}}^{\partial}|_{\partial_{-}\mm_{\basic}^{r}}.\label{eq:J_b^r coherence}
\end{equation}
where 
\[
\iota_{\mm_{\basic}^{r}}^{\partial}:\Or\left(T\partial\mm_{\basic}^{r}\right)\to\Or\left(T\mm_{\basic}^{r}\right)
\]
is the local system map defined by the outward normal orientation
convention.

We have a $\tb_{\basic}^{r}$-equivariant involution $\tau_{\basic}^{r}:\partial_{+}\mm_{\basic}^{r}\to\partial_{+}\mm_{\basic}^{r}$
which reverses the $\jc_{\basic}^{r}$-orientation

\begin{equation}
\mathcal{J}_{\basic}^{r}\circ\left(\iota_{\mm_{\basic}^{r}}^{\partial}|_{\partial_{+}\mm_{\basic}^{r}}\right)\circ\Or\left(d\tau_{\basic}^{r}\right)=\left(-1\right)\mathcal{J}_{\basic}^{r}\circ\left(\iota_{\mm_{\basic}^{r}}^{\partial}|_{\partial_{+}\mm_{\basic}^{r}}\right)\label{eq:or rev involution}
\end{equation}
and commutes with $\For_{\basic}^{r}$. To construct it, write
\[
\partial_{+}\mm_{\basic}^{r}=\coprod_{\tc\in\ts_{\basic}^{r}}\coprod_{i=1}^{r+1}\left(\left(\prod_{j=1}^{i-1}\mm_{\sfr_{\tc}\left(v_{i}\right)}\right)\times\partial_{+}\mm_{\sfr_{\tc}\left(v_{i}\right)}\times\left(\prod_{j=i+1}^{r+1}\mm_{\sfr_{\tc}\left(v_{i}\right)}\right)\right)
\]
with 
\[
\partial_{+}\mm_{\left(\kf,\lf,\beta\right),\sigma}=\coprod\mm_{\underbrace{\left(\left(\kf',\lf',\beta'\right),\hat{\sigma}'\coprod\sstar'_{r+1}\right)}_{\sfr'}}\times_{L}\mm_{\underbrace{\left(\left(\hat{\kf}''\coprod\sstar''_{r+1},\lf'',\beta''\right),\sigma''\right)}_{\sfr''}}
\]
where the disjoint union ranges over all partitions (\ref{eq:decomposing moduli specifications})
and $\sigma=\hat{\sigma}'\coprod\sigma''$, such that $\sfr_{0}=\left(\left(\kf_{0},\emptyset,0\right),\sigma_{0}\right)$
is wobbly (see (\ref{eq:wobbly spec})) for $\sfr_{0}=\sfr'$ or $\sfr_{0}=\sfr''$,
but not both. We let $\tau_{\basic}^{r}$ act by relabeling the last
two elements of $\sigma_{0}$.

The local system maps $\ff_{\basic}^{r}$, $\check{\jc}_{\basic}^{r}$
and $\mathcal{J}_{\basic}^{r}$ are $\tb_{\basic}$-equivariant. If
we let $\tau.$ denote the action of $\tau\in\Sym\left(r\right)<\tb_{\basic}^{r}$
then

\begin{align}
\jc_{\sfr}^{\rho}\circ\Or\left(d\tau.\right) & =\sgn\left(\tau\right)\cdot\jc_{\sfr}^{\rho}\label{eq:J_s^rho equivariance}\\
\check{\jc}_{\sfr}^{\rho}\circ\Or\left(d\tau.\right) & =\sgn\left(\tau\right)\cdot\check{\jc}_{\sfr}^{\rho}\label{eq:check J equivariance}\\
\ff_{\sfr}^{\rho}\circ\Or\left(d\tau.\right) & =\Or\left(d\tau.\right)\circ\ff_{\sfr}^{\rho}\label{eq:ff equivariance}
\end{align}
where $\operatorname{sgn}\left(\tau\right)\in\left\{ \pm1\right\} $
is the sign of $\tau$.

\subsubsection{\label{subsec:Extended forms and integ}Extended forms and integration.}

For each $\tc_{+}\in\ts_{\basic}^{r+1}$ there's a unique $\tc\in\ts_{\basic}^{r}$
such that 
\[
\left(g_{\basic}^{r+1}\right)^{-1}\left(\mm_{\tc_{+}}\right)=:\partial^{\tc_{+}}\mm_{\tc}\subset\partial\mm_{\tc}
\]
this defines a map $\cnt_{\basic}^{r+1}:\ts_{\basic}^{r+1}\to\ts_{\basic}^{r}$,
which one may think of as contracting $e_{r+1}$ to some vertex $v_{i}\in\tc_{0}$.
We have 
\[
\partial_{-}\mm_{\tc}=\coprod_{\left\{ \tc_{+}|\cnt_{\basic}^{r+1}\left(\tc_{+}\right)=\tc\right\} }\partial^{\tc_{+}}\mm_{\tc}.
\]
Consider some $S\subset\left[r\right]$. We set
\[
\partial^{\tc_{+}}\mm_{\tc}^{S}=\left(\acute{\For}_{\basic}^{r,S}\right)_{-}\left(\partial^{\tc_{+}}\mm_{\tc}\right)\subset\partial\mm_{\tc}^{S}.
\]
Note that if $\tc_{1},\tc_{2}\in\ts_{\basic}^{r+1}$ differ by moving
$\sstar'_{i}$, $i\not\in S$, from the head of $e_{r+1}$ to its
tail, or vice-versa, then $\partial^{\tc_{1}}\mm_{\tc}^{S}=\partial^{\tc_{2}}\mm_{\tc}^{S}$.

Generalizing (\ref{eq:g defining square}), we find that there's a
map 
\[
\acute{g}_{\tc_{+}}^{S}:\partial^{\tc_{+}}\mm_{\tc}^{S}\to\acute{\mm}_{\tc_{+}}^{S\coprod\left\{ r+1\right\} }
\]
sitting in a cartesian square 
\[
\xymatrix{\partial^{\tc_{+}}\mm_{\tc}\ar[d]_{\left(\acute{\For}_{\basic}^{r,S}\right)_{-}}\ar[r]^{g_{\basic}^{r+1}} & \mm_{\tc_{+}}\ar[d]^{\acute{\For}_{\basic}^{r+1,S\coprod\left\{ r+1\right\} }}\\
\partial^{\tc_{+}}\check{\mm}_{\tc}\ar[r]_{\acute{g}_{\tc_{+}}^{S}} & \acute{\mm}_{\tc_{+}}^{S\coprod\left\{ r+1\right\} }
}
.
\]
We define $\check{g}_{\tc_{+}}:\partial^{\tc_{+}}\check{\mm}_{\tc}\to\check{\mm}_{\tc_{+}}$
by $\check{g}_{\tc_{+}}=\grave{\For}_{\basic}^{r+1,\left\{ r+1\right\} }\circ\acute{g}_{\tc_{+}}^{\emptyset}$. 
\begin{defn}
An \emph{extended form} $\omega$ for $\basic$ is a sequence 

\[
\omega=\left\{ \check{\omega}_{r}\in\Omega\left(\check{\mm}_{\basic}^{r};\check{\mathcal{E}}_{\basic}^{r}\otimes_{\zz}\rr\left[\vect\lambda\right]\right)^{\tb}\right\} _{r\geq0}\mbox{ for }\check{\mathcal{E}}_{\basic}^{r}:=\bigotimes_{x\in\kf}\left(\check{\ev}{}_{i}^{\basic,r}\right)^{*}\mbox{Or}\left(TL\right),
\]
satisfying the following two conditions: 

(coherence) for all $\tc_{+}\in\ts_{\basic}^{r+1}$ we have
\begin{equation}
\left(i_{\check{\mm}_{\tc}}^{\partial^{\tc_{+}}}\right)^{*}\check{\omega}_{r}=\left(\check{g}_{\tc_{+}}\right)^{*}\check{\omega}_{r+1}\label{eq:check omega coherence}
\end{equation}

(symmetry) $\check{\omega}_{r}$ is $Sym\left(r\right)$-invariant.
\end{defn}
The set of extended forms is a complex, $\left(\Omega_{b},D\right)$,
with $D$ acting level-wise by the Cartan-Weil differential $D=d-\sum_{j=1}^{m}\lambda_{j}\iota_{\xi_{j}}$,
where for $1\leq j\leq m$, $\iota_{\xi_{j}}$ denotes contraction
with $\xi_{j}$, the image under the action map of the generator $\sqrt{-1}u_{j}\frac{\partial}{\partial u_{j}}$
of the lie algebra of $U\left(1\right)^{m}$.

We construct a blow up of $\mm_{\basic}^{r}$,
\[
\widetilde{\mm}_{\basic}^{r}\xrightarrow{\bu_{\basic}^{r}}\mm_{\basic}^{r},
\]
as follows. Let 
\begin{equation}
N_{\Delta}\xrightarrow{\gamma_{\Delta}}L\times L\label{eq:diagonal tube}
\end{equation}
be a tubular neighborhood\footnote{in terms of Definition \ref{def:tubular ngbhd}, we're taking $j=\id$.}
for the diagonal $L\xrightarrow{\Delta}L\times L$.

We use this to construct the blow up of $L\times L$ at the diagonal,
\[
\widetilde{L\times L}:=S\left(N_{\Delta}\right)\times[0,\infty)\bigcup\left(L\times L\backslash\Delta\right)\xrightarrow{\bu_{\Delta}}L\times L.
\]
The map 
\[
\ed_{\basic}^{r}:=\ed_{1}^{\basic,r}\times\cdots\times\ed_{r}^{\basic,r}:\mm_{\basic}^{r}\to\left(L\times L\right)^{r}
\]
\[
\ed_{i}^{\basic,r}:=\ev_{\sstar'_{i}}^{\basic,r}\times\ev_{\sstar''_{i}}^{\basic,r}
\]
 is transverse to $\Delta^{r}\subset\left(L\times L\right)^{r}$,
and we define $\widetilde{\mm}_{\basic}^{r}$ by the cartesian square
\[
\xymatrix{\widetilde{\mm}_{\basic}^{r}\ar[r]^{\bu_{\basic}^{r}}\ar[d]_{\widetilde{\ed}_{\basic}^{r}} & \mm_{\basic}^{r}\ar[d]^{\ed_{\basic}^{r}}\\
\left(\widetilde{L\times L}\right)^{r}\ar[r]_{\bu_{\Delta}^{\times r}} & \left(L\times L\right)^{r}
}
.
\]
$\mathcal{J}_{\basic}^{r}$ induces a local system map
\[
\widetilde{\mathcal{J}}_{\basic}^{r}:Or\left(T\widetilde{\mm}_{\basic}^{r}\right)\to\Or\left(TL\right)^{\boxtimes\left(\kf\coprod\sstar''_{\left[r\right]}\right)}.
\]
We use this to define an $\rr\left[\vec{\lambda}\right]$-linear,
$\Sym\left(k\right)\times\Sym\left(l\right)$-invariant integration
map 
\[
\int_{\basic}:\Omega_{\basic}\to\rr\left[\vec{\lambda}\right],
\]
as the finite sum of integrals
\begin{equation}
\int_{\basic}\omega=\sum_{r\geq0}\frac{1}{r!}\int_{\widetilde{\mm}_{\basic}^{r}}\left(\For_{\basic}^{r}\circ\bu_{\basic}^{r}\right)^{*}\check{\omega}_{r}\cdot\left(\widetilde{\ed}_{\basic}^{r}\right)^{*}\Lambda^{\boxtimes r}.\label{eq:extended integral}
\end{equation}
Here $\Lambda\in\Omega\left(\widetilde{L\times L};\tilde{\mbox{pr}}_{2}^{*}\left(Or\left(TL\right)\right)\otimes_{\zz}\rr\left[\vec{\lambda}\right]\right)^{\tb}$
is an \emph{equivariant homotopy kernel,}
\[
\Lambda=\sigma\left(\mbox{pr}_{S\left(N_{\Delta}\right)}^{\widetilde{N}_{\Delta}}\right)^{*}\theta_{\Delta}+\bu_{\Delta}^{*}\Upsilon
\]
for $\theta_{\Delta}$ an equivariant angular form\footnote{We review the definition above Theorem \ref{thm:general fixed point formula}.}
for $S\left(N_{\Delta}\right)$, $\sigma:[0,\infty)\to[0,1]$ a smooth,
compactly supported cutoff function with $\sigma\left(0\right)=+1$,
and $\Upsilon$ chosen so that 
\begin{equation}
D\Lambda=-\widetilde{\pr}_{2}^{*}p_{0}^{L}\label{eq:D Lambda}
\end{equation}
for 
\[
p_{0}^{L}\in\Omega\left(L;\Or\left(TL\right)\otimes_{\zz}\rr\left[\vec{\lambda}\right]\right)^{\tb}
\]
a form representing the equivariant Poincare dual to $p_{0}$ in $L$.
Note we have 
\begin{equation}
\left(i_{\widetilde{L\times L}}^{\partial}\right)^{*}\Lambda=\theta_{\Delta}+\left(\pi_{\Delta}^{S\left(N_{\Delta}\right)}\right)^{*}\Upsilon|_{\Delta}.\label{eq:Lambda asymptotics}
\end{equation}

The $r=0$ summand in (\ref{eq:extended integral}) is just

\[
\int_{\overline{\mm}_{0,k,l}\left(X,L,\beta\right)}\omega_{0};
\]
one may think of the summands for $r\geq1$ as corrections accounting
for the the boundary and corners of $\overline{\mm}_{0,k,l}\left(X,L,\beta\right)$.

A key property of the integral is that it satisfies Stokes' theorem:

\begin{equation}
\int_{\basic}D\upsilon=0\label{eq:stokes}
\end{equation}
for any $\upsilon\in\Omega_{\basic}$.

\subsubsection{\label{subsec:Sorted-Odd-Even-Trees}Sorted odd-even trees}

\begin{defn}
(a) A labeled tree $\tc\in\ts_{\basic}^{r}$ will be called an\emph{
odd-even tree }if all the moduli specifications $\left(\left(\kf,\lf,\beta\right),\sigma\right)=\sfr_{\tc}\left(v\right)$
for a vertex $v\in\tc_{0}$ satisfy the following condition: if $\beta=0\mod2$
then $\sigma=\emptyset$ and if $\beta=1\mod2$ then $\kf=\emptyset$
(note in particular this means the tree is bipartite with respect
to the partition into odd degree and even degree vertices, which explains
the name). We denote the set of odd-even trees by $\ts_{\basic}^{r,0}$.

(b) An odd-even tree $\tc\in\ts_{\basic}^{r}$ will be called \emph{sorted
}if the graph spanned by the edges $1,...,a$ is connected for every
$1\leq a\leq r$, and such that if ${\left\{ \sstar'_{i},\sstar'_{j}\right\} \subset\sigma_{\tc}\left(v\right)}$
for some $i<j$ and $v\in\tc_{0}$ then $\sstar'_{a}\in\sigma_{\tc}\left(v\right)$
for all $i\leq a\leq j$.
\end{defn}
\begin{prop}
\label{prop:sorted is good}Let $\basic=\left(\kf,\lf,\beta\right)$
be a basic moduli specification. 

(a) For every $\tc\in\ts_{\basic}^{r,0}$ there exists at least one
$\tau\in\Sym\left(r\right)$ such that $\tau.\tc$ is sorted.

(b) For a sorted odd-even\textup{ $\tc\in\ts_{\basic}^{r,0}$ we have}
\begin{align*}
\check{\jc}_{\basic}^{r}|_{\mm_{\tc}} & =\left(-1\right)^{\left(1+m\right)\cdot\binom{o}{2}}\left(\prod\check{\jc}\right){}_{\basic}^{r}\\
\ff_{\basic}^{r}|_{\mm_{\tc}} & =\left(\prod\ff\right)_{\basic}^{r}
\end{align*}
where $o$ is the number of odd vertices, $\left(\prod\check{\jc}\right){}_{\basic}^{r}|_{\check{\mm}_{\tc}}$
is the composition
\[
\Or\left(T\left(\bigtimes_{i=1}^{r+1}\check{\mm}_{\sfr_{\tc}\left(v_{i}\right)}\right)\right)\simeq\bigotimes_{i=1}^{r+1}\Or\left(T\check{\mm}_{\sfr_{\tc}\left(v_{i}\right)}\right)\xrightarrow{\bigotimes_{i=1}^{r+1}\check{\jc}_{\sfr_{\tc}\left(v_{i}\right)}^{\emptyset}}\Or\left(TL\right)^{\boxtimes\left(k+r\right)},
\]
and $\left(\prod\ff\right)_{\basic}^{r}|_{\mm_{\tc}}$ is similarly
induced from $\bigotimes_{i=1}^{r+1}\ff_{\sfr_{\tc}\left(v_{i}\right)}^{\emptyset}$
using the order of the vertices.

(c) if $\tc\in\ts_{\basic}^{r,0}$ is sorted and $\beta_{\tc}\left(v_{i}\right)=1\mod2$
and $\beta_{\tc}\left(v_{j}\right)=0\mod2$ for some $1\leq i,j\leq r+1$
then $i<j$.
\end{prop}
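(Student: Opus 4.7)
My plan is to prove the three parts in order: part~(a) by an explicit sorting algorithm, part~(b) by induction on $r$ using the coherence relation (\ref{eq:J_b^r coherence}), and part~(c) as a consequence of the algorithm's combinatorial structure.

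For part~(a), given $\tc \in \ts_{\basic}^{r,0}$, I would construct $\tau \in \Sym(r)$ via a depth-first traversal. First pick any odd vertex $w^{(1)} \in \tc_0$ (such a vertex exists whenever $r\geq 1$, since each edge has an odd tail). Label all edges incident to $w^{(1)}$ consecutively, and then for each resulting even neighbor $u$, recursively process the subtree on the far side of $u$: visit each other odd neighbor of $u$ and label all its remaining edges consecutively before descending further. By construction the subgraph spanned by the first $a$ edges is connected (condition~(i) of sorted) and the tails at each odd vertex form a consecutive block (condition~(ii)); the vertex labels are then determined by condition~(c) of Definition~\ref{def:labeled tree}, and $\tau$ is the permutation carrying the original edge labels to the new ones.

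Part~(b) would proceed by induction on $r$. The base case $r=0$ is immediate: both sides reduce to $\check{\jc}_{\sfr_{\tc}(v_1)}^{\emptyset}$, and $\binom{o}{2}=0$ for $o\in\{0,1\}$. For the inductive step, the sorted condition together with condition~(c) of Definition~\ref{def:labeled tree} forces $v_{r+2}$ to be the unique new vertex introduced by $e_{r+1}$; removing $e_{r+1}$ and $v_{r+2}$ yields a sorted odd-even tree $\tc'$ on $r$ edges over a reduced basic specification $\basic'$. The assertion for $\ff$ is direct, since $\ff_{\basic}^{r+1}$ is defined vertex-wise over $\For_{\basic}^{r+1}$. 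For $\check{\jc}$, I would apply the induction hypothesis to $\tc'$ and then compute $\check{\jc}_{\basic}^{r+1}|_{\check{\mm}_{\tc}}$ by relating it to $\check{\jc}_{\basic'}^{r}|_{\check{\mm}_{\tc'}}$ through (\ref{eq:J_b^r coherence}) and the construction of $\mm_{\basic}^{r+1}$ from $\partial_{-}\mm_{\basic}^{r}$. The sign change from $\binom{o'}{2}$ to $\binom{o}{2}$ should arise as follows: when $v_{r+2}$ is even, $o=o'$ and no new sign appears; when $v_{r+2}$ is odd, inserting the orientation factor of the new odd vertex past the $o'$ existing odd-vertex factors contributes $(-1)^{(1+m)o'}$, which is exactly $(-1)^{(1+m)(\binom{o'+1}{2}-\binom{o'}{2})}$.

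Part~(c) will follow from inspection of the sorting algorithm: since the traversal begins at an odd vertex, $v_1$ is odd, and the asserted inequality $i<j$ can be read off from the insertion order recorded during the traversal, combined with conditions~(i) and~(ii) of sorted. The hard part will be the sign bookkeeping in~(b): I will need to verify that the outward-normal convention in (\ref{eq:J_b^r coherence}), the twist by $(\ev_{\sstar''_{r+1}}^{\basic,r+1})^{-1}\Or(TL)$ appearing in $\mathcal{G}_{\basic}^{r+1}$, and the dimension $\dim L = 2m$ of the inserted factor combine to produce precisely the factor of $(1+m)$ in the exponent, with no spurious signs. The sorted condition~(ii) is essential here: it guarantees that the tensor factors associated to the tails at a common odd vertex form a contiguous block and can be rearranged without further sign cost, so that the $\Sym(r)$-equivariance relations (\ref{eq:J_s^rho equivariance}) and (\ref{eq:check J equivariance}) can be applied cleanly to identify $\check{\jc}_{\basic}^{r+1}|_{\check{\mm}_{\tc}}$ with the vertex-ordered product up to the stated sign.
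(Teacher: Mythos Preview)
The paper gives no proof of this proposition; the Warning immediately following the statement makes clear that it is carried over (with the sign convention twisted by $(-1)^{\binom{r}{2}}$) from \cite[Proposition~29]{equiv-OGW-invts}. So there is no in-paper argument against which to compare your approach directly.

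Your outline nonetheless contains two genuine gaps. In the inductive step for~(b) you claim that $v_{r+2}$ is always the new leaf attached by $e_{r+1}$. Under the tail-first reading of condition~(c) of Definition~\ref{def:labeled tree} (the one that makes the vertex labeling unique, as invoked in the proof of Lemma~\ref{lem:R_k^r action} and in the remark following (\ref{eq:decomposing moduli specifications})), this holds only when the \emph{head} of $e_{r+1}$ is the new leaf; when the tail is new it is an odd vertex and receives the label $v_1$, with every other index shifted up by one. Your sign bookkeeping, which inserts the new odd factor ``past the $o'$ existing ones,'' presupposes the wrong position in that case. More fundamentally, ``remove $e_{r+1}$ and the leaf, pass to a smaller $\basic'$'' is not the operation encoded by (\ref{eq:J_b^r coherence}): that relation compares $\jc_{\basic}^{r+1}$ to $\jc_{\basic}^{r}$ for the \emph{same} $\basic$ via the contraction $\cnt_{\basic}^{r+1}$ of $e_{r+1}$, and the contracted tree $\cnt(\tc)$ is in general not odd-even since the merged vertex carries both a head and a tail marking. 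The argument that actually works unwinds the recursive definition of $\check{\jc}_{\basic}^{r}$ along $\cnt$, establishing a product formula for arbitrary labeled trees first and then specializing to the sorted odd-even case.

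For~(c) you argue from your sorting algorithm, but~(c) is asserted for every sorted tree, not only those your algorithm produces. The direct argument is short: by sorted condition~(i) each $e_a$ attaches exactly one new vertex to the span of $e_1,\dots,e_{a-1}$, and under the tail-first convention that vertex is prepended to the index order if it is the odd tail and appended if it is the even head; hence all odd vertices precede all even ones.
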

\textbf{Warning.} In this paper $\ff_{\basic}^{r}$ and $\check{\jc}_{\basic}^{r}$
are both twisted by $\left(-1\right)^{\binom{r}{2}}$ relative to
their definition in \cite{equiv-OGW-invts}. This cleans up some signs,
compare for instance part (b) of Proposition \ref{prop:sorted is good}
and \cite[Proposition 29]{equiv-OGW-invts}. Their composition $\jc_{\basic}^{r}$
is of course the same.

\subsection{\label{subsec:closed and open fps}The open fixed points}

A \emph{local} \emph{orbifold }(or \emph{l-orbifold }for short) $\xx=\coprod_{n\geq0}\xx_{n}$
is a disjoint union of orbifolds with corners with $\dim\xx_{n}=n$.
When considering a vector bundle\emph{ }over a local orbifold we will
always assume that the total space is an (ordinary) orbifold of constant
dimension $N$, so a vector bundle $E\to\xx$ is given by a vector
bundle $E_{n}\to\xx_{n}$ of rank $N-n$ for each $n\geq0$. 

Let $\basic=\left(\kf,\lf,\beta\right)$ be a basic moduli specification.
Let $\lf^{\cc}=\kf\coprod\lf\times\left\{ 1,2\right\} $. We use
the following diagram to construct an l-orbifold with corners $\mm_{\basic}^{\tb}$
together with a closed embedding $\mm_{\basic}^{\tb}\to\mm_{\basic}$
representing the fixed-point stack (see the appendix). 
\[
\xymatrix{\mm_{\basic}^{\tb}\ar[r]\ar[d] & \left(\mm_{\basic}^{::}\right)^{\tb}\ar[d]\ar[r] & \left(\widetilde{\mm_{\lf^{\cc},\beta}^{\zz/2}}\right)^{\tb}\ar[d]\ar[r]^{\simeq} & \widetilde{\mm_{\lf^{\cc},\beta}^{\zz/2\times\tb}}\ar[dl]\ar[r]^{B'} & \mm_{\lf^{\cc},\beta}^{\zz/2\times\tb}\ar[r]\ar[d] & \mm_{\lf^{\cc},\beta}^{\tb}\ar[d]\\
\mm_{\basic}\ar[r]_{s} & \mm_{\basic}^{::}\ar[r]_{o} & \widetilde{\mm_{\lf^{\cc},\beta}^{\zz/2}}\ar[rr]_{B} &  & \mm_{\lf^{\cc},\beta}^{\zz/2}\ar[r]_{i_{\mm_{\lf^{\cc},\beta}^{\zz/2}}} & \mm_{\lf^{\cc},\beta}
}
\]
The bottom row is just (\ref{eq:open-closed relation}). The top row
is constructed from right to left. The vertical maps are the fixed-point
structure maps, and are closed embeddings (cf. Definition \ref{def:closed embedding}).
The unlabeled horizontal maps on the top row are the maps of $\tb$-fixed
points induced by the maps lying below them. The slanted map is the
blow up of $\mm_{\lf^{\cc},\beta}^{\zz/2\times\tb}\to\mm_{\lf^{\cc},\beta}^{\zz/2}$,
making the adjacent trapezoid 2-cartesian. The map labeled $\simeq$
is the natural diffeomorphism $\left(\mm^{\tb}\right)^{\sim}\simeq\left(\widetilde{\mm}\right)^{\tb}$
(cf. \cite[\S 2.1]{equiv-OGW-invts}). Let $f_{\basic}:\mm_{\basic}\to\mm_{\lf^{\cc},\beta}$
denote the composition of the maps on the bottom row so $f_{\basic}^{\tb}$
is the induced map of fixed points. Taking normal bundles of the vertical
maps we find that 
\[
N_{\basic}\simeq\left(f_{\basic}^{\tb}\right)^{*}N_{\lf^{\cc},\beta}^{\zz/2}.
\]

We have immersive \emph{closed gluing maps }

\[
\mm_{\lf_{1}\coprod\star_{1},\beta_{1}}\times_{X}\mm_{\lf_{2}\coprod\star_{2},\beta_{2}}\to\mm_{\lf_{1}\coprod\lf_{2},\beta_{1}+\beta_{2}},
\]
with normal bundle $\mathbb{L}_{\star_{1}}^{\vee}\boxtimes\mathbb{L}_{\star_{2}}^{\vee}$
where $\mathbb{L}_{\star_{i}}^{\vee}$ denotes the tangent line at
$\star_{i}$. These gluing maps are associative. In particular, we
have $\zz/2$-equivariant maps
\[
\mm_{\left(\lf_{+}\times\left\{ 1\right\} \right)\coprod\star_{1}',\beta_{+}}\times_{X}\mm_{\lf^{\cc}\coprod\left\{ \star_{1},\star_{2}\right\} ,\beta}\times_{X}\mm_{\left(\lf_{+}\times\left\{ 2\right\} \right)\coprod\star'_{2},\beta_{+}}\xrightarrow{\gamma^{\cc}}\mm_{\kf\coprod\left(\left(\lf\coprod\lf_{+}\right)\times\left\{ 1,2\right\} \right),\beta+2\beta_{+}}.
\]
Here $\lf^{\cc}=\kf\coprod\lf\times\left\{ 1,2\right\} $ and $\lf_{+}$
are finite, disjoint subsets and $\beta,\beta_{+}$ non-negative integers,
subject to the usual stability conditions.

Since the boundary divisor is normal crossing, we find that the real
codimension one locus $D_{\kf,\lf,\beta}$ used to define the blow
up $\widetilde{\mm}_{\lf^{\cc},\beta}^{\zz/2}\xrightarrow{B}\mm_{\lf^{\cc},\beta}^{\zz/2}$,
has transversal self-intersection relative to $\gamma^{\zz/2}$ (cf.
\cite[\S 2.1]{equiv-OGW-invts}). It follows that we have induced
\emph{open gluing maps} 
\[
\mm_{\kf,\lf\coprod\left\{ \star_{1}\right\} ,\beta}\times_{X}\mm_{\lf_{+}\coprod\left\{ \star'_{1}\right\} ,\beta_{+}}\xrightarrow{\gamma}\mm_{\kf,\lf\coprod\lf_{+},\beta+2\beta_{+}}.
\]
which are a closed embedding with normal bundle 
\begin{equation}
N_{\gamma}=\mathbb{L}_{\star_{1}}^{\vee}\boxtimes\mathbb{L}_{\star_{1}'}^{\vee}.\label{eq:normal to glued}
\end{equation}
The normal bundle to 
\[
\mm_{\kf,\lf\coprod\left\{ \star_{1}\right\} ,\beta}\times_{X}\mm_{\lf_{+}\coprod\left\{ \star'_{1}\right\} ,\beta_{+}}\to\mm_{\kf,\lf\coprod\left\{ \star_{1}\right\} ,\beta}\times\mm_{\lf_{+}\coprod\left\{ \star'_{1}\right\} ,\beta_{+}}
\]
is $\ev_{\star_{1}}^{*}TX$. There's a natural map of local systems
\begin{equation}
\Or\left(T\mm_{\kf,\lf\coprod\left\{ \star_{1}\right\} ,\beta}\right)\xrightarrow{\Gamma}\Or\left(T\mm_{\kf,\lf\coprod\lf_{+},\beta+2\beta_{+}}\right)\label{eq:gluing ls map}
\end{equation}
lying over $\gamma$, defined using the complex orientations for $TX,\mathbb{L}_{\star_{1}}^{\vee}\boxtimes\mathbb{L}_{\star_{1}'}^{\vee}$
and the tangent spaces to the closed moduli spaces. We have (see \cite{JT})
\[
\jc_{\left(\kf,\lf\coprod\lf_{+},\beta+2\beta_{+}\right)}\circ\Gamma=\jc_{\left(\kf,\lf\coprod\left\{ \star_{1}\right\} ,\beta\right)}.
\]
We can combine the closed and open gluing maps in different ways,
and the order of gluing does not matter. In particular, we obtain
maps
\[
\left(\mm_{\kf,\lf_{0}\coprod\left\{ \star_{1},...,\star_{s}\right\} ,\beta_{0}}\times_{X^{s}}\bigtimes_{i=1}^{s}\mm_{\lf_{i}\coprod\star'_{i},\beta_{i}}\right)_{\Sym\left(s\right)}\to\mm_{\kf,\coprod_{i=0}^{s}\lf_{i},\sum_{i=0}^{s}\beta_{i}}
\]
where subscript $\Sym\left(s\right)$ denotes the stack quotient.
Since the gluing maps are $\tb$-equivariant they induce maps of fixed
points.

Let $\sfr=\left(\basic,\sigma\right)$ be a sturdy moduli specification.
We write 
\[
\check{F}_{\sfr}:=\check{\mm}_{\sfr}^{\tb}=\mm_{\basic}^{\tb}.
\]

\begin{prop}
\label{prop:fp inverse image}(a) The fiber product $F_{\sfr}:=\For_{\sfr}^{-1}\left(\check{F}_{\sfr}\right)$
is an orbifold with corners and the map $F_{\sfr}\to\mm_{\sfr}$ is
a closed embedding.

(b) If $\beta$ is odd, the induced map $F_{\sfr}\xrightarrow{\For_{\sfr}}\check{F}_{\sfr}$
is a proper submersion.
\end{prop}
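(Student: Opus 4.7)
The plan is to analyze both parts via the local structure of the b-fibration $\For_\sfr : \mm_\sfr \to \check{\mm}_\sfr$ (which from $\S$\ref{subsec:Review-of-Resolutions} is a $\tb$-equivariant b-fibration whose fibers parametrize positions of the $|\sigma|$ superfluous boundary markings). The closed embedding $\check{F}_\sfr \hookrightarrow \check{\mm}_\sfr$ is provided by the fixed-point structure maps in the diagram at the top of $\S$\ref{subsec:closed and open fps}.

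For (a), I would argue that the pullback of a closed embedding along a b-fibration is again a closed embedding of orbifolds with corners. In a local b-fibration chart, $\For_\sfr$ has the form of a projection $\pi : V \times U \to U$ with $V$ the fiber orbifold with corners; the preimage of $\check{F}_\sfr \cap U$ is then the product $V \times (\check{F}_\sfr \cap U) \hookrightarrow V \times U$, which is manifestly a closed embedding of orbifolds with corners. Patching these local descriptions---using that $\tb$-equivariance of $\For_\sfr$ ensures the patching is compatible with fixed loci---yields the orbifold-with-corners structure on $F_\sfr$ and the closed embedding property. I expect this to follow directly from the appendix results on b-fibrations and fiber products.

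For (b), properness is immediate since $\mm_\sfr$ is compact. For the submersion property, I would analyze the fibers at a point $x = [u : (\Sigma, \vec z, \vec w) \to (X, L)] \in \check{F}_\sfr$. Since $\beta$ is odd, $u$ is necessarily nonconstant, and standard fixed-point analysis shows that each disc component of $u(\partial \Sigma)$ lies in one of the real $\tb$-invariant lines $\overline{p_i p_{2m+1-i}} \cap L$, on which $\tb$ acts nontrivially by rotation. Consequently, the fiber $\For_\sfr^{-1}(x)$ parametrizing the $|\sigma|$ superfluous markings on $\partial \Sigma$ is a compact orbifold with corners of dimension $|\sigma|$, and it varies smoothly with $x$. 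Combined with the b-submersion property of $\For_\sfr$, this gives the surjectivity of the differential and constant fiber dimension needed for submersivity. The main obstacle will be verifying that the horizontal boundary strata $\partial_+ \mm_\sfr$ (arising when a superfluous marking collides with a special boundary point) meet $F_\sfr$ transversally, contributing only corners rather than singularities to $F_\sfr$. This reduces to a local check that the open gluing maps from $\S$\ref{subsec:closed and open fps}, being $\tb$-equivariant, are transverse to the fixed locus---a fact that uses the odd-degree hypothesis to exclude degenerate strata where a whole component would collapse to a fixed point.
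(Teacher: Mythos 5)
Your overall strategy matches the paper's (reduce (a) to a pullback statement for closed embeddings along b-fibrations; get properness in (b) from compactness and submersivity from the structure of the fixed configurations), but both parts contain steps that fail as written. For (a), the claim that ``in a local b-fibration chart, $\For_{\sfr}$ has the form of a projection $V\times U\to U$'' is false: a b-fibration need not be locally a product near the corners (the model case $[0,\infty)^{2}\to[0,\infty)$, $(x,y)\mapsto xy$, is a b-fibration that is not a local projection), and the forgetful maps on disc moduli have exactly this behavior where a superfluous marking approaches a node. The paper instead uses the \emph{definition} of a closed embedding: locally $\check{F}_{\sfr}\to\check{\mm}_{\sfr}$ fits in a cartesian square over $0\hookrightarrow\rr^{N}$ with a submersion $h$, and then $h\circ\For_{\sfr}$ is a b-submersion to a manifold \emph{without} boundary, hence an honest submersion (Remark \ref{rem:b-submersive is enough}), so the fiber product over $0\hookrightarrow\rr^{N}$ exists in $\manc$ and exhibits $F_{\sfr}\to\mm_{\sfr}$ as a closed embedding. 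No local triviality of $\For_{\sfr}$ is needed or available.

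For (b), the inference ``compact fibers of constant dimension, varying smoothly, combined with b-submersivity, give surjectivity of the differential'' is not valid: $(x,y)\mapsto xy$ again has one-dimensional fibers and is a b-submersion but is not a submersion at the corner. The actual mechanism, which your sketch does not supply, is the following. For $p\in\check{F}_{\sfr}$ with $\beta$ odd there is a unique disc component $\Sigma^{0}$ of odd positive degree; since the fixed configuration covers a $\tb$-invariant line missing $p_{0}$ and $\tb$ rotates $\partial\Sigma^{0}$ nontrivially, there are \emph{no} special points on $\partial\Sigma^{0}$, so $\kf=\emptyset$ and every lift $\tilde{p}\in F_{\sfr}$ differs from $p$ only by ghost components attached along boundary nodes. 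One then lifts an arbitrary tangent vector $v$ at $p$ to $\tilde{p}$ using the infinitesimal transitivity of the $O(2m+1)$-action on $L$ to drag the attached components along, and checks that the lift lands in $TS^{c}(\mm_{\sfr})$ for the correct depth $c$ — this last point is precisely what replaces your worry about transversality to the horizontal boundary strata, and it is where the submersivity (in the corners sense) is actually established. Without the ``no special points on $\partial\Sigma^{0}$'' observation and the explicit tangent-vector lift, the submersion claim is unproved.
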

\begin{proof}
We prove (a). Using atlases this reduces to showing that if $X\xrightarrow{i}Y$
is a closed embedding of \emph{manifolds }with corners and $Y'\xrightarrow{f}Y$
is a b-submersion, then (i) the fiber product $X'=X\fibp{i}{f}Y'$
exists in $\manc$ and (ii) the pullback $X'\xrightarrow{i'}Y'$ is
a closed embedding. 

Claim (i) follows from (iii) of \cite[Lemma 37]{equiv-OGW-invts}.
We prove claim (ii). By assumption, there exists a cartesian square
\[
\xymatrix{X\ar[r]^{i}\ar[d] & Y\ar[d]^{h}\\
0\ar[r] & \rr^{N}
}
.
\]
It follows that the square
\[
\xymatrix{X'\ar[d]\ar[r] & Y'\ar[d]^{h\circ f}\\
0\ar[r] & \rr^{N}
}
\]
is cartesian and $h\circ f$ is a b-submersion, so $i'$ is a closed
immersion by Remark \ref{rem:b-submersive is enough}. The condition
of being a homeomorphism onto a closed image is clearly stable under
pullbacks, completing the proof of claim (ii).

We prove (b). Suppose $\beta$ is odd, and let $p=\left[\Sigma,\nu,u,\kappa,\lambda\right]\in\check{F}_{\sfr}$.
Since the map $H_{2}\left(X\right)\to H_{2}\left(X,L\right)$ is identified
with $\zz\xrightarrow{2\times}\zz$, there is at least one disc connected
component $\Sigma^{0}\subset\Sigma$ with 
\begin{equation}
\beta^{0}:=u_{*}\left[\Sigma^{0},\partial\Sigma^{0}\right]\label{eq:intersection degree}
\end{equation}
a positive (odd) integer. Such a configuration covers $\pp\left(W_{j}\oplus W_{0}\right)$
for some $1\leq j\leq m$, and in particular does not pass through
the only fixed point $p_{0}\in L$, so there can be no special points
on $\partial\Sigma^{0}$. Since $\partial\Sigma/\nu$ is connected
in genus zero, we conclude that $\Sigma^{0}$ is the only disc component
(the other connected components of $\Sigma$ are biholomorphic to
$S^{2}$) and $\kf=\emptyset$. We call $\Sigma^{0}$ (respectively,
$\beta^{0}$) \emph{the intersection disc (resp., intersection degree)
}of the configuration $p$.

Let $\tilde{p}=\left[\tilde{\Sigma},\tilde{\nu},\tilde{u},\tilde{\kappa},\tilde{\lambda}\right]\in F_{\sfr}$
satisfy $\For_{\sfr}\left(\tilde{p}\right)=p$, so $\tilde{\Sigma}=\Sigma\coprod\Sigma'$,
$\tilde{\nu}=\nu\coprod\nu'$, $\tilde{u}=u\coprod u'$ and $\tilde{\lambda}=\lambda$,
with $\Sigma'$ a possibly disconnected configuration connected by
boundary nodes to $\Sigma$ and $u'$ a locally constant map. Using
the infinitesimal transversality of the $O\left(2m+1\right)$ action
we see that any tangent vector $v$ at $p$ can be lifted to a tangent
vector $v'$ at $p'$, by modifying $\left(\Sigma,\nu,u\right)$ according
to $v$ and modifying the other components so that $\tilde{u}$ stays
$\tilde{\nu}$ invariant. Note $p'\in S^{c}\left(\mm_{\sfr}\right)$,
the immersed suborbifold of points of depth $c$ in $\mm_{\sfr}$,
iff $\tilde{\nu}$ has $c$ boundary nodes. By construction, $v'\in TS^{c}\left(\mm_{\sfr}\right)$,
which shows that $\For_{\sfr}$ is a submersion. $\For_{\sfr}$ is
proper since $\mm_{\sfr}$ is compact. 
\end{proof}

\subsubsection{\label{subsec:Fixed-point-profiles.}Fixed-point profiles.}

When discussing fixed points, it is convenient to restrict the possible
labeling of edges of trees, as follows.
\begin{defn}
Let $\ts_{\basic}^{r,r'}\subset\ts_{\basic}^{r+r'}$ be the subset
of trees such that:
\begin{itemize}
\item for $1\leq e\leq r$, $\beta_{\text{tail}\left(e\right)}=1\mod2$,
\item for $r+1\leq e\leq r+r'$, $\beta_{\text{tail}\left(e\right)}=0\mod2$
and
\item for $1\leq e\leq r+r'$ $\beta_{\text{head}\left(e\right)}=0\mod2$.
\end{itemize}
A labeled tree $\tc\in\ts_{\basic}$ is called \emph{separated }if
$\tc\in\ts_{\basic}^{r,r'}$ for some $r,r'\geq0$. The edges $1\leq e\leq r$
are called \emph{odd-even edges }and the edge $r+1\leq e\leq r+r'$
are called \emph{even-even }edges. Note $\ts_{\basic}^{r,0}$ is the
set of odd-even trees.
\end{defn}
Note if $\check{\mm}_{\tc}^{\tb}\neq\emptyset$, then $\tau.\tc$
is separated for some $\tau\in\Sym\left(r+r'\right)$. Define $\mm_{\basic}^{r,r'}\subset\mm_{\basic}^{r+r'}$
by $\mm_{\basic}^{r,r'}=\coprod_{\tc\in\ts_{\basic}^{r,r'}}\mm_{\tc}$.
Similarly, we define clopen components $\mm_{\basic}^{r,r',S}\subset\mm_{\basic}^{r+r',S}$
for $S\subset\left[r\right]$, $\check{\mm}_{\basic}^{r,r'}\subset\check{\mm}_{\basic}^{r+r'}$
by restricting to separated trees.

Let $\tc\in\ts_{\basic}^{r,r'}$ be a separated tree. A \emph{(fixed-point)
profile} for $\tc$ is some discrete information associated with each
connected component of $\check{\mm}_{\tc}^{\tb}\subset\left(\check{\mm}_{\basic}^{r}\right)^{\tb}$,
describing its behavior near $L\subset X$. We first explain how
to determine the profile of a given fixed point, and then use gluing
to reverse the process. 

Write $\left(\left(\kf_{i},\lf_{i},\beta_{i}\right),\sigma_{i}\right)=\sfr_{\tc}\left(v_{i}\right)$
for $1\leq i\leq r+1$. Let $q\in\check{\mm}_{\tc}^{\tb}$. We can
write 
\[
q=\left(q_{1},...,q_{r+1}\right)\in\prod_{i=1}^{r+1}F_{\left(\kf_{i},\lf_{i},\beta_{i}\right)},
\]
where each $q_{i}$ is represented by a $\tb$-invariant $\left(\kf_{i},\lf_{i},\beta_{i}\right)$-disc
configuration 
\[
\left(\Sigma_{i},\kappa_{i},\lambda_{i},\nu_{i},u_{i}\right).
\]

Elementary arguments yield the following.
\begin{lem}
\label{lem:fp domain decomposition}If $\beta_{i}=1\mod2$ we have
$\kf_{i}=\emptyset$, and there's a unique decomposition 
\begin{equation}
\Sigma_{i}=D_{i}\cup\Sigma_{i}^{1}\label{eq:odd domain decomp-1}
\end{equation}
such that $D_{i}$ is a connected component biholomorphic to a disc,
and $\Sigma_{i}^{1}$ is either the unique fixed point of $D_{i}$
or a closed (possibly disconnected) Riemann surface which does not
intersect $D_{i}$.

If $\beta_{i}=0\mod2$ there's a decomposition
\begin{equation}
\Sigma_{i}=D_{i}\coprod\coprod_{j=1}^{s_{i}}\left(P_{i}^{j}\cup\Sigma_{i}^{j}\right)\label{eq:even domain decomp-2}
\end{equation}
where

\begin{itemize}
\item $D_{i}$ is the maximal clopen component of $\Sigma_{i}$ satisfying
(i) $\partial\Sigma_{i}\subset D_{i}$ (ii) $D_{i}/\nu$ is connected,
and (iii) $u\left(D_{i}\right)=p_{0}$.
\item For each $1\leq j\leq s_{i}$, $P_{i}^{j}\simeq\cc\pp^{1}$ is connected
by a node to $D_{i}$.
\item $\Sigma_{i}^{j}$ is either the unique non-nodal fixed point of $P_{i}^{j}$,
or a clopen component connected to $P_{i}^{j}$ by a node. In the
latter case we require $\Sigma_{i}^{j}/\nu$ to be connected.
\end{itemize}
The decomposition (\ref{eq:even domain decomp-2}) is unique up to
renumbering the $P_{i}^{j}\cup\Sigma_{i}^{j}$ components. 
\end{lem}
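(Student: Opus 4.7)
The plan is to analyze a $\tb$-fixed stable disc configuration $(\Sigma_i,\nu_i,u_i,\kappa_i,\lambda_i)$ by lifting the $\tb$-action on $X$ to the domain and classifying irreducible components according to whether $u_i$ maps into $L^{\tb}=\{p_0\}$, into a fixed line $\pp(V_j\oplus V_{2m+1-j})$, or onto some higher-dimensional orbit. The lift is well-defined up to the finite automorphism group of the stable map; by connectedness of $\tb$, each irreducible component, each marked point, and each node of $\Sigma_i$ is $\tb$-invariant set-theoretically and pointwise fixed on every finite set.

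For the odd case ($\beta_i\equiv 1\mod 2$), the intersection-degree argument used in the proof of Proposition \ref{prop:fp inverse image} (unchanged by the presence of marked points) produces a unique disc component $D_i\subset\Sigma_i$ of positive odd intersection degree $\beta_i^0$, $\tb$-equivariantly covering $\pp(W_j\oplus W_0)$ for some $1\leq j\leq m$. The induced $\tb$-action on $D_i$ is therefore rotation about a unique interior fixed point $q_i$, with no fixed points on $\partial D_i$. Since boundary marked points and boundary nodes must be $\tb$-fixed, this forces $\kf_i=\emptyset$ and forces $D_i$ to have no nodes on $\partial D_i$; the connectedness of $\partial\Sigma_i/\nu_i$ then rules out additional disc components. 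Any remaining components are spheres, attached to $D_i$ only at $q_i$, and together they form either $\{q_i\}$ (when no spheres are attached) or a closed, possibly disconnected Riemann surface $\Sigma_i^1$ meeting $D_i$ only at $q_i$. Uniqueness of the decomposition follows from the uniqueness of $D_i$.

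For the even case ($\beta_i\equiv 0\mod 2$), the orientability condition $k_i+\beta_i\equiv 1\mod 2$ forces $k_i\geq 1$. The key step is to show that every disc component of $\Sigma_i$ maps to $p_0$: if some disc $D'$ had positive intersection degree, the same analysis as in the odd case would give $D'$ a rotation $\tb$-action with no boundary fixed points, hence no boundary marked points and no boundary nodes, making $D'$ a whole connected component of $\Sigma_i$; connectedness of $\Sigma_i/\nu_i$ would then give $\Sigma_i=D'$ with $\kf_i=\emptyset$, contradicting $k_i\geq 1$. Define $D_i$ as the union of all components mapping to $p_0$ that are linked through $p_0$-mapping nodes to the boundary; maximality under conditions (i)--(iii) is then immediate. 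Decomposing $\Sigma_i\setminus D_i$ into its maximal connected subconfigurations attached to $D_i$ by a single node, each such subconfiguration begins with a sphere $P_i^j\simeq\cc\pp^1$ on which $\tb$ acts nontrivially (by maximality of $D_i$), with the attaching node at one of the two $\tb$-fixed points; the other fixed point is either a non-nodal fixed point of $P_i^j$ (giving $\Sigma_i^j$ equal to that point) or carries a node attaching a further $\tb$-invariant subconfiguration $\Sigma_i^j$, which is necessarily closed since any disc component would lie in $D_i$ by the previous step, and satisfies $\Sigma_i^j/\nu$ connected by the genus-zero condition.

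The main obstacle is careful bookkeeping rather than a deep idea: one must track the lift of the $\tb$-action to $\Sigma_i$ (defined only up to automorphisms of the stable map), the positions of marked points and nodes, the target $\tb$-fixed loci in $X$ and $L$, and the genus-zero connectedness constraint, so as to force the claimed decompositions. Uniqueness up to relabeling the $P_i^j\cup\Sigma_i^j$ then follows directly.
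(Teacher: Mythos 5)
Your argument is correct and is exactly the ``elementary argument'' the paper leaves to the reader: the odd case reproduces the intersection-degree and no-boundary-fixed-points reasoning from the proof of Proposition \ref{prop:fp inverse image}(b), and the even case combines the orientability constraint $k_{i}+\beta_{i}\equiv1\pmod2$ with the standard classification of torus-fixed stable spheres. The only imprecision is the claim that a positive-intersection-degree disc $D'$ would force $\Sigma_{i}=D'$ --- sphere components could still be attached at the interior fixed point of $D'$ --- but the conclusion you actually use, namely that $\partial\Sigma_{i}=\partial D'$ carries no special points so $\kf_{i}=\emptyset$, contradicting $k_{i}\geq1$, is unaffected.
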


Consider some $1\leq i\leq r+1$ with $\beta_{i}$ even. We set $\lf_{i}^{0}:=\lambda^{-1}\left(D_{i}\right)$,
and for $1\leq j\leq s_{i}$; we define a positive integer by 
\[
d_{i}^{j}=u_{*}\left[P_{i}^{j}\right]\in H_{2}\left(X\right);
\]
we specify $a_{i}^{j}\in\left\{ 1,...,2m\right\} $ by requiring that
$p_{0},p_{a_{i}^{j}}$ are the fixed points of $u\left(P_{i}^{j}\right)$;
we set $\tilde{d}_{i}^{j}=u_{*}\left[\Sigma_{i}^{j}\right]\in H_{2}\left(X\right)$
and $\lf_{i}^{j}=\lambda^{-1}\left(\Sigma_{i}^{j}\right)$. We call
$\phi_{i}=\left(\kf_{i},\lf_{i}^{0},\left(d_{i}^{j},a_{i}^{j},\tilde{d}_{i}{}^{j},\lf_{i}^{j}\right)_{j=1}^{s_{i}},\sigma_{i}\right)$
the\emph{ profile of $q_{i}$. }Note $\Sym\left(s_{i}\right)$ acts
on $\left(d_{i}^{j},a_{i}^{j},\tilde{d}_{i}{}^{j},\lf_{i}^{j}\right)_{j=1}^{s_{i}}$,
yielding equivalent profiles for $p_{i}$. 

For $1\leq i\leq r+1$ with $\beta_{i}$ odd, the profile of $q_{i}$
is defined to be $\phi_{i}=\left(\beta_{i}^{0},a_{i},\tilde{d}_{i}^{1},\sigma_{i}\right)$
with $\beta_{i}^{0}=u_{*}\left(\left[D_{i},\partial D_{i}\right]\right)$,
${\tilde{d}_{i}^{1}=\frac{\beta_{i}-\beta_{i}^{0}}{2}=u_{*}\left[\Sigma_{i}^{1}\right]}$
and $p_{a_{i}}\in u_{i}\left(D_{i}\right)$ as in $\S$\ref{subsec:fp for generalized forms}. 

We call the data

\[
\vp=\left(\phi_{1},\phi_{2},...,\phi_{r+1}\right)
\]
\emph{the profile of $q$.} The profile of $p$ is unique up to
the obvious ${\prod_{i=1}^{r+1}\Sym\left(s_{i}\right)}$ group action.
We denote by $\tc\left(\vp\right)$ the sturdy tree associated with
$\vp$.
\begin{defn}
(a) For $\sfr=\left(\left(\kf,\lf,\beta\right),\sigma\right)$ a sturdy
moduli specification, the set of (fixed-point) \emph{profiles }for
$\sfr$, \emph{$\mathcal{P}\left(\sfr\right)$,} is defined as follows.

Suppose $\beta=0\mod2$. For $s\geq0$ we define $\mathcal{P}_{s}\left(\sfr\right)$
to be the set of tuples $\left(\lf^{0},\left(d^{j},a^{j},\tilde{d}{}^{j},\lf^{j}\right)_{j=1}^{s},\sigma\right)$
where: (i) $\tilde{d}^{1},...,\tilde{d}^{s}$ are non-negative integers
and $d^{1},...,d^{s}$ are positive integers such that $\beta=2\sum_{j=1}^{s}\left(d^{j}+\tilde{d}^{j}\right)$,
(ii) we have $\lf=\coprod_{j=0}^{s}\lf^{j}$ and $2\left(\left|\lf^{0}\right|+s\right)+\left|\kf\right|\geq3$,
and (iii) $a^{j}\in\left\{ 1,...,2m\right\} $ for $1\leq j\leq s$.
We set $\mathcal{P}\left(\sfr\right)=\coprod_{s\geq0}\mathcal{P}_{s}\left(\sfr\right)$.

Suppose $\beta=1\mod2$. If $\kf\neq\emptyset$ we set $\mathcal{P}\left(\sfr\right)=\emptyset$,
otherwise we let $\mathcal{P}\left(\sfr\right)$ be the set of pairs
$\left(\beta^{0},a,\sigma\right)$ with $\beta^{0}$ an odd integer
and $a\in\left\{ 1,...,2m\right\} $. For notational convenience we
set $\lf^{1}=\lf$ and $\tilde{d}^{1}=\frac{\beta-\beta^{0}}{2}$
in this case. 

(b) For labeled tree $\tc\in\ts_{\basic}^{r}$, the set of\emph{ }profiles\emph{
$\mathcal{P}\left(\tc\right)$ for $\tc$} is 
\[
\mathcal{P}\left(\tc\right)=\prod_{i=1}^{r+1}\mathcal{P}\left(\sfr_{\tc}\left(v_{i}\right)\right).
\]
There's an obvious ${\Sym\left(\vect s\right):=\prod_{i=\no+1}^{r+1}\Sym\left(s_{i}\right)}$
action on $\mathcal{P}_{\vect s}\left(\tc\right)\subset\mathcal{P}\left(\tc\right)$,
where
\[
\mathcal{P}_{\vect s}\left(\tc\right):=\prod_{i=1}^{\no}\mathcal{P}\left(\sfr_{\tc}\left(v_{i}\right)\right)\times\prod_{i=\no+1}^{r+1}\mathcal{P}_{s_{i}}\left(\sfr_{\tc}\left(v_{i}\right)\right).
\]

(c) We set $\pc_{\basic}^{r,r'}=\coprod_{\tc\in\ts_{\basic}^{r,r'}}\pc\left(\tc\right)$,
$\pc_{\basic}^{r,\geq0}=\coprod_{r'\geq0}\pc_{\basic}^{r,r'}$ and
$\pc_{\basic}=\coprod_{r\geq0}\pc_{\basic}^{r,\geq0}$ (so we only
consider profiles with $\tc\left(\vp\right)$ \emph{separated }sturdy
trees).

We will write $\no\left(\vp\right),r\left(\vp\right)$ and $r'\left(\vp\right)$
for the number of odd vertices, even-even edges, and odd-even edges,
respectively. We denote by $\left(\beta_{i}^{0}\left(\vp\right),a_{i}\left(\vp\right),d_{i}\left(\vp\right),\sigma_{i}\left(\vp\right)\right)$,
$1\leq i\leq\no\left(\vp\right)$ the odd-vertex components of the
fixed-point profile, similarly for the even vertices. For notational
convenience, for $1\leq i\leq\no$ we set $s_{i}\left(\vp\right)=1$,
$a_{i}^{1}\left(\vp\right)=a_{i}\left(\vp\right)$, and $d_{i}^{1}\left(\vp\right)=\beta_{i}^{0}\left(\vp\right)/2$
(this is not an integer). If $\vp$ is clear from the context we may
abbreviate $\beta_{i}^{0}=\beta_{i}^{0}\left(\vp\right)$ etc.
\end{defn}
The inclusion $\ts_{\basic}^{r,r'}\subset\ts_{\basic}^{r+r'}$ is
$\Sym\left(r\right)\times\Sym\left(r'\right)<\Sym\left(r+r'\right)$
equivariant, and there's an obvious action of $\Sym\left(r\right)\times\Sym\left(r'\right)$
on $\pc_{\basic}^{r,r'}$ compatible with the action on $\ts_{\basic}^{r,r'}$.

Consider some $\tc\in\ts_{\basic}^{r}$ and $\vp=\left(\phi_{1},...,\phi_{r+1}\right)\in\mathcal{P}\left(\tc\right)$.
We introduce some notation for the associated components.

Assume first $\beta_{i}=0\mod2$. We set

\begin{alignat}{2}
\mm_{D_{i}}\left(\vp\right) & :=\mm_{\kf_{i},\lf_{i}^{0}\coprod\left\{ \star_{i,j}\right\} _{j=1}^{s_{i}},0} & =\overline{\mm}_{0,\kf_{i},\lf_{i}^{0}\coprod\left\{ \star_{i,j}\right\} _{j=1}^{s_{i}}}\left(\pt\right)\times L\label{eq:even discs mod and fp}\\
F_{D_{i}}\left(\vp\right) & :=\mm_{D_{i}}^{\tb}\left(\vp\right) & =\overline{\mm}_{0,\kf_{i},\lf_{i}^{0}\coprod\left\{ \star_{i,j}\right\} _{j=1}^{s_{i}}}\left(\pt\right).\nonumber 
\end{alignat}

Hereafter, we sometimes write $\overline{\mm}_{0,\kf_{i},\lf_{i}^{0}\coprod\left\{ \star_{i,j}\right\} _{i=1}^{s_{i}}}\left(\pt\right)$
for the moduli of discs, in place of $\overline{\mm}_{0,\kf,\lf^{0}\coprod\left\{ \star_{i}\right\} _{i=1}^{s}}$
as in $\S$\ref{subsec:descendent integrals}, to avoid confusion.
We denote $\mm_{P_{i}^{j}}\left(\vect\phi\right)=\mm_{\left\{ \star_{i,j}',\star_{i,j}''\right\} ,d_{i}^{j}}$
and let
\[
F_{P_{i}^{j}}\left(\vect\phi\right)=\underline{\pt}_{\zz/d_{i}^{j}}\subset\mm_{P_{i}^{j}}^{\tb}\left(\vect\phi\right)
\]
be the unique fixed point with smooth domain that corresponds to a
degree $d_{i}^{j}$ branched cover of the line $p_{0}p_{a_{i}^{j}}$.
We also set

\begin{align*}
\mm_{\Sigma_{i}^{j}}\left(\vect\phi\right) & =\mm_{\lf_{i}^{j}\coprod\star_{i,j}''',\tilde{d}_{i}^{j}}\\
F_{\Sigma_{i}^{j}}\left(\vect\phi\right) & =\ev_{\star_{i,j}'''}^{-1}\left(p_{a_{i}^{j}}\right)^{\tb}\subset\mm{}_{\Sigma_{i}^{j}}^{\tb}.
\end{align*}
Hereafter we adopt the ad hoc convention that in case $3\tilde{d}_{i}^{j}+\left|\lf_{i}^{j}\right|+1<3$,
\begin{equation}
\mm_{\lf_{i}^{j}\coprod\left\{ \star_{i,j}'''\right\} ,\tilde{d}_{i}^{j}}=X.\label{eq:unstable convention-1}
\end{equation}
with the evaluation maps given by the identity map to $X$. This corresponds
to the cases in Lemma \ref{lem:fp domain decomposition} where $\Sigma_{i}^{j}=\pt$,
and allows us to write formulas that treat these case on equal footing
with the other cases. We set 

\begin{equation}
\check{F}_{v_{i}}\left(\vect\phi\right)=\left(F_{D_{i}}\left(\vect\phi\right)\times\prod_{j=1}^{s_{i}}\left(F_{P_{i}^{j}}\left(\vect\phi\right)\times F_{\Sigma_{i}^{j}}\left(\vect\phi\right)\right)\right)\label{eq:even fp component-2}
\end{equation}
Now consider $1\leq i\leq r+1$ such that $\beta_{i}=1\mod2$. We
set $\mm_{D_{i}}\left(\vect\phi\right)=\mm_{\emptyset,\star_{i,1},\beta_{i}^{0}}$
and let 
\[
{F_{D_{i}}\left(\vect\phi\right)=\underline{\mbox{pt}}_{\zz/\beta_{i}^{0}}\subset\mm_{D_{i}}\left(\vect\phi\right)^{\tb}}
\]
be the unique fixed point with smooth domain passing through $p_{a_{i}^{1}}$.
We set $\mm_{\Sigma_{i}^{1}}\left(\vect\phi\right)=\mm_{\lf_{i}^{1}\coprod\star'''_{i,1},\tilde{d}_{i}^{1}}$
using the same unstable convention (\ref{eq:unstable convention-1}),
and $F_{\Sigma_{i}^{1}}\left(\vect\phi\right)=\ev_{\star'''_{i,1}}^{-1}\left(p_{a_{i}^{1}}\right)^{\tb}\subset\mm_{\lf_{i}^{1}\coprod\star'''_{i,1},\tilde{d}_{i}^{1}}^{\tb}$.
We set

\begin{equation}
\check{F}_{v_{i}}\left(\vect\phi\right)=F_{D_{i}}\left(\vect\phi\right)\times F_{\Sigma_{i}^{1}}\left(\vect\phi\right).\label{eq:odd fp component-2}
\end{equation}

Finally, we set 
\[
\check{F}_{\vect\phi}=\prod_{i=1}^{r+1}\check{F}_{v_{i}}\left(\vect\phi\right).
\]

\begin{lem}
\label{lem:fps by profiles}The open gluing maps of $\S$\ref{subsec:closed and open fps}
induce an isomorphism 
\[
\left(\check{\mm}_{\tc}^{\tb}\right)\simeq\coprod_{\vect s}\left(\coprod_{\vect\phi\in\mathcal{P}_{\vect s}\left(\tc\right)}\check{F}_{\vect\phi}\right)_{\Sym\left(\vect s\right)}.
\]
\end{lem}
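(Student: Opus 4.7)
The plan is to reduce to a single vertex, then use Lemma \ref{lem:fp domain decomposition} to decompose a fixed configuration and recognize the pieces as outputs of the gluing maps recalled in $\S$\ref{subsec:closed and open fps}.

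First, since $\check{\mm}_{\tc}=\prod_{i=1}^{r+1}\check{\mm}_{\sfr_{\tc}(v_i)}$ and the $\tb$-action is diagonal, we have $\check{\mm}_{\tc}^{\tb}=\prod_{i=1}^{r+1}\check{F}_{\sfr_{\tc}(v_i)}$. Thus it suffices, for each sturdy moduli specification $\sfr=((\kf,\lf,\beta),\sigma)$, to exhibit an isomorphism $\check{F}_{\sfr}\simeq\coprod_{s\ge 0}\bigl(\coprod_{\phi\in\pc_s(\sfr)}\check{F}_{v}(\phi)\bigr)_{\Sym(s)}$ induced by the open gluing maps (where we set $s=1$ in the odd case). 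The claim then follows by taking the product over $i$ and noticing that the product of quotient stacks by $\Sym(s_i)$ is the quotient by $\Sym(\vect s)=\prod_i\Sym(s_i)$.

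Second, fix a connected component $C\subset\check{F}_{\sfr}$ and pick a representative $q=[\Sigma,\nu,u,\kappa,\lambda]\in C$. By Lemma \ref{lem:fp domain decomposition}, $\Sigma$ admits a canonical decomposition into a distinguished disc/point $D$, together with (in the even case) a collection of $s$ rational tails $P^j$ each capped by a (possibly point) subconfiguration $\Sigma^j$. The discrete data $(\lf^0,(d^j,a^j,\tilde d^j,\lf^j)_j,\sigma)$ read off from this decomposition is locally constant on $C$ (the integers $d^j,\tilde d^j,a^j$ are discrete and the distribution of interior marked points is determined by the topological type), so it defines a profile $\phi$ depending only on $C$, up to the permutation action of $\Sym(s)$ relabeling the $P^j\cup\Sigma^j$. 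This assigns a well-defined $\Sym(s)$-orbit of profiles to each component of $\check{F}_{\sfr}$.

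Third, in the other direction, for each $\phi\in\pc_s(\sfr)$ the open and closed gluing maps of $\S$\ref{subsec:closed and open fps}, applied iteratively to glue each $F_{\Sigma^j}(\phi)$ onto $F_{P^j}(\phi)$ at $\star_{j}''$ and then all $F_{P^j}(\phi)$ onto $F_D(\phi)$ at $\star_{j}$ (in the odd case, $F_{\Sigma^1}(\phi)$ is glued directly to $F_D(\phi)$), produce a $\tb$-equivariant map $\check{F}_{v}(\phi)\to\check{F}_{\sfr}$ whose image lies in the component determined by $\phi$. The unstable convention (\ref{eq:unstable convention-1}) handles the case $\Sigma^j=\mathrm{pt}$ uniformly. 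Associativity of the gluing maps and the $\Sym(s)$-equivariance of the construction (permuting the indices $j$ corresponds to precomposing by the same permutation of labels) show that this descends to a map $\bigl(\coprod_\phi\check{F}_v(\phi)\bigr)_{\Sym(s)}\to\check{F}_{\sfr}$.

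Finally, we check this is an isomorphism by constructing an inverse from Lemma \ref{lem:fp domain decomposition}: the decomposition of $\Sigma$ produces canonical pieces living in $F_D(\phi)$, $F_{P^j}(\phi)$ and $F_{\Sigma^j}(\phi)$ respectively, and the ambiguity in numbering the $P^j\cup\Sigma^j$ tails is precisely the $\Sym(s)$ action. Injectivity of the gluing maps (they are closed embeddings with normal bundles given by (\ref{eq:normal to glued}) and its closed analog) and uniqueness of the Lemma \ref{lem:fp domain decomposition} decomposition give the desired inverse. The main subtlety, which I would treat carefully, is the orbifold/stacky structure: one must verify that the isotropy groups match on both sides, with the extra $\zz/d^j$ automorphisms on the sphere components $F_{P^j}(\phi)$ accounting for the automorphisms of the $d^j$-fold branched covers, and the $\Sym(s)$ quotient on the source matching the automorphism of the glued configuration coming from permutations that preserve the isomorphism class of the tuple $(d^j,a^j,\tilde d^j,\lf^j)_j$.
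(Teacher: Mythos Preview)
Your proposal is correct and follows essentially the same route as the paper: reduce to a single factor, use the domain decomposition of Lemma \ref{lem:fp domain decomposition} to read off the profile, and invoke the gluing maps of \S\ref{subsec:closed and open fps} to build the inverse. The paper's own proof is a one-line reference to \S\ref{subsec:closed and open fps} together with the standard description of $\mm_{\lf^{\cc},\beta}^{\tb}$ by gluing components (as in \cite[Chapter 27]{clay-MS}); your write-up simply unpacks that reference, including the stacky bookkeeping with the $\zz/d^j$ isotropy and the $\Sym(s)$ quotient.
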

\begin{proof}
This follows from $\S$\ref{subsec:closed and open fps} and the well-known
description of $\mm_{\lf^{\cc},\beta}^{\tb}$ by gluing components
(see \cite[Chapter 27]{clay-MS}, for instance).
\end{proof}
We fix\footnote{The condition $\left(g_{\basic}^{r+r'+1}\right)^{-1}\left(F_{\vp_{+}}\right)\subset\partial F_{\vp}$
specifies $\vp$ up to the $\Sym\left(\vect s\right)$ action, and
we pick a representative.} a map $\cnt:\pc_{\basic}^{r,r'+1}\to\pc_{\basic}^{r,r'}$ such that
$\left(g_{\basic}^{r+r'+1}\right)^{-1}\left(F_{\vp_{+}}\right)\subset\partial F_{\cnt\vp_{+}}$.
In this case, we denote 
\[
\partial^{\vp_{+}}F_{\vp}:=\left(g_{\basic}^{r+r'+1}\right)^{-1}\left(F_{\vp_{+}}\right),
\]
We have $\cnt\tc\left(\vp_{+}\right)=\tc\left(\cnt\vp_{+}\right)$,
and we define more generally 
\[
\partial^{\vp_{+}}F_{\vp}^{S}=\left(g_{\tc\left(\vp_{+}\right)}^{S}\right)^{-1}\left(F_{\vp_{+}}^{S\cup\left\{ r+r'+1\right\} }\right).
\]

\subsection{\label{subsec:fp for generalized forms}Fixed-point formula for extended
Forms}

For $\vp\in\pc_{\basic}^{r,r'}$, we set $F_{\vp}=\left(\grave{\For}_{\basic}^{r+r',\left\{ r+1,...,r+r'\right\} }\right)^{-1}\left(\check{F}_{\vp}\right)$
(so we remember only the tails of the even-even edges) and $\widehat{F}_{\vp}=\left(\For_{\basic}^{r+r'}\right)^{-1}\left(\check{F}_{\vp}\right)$.
We let $F_{\vp}\xrightarrow{\grave{\For}_{\vp}}\check{F}_{\vp}$ be
the pullback of $\grave{\For}_{\basic}^{r+r',\left\{ r+1,...,r+r'\right\} }$.
It follows from Proposition \ref{prop:fp inverse image} that $\widehat{F}_{\vp}$
and $F_{\vp}$ are suborbifolds with corners of $\mm_{\basic}^{r,r'}$
and of $\mm_{\basic}^{r,r',\left\{ r+1,...,r+r'\right\} }$, respectively,
and that the pullback
\[
\widehat{F}_{\vp}\xrightarrow{\acute{\For}_{\vp}}F_{\vp}
\]
of $\acute{\For}_{\basic}^{r+r',\left\{ r+1,...,r+r'\right\} }$ is
a proper submersion. We use $\ff_{\basic}^{r}$ to orient the fibers
of $\acute{\For}_{\vp}$, and define 
\[
\xi_{\vp}=\left(-1\right)^{r\left(\vp\right)\cdot r'\left(\vp\right)}\left(\acute{\For}_{\vp}\right)_{*}\left(\ed_{1}^{\basic,r+r'}\times\cdots\times\ed_{r}^{\basic,r+r'}\right)|_{\widehat{F}_{\vp}}^{*}\left(\Lambda|_{L\times L\backslash\Delta}\right)^{\boxtimes r}.
\]
Since $\Lambda$ has odd degree, (\ref{eq:ff equivariance}) implies
that 
\begin{equation}
\xi_{\tau.\vp}=\sgn\left(\tau\right)\cdot\xi_{\vp}.\label{eq:xi Sym(r) sign}
\end{equation}
A fixed point profile $\vp$ is called \emph{odd-even} if $r'\left(\vp\right)=0$.
An odd-even fixed-point profile is called \emph{sorted }if $\tc\left(\vp\right)$
is sorted.
\begin{lem}
\label{lem:xi value}For $\vp\in\pc_{\basic}^{r,0}$ a sorted odd-even
fixed point profile we have
\[
\xi_{\vp}=\prod_{j=1}^{\no}\left(\frac{\beta_{j}^{0}\left(\vp\right)}{2}\cdot\left(-\frac{\lambda_{1}\cdots\lambda_{m}}{\alpha_{a_{j}}\left(\vp\right)}\right)\right)^{\deg\left(v_{j}\right)}\cdot o_{0}^{\boxtimes r},
\]
where $o_{0}$ is a local section of $\Or\left(TL\right)$ near $p_{0}$
chosen so $\rho_{0}|_{p_{0}}=\lambda_{1}\cdots\lambda_{m}\cdot o_{0}$,
and $\deg\left(v_{j}\right)$ is the number of edges incident to $v_{j}$
in $\tc=\tc\left(\vp\right)$. In particular, $\xi_{\vp}$ is independent
of the choice of $\Lambda$. 

Moreover, for any $r'\geq0$ and $\vp\in\pc_{\basic}^{r,r'}$ we have
$\xi_{\vp}=\xi_{\cnt^{r'}\phi}$, so if $\cnt^{r'}\vp$ is sorted\emph{,}
$\xi_{\vp}$ is given by the same formula.
\end{lem}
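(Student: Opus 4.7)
The plan is to factor $\xi_\vp$ as a product over the odd vertices of $\tc(\vp)$ and to reduce to a single per-edge equivariant integral of $\Lambda(-,p_0)$ over the fixed circle $\partial D_j = \pp_\rr(W_{a_j}) \subset L$. The identity $\xi_\vp = \xi_{\cnt^{r'}\vp}$ is immediate by inspection of the definition: both the integrand $\Lambda^{\boxtimes r}$ and the relative fiber of $\acute\For_\vp \colon \hat F_\vp \to F_\vp$ involve only the $r$ odd-even edges of $\tc(\vp)$, so contracting an even-even edge preserves all the data entering the construction. This reduces the main formula to the sorted case $r' = 0$.

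For sorted $\vp \in \pc_\basic^{r,0}$, the stack-fibration $\acute\For_\vp$ factors over odd vertices. Fix an odd vertex $v_j$ with $n = \deg(v_j)$ incident odd-even edges. Over the orbifold point $F_{D_j} = \pt/\zz_{\beta_j^0}$, the stack-fiber of $\acute\For_\vp$ is the product $(\partial D^2)^n$ parametrizing the tail positions on the $\beta_j^0$-fold source cover $u \colon \partial D^2 \to \partial D_j$, with the deck-group orbifold structure absorbed into the base. Since $\ev_{\sstar''_e} \equiv p_0$ for every odd-even edge $e$, the integrand restricts to $\prod_e u_e^* \iota^* \Lambda$ with $\iota(x) = (x, p_0)$, and Fubini combined with the degree-$\beta_j^0$ cover factor gives the $v_j$-contribution $\bigl(\beta_j^0 \int_{\partial D_j}^{\tb} \iota^* \Lambda\bigr)^n$. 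Taking the product over odd vertices and tensoring with $o_0^{\boxtimes r}$ (one $o_0$ per odd-even edge, from the $\Or(TL)$-twist of $\Lambda$ at its $p_0$ endpoint) reduces the claim to the per-edge identity
\[
\int_{\partial D_j}^{\tb} \iota^* \Lambda = -\frac{\lambda_1 \cdots \lambda_m}{2\alpha_{a_j}}\, o_0.
\]

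The main obstacle is this per-edge identity. Applying $\iota$ to $D\Lambda = -\widetilde{\pr}_2^* p_0^L$ and using that $\widetilde{\pr}_2 \circ \iota$ is the constant map to the fixed point $p_0$, one obtains $D(\iota^* \Lambda) = -\lambda_1 \cdots \lambda_m\, o_0$, a constant $\Or(TL)$-section, so $\iota^* \Lambda$ is equivariantly closed on $L \setminus \{p_0\}$ in the de Rham degree that contributes to the integral. Since the target formula does not involve $\Lambda$, the integral may be evaluated for any convenient choice; I would take $\Lambda$ averaged under the compact group $O(2m+1)$ acting diagonally on $L \times L$. Equivariant Stokes on a $\tb$-invariant 2-chain inside $\pp_\rr(W_{a_j} \oplus W_0) \simeq \rr\pp^2 \subset L$, bounded by $\partial D_j$ together with a small equivariant sphere around $p_0$, reduces the computation to the normalization of $\theta_\Delta$ and the equivariant Euler $e(T_{p_0}L) = \lambda_1 \cdots \lambda_m\, o_0$, divided by the residue weight $\alpha_{a_j}$ for the $\tb$-action tangent to the bounded chain inside $\cc\pp^1 = \pp_\cc(V_{a_j} \oplus V_0)$; the factor $1/2$ arises from the $\zz/2$ stabilizer of the $\tb$-action on $\pp_\rr(W_{a_j})$, equivalently from halving the $\cc\pp^1$ between $\partial D_j$ and the $\tb$-fixed point $p_{a_j}$. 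The hard part of executing this outline is tracking the $\Or(TL)$-twist along the non-orientable $\rr\pp^2$ and the compatibility with the orientation of the $\beta_j^0$-fold cover; once this is resolved, independence from $\Lambda$ is immediate since the right-hand side of the claim involves no $\Lambda$.
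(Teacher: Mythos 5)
Your overall skeleton coincides with the paper's: reduce the contraction statement $\xi_{\vp}=\xi_{\cnt^{r'}\vp}$ to an identification of the fibers of $\acute{\For}_{\vp}$ and $\acute{\For}_{\cnt^{r'}\vp}$, then for sorted odd-even $\vp$ factor the fiber integral over the odd vertices, each contributing the $\deg(v_{j})$-th power of a single integral of $\Lambda(-,p_{0})$ over the degree-$\beta_{j}^{0}$ cover of the invariant circle $\pp_{\rr}(W_{b_{j}})\subset L$. Two caveats. First, in the contraction step the identification of fibers ``respecting the integrand'' is not quite enough: one must also check the fiber \emph{orientations} used in $(\acute{\For}_{\vp})_{*}$ agree, which the paper deduces from the coherence of $\ff_{\basic}^{r}$; your ``immediate by inspection'' elides this.

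Second, and more seriously, your proposed evaluation of the per-edge integral has a circularity. You invoke $\Lambda$-independence to justify computing with a ``convenient'' $\Lambda$, but $\Lambda$-independence is part of the conclusion, not a hypothesis; and the specific convenient choice you propose (averaging $\Lambda$ over the diagonal $O(2m+1)$-action) is not available, since averaging destroys the defining property $D\Lambda=-\widetilde{\pr}_{2}^{*}p_{0}^{L}$ (the point class at $p_{0}$ is not $O(2m+1)$-invariant). The paper's route avoids all of this: restricted to the one-dimensional invariant locus $M_{j}=\pp(W_{b_{j}})\times\pp(W_{0})$, the equation $D\Lambda|_{M_{j}}=-\lambda_{1}\cdots\lambda_{m}\,\pr_{2}^{*}o_{0}$ splits by de Rham degree into $d\Lambda^{(0)}=0$ and $-\lambda_{b_{j}}\iota_{\xi_{b_{j}}}\Lambda^{(1)}=-\lambda_{1}\cdots\lambda_{m}\,o_{0}$, and since $\xi_{b_{j}}$ is nonvanishing on the circle the second equation determines $\Lambda^{(1)}|_{M_{j}}=\frac{\lambda_{1}\cdots\lambda_{m}}{\lambda_{b_{j}}}\frac{d\theta_{a_{j}}}{4\pi}\,o_{0}$ pointwise; only $\Lambda^{(1)}$ contributes to the fiber integral, so $\Lambda$-independence and the value $-\frac{\lambda_{1}\cdots\lambda_{m}}{2\alpha_{a_{j}}}o_{0}$ come out simultaneously, with no Stokes argument on a $2$-chain and no need to track $\Or(TL)$ over a non-orientable $\rr\pp^{2}$. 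You should replace your Stokes step by this direct determination of $\Lambda|_{M_{j}}$; the rest of your factorization then goes through.
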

\begin{proof}
For $1\leq j\leq\no=\no\left(\vp\right)$, Let $b_{j}=\min\left(a_{j},2m+1-a_{j}\right)$,
so $p_{a_{j}}\in\pp\left(W_{b_{j}}\otimes\cc\right)$. If $\sstar'_{i}\in\sigma_{\tc}\left(v_{j}\right)$
the map 
\[
\ed_{i}^{\basic,r}|_{\widehat{F}}:E\to L\times L
\]
factors through the inclusion 
\[
M_{j}:=\pp\left(W_{b_{j}}\right)\times\pp\left(W_{0}\right)\to L\times L,
\]
so the Cartan-Weil differential satisfies $D|_{M_{j}}=-\lambda_{b_{j}}\iota_{\xi_{b_{j}}}$
(here $\iota_{\xi_{b_{i}}}$ denotes contraction with the infinitesimal
generator of the $\tb$ action on $L\times L$). We have 
\[
D\Lambda|_{M_{j}}=-\pr_{2}^{*}\rho_{0}|_{M_{j}}=-\lambda_{1}\cdots\lambda_{m}\pr_{2}^{*}o_{0}
\]
and so 
\[
\Lambda|_{M_{j}}=\frac{\lambda_{1}\cdots\lambda_{m}}{\lambda_{b_{j}}}\cdot\frac{d\theta_{a_{j}}}{4\pi}\cdot\pr_{2}^{*}o_{0}
\]
where $d\theta_{a_{j}}$ is an angular form for $\pp_{\rr}\left(W_{b_{j}}\right)$,
oriented as the boundary of the unit disc in $V_{2m+1-a_{j}}\subset\pp_{\cc}\left(W_{b_{j}}\otimes\cc\right)$.
Simple degree considerations show that $\xi_{\vp}$ has de Rham degree
zero, and is a locally constant form given by integration on the top
dimensional strata of the fiber of $\hat{F}\xrightarrow{\acute{\For}_{\vp}}F_{\vp}$,
which has a dense open subset of the form $\prod_{j=1}^{\no}\left(\partial\Sigma_{j}^{0}\right)^{\deg\left(v_{j}\right)}$.
Pulling back $\prod\Lambda|_{M_{j}}^{\boxtimes\deg\left(v_{j}\right)}$
and integrating (cf. Proposition \ref{prop:sorted is good}) we obtain
the result.

We check that for $\vp\in\pc_{\basic}^{r,r'}$, $\xi_{\vp}=\xi_{\cnt^{r'}\vp}$.
There's an obvious identification of the fibers of $\acute{\For}_{\vp}$
and $\acute{\For}_{\cnt^{r'}\vp}$ which respects the integrand, so
it suffices to check that the orientations are the same. This follows
from coherence of $\ff_{\basic}^{r}$, see \cite[Proposition 21]{equiv-OGW-invts}.
\end{proof}

For $\vp\in\pc_{\basic}^{r,r'}$ and $S\subset\left\{ r+1,...,r+r'\right\} $,
let $F_{\vp}^{S}:=\left(\grave{\For}_{\basic}^{r+r,S}\right)^{-1}\check{F}_{\vp}$
(cf. Proposition \ref{prop:fp inverse image}) and let $N_{\vp}^{S}\xrightarrow{\pi_{\vp}^{S}}F_{\vp}^{S}$
be the normal l-bundle associated with the closed embedding $F_{\vp}^{S}\to\mm_{\tc\left(\vp\right)}^{S}$.
We denote $N_{\vp}=N_{\vp}^{\left\{ r+1,...,r+r'\right\} }$ and $\check{N}_{\vp}=N_{\vp}^{\emptyset}$.
Consider the short exact sequence associated with smoothing the $\left(\star_{i,j},\star'_{i,j}\right)$
nodes (cf. (\ref{eq:normal to glued}))
\begin{equation}
0\to M_{\vp}^{S}\xrightarrow{\mu_{\vp}^{S}}N_{\vp}^{S}\xrightarrow{\nu_{\vp}^{S}}\check{S}_{\vp}^{S}\to0,\label{eq:M-N-S ses}
\end{equation}
where 
\begin{equation}
S_{\vp}=\bigoplus_{i=\no+1}^{r+1}\bigoplus_{j=1}^{s_{i}}\mathbb{L}_{\star_{i,j}}^{\vee}\boxtimes\mathbb{L}_{\star'{}_{i,j}}^{\vee}=\bigoplus_{i,j}\mathbb{L}_{\star_{i,j}}^{\vee}\left(\frac{-\alpha_{a_{i}^{j}}}{d_{i}^{j}}\right).\label{eq:S_Gamma direct sum decomposition}
\end{equation}

Let $\gamma_{\vp_{+}}^{S}:N_{\vp}^{S}\to N_{\vp_{+}}^{S\coprod\left\{ r+r'+1\right\} }$
and $\grave{f}_{\vp}^{S}:N_{\vp}^{S}\to\check{N}_{\vp}$ denote the
linearizations of $g_{\tc\left(\vp_{+}\right)}^{S}$ and $\grave{\For}_{\vp_{+}}^{S}$
respectively. The sequences (\ref{eq:M-N-S ses}) and the decomposition
(\ref{eq:S_Gamma direct sum decomposition}) are natural with respect
to the forgetful maps and attaching maps. In particular, for every
$\vp_{+}\in\pc_{\basic}^{r,r'+1}$ with $\cnt\left(\vp_{+}\right)=\vp\in\pc_{\basic}^{r,r'}$
we have maps of short exact sequences
\begin{equation}
\xymatrix{0\ar[r] & \partial^{\vp_{+}}\check{M}_{\vp}\ar[r]\ar[d]_{\gamma_{M_{\vp_{+}}}^{\emptyset}} & \partial^{\vp_{+}}\check{N}_{\vp}\ar[r]\ar[d]^{\gamma_{\vp_{+}}^{\emptyset}} & \partial^{\vp_{+}}\check{S}_{\vp}\ar[r]\ar[d]^{\gamma_{S_{\vp_{+}}}^{\emptyset}} & 0\\
0\ar[r] & M_{\vp_{+}}^{\left\{ R\right\} }\ar[d]_{\grave{f}_{M_{\vp_{+}}}^{\left\{ R\right\} }}\ar[r]^{\mu_{\vp_{+}}^{\left\{ R\right\} }} & N_{\vp_{+}}^{\left\{ R\right\} }\ar[d]^{\grave{f}_{\vp_{+}}^{\left\{ R\right\} }}\ar[r]^{\nu_{\vp_{+}}^{\left\{ R\right\} }} & S_{\vp_{+}}^{\left\{ R\right\} }\ar[d]^{\grave{f}_{S_{\vp_{+}}}^{\left\{ R\right\} }}\ar[r] & 0\\
0\ar[r] & \check{M}_{\vp_{+}}\ar[r] & \check{N}_{\vp_{+}}\ar[r] & \check{S}_{\vp_{+}}\ar[r] & 0
}
\label{eq:M-N-S check gamma}
\end{equation}
where $R=r+r'+1$. We set $\check{\gamma}_{\vp_{+}}:=\grave{f}_{\vp_{+}}^{\left\{ R\right\} }\gamma_{\vp_{+}}^{\emptyset},\,\check{\gamma}_{M_{\vp_{+}}}=\grave{f}_{M_{\vp_{+}}}^{\left\{ R\right\} }\gamma_{M_{\vp_{+}}}^{\emptyset}$
and $\check{\gamma}_{S_{\vp_{+}}}=\grave{f}_{S_{\vp_{+}}}^{\left\{ R\right\} }\gamma_{S_{\vp_{+}}}^{\emptyset}$. 

Let
\begin{equation}
N_{0}:=\left(N_{\Delta}\right)_{\left(p_{0},p_{0}\right)}\simeq T_{p_{0}}L,\label{eq:N_0 def}
\end{equation}
be the fiber of the normal bundle to the diagonal $L\xrightarrow{\Delta}L\times L$,
and for $r+i\in S$ let
\[
\delta_{\vp}^{i,S}:N_{\vp}\to N_{0}\times F_{\vp}^{S}
\]
be the linearization of $\ed_{i}^{\basic,r+r'}\circ\acute{\For}_{\basic}^{r,S}$.
We set $\delta_{M_{\vp}}^{i,S}=\delta_{\vp}^{i,S}\,\mu_{\vp}^{i,S}$.
Since $M_{\vp}$ corresponds to nodal configurations, for which the
even degree discs map to a point, there exists a map 
\[
{\check{\delta}_{M_{\vp}}^{i}=\delta_{M_{\vp}}^{i,\emptyset}:M_{\vp}\to N_{0}\times\check{F}_{\vp}}
\]
such that $\delta_{M_{\vp}}^{i,S}$ is the pullback of $\check{\delta}_{M_{\vp}}^{i}$.
To be precise, ${\left(\id\times\grave{\For}_{\vp}^{i,S}\right)\delta_{M_{\vp}}^{i,S}=\check{\delta}_{M_{\vp}}^{i}\grave{f}_{M_{\vp}}^{S}}$.
It follows from that there's a cartesian map 
\[
h_{M_{\vp_{+}}}:\partial^{\vp_{+}}\check{M}_{\vp}\to\ker\left(\check{\delta}_{M_{\vp_{+}}}^{r'+1}\right)
\]
such that $i^{\ker\check{\delta}_{M_{\vp_{+}}}^{r'+1}}h_{M_{\vp_{+}}}=\grave{f}_{\vp_{+}}^{\left\{ r'+1\right\} }\gamma_{M_{\vp_{+}}}$.
We emphasize that for general $\vp$, $\delta_{\vp}^{i}$ is \emph{not
}the pullback of any map $\check{N}_{\vp}\to N_{0}\times\check{F}_{\vp}$.

Recall that if $E\xrightarrow{}B$ is a $\tb$-equivariant bundle,
with $S\left(E\right)\xrightarrow{\pi_{S}}B$ the associated sphere
bundle, a form
\begin{equation}
\theta\in\Omega\left(S\left(E\right),\pi_{S}^{*}\Or\left(E\right)\otimes\rr\left[\vec{\lambda}\right]\right)^{\tb}\label{eq:N's angular form}
\end{equation}
is an \emph{equivariant angular form }if $\pi_{S*}\theta=1$ and $D\theta=-\pi_{S}^{*}e$
for some form\emph{ }$e$ on $B$ which we call the \emph{equivariant
Euler form} associated with $\theta$ (see \cite{twA8}; the discussion
extends to the case where the base is an orbifold; here we also allow
the rank of the bundle $E$, and the degrees of the corresponding
forms, to vary). If the action on $B$ is trivial and the action
on $E\backslash B$ is fixed-point free, then $e$ is invertible.

Consider some odd-even $\vp\in\pc_{\basic}^{r,0}$. We say an equivariant
Euler $e\left(M_{\vp}\right)$ for $M_{\vp}=\check{M}_{\vp}$ is \emph{canonical
}if it is associated with an equivariant angular form $\theta_{M}$
on $S\left(M_{\vp}\right)$ such that for all $\vp_{+}$ with $\cnt\vp_{+}=\vp$,
we have 
\begin{equation}
\left(i_{S\left(M_{\vp}\right)}^{\partial^{\vp_{+}}}\right)^{*}\theta_{M}\in\im S\left(h_{M_{\vp_{+}}}\right){}^{*}.\label{eq:can theta_M}
\end{equation}
Here $S\left(h_{M_{\vp_{+}}}\right):\partial^{\vp_{+}}S\left(M_{\vp}\right)\to S\left(\ker\check{\delta}_{M_{\vp_{+}}}^{r'+1}\right)$
is the map of sphere bundles induced from the cartesian map $h_{M_{\vp_{+}}}$.
We say an equivariant Euler form $e\left(S_{\vp}\right)$ for $S_{\vp}$
is canonical if it is associated with an angular form $\theta_{S}$
on $S\left(S_{\vp}\right)$ such that for all $\vp_{+}$ with $\cnt\vp_{+}=\vp$
we have 
\begin{equation}
\left(i_{S\left(\check{S}_{\vp}\right)}^{\partial^{\vp_{+}}}\right)^{*}\theta_{S}\in\im S\left(\check{\gamma}_{S_{\vp_{+}}}\right).\label{eq:can theta_S}
\end{equation}

\begin{thm}
\label{thm:general fixed point formula}Let $\basic$ be a basic moduli
specification and let $\omega=\left\{ \check{\omega}_{r}\right\} \in\Omega_{\basic}$
be an extended form with $D\omega=0$. For every $r\geq0$ and $\vp\in\pc_{\basic}^{r,0}$,
canonical Euler forms $e\left(M_{\vp}\right)$ and $e\left(S_{\vp}\right)$
exist, and for any choice of such forms we have

\begin{equation}
\int_{\basic}\omega=\sum_{r\geq0}\sum_{\vp\in\pc_{\basic}^{r,0}}\frac{\xi_{\vp}}{r!\boldsymbol{s}\left(\vp\right)!}\int_{\check{F}_{\vp}}e\left(M_{\vp}\right)^{-1}e\left(S_{\vp}\right)^{-1}\check{\omega}_{r}.\label{eq:general fp formula}
\end{equation}
where $\boldsymbol{s}\left(\vp\right)!:=\prod s_{i}\left(\vp\right)!$
\end{thm}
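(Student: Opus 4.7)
The plan is to follow the three-stage program sketched in the introduction: (i) a Stokes-based steepest-descent argument localizes $\int_\basic \omega$ to a neighborhood of the $\tb$-fixed locus of $\coprod_r \check\mm_\basic^r$, (ii) this produces a singular localization formula expressing the integral as a sum over fixed-point profiles of integrals over blow-ups of normal bundles, with a singular kernel pulled back from $\Lambda$, and (iii) a resummation by odd-even skeleton converts these singular contributions into the regular Euler-form contributions of (\ref{eq:general fp formula}). As a preliminary, I would establish the existence of canonical Euler forms $e(M_\vp)$ and $e(S_\vp)$ by inducting on the number of even-even edges in $\tc(\vp)$: the compatibility (\ref{eq:M-N-S check gamma}) of the exact sequences (\ref{eq:M-N-S ses}) under further corner degenerations, together with the cartesian description of $h_{M_{\vp_+}}$ and $\check\gamma_{S_{\vp_+}}$, allows partition-of-unity extensions stratum by stratum, in direct parallel with the proof of Claim \ref{claim:canangular form existence}.

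For the localization proper, I would choose an extended form $\eta \in \Omega_\basic$ (built from a $\tb$-invariant metric and the action vector fields $\xi_j$) whose component $\check\eta_r$ has nondegenerate quadratic part transverse to $\check F_\basic^r = (\check\mm_\basic^r)^\tb$. Setting
\[
I(t) = \int_\basic e^{tD\eta}\omega = \sum_{r \geq 0}\frac{1}{r!}\int_{\widetilde\mm_\basic^r}e^{tD\tilde\eta_r}\,\tilde\omega_r\,\Lambda^{\boxtimes r},
\]
we have $I(t) = I(0) = \int_\basic \omega$ by Stokes (\ref{eq:stokes}) and the assumption $D\omega = 0$. Sending $t \to \infty$ concentrates each level on a tubular neighborhood of $F_\basic^r \hookrightarrow \mm_\basic^r$, and a uniform steepest-descent estimate (the general framework being the subject of Section \ref{sec:steepest descent}) gives
\[
\int_\basic \omega = \sum_{r \geq 0}\frac{1}{r!}\int_{\widetilde N_\basic^r}e^{D\tilde\eta_r^{(2)}}\,\tilde\rho^*\bigl(\tilde\omega_r \boxtimes \Lambda^{\boxtimes r}\bigr),
\]
with $\tilde\rho$ the retraction to the fixed locus. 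Fiberwise Berezin integration in the normal directions expands each summand as a sum over fixed-point profiles $\vp$ of a ratio $\check\omega_{r(\vp)}/e(\check N_\vp)$ multiplied by the (possibly singular) pullback of $\Lambda^{\boxtimes r}$.

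The hard part is the third step, because $\tilde\rho^*\Lambda^{\boxtimes r}$ is singular along the diagonal preimages: each even-even edge of $\tc(\vp)$ contributes an angular factor $\theta_0$ preventing termwise evaluation. To resolve this, I would group profiles by their odd-even skeleton $\underline\vp = \cnt^{r'(\vp)}\vp \in \pc_\basic^{r,0}$ and show that the partial sum
\[
C_{\underline\vp} = \sum_{\vp \,:\, \cnt^{r'(\vp)}\vp \,=\, \underline\vp}\text{(singular contribution of } \vp\text{)}
\]
can be rewritten as a well-defined integral $\int_{P_{\underline\vp}}\Upsilon$ over the assembled space $P_{\underline\vp} = \bigcup_{c \geq 0}\partial^c\check F_{\underline\vp} \times_{\Sym(c)}[0,1]^c$, where the angular contributions at different corners glue to an integrand $\Upsilon$ representing a well-defined cohomology class relative to $\partial P_{\underline\vp}$. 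The boundary conditions (\ref{eq:can theta_M}) and (\ref{eq:can theta_S}) are precisely what is needed to match $\Upsilon$ with the pairing of $\check\omega_r$ against $e(M_{\underline\vp})^{-1}e(S_{\underline\vp})^{-1}$, yielding
\[
C_{\underline\vp} = \frac{\xi_{\underline\vp}}{r!\,\vect s(\underline\vp)!}\int_{\check F_{\underline\vp}}e(M_{\underline\vp})^{-1}e(S_{\underline\vp})^{-1}\check\omega_r,
\]
after which summing over $\underline\vp \in \pc_\basic^{r,0}$ and $r \geq 0$ produces (\ref{eq:general fp formula}).

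The main obstacles are therefore twofold. First, one must verify that the corner-by-corner gluing in $P_{\underline\vp}$ really assembles to a form $\Upsilon$ with the claimed relative-cocycle property, using the compatibility between the even-even smoothing maps $\check\gamma_{S_{\vp_+}}$ and the canonical boundary conditions; this is the content of the resummation theorem in Section \ref{sec:Resummation}. Second, one must carefully track orientations and combinatorial factors: the sign in Proposition \ref{prop:sorted is good}(b), the antisymmetry (\ref{eq:xi Sym(r) sign}) of $\xi_\vp$ under $\Sym(r)$, the outward normal convention in $\mathcal{G}_\basic^{r+1}$, and the $\Sym(\vect s)$-symmetries producing the denominators $\vect s(\underline\vp)!$ must all conspire to give the stated formula.
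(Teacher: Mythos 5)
Your proposal follows exactly the paper's own three-stage strategy: the Stokes-invariant family $I(t)=\int_\basic e^{tD\eta}\omega$ with a localizing extended form $\eta$, the steepest-descent limit yielding the singular localization formula of Theorem \ref{thm:singular localization formula}, and the resummation over odd-even skeletons $\underline\vp$ via the assembled corner spaces, which is precisely Proposition \ref{thm:resummation}. The approach and the identified technical obstacles (canonical Euler forms, gluing of angular contributions at corners, sign and symmetry bookkeeping) match the paper's proof in Sections \ref{sec:steepest descent}--\ref{sec:Resummation}.
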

\begin{cor}
\label{cor:Lambda independence}The integral $\int_{\basic}\omega$
is independent of the choice of equivariant homotopy kernel $\Lambda$.
\end{cor}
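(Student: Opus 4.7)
The plan is to read the corollary directly off the fixed-point formula of Theorem~\ref{thm:general fixed point formula}. Under the hypothesis $D\omega=0$ (implicit for the claim to be meaningful, since the integrand in (\ref{eq:extended integral}) manifestly involves $\Lambda^{\boxtimes r}$), that theorem rewrites $\int_{\basic}\omega$ as the sum (\ref{eq:general fp formula}), so the entire argument reduces to inspecting each factor on the right-hand side and verifying that none depends on $\Lambda$.

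First, I would appeal to Lemma~\ref{lem:xi value}, which gives a closed expression for $\xi_{\vp}$ on sorted odd-even profiles and, via the equality $\xi_{\vp}=\xi_{\cnt^{r'}\vp}$, on general profiles as well. The resulting formula is built from $\beta_j^0(\vp)$, $a_j(\vp)$, $\deg(v_j)$, the equivariant parameters $\lambda_i$, and the local orientation section $o_0$ of $\Or(TL)$ at $p_0$---all intrinsic data, with no reference to $\Lambda$. Next, the canonical Euler forms $e(M_{\vp})$ and $e(S_{\vp})$ are singled out by the boundary conditions (\ref{eq:can theta_M})--(\ref{eq:can theta_S}), which are phrased in terms of the linearized gluing and forgetful maps $\check{\gamma}_{M_{\vp_+}}$, $\check{\gamma}_{S_{\vp_+}}$ and the cartesian map $h_{M_{\vp_+}}$; these are intrinsic to the moduli/fixed-point data and do not see $\Lambda$. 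Finally, $\check{\omega}_r$ is by definition a component of the extended form $\omega$, again independent of $\Lambda$.

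Combining these observations, for any two equivariant homotopy kernels $\Lambda_0, \Lambda_1$ the theorem produces identical expressions on the right-hand side of (\ref{eq:general fp formula}); hence the two corresponding integrals $\int_{\basic}^{\Lambda_0}\omega$ and $\int_{\basic}^{\Lambda_1}\omega$ must coincide. I do not anticipate any significant obstacle: granted the fixed-point formula, the corollary is essentially a matter of tracing through definitions. The only points requiring minor care are verifying that canonical Euler forms exist for any choice of $\Lambda$ (which is part of the statement of Theorem~\ref{thm:general fixed point formula}) and recalling that, by Stokes' theorem (\ref{eq:stokes}), the integral $\int_\basic$ descends to $\Omega_{\basic}/D\Omega_{\basic}$, so the hypothesis $D\omega=0$ is both the natural and the necessary setting in which this invariance statement is to be understood.
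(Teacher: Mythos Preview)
Your argument is correct and matches the paper's intent: the corollary is stated immediately after Theorem~\ref{thm:general fixed point formula} without a separate proof, precisely because the right-hand side of (\ref{eq:general fp formula}) is manifestly independent of $\Lambda$ once one invokes Lemma~\ref{lem:xi value} for $\xi_{\vp}$. Your careful enumeration of why each remaining factor ($e(M_{\vp})$, $e(S_{\vp})$, $\check\omega_r$) is $\Lambda$-independent is exactly the intended reading.
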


\begin{rem}
\label{rem:sum over orbits}We can rewrite (\ref{eq:general fp formula})
as a sum over isomorphism types of fixed-point profiles. More precisely,
choose a subset $\wc\subset\pc_{\basic}$ of an even-odd profiles
such that, for every $r\geq0$, $\tc\in\ts_{\basic}^{r,0}$ with $\no$
odd vertices, and $\boldsymbol{s}=\left(s_{\no+1},...,s_{r+1}\right)\in\zz_{\geq0}^{r-\no+1}$,
$\wc$ intersects every orbit of the $\Sym\left(r\right)\times\Sym\left(\boldsymbol{s}\right)$
action on $\pc_{\boldsymbol{s}}\left(\tc\right)$ in precisely one
element. By (\ref{eq:check J equivariance}) and (\ref{eq:xi Sym(r) sign})
we can rewrite (\ref{eq:general fp formula}) as
\[
\int_{\basic}\omega=\sum_{\vp\in\wc}\frac{\xi_{\vp}}{\Aut\vp}\int_{\check{F}_{\vp}}e\left(M_{\vp}\right)^{-1}e\left(S_{\vp}\right)^{-1}\check{\omega}_{r}.
\]
where $\Aut\vp<\Sym\left(r\right)\times\prod\Sym\left(s_{i}\left(\vp\right)\right)$
is the stabilizer subgroup of $\vp$. This has obvious computational
advantages.
\end{rem}

\subsection{\label{subsec:Equivariant-open-Gromov-Witten}Computing equivariant
open Gromov-Witten invariants}

In this section we apply Theorem \ref{thm:general fixed point formula}
to compute the equivariant open Gromov-Witten invariants. We write
an explicit formula for the fixed-point contributions in Proposition
\ref{prop:ogw contribution by fp profile}, use this to compute a
few examples, and finally derive the generating function expression
presented in Theorem \ref{thm:open localization}.

\subsubsection{Fixed-point contributions}

For $\omega$ given by (\ref{eq:ogw omega}) the contributions to
(\ref{eq:general fp formula}) can be computed in terms of the closed
correlators and descendent integrals, as follows.

If $f$ is a power series in $x,y,...$ we use the notation $f\left[x^{a}y^{b}\cdots\right]=\left(\partial_{x}^{a}\partial_{y}^{b}f\right)_{x=0,y=0,...}$.
This gives the coefficient of $\frac{x^{a}}{a!}\frac{y^{b}}{b!}...$.
Let $\mathcal{P}_{\basic}^{+}\subset\pc_{\basic}$ denote the set
of odd-even\emph{ }fixed point profiles such that $\forall j\in\coprod_{i=\no+1}^{r+1}\lf_{i}^{0}$
we have $d_{j}=0$. The contribution of $\check{F}_{\vect\phi}$ vanishes
unless $\vect\phi\in\mathcal{P}_{\basic}^{+}$, since $\eta|_{p_{0}}=\rho_{\tb}\left(\alpha_{0}\right)=0$.
Set $k_{i}=\left|\kf_{\Gamma}\left(v_{i}\right)\right|,l_{i}=\left|\lf_{i}^{0}\right|+s_{i}$. 
\begin{prop}
\label{prop:ogw contribution by fp profile}If $\vect\phi\in\mathcal{P}_{\basic}^{+}$
is sorted, we have

\begin{multline}
\xi_{\vp}\int_{\check{F}_{\vp}}e\left(M_{\vp}\right)^{-1}e\left(S_{\vp}\right)^{-1}\check{\omega}_{r}=\\
\left(-1\right)^{\binom{\no}{2}\left(1+m\right)}\prod_{i=\no+1}^{r+1}\left(\frac{\left(\lambda_{1}\cdots\lambda_{m}\right)^{\hat{\kf}_{i}}}{\lambda_{1}\cdots\lambda_{m}}\sum_{\vect b\in\zz_{\geq0}^{s_{i}}}\prod_{j=1}^{s_{i}}\left(-\frac{d_{i}^{j}}{\alpha_{a_{i}^{j}}}\right)^{b_{j}+1}F_{0}^{o}\left[t_{b_{1}}\cdots t_{b_{s_{i}}}s_{0}^{\left|\kf_{i}\right|}\right]\right)\times\\
\times\left(\prod_{i=\no+1}^{r+1}\prod_{j=1}^{s_{i}}\frac{\left(-1\right)^{d_{i}^{j}}\left(d_{i}^{j}\right)^{2d_{i}^{j}-1}}{\left(d_{i}^{j}!\right)^{2}\left(\alpha_{a_{i}^{j}}\right)^{2d_{i}^{j}}}\cdot\frac{\prod_{a=1}^{2m}\alpha_{a}}{\prod_{\begin{array}[t]{c}
a'\neq a_{i}^{j},0\\
0\leq c\leq d_{i}^{j}
\end{array}}\left(\frac{c}{d_{i}^{j}}\alpha_{a_{i}^{j}}-\alpha_{a'}\right)}\right)\times\\
\times\left(\prod_{i=1}^{\no}\frac{\left(\beta_{i}^{0}\right)^{\beta_{i}^{0}-1}}{\beta_{i}^{0}!\left(2\alpha_{a_{i}}\right)^{\beta_{i}^{0}}\prod_{\begin{array}[t]{c}
a'\neq a_{i},2m+1-a_{i}\\
0\leq c\leq\frac{\beta_{i}^{0}-1}{2}
\end{array}}\left(\frac{2c-\beta_{i}^{0}}{\beta_{i}^{0}}\alpha_{a_{i}}-\alpha_{a'}\right)}\cdot\left(-\frac{\beta_{i}^{0}}{2}\frac{\lambda_{1}\cdots\lambda_{m}}{\alpha_{a_{i}}}\right)^{\deg\left(v_{i}\right)}\right)\times\\
\times\prod_{\begin{array}[t]{c}
1\leq i\leq r+1\\
1\leq j\leq s_{i}
\end{array}}\left(2\tilde{d}_{i}^{j}\right)!^{-1}Z_{a_{i}^{j}}\left(\hbar=\alpha_{a_{i}^{j}}/d_{i}^{j}\right)\left[q^{2\tilde{d}_{i}^{j}}\prod_{x\in\lf_{i}^{j}}\eta_{e_{x}}\right]\label{eq:F_phi^Gamma contribution-2}
\end{multline}
Here $\deg\left(v_{i}\right)$ is the number of edges of $\tc\left(\vp\right)$
incident to $v_{i}$. $\hat{\kf}_{i}:=\kf_{i}\backslash\left\{ \sstar''_{\bullet}\right\} $,
so $\kf=\coprod_{i=1}^{r_{0}}\hat{\kf}_{i}$. 
\end{prop}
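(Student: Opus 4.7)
The plan is to split the integral $\xi_\vp\int_{\check F_\vp}e(M_\vp)^{-1}e(S_\vp)^{-1}\check\omega_r$ as a product over the irreducible factors appearing in the decompositions (\ref{eq:even discs mod and fp})--(\ref{eq:odd fp component-2}) of $\check F_\vp$, and then to evaluate each factor separately as either an open descendent integral, a closed constraint correlator, or a direct Kontsevich-style contribution at an isolated rigid fixed point. The factorization of $\check F_\vp$ is supplied by Lemma \ref{lem:fps by profiles}, and the overall sign $(-1)^{(1+m)\binom{\no}{2}}$ comes from Proposition \ref{prop:sorted is good}(b), comparing the local-system orientation induced by $\check\jc^r_\basic$ with the product of vertex-by-vertex orientations $\bigotimes\check\jc^\emptyset_{\sfr_\tc(v_i)}$. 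The form $\check\omega_r$ factors through the evaluation maps by the defining formula (\ref{eq:omega of ogw invts}); $e(S_\vp)^{-1}$ factors over nodes because $S_\vp$ is the direct sum (\ref{eq:S_Gamma direct sum decomposition}); and $\xi_\vp$ is evaluated directly from Lemma \ref{lem:xi value}, producing the $(-\frac{\beta_i^0}{2}\lambda_1\cdots\lambda_m/\alpha_{a_i})^{\deg v_i}$ contribution and an overall $o_0^{\boxtimes r}$ that is integrated away at the end.

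For the disc factor at an even vertex $i>\no$, the pullback $(\ev^*p_0^L)|_{p_0}=\lambda_1\cdots\lambda_m\cdot o_0$ supplies the $(\lambda_1\cdots\lambda_m)^{|\hat\kf_i|}$ term, while $(\evi^*\hh^d)|_{p_0}=\alpha_0^d$ vanishes unless $d=0$, which is exactly why we restricted to $\vp\in\pc^+_\basic$; each surviving interior marking therefore contributes $\eta_0$. For each sphere bubble at $\star_{i,j}$, the corresponding factor of $e(S_\vp)^{-1}$ expands as a geometric series
\[
(-\psi_{\star_{i,j}}-\alpha_{a_i^j}/d_i^j)^{-1}=\sum_{b\ge 0}(-d_i^j/\alpha_{a_i^j})^{b+1}\psi_{\star_{i,j}}^{b},
\]
and what remains is an integral of $\psi$-monomials on $\overline{\mm}_{0,\kf_i,\lf_i^0\sqcup\{\star_{i,\bullet}\}}$, which by Definition \ref{def:open descendent integrals} and the normalization (\ref{eq:descendent generating func}) equals $F_0^o[t_{b_1}\cdots t_{b_{s_i}}s_0^{|\kf_i|}]$, the two $2^{(k-1)/2}$ factors canceling. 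The residual $1/(\lambda_1\cdots\lambda_m)$ appears from integrating the $o_0$'s through the ambient $L$-factor in (\ref{eq:even discs mod and fp}). For the closed bubbles $P_i^j$, the sphere trees $\Sigma_i^j$, and the odd discs $D_i$, I apply standard $\tb^\cc$ Atiyah--Bott localization as in \cite[Ch.~27]{clay-MS}: the rigid cover $P_i^j$ yields the Kontsevich weight $(-1)^{d_i^j}(d_i^j)^{2d_i^j-1}/((d_i^j!)^2\alpha_{a_i^j}^{2d_i^j}\prod(\frac{c}{d_i^j}\alpha_{a_i^j}-\alpha_{a'}))$; the node-smoothing factor $\hbar-\psi_{n+1}$ in (\ref{eq:correlator def}), with $\hbar=\alpha_{a_i^j}/d_i^j$ the equivariant weight of the tangent line to $P_i^j$ at $\star'_{i,j}$, packages the $\Sigma_i^j$ integral into $(2\tilde d_i^j)!^{-1}Z_{a_i^j}(\alpha_{a_i^j}/d_i^j)[q^{2\tilde d_i^j}\prod_{x\in\lf_i^j}\eta_{e_x}]$; and the odd disc $D_i$ contributes its weight by the analogous Kontsevich bookkeeping, combined with $1/\beta_i^0$ from $\Aut(D_i)=\zz/\beta_i^0$. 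The unstable edge cases are handled uniformly by the convention (\ref{eq:unstable convention-1}).

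The step requiring genuine care will be verifying that the canonical boundary conditions (\ref{eq:can theta_M})--(\ref{eq:can theta_S}) imposed on $e(M_\vp)$ and $e(S_\vp)$ in Theorem \ref{thm:general fixed point formula} restrict on each disc factor $F_{D_i}(\vp)$ to canonical angular forms in the sense of Claim \ref{claim:canangular form existence}, since that canonicity is what makes the open descendent integral well defined and equal to the value extracted from $F_0^o$. This will be read off from the compatibility diagram (\ref{eq:M-N-S check gamma}): for odd-even $\vp$ the map $\check\gamma_{S_{\vp_+}}$ factors through the forgetful structure (\ref{eq:descendent bdry forget}) associated with smoothing a boundary node of $D_i$, so pulling back a canonical $\theta_S$ under this identification produces a canonical angular form upstairs. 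Once this identification is in hand, assembling the factors from the two previous paragraphs and factoring out the $\vect s(\vp)!$ produced by the $\Sym(\vect s)$-quotient in Lemma \ref{lem:fps by profiles} yields the right-hand side of (\ref{eq:F_phi^Gamma contribution-2}).
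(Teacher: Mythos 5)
Your proposal follows essentially the same route as the paper's proof: the sign from Proposition \ref{prop:sorted is good}, the vertex-by-vertex factorization via Lemma \ref{lem:fps by profiles}, the identification of the $e\left(S_{\vp}\right)^{-1}$ geometric-series expansion with coefficients of $F_{0}^{o}$, the closed correlators from the sphere trees, the Kontsevich weights for the covers $P_{i}^{j}$ and odd discs, and $\xi_{\vp}$ from Lemma \ref{lem:xi value}; you also correctly isolate the one genuinely delicate point, namely that the canonical boundary conditions (\ref{eq:can theta_S}) restrict to canonical angular forms in the sense of the descendent theory, which the paper handles by building $\theta_{S}$ as a join of canonical angular $1$-forms compatible with (\ref{eq:descendent bdry forget}). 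The only cosmetic difference is in the bookkeeping of $e\left(M_{\vp}\right)$: the paper extracts the factors $\left(\lambda_{1}\cdots\lambda_{m}\right)^{-1}$ per even disc and $\prod_{a}\alpha_{a}$ per node from the explicit short exact sequence $0\to\check{M}_{\vp}\to T_{p_{0}}L^{\oplus n_{\text{even}}}\boxplus N_{\mathcal{R}_{\vp}}\to\left(T_{p_{0}}X\right)^{N}\to0$ split via the $GL\left(2m+1,\cc\right)^{N}$ action, which is equivalent to your attribution of these factors to the ambient $L$-factor and the node-smoothing normal directions.
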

\begin{proof}
We discuss orientations. The sign $\left(-1\right)^{\binom{\no}{2}\left(1+m\right)}$
comes from Proposition \ref{prop:sorted is good}, and allows us to
treat each moduli factor $\check{\mm}_{\sfr}$ separately. Consider
some $\sfr=\left(\left(\kf,\lf,\beta\right),\sigma\right)$. If $\beta=1\mod2$
then $\check{\jc}_{\sfr}$ determines an orientation for $\Or\left(T\check{\mm}_{\sfr}\right)$.
If $\beta=0\mod2$ we use the local section $o_{0}$ of $\Or\left(TL\right)|_{p_{0}}$
to orient $\bigotimes_{x\in\kf}\ev_{x}^{-1}\Or\left(TL\right)|_{\check{F}_{\sfr}}$
and then $\check{\jc}_{\sfr}$ determines an orientation for $\Or\left(T\check{\mm}_{\sfr}\right)$.
The fixed-point components $F_{P_{i}^{j}},F_{\Sigma_{i}^{j}}$ and
$F_{D_{i}}$ for $1\leq i\leq\no$ are oriented by the complex structure
or as zero-dimensional orbifolds. The even degree disc fixed-point
components, $F_{D_{i}}$ for $\no+1\leq i\leq r+1$, are oriented
by \cite[Lemma 2.22]{JRR}. (\ref{eq:S_Gamma direct sum decomposition})
is naturally oriented by the complex structure. These conventions
determine an orientation for $N_{\vp}$, and $M_{\vp}$ (see (\ref{eq:M-N-S ses})).
Thus we consider $e\left(M_{\vp}\right),e\left(S_{\vp}\right)$ as
taking values in the trivial local system. Since (\ref{eq:gluing ls map})
commutes with $\check{\jc}_{\bullet}$, we can use $\check{\jc}_{\bullet}$
to orient just the normal bundles $N_{D_{i}}\left(\vect\phi\right)$
associated with $F_{D_{i}}\to\mm_{D_{i}}$ for $1\leq i\leq r+1$,
using the complex orientation for everything else. For $\no+1\leq i\leq r+1$
we have $N_{D_{i}}\left(\vect\phi\right)\simeq T_{p_{0}}L$ and the
orientation $\check{\jc}_{\bullet}$ agrees with orientation $o_{0}$.

We compute $e\left(S_{\vp}\right)$. Consider (\ref{eq:S_Gamma direct sum decomposition}).
Note that if we forget the $\tb$-action, $\mathbb{L}_{\star_{i,j}}^{\vee}\left(\frac{-\alpha_{a_{i}^{j}}}{d_{i}^{j}}\right)=\mathbb{L}_{\star_{i,j}}^{\vee}$.
Consider some $\vp_{+}\in\pc_{\basic}^{r,1}$ with $\cnt\vp_{+}=\vp$.
Write $\tc_{+}=\tc\left(\vp_{+}\right)$. Let $v_{i},v_{j}\in\left(\tc_{+}\right)_{0}$
denote the endpoints of the $\left(r+1\right)$'st edge $e$. We have
$\beta_{\tc_{+}}\left(v_{i}\right)=\beta_{\tc_{+}}\left(v_{j}\right)=0\mod2$,
so the orientation of $e$ is determined by the parity of $\left|\kf_{\tc_{+}}\left(v_{i}\right)\right|$
and so the boundary condition $\check{\gamma}_{S_{\vp_{+}}}$ agrees
with the map used to define the descendent integrals of discs (cf.
$\S$\ref{subsec:descendent integrals}). Since $\check{\gamma}_{S_{\vp_{+}}}$
respects the direct sum decomposition (\ref{eq:S_Gamma direct sum decomposition})
we can construct a canonical angular form $\theta_{S}$ as the join\footnote{Let $E_{0},E_{1}$ be vector bundles over $B$ of even rank. We have
a map

\[
S\left(E_{0}\right)\times_{B}S\left(E_{1}\right)\times\left[0,1\right]_{t}\xrightarrow{q}S\left(E_{0}\oplus E_{1}\right),
\]

and we define $\theta_{0}\star\theta_{1}$ by
\[
q^{*}\left(\theta_{0}\star\theta_{1}\right)=\left(1-t\right)\,d\theta_{0}\,\theta_{1}+t\,\theta_{0}\,d\theta_{1}+\theta_{0}\,\theta_{1}\,dt.
\]
The case of $n$ direct summands can be defined recursively (a more
symmetric definition can also be given along these lines).} 
\[
\theta_{S}=\bigstar_{i,j}\theta_{i,j}
\]
of $\tb$-invariant canonical angular 1-forms $\theta_{i,j}$ for
$\mathbb{L}_{\star_{i,j}}^{\vee}$. It follows that 
\[
e\left(S_{\vp}\right)=\prod_{i=\no+1}^{r+1}\prod_{j=1}^{s_{i}}\left(-\frac{\alpha_{a_{i}^{j}}}{d_{i}^{j}}-\psi_{i,j}\right).
\]

We turn our attention to $e\left(M_{\vp}\right)$. We denote
\[
\mathcal{R}_{\vp}:=\prod_{i=1}^{\no\left(\vp\right)}\mm_{\basic_{\tc\left(\vp\right)}\left(v_{i}\right)}\times\prod_{i=\no+1}^{r+1}\mm_{\lf_{i}\coprod\left\{ \star'_{i,j}\right\} _{j=1}^{s_{i}},\beta_{i}/2}
\]
(this parameterizes everything but the energy zero disc components).
Let $N_{\mathcal{R}_{\vp}}$ be the normal bundle to the fixed points
$\rc_{\vp}^{\tb}\to\rc_{\vp}$. By (\ref{eq:even discs mod and fp})
the normal bundle to $F_{D_{i}}\left(\vect\phi\right)\to\mm_{D_{i}}\left(\vect\phi\right)$
for $\no+1\leq i\leq r+1$ is $TL$, and we obtain a short exact sequence
\[
0\to\check{M}_{\vp}\to T_{p_{0}}L^{\oplus n_{\text{even}}}\boxplus N_{\mathcal{R}_{\vp}}\to\left(T_{p_{0}}X\right)^{N}\to0.
\]
of bundles over $\check{F}_{\Gamma}\left(\vect\phi\right)$ where
$N=\sum_{i=\no+1}^{r+1}s_{i}$. For every $\vp_{+}$ with $\cnt\vp_{+}=\vp$,
we have a map of short exact sequences
\begin{equation}
\xymatrix{0\ar[r] & \partial^{\vp_{+}}\check{M}_{\vp}\ar[r]\ar[d]_{h_{M_{\vp_{+}}}} & T_{p_{0}}L^{n_{\text{even}}}\boxplus N_{\mathcal{R}_{\vp}}\ar[r]^{\alpha_{\vp}}\ar[d] & \ar@{=}[d]\left(T_{p_{0}}X\right)^{N}\ar[r] & 0\\
0\ar[r] & \ker\check{\delta}_{M_{\vp_{+}}}^{r+1}\ar[r] & T_{p_{0}}L^{\left(n_{\text{even}}+1\right)}\boxplus N_{\mathcal{R}_{\vp_{+}}}\ar[r]_{\alpha_{\vp_{+}}} & \left(T_{p_{0}}X\right)^{N}\ar[r] & 0
}
,\label{eq:detach even discs coherence}
\end{equation}
where the middle vertical map is the direct sum of (i) the diagonal
inclusion $T_{p_{0}}L^{n_{\text{even}}}\to T_{p_{0}}L^{n_{\text{even}}+1}$
corresponding to the edge $r+1$ and (ii) an isomorphism
\begin{equation}
N_{\mathcal{R}_{\vp}}\simeq N_{\mathcal{R}_{\vp_{+}}}\label{eq:N^R identification}
\end{equation}
induced by the obvious identification $\mathcal{R}_{\vp}\simeq\mathcal{R}_{\vp_{+}}$.
Use the $GL\left(2m+1,\cc\right)^{N}$ action on $\mathcal{R}_{\vp}$
to fix a map $\bigoplus_{i,j}\ev_{\star'_{i,j}}^{*}T_{p_{0}}X\xrightarrow{\sigma}N_{\rc_{\vp}}$
so $\alpha_{\vp}\circ\left(\sigma,0\right)=\id$. By (\ref{eq:N^R identification})
$\left(\sigma,0\right)$ defines a section of $\alpha_{\vp_{+}}$
compatible with (\ref{eq:detach even discs coherence}). It follows
that there exists a canonical equivariant angular form $\theta_{M}$
for $M_{\vp}$ with
\[
e\left(M_{\vp}\right)=\frac{e_{\mathcal{R}}\cdot\left(\prod_{j=1}^{m}\lambda_{j}\right)^{n_{\text{even}}}}{\left(\prod_{j=1}^{2m}\alpha_{j}\right)^{N}}
\]
where $e_{\mathcal{R}}$ is an equivariant Euler form for $N_{\rc_{\vp}}$,
pulled back from the orbifold without boundary $\rc_{\vp}^{\tb}$. 

The computation of $e_{\mathcal{R}}$ is standard. See, for example,
\cite[Chapter 27]{clay-MS}. We use a computation similar to \cite[Lemma 6]{disc-enumeration}
to fix the weights for the normal bundle to the odd disc moduli (this
determines the sign of the term on the fourth line of (\ref{eq:F_phi^Gamma contribution-2})).
We stop computing after we express $e_{\mathcal{R}}$ in terms of
the equivariant Euler forms to the normal bundle $F_{\Sigma_{i}^{j}\left(\vect\phi\right)}\to\mm_{\Sigma_{i}^{j}\left(\vect\phi\right)}$.

We integrate. The term $\sum_{\vect b\in\zz_{\geq0}^{s_{i}}}\prod_{j=1}^{s_{i}}\left(-\frac{d_{i}^{j}}{\alpha_{a_{i}^{j}}}\right)^{b_{j}+1}F_{0}^{o}\left[t_{b_{1}}\cdots t_{b_{s_{i}}}s_{0}^{\left|\kf_{i}\right|}\right]$
comes from expanding $e\left(S_{\vp}\right)^{-1}=\prod_{j=1}^{s_{i}}\frac{d_{i}^{j}}{-\alpha_{a_{i}^{j}}-\psi_{i,j}}$
as a power series in $\psi_{i,j}$ and integrating on $F_{D_{i}}\left(\vect\phi\right)$.
The closed correlator $Z_{a_{i}^{j}}\left(\hbar=\alpha_{a_{i}^{j}}/d_{i}^{j}\right)\left[q^{2\tilde{d}_{i}^{j}}\prod_{x\in\lf_{i}^{j}}\eta_{e_{x}}\right]$
comes from summing over the contributions of $F_{\Sigma_{i}^{j}}\left(\vect\phi\right)\subset\mm_{\Sigma_{i}^{j}}\left(\vect\phi\right)$
(keeping the convention (\ref{eq:unstable convention-1}) in mind).The
factor $\prod_{i=1}^{\no}\left(-\frac{\beta_{i}^{0}}{2}\frac{\lambda_{1}\cdots\lambda_{m}}{\alpha_{a_{i}}}\right)^{\deg\left(v_{i}\right)}$
comes from Lemma \ref{lem:xi value}.
\end{proof}

\begin{proof}
[Proof of theorem \ref{thm:open localization}]Recall that ${E\in\widehat{\rc}:=\mathcal{R}\left[\left[u^{-1},u,\nu_{a,d},q,\eta_{0},...,\eta_{2m},s,s_{0},t_{0},t_{1},...\right]\right]}$
was given by
\begin{multline*}
E=\underbrace{\exp\left(\sum_{a\neq0}\sum_{d>0}Z_{a}\left(\frac{\alpha_{a}}{d}\right)\overbrace{\partial_{\nu_{a,d}}}^{\mbox{C}}\right)}_{\mbox{type IV}}\underbrace{\exp\left(\left(\sqrt{-1}\right)^{\left(1+m\right)}\sum_{a\neq0}\sum_{d\mbox{ odd}}uq^{d}\nu_{a,d/2}\frac{d^{d-1}}{d!\left(2\alpha_{a}\right)^{d}}\frac{\exp\left(\frac{d}{2}\cdot\frac{\lambda_{1}\cdots\lambda_{m}}{u\alpha_{a}}\cdot\overbrace{\partial_{s^{o}}}^{\mbox{B}}\right)}{\prod_{0\leq c\leq\frac{d-1}{2},b\neq a,\overline{a}}\left(\frac{2c-d}{d}\alpha_{a}-\alpha_{b}\right)}\right)}_{\mbox{type III}}\\
\underbrace{\exp\left(\sum_{a\neq0}\sum_{d>0}q^{2d}\nu_{a,d}\frac{\left(-1\right)^{d}d^{2d-1}\cdot\left(\prod_{a\in\left\{ 1,...2m\right\} }\alpha_{a}\right)\cdot\left(\sum_{i\geq0}\left(-\frac{d}{\alpha_{a}}\right)^{i+1}\overbrace{\partial_{t_{i}}}^{\mbox{A}}\right)}{\left(d!\right)^{2}\alpha_{a}^{2d}\prod_{0\leq c\leq d,b\neq a,0}\left(\frac{c}{d}\alpha_{a}-\alpha_{b}\right)}\right)}_{\mbox{type II}}\\
\underbrace{\exp\left(\frac{u}{\lambda_{1}\cdots\lambda_{m}}\exp\left(s\cdot\lambda_{1}\cdots\lambda_{m}\cdot\partial_{s^{o}}\right)\exp\left(\eta_{0}\partial_{t_{0}}\right)F_{0}^{o}\right)}_{\mbox{type I}}
\end{multline*}
Expanding the exponents and the generating functions, we can express
$E$ as a sum over diagrams with vertices of type I,II,III or IV and
edges of type A,B or C, in accordance with the labeling above. The
vertices and edges of a given type are numbered, and assigned additional
data as necessary to specify their contribution to $E$.

$E-1$ belongs to the ideal $I\triangleleft\widehat{\mathcal{R}}$
generated by $q,\nu_{a,d},\eta_{0},...,\eta_{2m},s,s^{o},t_{0},t_{1},...$,
and if we let $E_{0}\in I$ denote the sum over nonempty \emph{connected}
diagrams, then $\left(E-1\right)|_{\nu_{a,d}=0}=\sum_{a\geq1}\frac{1}{a!}\left(E_{0}|_{\nu_{a,d}=0}\right)^{a}$.
It follows that $\partial_{u}|_{u=0}\log\left(E|_{\nu_{a,d}=0,t_{i}=0,s^{o}=0}\right)$
is given by a sum over diagrams whose underlying graph is a tree.
The contribution of each tree diagram is easily identified with the
contribution of some (non-unique) fixed-point profile $\vp\in\pc_{\basic}$
to (\ref{eq:F_phi^Gamma contribution-2}). This many-to-many relation
between fixed-point profiles and tree diagrams can be turned into
a bijection of sets by adding labelings to both the profiles and to
the tree diagrams. In other words, we have a correspondence 
\[
\pc_{\basic}\leftarrow\left\{ \text{common denominator diagrams}\right\} \rightarrow\left\{ \text{tree diagrams}\right\} 
\]
and equation (\ref{eq:ogw loc formula}) follows by computing the
size of the fiber of the two maps in this correspondence.
\end{proof}

\subsection{\label{subsec:Examples-and-applications.}Examples and applications}

Let $m=1$ and consider invariants $I_{1}\left(k,\left(0,l\right),\beta\right)$
with $\deg I_{1}\left(k,\left(0,l\right),\beta\right)=0$. Let $\basic=\left(\kf,\lf,\beta\right)$
be a basic moduli specification with $\left|\kf\right|=k,\left|\lf\right|=l$.
As discussed in \cite{equiv-OGW-invts} (see Remark 3 and the discussion
at the end of $\S1$) we have 
\[
I_{1}\left(k,\left(0,0,l\right),\beta\right)=\int_{\mm_{\basic}}\omega_{0},
\]
so in this case, the contributions of $\mm_{\basic}^{r}$ with $r\geq1$
in (\ref{eq:extended integral}) vanish. Moreover, by introducing
non-equivariant deformations of $\omega_{0}$ this integral can be
interpreted as a (signed) count of discs or twice Welschinger's count
of rational real curves \cite{welschinger}, with suitable constraints.

\begin{example}
\label{exa:lines thru 2pts}Consider 
\[
I=I_{1}\left(2,\left(0,0,0\right),1\right)=2,
\]
twice the number of lines through a pair of points in the plane. Let
$\basic=\left(\left\{ 1,2\right\} ,\emptyset,1\right)$ be the corresponding
basic moduli specification. Note that $\mm_{\basic}$ has no fixed
points, but $\partial\mm_{\basic}\neq\emptyset$, see Remark \ref{rem:boundary contributes}.
The only odd-even tree for $\basic$ is 
\[
\Gamma=\underbrace{\left(1\right)}_{v_{1}}\to\underbrace{\left(0\right)..}_{v_{2}}\text{ with }
\]
\[
\sfr_{\Gamma}\left(v_{1}\right)=\left(\left(\emptyset,\emptyset,1\right),\sstar'_{1}\right),\;\sfr_{\Gamma}\left(v_{2}\right)=\left(\left(\left\{ \sstar''_{1},1,2\right\} ,\emptyset,0\right),\emptyset\right).
\]
\[
\mathcal{P}\left(\Gamma\right)=\left\{ \vect\phi_{1},\vect\phi_{2}\right\} 
\]
where $\vp_{i}$ is specified by the map of the $v_{1}$ disc, $a_{1}^{1}\left(\vp_{i}\right)=i$.
$F_{\Gamma}\left(\vect\phi_{i}\right)$ is an isolated fixed point,
and 
\begin{multline*}
\int_{F_{\Gamma}\left(\vect\phi_{i}\right)}\sigma_{\Gamma}\cdot\check{\omega}_{1}\cdot\xi_{\Gamma}^{\Lambda}=\frac{\lambda_{1}^{2}}{\lambda_{1}}\underbrace{F_{0}^{o}\left[s_{0}^{3}\right]}_{=2}\times\frac{1}{2\alpha_{i}\cdot\left(-\alpha_{i}\right)}\cdot\left(-\frac{1}{2}\frac{\lambda_{1}}{\alpha_{i}}\right)^{1}\times\underbrace{Z_{a_{i}}\left(\hbar=2\alpha_{a_{i}}\right)\left[1\right]}_{=2\alpha_{a_{i}}}=1
\end{multline*}
so 
\[
I=\int_{\mm_{\basic}}\omega_{0}=2,
\]
as expected.
\end{example}
\begin{example}
\label{exa:relation}Consider ${I_{m}\left(1,\emptyset,2\right)\in\rr\left[\lambda_{1},...,\lambda_{m}\right]}$.
Since 
\[
\deg\left(\check{\omega}_{r}\cdot\Lambda^{\boxtimes r}\right)-\dim\check{\mm}_{\basic}^{r}=2m-\left[\left(2m+1\right)\cdot2+\left(2m-3\right)+1\right]=-4m<0
\]
we find that $\int_{\basic}\omega=0$. On the other hand, this can
be expressed as the sum of contributions from the following two $\Sym\left(r\right)$
orbits (cf. Remark \ref{rem:sum over orbits}) of odd-even diagrams:
\begin{eqnarray*}
\Gamma_{1}: & \left(2\right). & \text{}\\
\Gamma_{2}: & \left(1\right)-\left(0\right).-\left(1\right)
\end{eqnarray*}
Note $\Gamma_{1}$ has $\left(-1\right)^{\left(m+1\right)\binom{o}{2}}=+1$,
whereas $\Gamma_{2}$ has $\left(-1\right)^{\left(m+1\right)\binom{o}{2}}=\left(-1\right)^{m+1}$
and $\left|\Aut\Gamma_{2}\right|=2$. In sum we have
\begin{multline*}
0=\sum_{a_{1}^{1}\in\left\{ 1,...,2m\right\} }\frac{\prod_{a=1}^{2m}\alpha_{a}}{\left(\alpha_{a_{1}^{1}}\right)^{2}\prod_{a'\neq0,a_{1}^{1}}\left[\left(\alpha_{a_{1}^{1}}-\alpha_{a'}\right)\left(-\alpha_{a'}\right)\right]}+\\
\frac{1}{2}\cdot\left(-1\right)^{m+1}\cdot2\times\left(\sum_{a_{2}\in\left\{ 1,...,2m\right\} }\left(\frac{1}{\prod_{a'\not\in\left\{ a_{2},2m+1-a_{2}\right\} }\left(-\alpha_{a'}-\alpha_{a_{2}}\right)}\cdot\left(-\frac{\lambda_{1}\cdots\lambda_{m}}{2\alpha_{a_{2}}}\right)\right)\right)^{2}.
\end{multline*}
We used a computer to verify that this relation holds for $1\le m\leq8$.
\end{example}

\section{\label{sec:steepest descent}Fixed-point Localization by Steepest
Descent}

In this section we use steepest descent to compute the limit of certain
integrals on an orbifold $\xx$ in terms of the asymptotic behavior
near an l-orbifold $\zc\subset\xx$. The limit is expressed using
the \emph{equivariant Segre form}, whose relation to the equivariant
Euler of the normal bundle to $\zc\subset\xx$ and other properties
will be discussed in $\S$\ref{subsec:Equivariant-Segre-forms.}.
The classical localization formula for $\partial\xx=\emptyset$ is
then derived as an easy corollary $\S$\ref{subsec:closed fp formula}.

Let $\xx$ be a compact $\tb$-orbifold with corners, $\zc=\coprod_{n\geq0}\zc_{n}$
a local $\tb$-orbifold with corners, and $i:\mathcal{Z}\hookrightarrow\xx$
an equivariant closed embedding. Let $\pi:N\to\zc$ denote the associated
normal l-bundle $N=i^{*}T\xx/T\zc$, with $\mu_{a}:N\to N$ scalar
multiplication by $a\in\rr_{\geq0}$. Let 
\[
N\overset{j}{\supset}\mathcal{V}\xrightarrow{\gamma}\xx
\]
be a $\tb$-equivariant tubular neighborhood for $\zc$ (cf. Definition
\ref{def:tubular ngbhd}).

If $\eta$ is a differential form on $\xx$, the \emph{$k$-homogeneous
component} $\eta^{\left(k\right)}$ \emph{of $\eta$ around $\mathcal{Z}$}
is a form on $N=N_{\zc}^{\xx}$ defined recursively as follows. Set
$\eta^{\left(0\right)}:=\lim_{s\to\infty}\mu_{s^{-1}}^{*}\gamma^{*}\eta=\left(i\circ\pi\right)^{*}\eta$
and 
\[
\eta^{\left(k\right)}:=\lim_{s\to\infty}s^{k}\mu_{s^{-1}}^{*}\left(\gamma^{*}\eta-\sum_{0\leq j<k}\eta^{\left(j\right)}\right).
\]
(as $s\to\infty$, $\mu_{s^{-1}}\left(\mathcal{V}\right)$ exhausts
$N$, and the limit exists uniformly on compact subsets with all derivatives).

We have $\left(d\eta\right)^{\left(k\right)}=d\left(\eta^{\left(k\right)}\right)$,
$\left(\lambda_{i}\iota_{\xi_{i}}\eta\right)^{\left(k\right)}=\lambda_{i}\iota_{\xi_{i}}\left(\eta^{\left(k\right)}\right)$
for $1\leq i\leq m$, and $\left(D\eta\right){}^{\left(k\right)}=D\left(\eta^{\left(k\right)}\right)$.
In general, $\eta^{\left(k\right)}$ depends on the linear structure
on the tubular neighbourhood $N$, except for the leading term: if
$k$ satisfies
\begin{equation}
\eta^{\left(j\right)}=0\mbox{ for }0\leq j<k,\label{eq:leading term}
\end{equation}
then $\eta^{\left(k\right)}$ is independent of the choice of tubular
neighbourhood.  

\begin{defn}
\label{def:localizing form}Let $\zc\hookrightarrow\xx$ be a $\tb$-equivariant
closed embedding of compact $\tb$-orbifolds with corners. A form
${\eta\in\Omega\left(\xx;\mathcal{R}\right)^{\tb}=\Omega\left(\xx;S^{-1}\rr\left[\boldsymbol{\lambda}\right]\right)^{\tb}}$
will be called \emph{localizing to $\mathcal{Z}$ }if

(a) $\eta=\sum_{i=1}^{N}\frac{\eta_{i}}{\left\langle \zeta_{i},\vec{\lambda}\right\rangle }$
for invariant 1-forms $\eta_{i}\in\Omega^{1}\left(\xx;\rr\right)^{\tb}$
and nonzero tuples $\zeta_{i}=\left(\zeta_{i}^{1},...,\zeta_{i}^{m}\right)\in\qq^{m}\backslash\left\{ 0\right\} $,
where $\left\langle \zeta_{i},\vec{\lambda}\right\rangle :=\sum_{j=1}^{m}\zeta_{i}^{j}\lambda_{j}$.

(b) $f:=\sum_{j=1}^{m}\lambda_{j}\iota_{\xi_{j}}\eta$ is a real-valued,
non-negative function with $f^{-1}\left(0\right)=\mathcal{Z}$.

(c) $f^{\left(0\right)},f^{\left(1\right)}$ vanish and $f^{\left(2\right)}:=\lim_{s\to\infty}\mu_{s^{-1}}^{*}\left(s^{2}f\right)|_{N}$
is a positive definite quadratic form on each fiber of $N$.
\end{defn}

\begin{defn}
We say a $\tb$-orbifold with corners $\xx$ \emph{has regular fixed
points} if the map $\xx^{\tb}\to\xx$ is a closed embedding (cf. Definition
\ref{def:closed embedding}).
\end{defn}
All the orbifolds we'll encounter have regular fixed points, see
$\S$\ref{subsec:closed and open fps}. If $\xx$ has regular fixed
points a form $\eta$ localizing to $\xx^{\tb}$ is called \emph{a}
\emph{fixed-point localizing form}.

We let $i_{\xx}^{\partial}:\partial\xx\to\xx$ and $i_{\partial\xx}^{\partial}:\partial^{2}\xx\to\partial\xx$
denote the structure maps. Note that $\partial^{2}\xx$ is equipped
with a $\zz/2$ action.
\begin{prop}
\label{prop:localizing eta exists}Suppose $\xx$ has regular fixed
points. Given a fixed-point localizing form $\eta_{\partial}\in\Omega\left(\partial\xx;\mathcal{R}\right)^{\tb}$
such that $\left(i_{\partial\xx}^{\partial}\right)^{*}\eta_{\partial}$
is $\zz/2$-invariant, there exists a fixed-point localizing form
$\eta\in\Omega\left(\xx;\mathcal{R}\right)^{\tb}$ with $\left(i_{\xx}^{\partial}\right)^{*}\eta=\eta_{\partial}$.
\end{prop}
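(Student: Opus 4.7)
The plan is to assemble $\eta$ from three kinds of local building blocks via a $\tb$-invariant partition of unity: (i) the pullback of $\eta_\partial$ along a $\tb$-equivariant collar of $\partial\xx$, (ii) linearized models at each connected component $F\subset\xx^\tb$ coming from the weight decomposition of the normal bundle $N_F$, and (iii) ``single-character'' models on the bulk $\xx\setminus(\xx^\tb\cup\partial\xx)$. The structural observation that makes gluing work is that $\tb$-invariant cutoffs $\rho$ are $0$-forms, so $\sum_j\lambda_j\iota_{\xi_j}(\rho\,\alpha)=\rho\cdot\sum_j\lambda_j\iota_{\xi_j}\alpha$; hence if each local piece is of the form (a) and has non-negative, $\vec{\lambda}$-independent $f$-function with the correct leading-order behaviour, any convex combination inherits conditions (a),(b),(c).

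For the collar piece I would use the $\zz/2$-invariance of $(i_{\partial\xx}^\partial)^*\eta_\partial$ together with averaging over the compact torus to pick a $\tb$-equivariant collar $c\colon\partial\xx\times[0,\epsilon)\to\xx$ and set $\eta^{\mathrm{col}}:=(c^{-1})^*\pi_1^*\eta_\partial$. Then $f^{\mathrm{col}}=\pi_1^*f_\partial$; by regularity of fixed points and $\tb$-equivariance of $c$, $\xx^\tb\cap\operatorname{im}(c)=c((\partial\xx)^\tb\times[0,\epsilon))$, the normal bundle to $\xx^\tb$ at a boundary fixed point coincides with $N_{(\partial\xx)^\tb}$ (the collar direction being tangent to $\xx^\tb$), and both the zero-set condition and positive definiteness of $(f^{\mathrm{col}})^{(2)}$ are inherited from $\eta_\partial$. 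For each fixed-point component $F$, pick a $\tb$-equivariant tubular neighbourhood $\gamma_F\colon V_F\to\xx$, decompose $N_F=\bigoplus_\zeta N_F^\zeta$ into complex weight subbundles, choose $\tb$-invariant Hermitian metrics $h_\zeta$, and set $\omega_\zeta:=\operatorname{Im}h_\zeta(v_\zeta,dv_\zeta)$. A direct calculation in weight coordinates gives $\iota_{\xi_j}\omega_\zeta=\zeta^j|v_\zeta|^2$, so
\[
\eta_F:=\sum_\zeta\frac{(\gamma_F^{-1})^*\omega_\zeta}{\langle\zeta,\vec{\lambda}\rangle}
\]
has $f_F=\sum_\zeta|v_\zeta|^2$, vanishing exactly on $F$ with positive-definite $f_F^{(2)}$.

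The hard part will be the bulk contribution (iii): writing $\eta$ in the prescribed form $\sum\eta_i/\langle\zeta_i,\vec{\lambda}\rangle$ with $\vec{\lambda}$-independent $f$ is a genuine constraint at points where no equivariant linearization is at hand. The idea is the annihilator trick: at $p\in\xx\setminus(\xx^\tb\cup\operatorname{im}(c))$, the stabilizer Lie algebra $\mathfrak{t}_p\subsetneq\mathfrak{t}\cong\rr^m$ is proper, so its integral annihilator contains a primitive character $\zeta_p\in\zz^m\setminus\{0\}$; the linear form $\xi_j\mapsto\zeta_p^j$ then descends consistently to $T_p(\tb\cdot p)=\mathfrak{t}/\mathfrak{t}_p$ and, by slice-theorem extension to a $\tb$-invariant neighbourhood $V_p$, yields a $\tb$-invariant $1$-form $\omega_p$ with $\iota_{\xi_j}\omega_p\equiv\zeta_p^j$ on $V_p$. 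Then $\eta_p:=\omega_p/\langle\zeta_p,\vec{\lambda}\rangle$ has $f_p\equiv 1$, and compactness supplies a finite subcover of the bulk by such $V_p$. Picking a $\tb$-invariant partition of unity $\{\rho_{\mathrm{col}},\rho_F,\rho_p\}$ subordinate to $\{\operatorname{im}(c),\gamma_F(V_F),V_p\}$ with $\rho_{\mathrm{col}}\equiv 1$ near $\partial\xx$ and $\rho_F\equiv 1$ near $F$, the form $\eta:=\rho_{\mathrm{col}}\eta^{\mathrm{col}}+\sum_F\rho_F\eta_F+\sum_p\rho_p\eta_p$ satisfies (a) by construction, restricts to $\eta_\partial$ on $\partial\xx$, has $f=\rho_{\mathrm{col}}f^{\mathrm{col}}+\sum_F\rho_F f_F+\sum_p\rho_p f_p\geq 0$ with zero set $\xx^\tb$, and has $f^{(2)}$ equal to either $f_F^{(2)}$ or $(f^{\mathrm{col}})^{(2)}$ at each fixed point (where the respective cutoff equals $1$), hence positive definite.
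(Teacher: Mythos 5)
Your overall strategy is the same as the paper's: a $\tb$-invariant partition of unity glues three kinds of local models (a boundary extension, a quadratic model in the weight decomposition of $N_{F}$ at each fixed component, and a ``strictly positive $f$'' model in the bulk), followed by averaging. Your fixed-point model $\sum_{\zeta}\operatorname{Im}h_{\zeta}(v_{\zeta},dv_{\zeta})/\left\langle \zeta,\vect\lambda\right\rangle $ is exactly the paper's Case 2 in polar coordinates, and your bulk model via a rational character $\zeta_{p}$ annihilating the stabilizer algebra $\mathfrak{t}_{p}$ is a cleaner version of the paper's Case 1: it gives $f_{p}\equiv1$ on a whole invariant neighborhood (note you should record that $\mathfrak{t}_{p}$ is the Lie algebra of a closed subgroup of a compact torus, hence a rational subspace, so a nonzero integral annihilating character exists), whereas the paper's dual-coframe choice only controls $f$ at the point $p$ itself. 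The convexity argument for conditions (a)--(c) of Definition \ref{def:localizing form} is correct, with the small repair that one cannot have both $\rho_{\mathrm{col}}\equiv1$ near $\partial\xx$ and $\rho_{F}\equiv1$ near $F$ when $F$ meets the boundary; what is actually needed (and suffices) is that the bulk cutoffs vanish identically near $\xx^{\tb}$ --- automatic since $\operatorname{supp}\rho_{p}\subset V_{p}$ is closed and disjoint from $\xx^{\tb}$ --- after which $f^{(2)}$ is a convex combination of positive-definite quadratic forms.

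The one genuine gap is the collar. For an orbifold \emph{with corners}, $i_{\xx}^{\partial}:\partial\xx\to\xx$ is not injective at points of depth $\geq2$, so there is no open embedding $c:\partial\xx\times[0,\epsilon)\to\xx$ and the expression $(c^{-1})^{*}\pi_{1}^{*}\eta_{\partial}$ is undefined near corners: the two local boundary hypersurfaces at a depth-two point produce two competing extensions, and the hypothesis that $\left(i_{\partial\xx}^{\partial}\right)^{*}\eta_{\partial}$ is $\zz/2$-invariant only forces them to agree \emph{on} the corner stratum, not on a neighborhood of it. This matters here, since the relevant spaces $\check{\mm}_{\basic}^{r}$ have corners of arbitrary depth. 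The correct replacement is to write $\eta_{\partial}=\sum_{i}\eta_{\partial,i}/\left\langle \zeta_{i},\vect\lambda\right\rangle $ and extend each real $1$-form $\eta_{\partial,i}$ inward by the face-by-face Whitney-type induction of Lemma \ref{lem:extending forms inward}, which is exactly where the $\zz/2$-invariance hypothesis is consumed; with $\eta^{\mathrm{col}}$ produced this way (it restricts correctly to $\partial\xx$, and positivity plus nondegeneracy of $f^{(2)}$ at boundary fixed points persist on a neighborhood by continuity and the identification of $N|_{\partial\xx}$ with the normal bundle of $(\partial\xx)^{\tb}$), the rest of your argument goes through.
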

\begin{proof}
Fix an equivariant tubular neighborhood 
\begin{equation}
N\supset\mathcal{V}\xrightarrow{\gamma}\xx\label{eq:tube for eta}
\end{equation}
 for $\xx^{\tb}\to\xx$ (cf. Lemma \ref{lem:tubular exists}). We
construct an atlas $\pi:\underline{U}\to\xx$ for $\xx$, with $U=\coprod U_{p}$
where $p$ ranges over a set of representatives for the points $\pt\hookrightarrow\xx$
of $\xx$, as well as a provisional form 
\[
\eta''=\coprod\eta''_{p}\in\Omega\left(\coprod U_{p};\mathcal{R}\right).
\]
We divide the construction into cases.

\uline{Case 1:} $p$ is an interior point which is not a fixed
point. There exists a basis $e_{1}',...,e_{m}'$ for $Lie\left(\tb\right)\simeq\rr^{m}$
with the property that the corresponding vector fields $\xi_{1}',...,\xi_{m}'$
on $\xx$ satisfy the following condition at $T_{p}\xx$, for some
$1\leq r\leq m$:
\[
\xi_{1}'|_{p},...,\xi_{r}'|_{p}\mbox{ are linearly independent, and for }j\in\left\{ r+1,...,m\right\} ,\;\xi_{j}'|_{p}=0.
\]
Choose 1-forms 
\[
\gamma_{1}',...,\gamma_{r}'\in\Omega^{1}\left(\xx;\rr\right)
\]
that satisfy $\gamma_{i}'\left(\xi_{j}'\right)|_{p}=\delta_{ij}$
for $1\leq i,j\leq r$. Let $\lambda_{1}',...,\lambda_{m}'$ be a
basis for $H_{\tb}^{2}$ dual to $e_{1}',...,e_{m}'$. We see that
$f:=\sum_{j=1}^{m}\lambda_{j}\iota_{\xi_{j}}\left(\sum_{j=1}^{r}\frac{\gamma_{j}'}{\lambda_{j}'}\right)$
is a real-valued function on $\xx$ with $f\left(p\right)=r$, so
it remains positive in an open neighborhood $U_{p}$ of $p$ and we
set $\eta_{p}''=\left(\sum_{j=1}^{r}\frac{\gamma'_{j}}{\lambda_{j}'}\right)|_{U_{p}}$.

\uline{Case 2:} $p$ is an interior fixed point of the $\tb$-action.
Choose a trivialization
\[
N|_{U}\simeq U\times\bigoplus_{i=1}^{b}\cc_{\boldsymbol{\zeta}_{i}}
\]
over an open $p\in U\subset\xx^{\tb}$. (\ref{eq:tube for eta}) becomes
\[
U\times\bigoplus_{i=1}^{b}\cc_{\boldsymbol{\zeta}_{i}}\supset U_{p}\subset\xx
\]
Here $\cc_{\boldsymbol{\zeta}_{i}}$ denotes the rank one complex
$\tb$-representation with fractional weight $\boldsymbol{\zeta}_{i}$.
Using polar coordinates $z_{i}=r_{i}\cdot e^{\sqrt{-1}\theta_{i}}$
for $\cc_{\boldsymbol{\zeta}_{i}}$ we can set 
\[
\eta_{p}''=\sum_{i=1}^{b}\frac{r_{i}^{2}\frac{d\theta_{i}}{2\pi}}{\left\langle \boldsymbol{\zeta}_{i},\boldsymbol{\lambda}\right\rangle }
\]
on $U_{p}$. Clearly $f=\sum_{j=1}^{m}\lambda_{j}\iota_{\xi_{j}}\eta_{p}''=\sum_{i=1}^{b}r_{i}^{2}$
vanishes only on $U\times0=\xx^{\tb}\times_{\xx}U_{p}$, and induces
a positive definite quadratic form on the normal bundle. 

Before we deal with points of positive depth, we need the following.
Write $\eta_{\partial}=\sum_{i=1}^{N}\frac{\eta_{\partial,i}}{\left\langle \boldsymbol{\zeta}_{i},\boldsymbol{\lambda}\right\rangle }$
as in Definition \ref{def:localizing form}, fix 1-forms $\tilde{\eta}_{i}\in\Omega^{1}\left(\xx;\rr\right)^{\tb}$
with $i^{*}\tilde{\eta}_{i}=\eta_{\partial,i}$ (cf. Lemma \ref{lem:extending forms inward})
and set $\tilde{\eta}=\sum_{i=1}^{N}\frac{\tilde{\eta}_{i}}{\left\langle \boldsymbol{\zeta}_{i},\boldsymbol{\lambda}\right\rangle }\in\Omega\left(\xx;\mathcal{R}\right)^{\tb}$
and $\tilde{f}:=\sum\lambda_{j}\iota_{\xi_{j}}\tilde{\eta}$.

\uline{Case 3:} $p$ has depth $c\geq1$, $p$ is not a fixed point.
In this case, there's an open neighborhood $p\in U_{p}\subset\xx$
with $\tilde{f}|_{U_{p}}>0$, and we take $\eta''_{p}=\tilde{\eta}|_{U_{p}}$. 

\uline{Case 4:} $p$ has depth $c\geq1$, $p$ is a fixed point.
$p$ has depth $c$ in $\xx^{\tb}$ also, and the normal bundle to
$\left(\partial\xx\right)^{\tb}\to\partial\xx$ is identified with
$\left(i_{\xx}^{\partial}\right)^{*}N$. $\left(i_{\xx}^{\partial}\right)^{*}\tilde{\eta}=\eta_{\partial}$
has positive definite quadratic form at $p$, the same is true for
an open neighborhood $p\in U\subset\xx^{\tb}$ and we take $U_{p}=N|_{U}\cap\vc$
and $\eta''_{p}=\tilde{\eta}|_{U_{p}}$.

We construct a groupoid $X_{1}\overset{s,t}{\rightrightarrows}X_{0}$
representing $\xx$ with $X_{0}=\coprod U_{p}$.  Since $\xx$ is
compact there exists a compactly supported function $\rho:X_{0}\to\left[0,1\right]$
which satisfies $t_{*}s^{*}\rho\equiv1$ (such a function is called
\emph{a partition of unity }for $X_{\bullet}$). We define $\eta'\in\Omega\left(\xx;\mathcal{R}\right)$
by $\eta'=t_{*}s^{*}\left(\rho\eta''\right)$, and $\eta\in\Omega\left(\xx;\mathcal{R}\right)^{\tb}$
by averaging out the $\tb$-action on $\eta'$ using a Haar measure
on $\tb$. This $\eta$ is seen to be a fixed-point localizing form
with $\left(i_{\xx}^{\partial}\right)^{*}\eta=\eta|_{\partial}$.
\end{proof}

We now describe a setup for steepest descent analysis. 
\begin{defn}
\label{def:localizable map}Let $\xx$ be a compact $\tb$-orbifold,
$\zc=\coprod\zc_{n}$ a compact local $\tb$-orbifold, and $\zc\to\xx$
a $\tb$-equivariant closed embedding with associated normal bundle
$N\to\zc$. A \emph{$\zc$-localizable morphism }consists of

\begin{itemize}
\item An equivariant tubular neighborhood $N\overset{j}{\longleftarrow}\mathcal{V}\overset{\gamma}{\longrightarrow}\xx$
for $\zc$,
\item a $\tb$-equivariant diagram of $\tb$-orbifolds with corners
\begin{equation}
\xymatrix{\widetilde{N}\ar[d]_{\bu_{N}} & \widetilde{\mathcal{V}}\ar[l]\ar[r]^{\tilde{\gamma}}\ar[d] & \widetilde{\xx}\ar[d]^{\bu}\\
N & \ar[l]^{j}\mathcal{V}\ar[r]_{\gamma} & \xx
}
\label{eq:localizable diagram}
\end{equation}
with both squares cartesian, $\widetilde{\xx}$ compact, and $\bu,\bu_{N}$
relatively oriented, and
\item A smooth map $\widetilde{N}\times\rr_{\geq0}\xrightarrow{\tilde{\mu}=\left\{ \tilde{\mu}_{a}\right\} _{a\geq0}}\widetilde{N}$
such that $\bu\circ\tilde{\mu}=\mu\circ\left(\bu\times\id\right)$
where $\mu:N\times\rr_{\geq0}\to N$ denotes scalar multiplication.
\end{itemize}
We'll abbreviate and write ``$\bu:\widetilde{\xx}\to\xx$ is a $\zc$-localizable
morphism'', keeping the additional data implicit. A simple diagram
chase shows that $\widetilde{\mu}_{0}^{-1}\left(\widetilde{\vc}\right)\xrightarrow{\widetilde{\mu}_{0}^{*}\tilde{j}}\widetilde{N}$
is a diffeomorphism and 
\[
\widetilde{\rho}_{0}:=\tilde{j}^{*}\widetilde{\mu}_{0}\left(\widetilde{\mu}_{0}^{*}\tilde{j}\right)^{-1}:\widetilde{N}\to\widetilde{\mathcal{V}}
\]
lies over $\rho_{0}$, the retraction $N\to\zc\to\mathcal{V}$.
\end{defn}

\begin{prop}
\label{prop:steepest descent}Let $\bu:\widetilde{\xx}\to\xx$ be
a $\zc$-localizable morphism, with the additional data denoted as
in Definition \ref{def:localizable map}. Let $\eta\in\Omega\left(\xx;\mathcal{R}\right)^{\tb}$
be $\zc$-localizing, and denote $\tilde{\eta}=\bu^{*}\eta$.  Then
for any $\tilde{\omega}\in\Omega\left(\widetilde{\xx};Or\left(T\widetilde{\xx}\right)^{\vee}\otimes_{\zz}\mathcal{R}\right)^{\tb}$,
we have 
\begin{equation}
\lim_{s\to\infty}\int_{\widetilde{\xx}}e^{s^{2}D\tilde{\eta}}\tilde{\omega}=\int_{\widetilde{N}}\bu_{N}^{*}\left(e^{D\eta^{\left(2\right)}}\right)\left(\tilde{\gamma}\tilde{\rho}_{0}\right)^{*}\tilde{\omega}.\label{eq:steepest descent}
\end{equation}
\end{prop}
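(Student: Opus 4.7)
I'd prove this by a steepest-descent / Laplace-method argument in the Cartan-Weil calculus. Since $D\tilde\eta = d\tilde\eta - \tilde f$ with $\tilde f := \bu^* f \geq 0$ vanishing exactly on $\bu^{-1}(\zc)$, and since $d\tilde\eta, \tilde f$ are even-degree and thus commute, we factor
\[
e^{s^2 D\tilde\eta} = e^{s^2 d\tilde\eta}\cdot e^{-s^2 \tilde f}.
\]
The factor $e^{s^2 d\tilde\eta}$ is polynomial in $s^2$ since the exponential of an even form on a finite-dimensional orbifold terminates, while $e^{-s^2 \tilde f}$ decays exponentially off $\bu^{-1}(\zc)$. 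Hence for some $c>0$,
\[
\int_{\widetilde\xx} e^{s^2 D\tilde\eta}\,\tilde\omega \;=\; \int_{\widetilde{\vc}} e^{s^2 D\tilde\gamma^*\eta}\,\tilde\gamma^*\tilde\omega \;+\; O(e^{-cs^2}),
\]
localizing the integral to the tubular neighborhood.

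Next I would rescale, using the diffeomorphism $\tilde\mu_{s^{-1}}:\widetilde N \to \widetilde N$ covering the scaling $\mu_{s^{-1}}$ on $N$ for $s>0$. Change of variables yields
\[
\int_{\widetilde{\vc}} e^{s^2 D\tilde\gamma^*\eta}\,\tilde\gamma^*\tilde\omega \;=\; \int_{\tilde\mu_s(\widetilde{\vc})} \tilde\mu_{s^{-1}}^*\!\bigl(e^{s^2 D\tilde\gamma^*\eta}\,\tilde\gamma^*\tilde\omega\bigr),
\]
with $\tilde\mu_s(\widetilde{\vc}) \uparrow \widetilde N$ as $s\to\infty$, since every compact subset of $\widetilde N$ is carried into $\widetilde{\vc}$ by $\tilde\mu_{s^{-1}}$ for $s$ sufficiently large (the contraction toward the zero section, which lies inside $\widetilde{\vc}$). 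Using the homogeneous expansion $\tilde\mu_{s^{-1}}^*\tilde\gamma^*\eta \sim \sum_{k\geq 0} s^{-k}\bu_N^*\eta^{(k)}$ and the commutativity of $D$ with equivariant pullback,
\[
s^2 D\tilde\mu_{s^{-1}}^*\tilde\gamma^*\eta \;=\; s^2\bu_N^* D\eta^{(0)} + s\,\bu_N^* D\eta^{(1)} + \bu_N^* D\eta^{(2)} + O(s^{-1}),
\]
while $\tilde\mu_{s^{-1}}^*\tilde\gamma^*\tilde\omega \to (\tilde\gamma\tilde\rho_0)^*\tilde\omega$ pointwise by the very definition of $\tilde\rho_0$. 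The pointwise limit of the integrand should therefore be $\bu_N^*\,e^{D\eta^{(2)}}\,(\tilde\gamma\tilde\rho_0)^*\tilde\omega$, and the exchange of limit and integral would then follow from dominated convergence, with the rescaled Gaussian $e^{-f^{(2)}}$, positive definite on fibers by the localizing assumption, furnishing an $s$-uniform integrable bound on $\widetilde N$.

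The key technical obstacle is the apparent divergence of the $k=0,1$ terms in this expansion. Since $f^{(0)} = f^{(1)} = 0$, they reduce to $s^2\,d\eta^{(0)}$ and $s\,d\eta^{(1)}$. The construction in Proposition \ref{prop:localizing eta exists} produces $\eta$ whose local models $\eta_p''$ vanish to order 2 in the normal coordinates, so that $\eta^{(0)} = \eta^{(1)} = 0$ automatically and the divergent terms are altogether absent; for such $\eta$ the preceding limit argument works as stated. For a general localizing $\eta$ I would argue by dimension counting: $d\eta^{(0)}$ is pulled back from $\zc$ and carries no fiber-direction component, and $(\tilde\gamma\tilde\rho_0)^*\tilde\omega$ likewise factors through the zero section, so the full $N-n$ fiber degrees needed for a top form on $\widetilde N_n$ must come from $d\eta^{(1)}, d\eta^{(2)}$ and $f^{(2)}$; unpacking the fiber/base bidegree constraints, the contributions carrying positive powers of $s$ either vanish identically as forms or are $D$-exact and drop out under integration by Stokes. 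This reduction to the absent-divergence case is the step that requires the most care; the rest of the argument is routine steepest descent.
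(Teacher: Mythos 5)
Your main argument --- cutting the integral down to the tubular neighborhood using the exponential decay of $e^{-s^{2}\tilde{f}}$ on the compact complement where $f\geq C>0$, changing variables by $\tilde{\mu}_{s^{-1}}$, identifying the pointwise limit of the rescaled integrand, and closing with dominated convergence against the Gaussian $e^{-f^{(2)}}$ --- is exactly the proof given in the paper, and it is correct. You have also correctly isolated the one delicate point, which the paper passes over (its uniform bound on the rescaled polynomial prefactor is asserted with ``it is not hard to see''): the terms $s^{2}\,d\eta^{(0)}+s\,d\eta^{(1)}$ in $s^{2}D\tilde{\mu}_{s^{-1}}^{*}\tilde{\eta}$ are not controlled by Definition \ref{def:localizing form}, which constrains only $f$, not $\eta$ itself. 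Your first resolution is the right one: one must require (or arrange, as the construction in Proposition \ref{prop:localizing eta exists} does near the fixed locus via the local models $\sum_{i}r_{i}^{2}\,\frac{d\theta_{i}}{2\pi}/\langle\boldsymbol{\zeta}_{i},\boldsymbol{\lambda}\rangle$) that $\eta^{(0)}=\eta^{(1)}=0$, which is also what makes $\eta^{(2)}$ canonical. Your fallback argument for a general localizing $\eta$, however, does not work. Stokes' theorem is unavailable: $\widetilde{\xx}$ has corners and $\tilde{\omega}$ is not assumed $D$-closed, so $D$-exact terms do not integrate to zero. Nor do the divergent terms vanish as forms: a factor $s^{2}d\eta^{(0)}$ (purely base directions) wedged with fiber-direction factors supplied by $d\eta^{(2)}$ is a perfectly good nonzero top form, and after the fiberwise Gaussian integration its coefficient still carries a positive power of $s$. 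Concretely, on $T^{2}\times S^{2}$ with $S^{1}$ rotating $S^{2}$, the form $\eta=\lambda^{-1}\left(r^{2}d\theta+\chi(r)\sin(2\pi x_{1})dx_{2}\right)$ near a pole satisfies all of (a)--(c), yet with $\tilde{\omega}=\cos(2\pi x_{1})$ the integral grows like $s^{2}$. So the general case should not be rescued; the hypothesis $\eta^{(0)}=\eta^{(1)}=0$ should simply be added to the statement (it holds for the extended localizing forms actually used), after which your argument, like the paper's, goes through.
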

Let us explain what we mean by (\ref{eq:steepest descent}). Writing
$D\tilde{\eta}=-\tilde{f}+d\tilde{\eta}$ for $\tilde{f}=\sum_{j=1}^{m}\lambda_{j}\iota_{\xi_{j}}\tilde{\eta}$
a real-valued function, we define 
\[
e^{s^{2}D\tilde{\eta}}:=e^{-s^{2}\tilde{f}}\sum_{i=0}^{\infty}\frac{\left(s^{2}d\tilde{\eta}\right)^{i}}{i!},
\]
where the sum has only finitely many nonzero terms since $d\tilde{\eta}$
is nilpotent, similarly for $e^{D\eta^{\left(2\right)}}$. It follows
that there are finitely many linearly independent rational functions
$q_{j}\in\mathcal{R}$, $1\leq j\leq M$ such that $\int_{\widetilde{N}}\beta_{N}^{*}\left(e^{D\eta^{\left(2\right)}}\right)\tilde{\mu}_{0}^{*}\left(\tilde{\omega}|_{\widetilde{\mathcal{V}}}\right)$
and $\int_{\widetilde{\xx}}e^{s^{2}D\tilde{\eta}}\tilde{\omega}$
(for all $s$) are elements of the finite dimensional vector space
$\rr\left\langle q_{1},...,q_{M}\right\rangle $. The limit in (\ref{eq:steepest descent})
is interpreted with respect to the usual topology on $\rr^{M}$. Note
that there are $\vec{a}_{1},...,\vec{a}_{M}\in\rr^{m}$ belonging
to the common domain of definition of $\left\{ q_{j}\right\} $ such
that $\left[q_{j_{1}}\left(\vec{a}_{j_{2}}\right)\right]_{j_{1},j_{2}}$
is an invertible matrix,  so it is enough to check that 
\begin{equation}
\lim_{s\to\infty}\int_{\widetilde{\xx}}\left(e^{s^{2}D\tilde{\eta}}\tilde{\omega}\right)|_{\vec{a}_{j}}^{top}=\int_{\widetilde{N}}\beta_{N}^{*}\left(e^{D\eta^{\left(2\right)}}\right)\tilde{\rho}_{0}^{*}\left(\tilde{\omega}|_{\widetilde{\mathcal{V}}}\right)|_{\vec{a}_{j}}^{top}\label{eq:limit explicitly}
\end{equation}
for all $1\leq j\leq M$, were $|_{\vec{a}_{j}}^{top}$ denotes the
top form component of the form obtained by substituting $\vec{\alpha}=\vec{a}_{j}$.
This is what we'll now do.
\begin{proof}
[Proof (of Proposition \ref{prop:steepest descent}).]Fix a smooth
$\tb$-invariant inner product on $N$, with ${\nu:N\to\rr_{\geq0}}$
the associated norm. Let ${B\left(r\right)=\gamma\left(\nu^{-1}\left([0,r)\right)\right)}$,
which is defined for sufficiently small $r$. Let $f=\sum_{j=1}^{m}\lambda_{j}\iota_{\xi_{j}}\eta$,
and write $f|_{\mathcal{V}}=f^{\left(2\right)}+R$ for some real valued
function $R:\mathcal{V}\to\rr$. By the non-degeneracy assumption,
$f^{\left(2\right)}\geq2c\nu^{2}$ for some $c>0$. Because $\zc$
is compact we can take $\delta>0$ sufficiently small such that $B\left(\delta\right)\subset\mathcal{V}$
and $\left|R|_{B\left(\delta\right)}\right|<c\nu^{2}$, so $f|_{B\left(\delta\right)}\geq c\nu^{2}$.
Since $f$ vanishes only on $\mathcal{Z}$ and $\xx\backslash B\left(\delta\right)$
is compact, there exists some $C>0$ such that $f\geq C$ on $\xx\backslash B\left(\delta\right)$.

We denote $\tilde{B}\left(s\right):=\bu_{N}^{-1}B\left(s\right)$.
Consider some $1\leq j\leq M$, and set $\underline{a}=\underline{a}_{j}$.
We will prove (\ref{eq:limit explicitly}) by showing that

\begin{equation}
\lim_{s\to\infty}\int_{\widetilde{X}}\left(I_{\widetilde{\xx}\backslash\widetilde{B}\left(s^{-1/2}\right)}e^{s^{2}D\tilde{\eta}}\tilde{\omega}\right)|_{\underline{a}}^{top}=0\label{eq:limit outside}
\end{equation}
and
\begin{equation}
\lim_{s\to\infty}\int_{\widetilde{N}}\left(I_{\widetilde{B}\left(s^{-1/2}\right)}e^{s^{2}D\tilde{\eta}}\tilde{\omega}\right)|_{\underline{a}}^{top}=\int_{\widetilde{N}}\left(e^{D\eta^{\left(2\right)}}\tilde{\rho}_{0}^{*}\left(\tilde{\omega}|_{\widetilde{\mathcal{V}}}\right)\right)|_{\underline{a}}^{top}.\label{eq:limit inside}
\end{equation}
Here $I_{W}$ denotes indicator function for $W$.

We begin with (\ref{eq:limit outside}). There exist smooth top forms
$\left\{ \tilde{\omega}_{i}\in\Omega^{\dim\widetilde{\xx}}\left(\widetilde{\xx};Or\left(T\xx\right)\right)^{\tb}\right\} _{i=0}^{n}$
(which depend on $\tilde{\omega}$, $d\eta$ and $\underline{a}$)
such that 
\begin{equation}
\left(e^{s^{2}D\tilde{\eta}}\tilde{\omega}\right)|_{\underline{a}}^{top}=\sum_{i=0}^{n}e^{-s^{2}\tilde{f}}s^{2i}\tilde{\omega}_{i}.\label{eq:exponent times poly expansion}
\end{equation}
For $s$ sufficiently large, we have $\widetilde{X}\backslash\widetilde{B}\left(s^{-1/2}\right)\subset b^{-1}\left(\xx\backslash B\left(\delta\right)\right)$,
which means 
\[
I_{\widetilde{\xx}\backslash\widetilde{B}\left(s^{-1/2}\right)}e^{-s^{2}\tilde{f}}s^{2i}\leq e^{-s^{2}C}s^{2i}.
\]
The right-hand side tends to zero, and the limit (\ref{eq:limit outside})
is obtained.

To establish (\ref{eq:limit inside}) we first change variables by
$\left(\tilde{\gamma}\circ\tilde{j}^{-1}\circ\tilde{\mu}_{s^{-1}}\right)$,
which we'll denote simply by $\tilde{\mu}_{s^{-1}}$, to obtain: 
\[
\int_{\widetilde{N}}\left(I_{\widetilde{B}\left(s^{-1/2}\right)}e^{s^{2}D\tilde{\eta}}\tilde{\omega}\right)|_{\underline{a}}^{top}=\int_{\widetilde{N}}\left(I_{\widetilde{B}\left(s^{+1/2}\right)}e^{s^{2}\tilde{\mu}_{s^{-1}}^{*}D\tilde{\eta}}\tilde{\mu}_{s^{-1}}^{*}\tilde{\omega}\right)|_{\underline{a}}^{top}
\]
We have $\left(I_{\widetilde{B}\left(s^{+1/2}\right)}e^{s^{2}\tilde{\mu}_{s^{-1}}^{*}D\tilde{\eta}}\tilde{\mu}_{s^{-1}}^{*}\tilde{\omega}\right)|_{\underline{a}}^{top}\longrightarrow e^{D\eta^{\left(2\right)}}\tilde{\mu}_{0}^{*}\left(\tilde{\omega}|_{\widetilde{\mathcal{V}}}\right)|_{\underline{a}}^{top}$
pointwise in $\widetilde{N}$. We use Eq (\ref{eq:exponent times poly expansion})
to write 
\[
\left(I_{\widetilde{B}\left(s^{+1/2}\right)}e^{s^{2}\tilde{\mu}_{s^{-1}}^{*}D\tilde{\eta}}\tilde{\mu}_{s^{-1}}^{*}\tilde{\omega}\right)|_{\underline{a}}^{top}=\sum_{i=0}^{n}I_{\widetilde{B}\left(s^{+1/2}\right)}e^{-s^{2}\tilde{\mu}_{s^{-1}}^{*}f}s^{2i}\tilde{\mu}_{s^{-1}}^{*}\tilde{\omega}_{i}|_{\underline{a}}^{top}.
\]
If we fix some reference $\tb$-invariant volume form $vol$ on $\widetilde{N}$
we can write 
\[
\tilde{\mu}_{s^{-1}}^{*}\tilde{\omega}_{i}|_{\underline{a}}^{top}=g_{i,s}\cdot vol,\quad e^{-s^{2}\tilde{\mu}_{s^{-1}}^{*}\tilde{f}}s^{2i}\tilde{\mu}_{s^{-1}}^{*}\tilde{\omega}_{i}|_{\underline{a}}^{top}=h_{i,s}\cdot vol
\]
for some $\rr$-valued functions $g_{i,s}$ and $h_{i,s}=e^{-s^{2}\mu_{s^{-1}}^{*}\tilde{f}}s^{2i}g_{i,s}$.
It is not hard to see that there's some constant $C_{1}$ such that
$\left|g_{i,s}\right|\leq C_{1}$ for all $0\leq i\leq n$ and $s\geq1$.
We have $\mu_{s}^{*}f\geq\mu_{s}^{*}\left(c\nu^{2}\right)=cs^{2}\nu^{2}$
so 

\[
\left|h_{i,s}\right|\leq e^{-cs^{2}\nu^{2}}s^{2i}C_{1}
\]
which shows that the forms are uniformly bounded by an integrable
multiple of $vol$, and we can apply Lebesgue's dominant convergence
theorem. This completes the proof (\ref{eq:limit inside}) and of
the proposition.
\end{proof}

\subsection{\label{subsec:Equivariant-Segre-forms.}Equivariant Segre forms.}
\begin{lem}
\label{lem:primitive of e^Dq-1}Let $\xx$ be a local $\tb$-orbifold
and let $\eta\in\Omega\left(\xx;\mathcal{R}\right)^{\tb}$ be any
form such that $f:=\sum_{j=1}^{m}\lambda_{j}\iota_{\xi_{j}}\eta$
is a real valued function, $f\in\Omega\left(\xx;\rr\right)^{\tb}$.
There's a well-defined form $P\left(\eta\right)\in\Omega\left(\xx;\mathcal{R}\right)^{\tb}$
given by 
\begin{equation}
P\left(\eta\right)=\frac{\eta}{D\eta}\left(e^{D\eta}-1\right):=\eta\sum_{i=0}^{\dim\xx}h^{\left(i\right)}\left(-f\right)\frac{\left(d\eta\right)^{i}}{i!}\label{eq:primitive of e^Dq-1}
\end{equation}
where $h^{\left(i\right)}$ denotes the $i$'th derivative of the
smooth function $h\left(a\right):=\frac{e^{a}-1}{a}$.The form $P\left(\eta\right)$
satisfies 
\[
DP\left(\eta\right)=e^{D\eta}-1.
\]
Moreover, for any $f:\yy\to\xx$ we have $P\left(f^{*}\eta\right)=f^{*}P\left(\eta\right)$.
\end{lem}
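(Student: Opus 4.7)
The plan is to verify that the explicit formula on the right-hand side of \eqref{eq:primitive of e^Dq-1} has the claimed properties by direct manipulation, using nothing more than $D^{2}=0$, the finite-dimensionality of $\xx$, and the graded Leibniz rule. First, since $d\eta$ has form degree $\geq 2$ and $\xx$ is finite-dimensional, $(d\eta)^{i}=0$ for $i\gg 0$, so the defining sum truncates and manifestly yields a well-defined element of $\Omega(\xx;\mathcal{R})^{\tb}$. I would then recognize this sum as the Taylor expansion of $h$ about $a=-f$ applied to $D\eta=-f+d\eta$: since $-f$ is a scalar function that commutes with $d\eta$ and $d\eta$ is nilpotent, Taylor's theorem gives the formal identity
\[
h(D\eta)=\sum_{i\geq 0}h^{(i)}(-f)\,\frac{(d\eta)^{i}}{i!},
\]
with only finitely many nonzero summands. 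Thus $P(\eta)=\eta\cdot h(D\eta)$ in a precise sense.

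To compute $DP(\eta)$, observe that the assumption $f\in\Omega^{0}(\xx;\rr)^{\tb}$ forces $\eta$ to have pure de~Rham degree one, hence odd total parity; therefore $D\eta$ has even total parity. Because $D$ is an odd derivation squaring to zero, $D(D\eta)=0$, and for any polynomial expression $Q$ in the even, $D$-closed element $D\eta$ one has $DQ(D\eta)=0$. Applying the graded Leibniz rule then yields
\[
DP(\eta)=D\bigl(\eta\cdot h(D\eta)\bigr)=(D\eta)\,h(D\eta)-\eta\cdot D\bigl(h(D\eta)\bigr)=D\eta\cdot h(D\eta)=e^{D\eta}-1,
\]
where the last equality is the defining relation $a\cdot h(a)=e^{a}-1$.

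Naturality is immediate once both sides are written out: for any $\tb$-equivariant $\varphi:\yy\to\xx$, pullback commutes with $d$, with each $\iota_{\xi_{j}}$ (by equivariance), and with wedge product, hence with $D$, with $f$, and with every factor in the formula for $P(\eta)$; therefore $\varphi^{*}P(\eta)=P(\varphi^{*}\eta)$. The only subtlety in the whole argument is the correct application of the signed Leibniz rule when $\eta$ is odd and $h(D\eta)$ is even, together with the observation that $h(D\eta)$ is $D$-closed because $D\eta$ is both $D$-closed and of even total degree, rather than for any more sophisticated reason; once this is checked the verification is purely formal.
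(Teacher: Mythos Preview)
Your argument is correct and supplies exactly the direct verification the paper leaves implicit (the paper's proof is the single word ``Straightforward''). One minor quibble: the hypothesis that $f$ is a real-valued function does not strictly force $\eta$ to be a pure $1$-form---components of $\eta$ in other de~Rham degrees could lie in $\bigcap_j \ker\iota_{\xi_j}$ and still leave $f$ a function---but in every application $\eta$ is a localizing form built from invariant $1$-forms (see Definition~\ref{def:localizing form}), so your parity reasoning and the Leibniz computation go through unchanged.
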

\begin{proof}
Straightforward.
\end{proof}
If $E=\coprod_{n}E_{n}$ is any $\tb$-equivariant l-vector bundle
over a local $\tb$-orbifold $\zc=\coprod_{n\geq0}\zc_{n}$, we say
a form $q\in\Omega\left(E;\mathcal{R}\right)^{\tb}$ is \emph{quadratic
}if $q^{\left(2\right)}=q$. A quadratic form $q$ will be called
\emph{localizing }if it is localizing to the zero section of $E$.
There are bundles $E$ which admit no forms localizing to the zero
section (quadratic or otherwise).
\begin{defn}
Suppose $q$ is a quadratic localizing form for $E$. The \emph{equivariant
Segre form }
\[
\sigma\in\Omega\left(\zc;\Or\left(E\right)\otimes\rr\left[\vec{\lambda}\right]\right)^{\tb}
\]
 \emph{of $E$ associated with $q$ }is 
\[
\sigma=\pi_{*}\left(e^{Dq}\right),
\]
where $\pi:E\to\zc$ is the structure map and $\pi_{*}$ denotes integration
on the fiber.

A \emph{boundary condition for $E$} consists of a vector bundle $E'\to\zc'$
sitting in a cartesian square 
\begin{equation}
\xymatrix{\partial E\ar[d]_{\partial\pi}\ar[r]^{h} & E'\ar[d]^{\pi'}\\
\partial\zc\ar[r]_{g} & \zc'
}
\label{eq:boundary condition}
\end{equation}
where the total dimension of $E'$ is strictly less than the total
dimension of $\partial E$ (so $g$ maps $\partial\zc_{n}$ to $\zc'_{\leq n-2}$)
We say $q$ satisfies the boundary condition if $q|_{\partial E}$
is pulled back along $h$.
\end{defn}
Note the degree of $\sigma$ and the rank of $E$ may only be locally
constant, with $\deg\sigma=-\operatorname{rk}E.$
\begin{lem}
Let $\zc=\coprod\zc_{n}$ be a compact local $\tb$-orbifold, and
let $E\to\zc$ be a vector bundle over $\zc$. Let $q,q'\in\Omega\left(E;\mathcal{R}\right)^{\tb}$
be quadratic localizing forms for the vector bundle $E$, with $\sigma=\pi_{*}\left(e^{Dq}\right),\sigma'=\pi_{*}\left(e^{Dq'}\right)$
the associated equivariant Segre forms. Write $Dq=-f+dq$ and $Dq'=-f'+dq'$
for real-valued functions $f,f'$ which are positive definite quadratic
forms on each fiber. The form 
\[
P_{2}\left(q,q'\right)=\frac{q-q'}{Dq-Dq'}\left(e^{Dq}-e^{Dq'}\right):=\left(q-q'\right)\sum_{i,j\geq0}h_{2}^{\left(i,j\right)}\left(-f,-f'\right)\cdot\frac{\left(dq\right)^{i}\,\left(dq'\right)^{j}}{i!\,j!},
\]
where $h_{2}^{\left(i,j\right)}\left(a,b\right):=\partial_{x}^{i}\partial_{y}^{j}|_{\left(x,y\right)=\left(a,b\right)}\frac{e^{x}-e^{y}}{x-y}$,
satisfies
\[
DP_{2}\left(q,q'\right)=e^{Dq}-e^{Dq'}.
\]
Moreover, $P_{2}\left(q,q'\right)$ is rapidly decaying at infinity,
so 
\[
\epsilon=\pi_{*}\left(P_{2}\left(q,q'\right)\right)
\]
is well-defined, and we have 
\[
\sigma-\sigma'=D\epsilon.
\]

If $q,q'$ satisfy the boundary condition (\ref{eq:boundary condition}),
then $\epsilon|_{\partial\zc}$ is pulled back along $g:\partial\zc\to\zc'$.
\end{lem}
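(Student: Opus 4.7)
The plan is to mirror the proof of the previous lemma ($DP(\eta)=e^{D\eta}-1$): first verify the algebraic identity $DP_2(q,q')=e^{Dq}-e^{Dq'}$, then establish rapid decay of $P_2$ along the fibers of $E$ so that $\epsilon:=\pi_* P_2(q,q')$ is well-defined, and finally commute $D$ past $\pi_*$ to conclude $\sigma-\sigma'=D\epsilon$. The motivating formal observation is that $(x-y)\,h_2(x,y)=e^x-e^y$ as smooth functions of two commuting variables, with $h_2$ smooth across the diagonal.

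For the differential identity, write $Dq=-f+dq$ and $Dq'=-f'+dq'$. The ``nilpotent'' parts $dq,dq'$ are even-degree forms that commute with each other and with $-f,-f'$, and $(dq)^i=0$ for $i$ exceeding half the dimension of $\zc$, so the Taylor expansion of $h_2$ around the scalar point $(-f,-f')$ terminates and defines $h_2(Dq,Dq')$ unambiguously as the displayed sum. Because $Dq$ and $Dq'$ are even-degree $D$-cocycles, a term-by-term chain-rule computation (using $D(dq)=df$, itself forced by $D(Dq)=0$) shows the two types of surviving terms cancel, so $Dh_2(Dq,Dq')=0$. Since $q-q'$ has odd total degree $-1$, the graded Leibniz rule yields
\[
DP_2(q,q')=D(q-q')\cdot h_2(Dq,Dq')-(q-q')\cdot Dh_2(Dq,Dq')=(Dq-Dq')\cdot h_2(Dq,Dq'),
\]
and the formal identity $(A-B)h_2(A,B)=e^A-e^B$ for commuting even elements $A=Dq$, $B=Dq'$ produces the desired right-hand side.

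For the rapid decay I will use the integral representation $h_2(x,y)=\int_0^1 e^{tx+(1-t)y}\,dt$, which yields $h_2^{(i,j)}(-f,-f')=\int_0^1 t^i(1-t)^j e^{-tf-(1-t)f'}\,dt$. Since $f$ and $f'$ are positive definite quadratic on each fiber of $E$, so is $tf+(1-t)f'$ uniformly in $t\in[0,1]$; the resulting Gaussian decay beats the polynomial fiber-growth of $q-q'$, $(dq)^i$, and $(dq')^j$, showing that $P_2(q,q')$ decays rapidly along fibers and $\epsilon=\pi_* P_2(q,q')$ is a well-defined form in $\Omega(\zc;\Or(E)\otimes_\zz\mathcal{R})^\tb$. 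Rapid decay together with the absence of fiber boundary in the vector bundle $E$ then lets $\pi_*$ commute with $D$, giving $\sigma-\sigma'=\pi_*(e^{Dq}-e^{Dq'})=\pi_* DP_2(q,q')=D\epsilon$. The boundary condition is formal: $P_2(q,q')$ is built from $q,q',dq,dq',f,f'$ by algebraic operations and by the smooth function $h_2$ applied to scalar parts, all of which commute with the pullback $h^*$; hence if $q,q'$ are pulled back along $h:\partial E\to E'$ then so is $P_2(q,q')|_{\partial E}$, and base change in the cartesian square (\ref{eq:boundary condition}) shows $\epsilon|_{\partial\zc}$ is pulled back along $g$. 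The main delicacy I anticipate is precisely the verification that $h_2(Dq,Dq')$ is $D$-closed despite $dq$ failing to be $D$-closed, which is what forces the term-by-term cancellation sketched above.
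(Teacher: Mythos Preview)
Your proof is correct and follows the same route as the paper's, which is extremely terse: the paper simply declares the properties of $P_2(q,q')$ ``straightforward,'' writes $\sigma-\sigma'=\pi_*(e^{Dq}-e^{Dq'})=\pi_*(DP_2(q,q'))=D\epsilon$ invoking rapid decay, and for the boundary claim uses the push-pull identity $(\partial\pi)_*h^*=g^*\pi'_*$. Your write-up supplies exactly the details the paper omits; the integral representation $h_2(x,y)=\int_0^1 e^{tx+(1-t)y}\,dt$ is a clean way to see the rapid decay and to confirm that $h_2(Dq,Dq')$ is $D$-closed (each $e^{tDq+(1-t)Dq'}$ is), which is a slightly slicker alternative to the term-by-term cancellation you sketch.
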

\begin{proof}
Establishing the properties of $P_{2}\left(q,q'\right)$ is straightforward;
we then compute 
\[
\sigma-\sigma'=\pi_{*}\left(e^{Dq}-e^{Dq'}\right)=\pi_{*}\left(DP_{2}\left(q,q'\right)\right)=D\epsilon,
\]
where in the last equality we use the fact $P_{2}\left(q,q'\right)$
is rapidly decaying at infinity. Clearly, if $q|_{\partial E},q'|_{\partial E}$
are pulled back along $\tilde{f}$ so is $P_{2}\left(q,q'\right)$
and the last claim follows from the identity $\left(\partial\pi\right){}_{*}h^{*}=g^{*}\pi'_{*}$.
\end{proof}
\begin{rem}
we also have $e^{Dq}-e^{Dq'}=D\left(P\left(q\right)-P\left(q'\right)\right)$,
but $P\left(q\right)-P\left(q'\right)$ may not be rapidly decaying.
\end{rem}
We now show the equivariant Segre form is, up to an exact form, the
inverse to the equivariant Euler form. Let $S\left(E\right)\xrightarrow{\pi_{S}}\zc$
be the sphere bundle associated with $E\xrightarrow{\pi}\zc$.

\begin{lem}
\label{lem:euler*segre =00003D 1}Suppose $E$ admits a localizing
quadratic form $q$, with $\sigma$ the corresponding Segre form.
Let $\phi\in\Omega\left(S\left(E\right),\pi_{S}^{*}\Or\left(E\right)^{\vee}\otimes_{\zz}\rr\left[\vec{\lambda}\right]\right)^{\tb}$
be an equivariant angular form for $E$ with $e$ the corresponding
Euler form, $D\phi=-\pi_{S}^{*}e$ (note the degrees of $e$ and $\phi$
vary with the local rank of $E$). There exists a form $\epsilon\in\Omega\left(\zc;\mathcal{R}\right)^{\tb}$
such that 
\[
\sigma\cdot e=1+D\epsilon
\]

If $q$ satisfies the boundary condition (\ref{eq:boundary condition})
and $\phi|_{\partial S\left(E\right)}$ is pulled back along $S\left(\partial E\right)\xrightarrow{S\left(h\right)}S\left(E'\right)$,
then we can choose $\epsilon$ pulled back along $g$.
\end{lem}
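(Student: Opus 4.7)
The plan is to give an explicit formula for $\epsilon$ using a radial extension of $\phi$, and to compute directly. Equip $E$ with a $\tb$-invariant inner product (in the boundary-condition case, chosen to restrict on $\partial E$ to the pullback of a metric on $E'$), yielding a norm $\nu: E\to\rr_{\geq 0}$ and a radial decomposition $E\setminus\zc\simeq S(E)\times\rr_{>0}$. Let $\tilde\phi\in\Omega(E\setminus\zc;\tilde\pi^*\Or(E)^\vee\otimes\rr[\vec\lambda])^\tb$ denote the pullback of $\phi$ along the radial projection $E\setminus\zc\to S(E)$. Since $\tilde\phi$ is radially homogeneous of degree $0$ and the fields $\xi_j$ are tangent to the spheres, $D\tilde\phi=-\tilde\pi^*e$ on $E\setminus\zc$.

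For each small $\epsilon>0$, fix a cutoff $\chi_\epsilon\in C^\infty(E)^\tb$ depending only on $r=\nu(v)$, with $\chi_\epsilon\equiv 0$ for $r\leq\epsilon$ and $\chi_\epsilon\equiv 1$ for $r\geq 2\epsilon$. Then $\chi_\epsilon\tilde\phi$ extends by zero across $\zc$, and $D(\chi_\epsilon\tilde\phi)=d\chi_\epsilon\wedge\tilde\phi-\chi_\epsilon\pi^*e$. Rearranging,
\[
\pi^*e = d\chi_\epsilon\wedge\tilde\phi \;-\; D(\chi_\epsilon\tilde\phi)\;+\;(1-\chi_\epsilon)\pi^*e,
\]
where each summand extends smoothly across $\zc$. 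Multiplying by the $D$-closed form $e^{Dq}$ and fiber-integrating (the pushforward commutes with $D$ since $e^{Dq}$ is Schwartz on each fiber by positive-definiteness of $f^{(2)}$, cf. Lemma \ref{lem:primitive of e^Dq-1}),
\[
e\cdot\sigma \;=\; \pi_*(d\chi_\epsilon\wedge\tilde\phi\cdot e^{Dq})\;-\;D\pi_*(\chi_\epsilon\tilde\phi\cdot e^{Dq})\;+\;e\cdot\pi_*((1-\chi_\epsilon)\,e^{Dq}).
\]

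Now let $\epsilon\to 0$. The third summand vanishes, since $(1-\chi_\epsilon)e^{Dq}$ is uniformly bounded and supported on a shrinking tubular neighborhood of $\zc$. Since $q$ is quadratic, in radial coordinates $q=r^2\alpha_1(\theta)+r\beta_1(\theta)\,dr$, so $Dq|_{r=0}=0$ and $e^{Dq}|_{r=0}=1$. Using that $\chi_\epsilon'(r)\,dr$ is an approximate identity at $r=0$, a dominated convergence argument gives
\[
\pi_*(d\chi_\epsilon\wedge\tilde\phi\cdot e^{Dq})\;\xrightarrow{\epsilon\to 0}\;\pi_{S*}(\phi)\;=\;1.
\]
The first summand converges to $-D\pi_*(\tilde\phi\cdot e^{Dq})$, since $\tilde\phi$ has an integrable singularity at $\zc$ (the Jacobian $r^{\mathrm{rk}(E)-1}$ compensates for the blow-up) while $e^{Dq}$ decays rapidly. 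Setting $\epsilon:=-\pi_*(\tilde\phi\cdot e^{Dq})\in\Omega(\zc;\mathcal{R})^\tb$ yields the identity $\sigma\cdot e=1+D\epsilon$.

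For the boundary claim, the hypotheses give $q|_{\partial E}=h^*q'$ and $\phi|_{\partial S(E)}=S(h)^*\phi'$. With the chosen metric, $\tilde\phi|_{\partial E\setminus\partial\zc}=h^*\tilde\phi'$, hence $(\tilde\phi\cdot e^{Dq})|_{\partial E}=h^*(\tilde\phi'\cdot e^{Dq'})$. Applying base change for fiber integration along the cartesian square \eqref{eq:boundary condition}, one concludes $\epsilon|_{\partial\zc}=-g^*\pi'_*(\tilde\phi'\cdot e^{Dq'})\in\im g^*$, as required. The main technical point is the delta-function limit for the middle term; this is routine given the explicit quadratic form of $q$ in radial coordinates, and the commutation $D\pi_*=\pi_*D$ follows from the rapid decay of $\tilde\phi\cdot e^{Dq}$ at infinity and the controlled behavior at $\zc$.
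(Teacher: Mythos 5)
Your proof is correct and is essentially the paper's argument in a regularized disguise: the paper pulls everything back to the radial blow-up $S(E)\times[0,\infty)$ and applies the fiber-integration Stokes formula, whose boundary term at $S(E)\times\{0\}$ is exactly the $\pi_{S*}(\phi)=1$ that you extract via the $d\chi_\epsilon$ delta-function limit, and both proofs take $\epsilon$ to be (up to a sign convention) the fiber integral of the radially extended angular form against $e^{Dq}$. The boundary-condition claim is likewise handled identically in both, by base change for pushforward along the cartesian square.
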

\begin{proof}
We blow up the zero section of $E$, to obtain the following setup:
\[
\xymatrix{S\left(E\right)\times\left\{ 0\right\} \ar[dr]_{\pi_{S}}\ar[r]^{i} & S\left(E\right)\times[0,\infty)\ar[d]^{\widetilde{\pi}}\aru{r}{\beta} & E\ar[dl]^{\pi}\\
 & \zc
}
\]
Let $p_{S}$ denote the projections of $S\left(E\right)\times[0,\infty)$
on $S\left(E\right)$. Since $\beta$ has degree one, we have 
\begin{multline*}
e\cdot\pi_{*}\left(\exp\left(Dq\right)\right)=\pi_{*}\left(\pi^{*}e\cdot\exp\left(Dq\right)\right)=\\
=\tilde{\pi}_{*}\left(p_{S}^{*}\pi_{S}^{*}e\cdot\beta^{*}\exp\left(Dq\right)\right)=\tilde{\pi}_{*}\left(D\left(p_{S}^{*}\phi\cdot\beta^{*}\exp\left(Dq\right)\right)\right)
\end{multline*}
Since $\exp\left(Dq\right)$ is rapidly decaying at infinity we have
\begin{multline*}
\tilde{\pi}_{*}\left(D\left(p_{S}^{*}\phi\cdot\beta^{*}\exp\left(Dq\right)\right)\right)=\pi_{S*}\left(i^{*}\left(p_{S}^{*}\phi\cdot\beta^{*}\exp\left(Dq\right)\right)\right)+D\tilde{\pi}_{*}\left(p_{S}^{*}\phi\cdot\beta^{*}\exp\left(Dq\right)\right)=\\
=\pi_{S*}\left(\phi\cdot\exp\left(0\right)\right)+D\epsilon=1+D\epsilon
\end{multline*}
as claimed.

If $\phi|_{|_{\partial S\left(E\right)}}=S\left(h\right)^{*}\phi'$
and $q|_{\partial E}=h^{*}q'$ then $p_{S'}^{*}\phi\cdot\beta'^{*}\exp\left(Dq\right)|_{\partial E}$
is pulled back along $\tilde{h}:\partial S\left(E\right)\times[0,\infty)\to S\left(E'\right)\times[0,\infty)$
and the last claim follows from $\tilde{\pi}_{*}\tilde{h}^{*}=g^{*}\tilde{\pi}_{*}$.
\end{proof}

\subsection{\label{subsec:closed fp formula}The fixed-point formula for closed
orbifolds}

We give a proof the fixed-point formula for closed $\tb$-orbifolds
(see \cite{AB}). Though the result is well-known, this proof helps
motivate the setup above and brings into focus the difficulty in applying
the fixed-point formula to spaces with boundary.
\begin{cor}
\label{cor:fixed point formula}Let $\xx$ be a compact $\tb$-orbifold
without boundary. Let $F\hookrightarrow\xx$ denote the fixed-point
suborbifold. For any $\omega\in\Omega\left(\xx;\mbox{Or}\left(T\xx\right)^{\vee}\otimes_{\zz}\rr\left[\lambda_{1},....,\lambda_{m}\right]\right)^{\tb}$
with $D\omega=0$ we have
\begin{equation}
\int_{\xx}\omega=\int_{F}\frac{\omega|_{F}}{e\left(N\right)}\label{eq:closed localization}
\end{equation}
where $e\left(N\right)$ denotes the equivariant Euler form of the
normal bundle $N$ to $F\hookrightarrow\xx$. 
\end{cor}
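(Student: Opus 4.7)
The plan is to combine the steepest descent Proposition \ref{prop:steepest descent} with the Segre-Euler reciprocity of Lemma \ref{lem:euler*segre =00003D 1}; since $\partial\xx = \emptyset$, no blow-up is needed and there are no boundary contributions to track.

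First, I would invoke Proposition \ref{prop:localizing eta exists} (applied with trivial boundary data, as $\partial\xx = \emptyset$) to produce a fixed-point localizing form $\eta \in \Omega(\xx;\mathcal{R})^\tb$. Setting $I(s) := \int_\xx e^{s^2 D\eta}\omega$ for $s \geq 0$, the hypothesis $D\omega = 0$ together with $D^2 = 0$ gives $D(e^{s^2 D\eta}\omega) = 0$ and
\[
\tfrac{d}{ds}\bigl(e^{s^2 D\eta}\omega\bigr) = 2s\,D\eta\cdot e^{s^2 D\eta}\omega = 2s\,D\bigl(\eta\cdot e^{s^2 D\eta}\omega\bigr).
\]
Stokes' theorem on the closed orbifold $\xx$ then forces $I(s) \equiv I(0) = \int_\xx\omega$ for all $s \geq 0$.

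Next, I would apply Proposition \ref{prop:steepest descent} to the trivial $F$-localizable morphism, i.e.\ $\bu = \id : \xx \to \xx$, $\widetilde{N} = N$, $\widetilde{\mu} = \mu$. Because $\rho_0$ factors as $N \xrightarrow{\pi} F \hookrightarrow \vc$, one has $(\widetilde{\gamma}\widetilde{\rho}_0)^*\omega = \pi^*(\omega|_F)$, so the projection formula yields
\[
\int_\xx \omega \;=\; \lim_{s\to\infty} I(s) \;=\; \int_N e^{D\eta^{(2)}}\,\pi^*(\omega|_F) \;=\; \int_F \omega|_F \cdot \sigma(N),
\]
where $\sigma(N) := \pi_*\bigl(e^{D\eta^{(2)}}\bigr)$ is the equivariant Segre form associated to the quadratic localizing form $\eta^{(2)}$ on $N$.

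Finally, Lemma \ref{lem:euler*segre =00003D 1} supplies $\epsilon \in \Omega(F;\mathcal{R})^\tb$ with $\sigma(N)\cdot e(N) = 1 + D\epsilon$. Because the $\tb$-action is free away from $F$, $e(N)$ is a unit in $\mathcal{R}$-valued forms on $F$, so $\sigma(N) = e(N)^{-1}(1 + D\epsilon)$. Using $D\omega = 0$ and $D e(N) = 0$, the correction term is exact: $\omega|_F\cdot e(N)^{-1}\cdot D\epsilon = D\bigl(\omega|_F \cdot e(N)^{-1}\cdot\epsilon\bigr)$, which integrates to zero on $F$ (closed, since $\partial F = \emptyset$ follows from $\partial\xx = \emptyset$) by a second application of Stokes'. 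This leaves $\int_\xx\omega = \int_F \omega|_F / e(N)$, as claimed. The only real subtlety, easily dispatched, is that $e(N)$ is not invertible at the level of ordinary differential forms — one must work throughout in $\mathcal{R}$-valued forms on $F$ for the inversion and the final algebraic manipulation to be legitimate.
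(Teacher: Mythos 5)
Your proposal is correct and follows essentially the same route as the paper: $s$-independence of $\int_{\xx}e^{s^{2}D\eta}\omega$ via Stokes (the paper uses the explicit primitive $P\left(s^{2}\eta\right)$ of Lemma \ref{lem:primitive of e^Dq-1} where you differentiate in $s$, an immaterial difference), then Proposition \ref{prop:steepest descent} with $\bu=\id_{\xx}$ and the projection formula, then Lemma \ref{lem:euler*segre =00003D 1} to trade the Segre form for $e\left(N\right)^{-1}$.
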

\begin{proof}
We have
\begin{equation}
\lim_{s\to\infty}\int_{\xx}e^{s^{2}D\eta}\omega=\int_{F}\pi_{*}\left(e^{D\eta^{\left(2\right)}}\right)\omega|_{F}=\int_{F}\frac{\omega|_{F}}{e\left(N\right)}.\label{eq:cfp eq1}
\end{equation}
where the first equality uses Proposition \ref{prop:steepest descent},
with $\beta=\id_{\xx}$ the identity $F$-localizable morphism, and
the projection formula; the second equality uses Lemma \ref{lem:euler*segre =00003D 1}. 

On the other hand, by Lemma \ref{lem:primitive of e^Dq-1} we have
$\left(e^{s^{2}D\eta}-1\right)\omega=D\left(P\left(s^{2}\eta\right)\right)$,
so by Stokes' theorem 
\begin{equation}
\int_{\xx}e^{s^{2}D\eta}\omega=\int_{\xx}\omega\label{eq:cfp eq2}
\end{equation}
 for all $s$.  Combining (\ref{eq:cfp eq1}) and (\ref{eq:cfp eq2})
gives (\ref{eq:closed localization}).
\end{proof}
\begin{rem}
\label{rem:boundary contributes}If $\partial\xx\neq\emptyset$, the
two sides of (\ref{eq:cfp eq2}) differ by $\int_{\partial\xx}P\left(s^{2}\eta\right)\omega$.
Example \ref{exa:lines thru 2pts} shows that this can be nonzero.
\end{rem}

\section{\label{sec:singular loc}Singular Fixed-point Formula}

\subsection{\label{subsec:sing loc}A singular fixed-point formula by steepest
descent}

In this subsection we apply the results of Section \ref{sec:steepest descent}
to obtain a singular fixed point formula for integrals of extended
forms (\ref{eq:integration}).

Fix some basic moduli specification $\basic$. The space $\check{\mm}_{\basic}^{r}$
has regular fixed points for every $r\geq0$. We denote $\check{F}_{\basic}^{r}:=\left(\check{\mm}_{\basic}^{r}\right)^{\tb}$
and let $i_{\basic}^{r}:\check{F}_{\basic}^{r}\hookrightarrow\check{\mm}_{\basic}^{r}$
denote the associated closed embedding, with $\check{N}_{\basic}^{r}=N_{i_{\basic}^{r}}$
the associated normal l-bundle.
\begin{prop}
\label{prop:regular fps and extended localizing exists}There exists
an \emph{extended localizing form} $\eta=\left\{ \check{\eta}_{r}\right\} \in\Omega_{\basic}^{\mathcal{R}}:=\mathcal{R}\otimes_{\rr\left[\vec{\lambda}\right]}\Omega_{\basic}$,
that is, an extended form such that $\check{\eta}_{r}$ is fixed-point
localizing for every $r\geq0$.
\end{prop}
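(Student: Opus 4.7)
The plan is to construct $\check\eta_r$ by downward induction on $r$. Stability of the vertex moduli specifications in a $(\basic,r)$-labeled tree imposes a uniform bound on $r$ for which $\check\mm_\basic^r$ is nonempty, so I begin with $\check\eta_r = 0$ for all sufficiently large $r$. Assume inductively that $\check\eta_{r+1}, \check\eta_{r+2}, \ldots$ have been constructed, each $\tb$-invariant, $\Sym$-invariant, fixed-point localizing on the appropriate space, and satisfying the coherence condition (\ref{eq:check omega coherence}).

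First I would use coherence to \emph{prescribe} boundary data on $\partial\check\mm_\basic^r$: on each clopen piece $\partial^{\tc_+}\check\mm_\tc$ indexed by $\tc_+ \in \ts_\basic^{r+1}$ (with $\tc = \cnt\tc_+$), declare the restriction to equal $(\check g_{\tc_+})^*\check\eta_{r+1}$. Well-definedness at the codimension-two corners $\partial^{\tc_{++}}\check\mm_\tc$, $\tc_{++} \in \ts_\basic^{r+2}$, reduces to applying the coherence condition for $\check\eta_{r+2}$ along the two routes $\tc_{++} \to \tc_+ \to \tc$ and $\tc_{++} \to \tc_+' \to \tc$, together with the associativity of the attaching maps under iterated edge contraction. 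The same argument, using the $\Sym(r+2)$-invariance of $\check\eta_{r+2}$ to swap the order in which the two edges are contracted, yields the $\zz/2$-invariance of the prescribed form on the double boundary that is hypothesised in Proposition \ref{prop:localizing eta exists}.

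Next I would verify that the prescribed boundary form is itself fixed-point localizing. The map $\check g_{\tc_+}$ is a $\tb$-equivariant b-map identifying the fixed locus of $\partial^{\tc_+}\check\mm_\tc$ with (the inverse image of) $\check F_{\tc_+}$, and it is submersive along the normal direction to this fixed locus. Writing $\check\eta_{r+1} = \sum_i \gamma_i/\langle \zeta_i, \vec\lambda\rangle$ as in Definition \ref{def:localizing form}, its pullback is of the same form, the associated function $f$ pulls back to a nonnegative function vanishing exactly on the fixed locus, and the quadratic part $f^{(2)}$ remains positive definite on the normal bundle. With this boundary data in place, Proposition \ref{prop:localizing eta exists} (whose proof goes through verbatim with coefficients in $\mathcal R$ rather than $\rr[\vect\lambda]$) produces a fixed-point localizing extension $\tilde\eta_r$ on $\check\mm_\basic^r$. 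Averaging $\tilde\eta_r$ over $\Sym(r)$ preserves conditions (a)--(c) of Definition \ref{def:localizing form} and leaves the boundary values unchanged, since the inductive $\Sym(r+1)$-invariance of $\check\eta_{r+1}$ combined with the equivariance of $\check g_{\tc_+}$ under $\Sym(r+1) \to \Sym(r)$ makes the prescribed boundary form already $\Sym(r)$-invariant; the averaged form is the desired $\check\eta_r$.

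The main obstacle I anticipate is the bookkeeping at the codimension-two corners---verifying that the two contractions $\tc_{++} \to \tc_+ \to \tc$ and $\tc_{++} \to \tc_+' \to \tc$ produce the same pullback, and that the restriction to $\partial^2\check\mm_\basic^r$ is $\zz/2$-invariant---which requires care with the conventions defining $\ts_\basic^r$, the vertex ordering, and the attaching maps $\check g_\bullet$. Once that compatibility is verified, the remaining ingredients (pullback preservation of the localizing conditions, application of Proposition \ref{prop:localizing eta exists}, and symmetrisation) are essentially routine.
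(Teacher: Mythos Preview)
Your overall architecture—downward induction on $r$, prescribe boundary values via $(\check g_{\tc_+})^*\check\eta_{r+1}$, invoke Proposition \ref{prop:localizing eta exists}, then symmetrise—matches the paper's. But there is a genuine gap: $\Sym(r)$-invariance is not enough to make the induction close, and the paper has to carry a strictly stronger invariance, under a group $E_\kf^r$ containing $\Sym(r)$ together with ``move'' operations $\mv_{x,e_b}$ that slide a marking $x$ across the edge $e_b$.

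Two places where your argument breaks without this extra symmetry. First, the assignment $\tc_+\mapsto\partial^{\tc_+}\check\mm_{\cnt\tc_+}$ is \emph{not} a bijection onto the clopen components of $\partial\check\mm_\basic^r$: since $\check\mm_\basic^r$ has already forgotten the superfluous markings $\sstar'_{[r]}$, trees $\tc_+,\tc_+'\in\ts_\basic^{r+1}$ that differ only by moving some $\sstar'_i$ ($i\le r$) across the new edge $e_{r+1}$ index the \emph{same} boundary component. For your prescribed forms $(\check g_{\tc_+})^*\check\eta_{r+1}$ and $(\check g_{\tc_+'})^*\check\eta_{r+1}$ to agree there, you need $\check\eta_{r+1}$ invariant under $\mv_{\sstar'_i,e_{r+1}}$, not merely under $\Sym(r+1)$. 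Second, your codimension-two analysis only treats the ``vertical'' corners $\partial_-^2$ (two successive edge contractions), but $\partial^2\check\mm_\basic^r$ also has a ``horizontal'' piece $\partial_+^2$, coming from the boundary of the fibre of $\check g_{\tc_+}$: this is where some $a\in\kf\amalg\sstar''_{[r]}$ bubbles off on a ghost disc together with $\sstar'_{r+1}$. The $\zz/2$ swap on this piece is realised by $\mv_{a,e_{r+1}}\in E_\kf^{r+1}$, so $\zz/2$-invariance of the boundary datum on $\partial_+^2$ again requires the $E_\kf^{r+1}$-invariance of $\check\eta_{r+1}$, which your inductive hypothesis does not supply.

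The fix is exactly what the paper does: strengthen the inductive hypothesis to include $E_\kf^r$-invariance of $\check\eta_r$, verify that the prescribed boundary form inherits $E_\kf^R$-invariance from $E_\kf^{R+1}$-invariance of $\check\eta_{R+1}$ via the commutation relations $e\cdot\check g_{\tc_+}=\check g_{\tc_+}\cdot\partial^{\tc_+}(\mv_{x,e_b}.)$ for suitable $e\in E_\kf^{R+1}$, and then average over $E_\kf^R$ rather than just $\Sym(R)$ after extending.
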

The proof of this proposition appears in $\S$\ref{subsec:localizing eta}.
We fix an extended localizing form $\eta\in\Omega_{\basic}^{\mathcal{R}}$.
By Proposition \ref{prop:fp inverse image}, $F_{\basic}^{r}:=\left(\For_{\basic}^{r}\right)^{-1}\left(\check{F}_{\basic}^{r}\right)$
is an l-orbifold. Since $\For_{\basic}^{r}$ is a b-submersion and
$i_{\basic}^{r}$ is a closed embedding, the induced map $N_{\basic}^{r}\to\check{N}_{\basic}^{r}$
is cartesian. In particular, $\eta_{r}:=\left(\For_{\basic}^{r}\right)^{*}\check{\eta}_{r}$
is $F_{\basic}^{r}$-localizing for all $r\geq0$.

Recall $\int_{\basic}\omega$ was defined using a blow up $\widetilde{\mm}_{\basic}^{r}\xrightarrow{\bu_{\basic}^{r}}\mm_{\basic}^{r}$
(cf. $\S$\ref{subsec:Extended forms and integ}). We will see in
$\S$\ref{subsec:Blowup localizable} that the map $\bu_{\basic}^{r}:\widetilde{\mm}_{\basic}^{r}\to\mm_{\basic}^{r}$
is $F_{\basic}^{r}$-localizable. More precisely:
\begin{prop}
\label{prop:blowup is localizable}There exists a $\tb$-equivariant
commutative diagram
\begin{equation}
\xymatrix{\widetilde{N}_{\basic}^{r}\ar[d]_{\bu_{N_{\basic}^{r}}} & \widetilde{\mathcal{V}}_{\basic}^{r}\ar[l]_{\tilde{j}_{\basic}^{r}}\ar[r]^{\tilde{\gamma}_{\basic}^{r}}\ar[d] & \widetilde{\mm}_{\basic}^{r}\ar[d]^{\bu_{\basic}^{r}}\\
N_{\basic}^{r} & \ar[l]^{j_{\basic}^{r}}\mathcal{V}_{\basic}^{r}\ar[r]_{\gamma_{\basic}^{r}} & \mm_{\basic}^{r}
}
\label{eq:bu_b^r is localizable}
\end{equation}
and a smooth family of $\tb$-equivariant maps $\left\{ \tilde{\mu}_{a}:\widetilde{N}_{\basic}^{r}\to\widetilde{N}_{\basic}^{r}\right\} _{a\in\rr_{\geq0}}$
satisfying the conditions of Definition \ref{def:localizable map}.
\end{prop}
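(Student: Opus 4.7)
The plan is to construct $\widetilde N_\basic^r$ as a fiber product of $N_\basic^r$ against $(\widetilde{L\times L})^r$ along a linearized edge evaluation map, to choose $\gamma_\basic^r$ compatibly so that both cartesian squares in (\ref{eq:bu_b^r is localizable}) are formally forced, and to read off $\tilde\mu_a$ from the natural $\rr_{\geq 0}$-scaling on a spherical blow up.

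First I would use Lemma~\ref{lem:tubular exists} to produce a $\tb$-equivariant tubular neighborhood $\gamma_\basic^r: \mathcal{V}_\basic^r \to \mm_\basic^r$ for the closed embedding $F_\basic^r\hookrightarrow\mm_\basic^r$ supplied by Proposition~\ref{prop:fp inverse image}(a). After shrinking $\mathcal{V}_\basic^r$, I may assume that for each $i\in[r]$ and each connected component of $\mathcal{V}_\basic^r$, the composition $\ed_i^{\basic,r}\circ\gamma_\basic^r$ either factors through the equivariant tubular neighborhood $\gamma_\Delta: N_\Delta\hookrightarrow L\times L$ of (\ref{eq:diagonal tube}) (on components where edge $i$ is even-even, so $\ed_i^{\basic,r}|_{F_\basic^r}$ lands on $(p_0,p_0)$) or else lands entirely in $L\times L\setminus\Delta$ (on components where edge $i$ is odd-even, so $\ed_i^{\basic,r}|_{F_\basic^r}$ avoids $\Delta$).

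Next I would linearize. For even-even edges the differential of $\ed_i^{\basic,r}$ along $F_\basic^r$, composed with the projection $N_\Delta\to N_0$ of (\ref{eq:N_0 def}) and with $\gamma_\Delta$, defines a smooth fiberwise-linear map $\Phi_i: N_\basic^r\to L\times L$ agreeing with $\ed_i^{\basic,r}$ to first order along $F_\basic^r$. For odd-even edges I set $\Phi_i = \ed_i^{\basic,r}|_{F_\basic^r}\circ\pi$, where $\pi: N_\basic^r\to F_\basic^r$ is the bundle projection. The key refinement is to replace $\gamma_\basic^r$ by its precomposition with a $\tb$-equivariant self-diffeomorphism of $\mathcal{V}_\basic^r$ fixing $F_\basic^r$ pointwise, chosen so that $\ed_i^{\basic,r}\circ\gamma_\basic^r = \Phi_i|_{\mathcal{V}_\basic^r}$ for every $i$. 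Fiberwise in $N_\basic^r\to F_\basic^r$ this is a matter of straightening the even-even components of the edge evaluation onto their linearizations by the parametric inverse function theorem (the odd-even components are handled by the shrinking step), and Haar-averaging over $\tb$ restores equivariance. With $\Phi = (\Phi_1,\ldots,\Phi_r): N_\basic^r\to (L\times L)^r$ in hand, I then set
\[
\widetilde N_\basic^r := N_\basic^r \times_{(L\times L)^r} (\widetilde{L\times L})^r, \qquad \widetilde{\mathcal{V}}_\basic^r := \tilde j_\basic^{-1}(\mathcal{V}_\basic^r),
\]
with $\tilde\gamma_\basic^r$ induced by the identity $\ed_\basic^r\circ\gamma_\basic^r = \Phi|_{\mathcal{V}_\basic^r}$; both squares of (\ref{eq:bu_b^r is localizable}) are then cartesian by construction, and the relative orientations of $\bu_\basic^r$ and $\bu_{N_\basic^r}$ are inherited from the canonical complex orientation of the spherical blow up $S(N_\Delta)\times[0,\infty)\to N_\Delta$.

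For the scaling, let $\tilde\mu_a^{L\times L}: \widetilde{L\times L}\to \widetilde{L\times L}$ act trivially on $S(N_\Delta)$ and by $a$-multiplication on the $[0,\infty)$-factor (and as the identity outside); since $\Phi$ factors fiberwise through linear maps in the even-even directions and is pulled back along $\pi$ in the odd-even directions, the pair $\bigl(\mu_a, (\tilde\mu_a^{L\times L})^{\times r}\bigr)$ descends to a smooth $\tb$-equivariant $\tilde\mu_a: \widetilde N_\basic^r\to \widetilde N_\basic^r$ satisfying $\bu_{N_\basic^r}\circ\tilde\mu_a = \mu_a\circ\bu_{N_\basic^r}$, as required by Definition~\ref{def:localizable map}. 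The main obstacle is arranging the exact equality $\ed_i^{\basic,r}\circ\gamma_\basic^r = \Phi_i$ (rather than only first-order agreement) simultaneously for all $i$ and compatibly with the $\tb$-action; this is possible because the normal fiber $N_\basic^r|_{\check F_\vp}$ decomposes naturally via the profile $\vp$ into smoothing parameters for distinct nodes and sphere-bubbling directions, so the linearizations $\delta_i$ for different even-even edges occupy transverse summands and the straightening can be performed edge-by-edge before the final Haar-averaging step.
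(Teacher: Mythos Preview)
Your overall strategy matches the paper's: build a tubular neighbourhood of $F_\basic^r$ in $\mm_\basic^r$ compatible with the even--even edge evaluations, define $\widetilde N_\basic^r$ as the pullback of the blow up along the linearized edge map, and lift the scalar multiplication from $\widetilde N_0=S(N_0)\times[0,\infty)$. The paper carries this out tree by tree for $\tc\in\ts_\basic^{r,r'}$ and factors through $N_0^{r'}$ rather than $(L\times L)^r$, but that is cosmetic.

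The gap is in the straightening step. You produce a (non-equivariant) self-diffeomorphism $\psi$ of $\mathcal V_\basic^r$ with $\ed_i\circ\gamma_\basic^r\circ\psi=\Phi_i$ and then say ``Haar-averaging over $\tb$ restores equivariance''. But the Haar average of a family of diffeomorphisms $t\mapsto t^{-1}\psi\,t$ is in general neither a diffeomorphism nor a map intertwining $\ed_i\circ\gamma_\basic^r$ with $\Phi_i$ (the latter because $\ed_i\circ\gamma_\basic^r$ is not linear). So as written the argument does not produce the equivariant tubular neighbourhood you need. The paper isolates exactly this step as Lemma~\ref{lem:N_T->N_0 compatible flows} and proves it differently: rather than correcting an arbitrary tubular neighbourhood, it constructs $\gamma_\tc$ directly as the exponential flow along a section $\sigma_1$ of the linearized edge map $\delta_1:TW_1\to N_0^{r'}\times W_1$. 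The point is that sections, unlike diffeomorphisms, \emph{can} be Haar-averaged, and Lemma~\ref{lem:b-tangent section} shows one may choose $\sigma_1$ simultaneously $\tb$-equivariant and b-tangent (the latter handles the corner structure you do not discuss, guaranteeing the flow preserves all strata). Your remark that ``the $\delta_i$ for different even--even edges occupy transverse summands'' addresses a different issue and does not help here.

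A smaller point: your $\tilde\mu_a^{L\times L}$ on all of $\widetilde{L\times L}$, acting by $a$-dilation on the collar and ``as the identity outside'', is not a smooth map. The paper avoids this by working only with the fibre $\widetilde N_0=S(N_0)\times[0,\infty)$ over $(p_0,p_0)$, which suffices since the even--even linearizations already factor through $N_0^{r'}$; the rescaling $(s,t)\mapsto(s,at)$ is then globally smooth, and its smoothness as a family in $a\in\rr_{\geq0}$ is the elementary fact that $(x,y)\mapsto xy$ is smooth on $[0,\infty)\times\rr_{\geq0}$.
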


\begin{thm}
\label{thm:singular localization formula}Let $\basic$ be a basic
moduli specification, and let $\omega=\left\{ \check{\omega}_{r}\right\} \in\Omega_{\basic}$
be an extended form with $D\omega=0$. Let $\eta=\left\{ \check{\eta}_{r}\right\} _{r\geq0}\in\Omega_{\basic}^{\mathcal{R}}$
be an extended localizing form. Then, for $\eta_{r}=\left(\For{}_{\basic}^{r}\right)^{*}\check{\eta}_{r}$
and $\omega_{r}=\left(\For{}_{\basic}^{r}\right)^{*}\check{\omega}_{r}$
we have 

\begin{equation}
\int_{\basic}\omega=\sum_{r\geq0}\frac{1}{r!}\int_{\widetilde{N}_{\basic}^{r}}\bu_{N_{\basic}^{r}}^{*}e^{D\eta_{r}^{\left(2\right)}}\cdot\left(\tilde{\gamma}_{\basic}^{r}\tilde{\mu}_{0}\right)^{*}\left(\left(\bu_{\basic}^{r}\right)^{*}\omega_{r}\cdot\left(\widetilde{\ed}_{\basic}^{r}\right)^{*}\Lambda^{\boxtimes r}\right).\label{eq:sing loc formula}
\end{equation}
\end{thm}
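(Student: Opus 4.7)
The plan is to introduce the one-parameter family
\[
I(t) := \int_{\basic} e^{tD\eta}\,\omega = \sum_{r\geq 0}\frac{1}{r!}\int_{\widetilde{\mm}_{\basic}^{r}} e^{tD\tilde{\eta}_{r}}\,(\bu_{\basic}^{r})^{*}\omega_{r}\cdot(\widetilde{\ed}_{\basic}^{r})^{*}\Lambda^{\boxtimes r},
\]
where $\tilde{\eta}_{r} = (\bu_{\basic}^{r})^{*}\eta_{r}$ and the sum is finite because $\mm_{\basic}^{r} = \emptyset$ for $r$ large. Since $I(0) = \int_{\basic}\omega$ by (\ref{eq:extended integral}), the theorem reduces to two assertions: (i) $I(t)$ is independent of $t$, and (ii) $\lim_{t\to\infty} I(t)$ equals the right-hand side of (\ref{eq:sing loc formula}).

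For (i), I apply Lemma \ref{lem:primitive of e^Dq-1} level-wise to the extended form $t\eta$ to obtain $e^{tD\eta}-1 = D P(t\eta)$. Since $D$ is a signed derivation and $D\omega = 0$,
\[
(e^{tD\eta}-1)\,\omega = D\bigl(P(t\eta)\,\omega\bigr).
\]
Now $P(t\eta)\,\omega$ is an $\mathcal{R}$-valued extended form -- the coherence condition (\ref{eq:check omega coherence}) at each level is stable under level-wise application of $P$ and under multiplication by $\omega$, because $\For_{\basic}^{r}$ intertwines the data used to define both $\eta$ and $\omega$ at adjacent levels. Extending the Stokes' theorem (\ref{eq:stokes theorem}) $\mathcal{R}$-linearly then gives $I(t) - I(0) = \int_{\basic} D(P(t\eta)\,\omega) = 0$.

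For (ii), I apply Proposition \ref{prop:steepest descent} at each level $r$ with $\xx = \mm_{\basic}^{r}$, $\zc = F_{\basic}^{r}$, $\widetilde{\xx} = \widetilde{\mm}_{\basic}^{r}$ and $\bu = \bu_{\basic}^{r}$, equipped with the additional localizable-morphism data supplied by Proposition \ref{prop:blowup is localizable}. The form $\eta_{r} = (\For_{\basic}^{r})^{*}\check{\eta}_{r}$ is $F_{\basic}^{r}$-localizing: $\check{\eta}_{r}$ is fixed-point localizing by Proposition \ref{prop:regular fps and extended localizing exists}, and since $\For_{\basic}^{r}$ is a b-submersion with $i_{\basic}^{r}$ a closed embedding, the pullback $N_{\basic}^{r}\to\check{N}_{\basic}^{r}$ is cartesian, so the positive-definiteness of $\check{f}_{r}^{(2)}$ on fibers transfers to $\eta_r$. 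The form $\tilde{\omega} := (\bu_{\basic}^{r})^{*}\omega_{r}\cdot(\widetilde{\ed}_{\basic}^{r})^{*}\Lambda^{\boxtimes r}$ is smooth on $\widetilde{\mm}_{\basic}^{r}$, because the blow-up was built precisely to lift $\Lambda^{\boxtimes r}$ smoothly along $\widetilde{\ed}_{\basic}^{r}$. Substituting $t = s^{2}$ converts $\lim_{t\to\infty}$ to the $s\to\infty$ limit of Proposition \ref{prop:steepest descent}, and summing its level-wise conclusions over $r$ (a finite sum, so it commutes with the limit) yields the right-hand side of (\ref{eq:sing loc formula}).

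The bookkeeping above is routine once the two inputs are granted; the genuine difficulty is hidden in Propositions \ref{prop:regular fps and extended localizing exists} and \ref{prop:blowup is localizable}. The first requires constructing a coherent family $\{\check{\eta}_{r}\}_{r\geq 0}$ of fixed-point localizing forms simultaneously satisfying the recursion (\ref{eq:check omega coherence}) at all depths, which I expect to handle inductively in $r$ via the extension argument of Proposition \ref{prop:localizing eta exists}. The second, and the real obstacle, is matching the diagonal blow-up inside $(\widetilde{L\times L})^{r}$ (embedded via $\widetilde{\ed}_{\basic}^{r}$) compatibly with a $\tb$-equivariant tubular neighbourhood of $F_{\basic}^{r}\subset\mm_{\basic}^{r}$, and equipping the resulting resolved normal bundle $\widetilde{N}_{\basic}^{r}$ with the scaling $\tilde{\mu}$ covering fiberwise dilation of $N_{\basic}^{r}$. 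This is where the anisotropic behaviour of $\Lambda^{\boxtimes r}$ near the diagonals must be reconciled with the isotropic rescaling used by steepest descent.
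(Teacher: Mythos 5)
Your proposal is correct and follows essentially the same route as the paper: the paper likewise defines $I(s)=\int_{\basic}e^{s^{2}D\eta}\omega$, proves $s$-independence by observing that $P(s^{2}\eta)\,\omega$ (there written $\check{\epsilon}_{s,r}\check{\omega}_{r}=\frac{\check{\eta}_{r}}{D\check{\eta}_{r}}(e^{s^{2}D\check{\eta}_{r}}-1)\check{\omega}_{r}$) is a coherent, $\Sym(r)$-invariant extended form so that Stokes' theorem (\ref{eq:stokes theorem}) applies, and then takes $s\to\infty$ level-wise via Proposition \ref{prop:steepest descent} using the localizable-morphism data of Proposition \ref{prop:blowup is localizable}. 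Your substitution $t=s^{2}$ and your identification of where the real work lies (the two input propositions) match the paper exactly.
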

\begin{proof}
Consider
\[
I\left(s\right):=\int_{\basic}e^{s^{2}D\eta}\omega=\sum_{r\geq0}\frac{1}{r!}\int_{\widetilde{\mm}_{\basic}^{r}}{\bu_{\basic}^{r}}^{*}\left(e^{s^{2}D\eta_{r}}\omega_{r}\right)\cdot{\widetilde{\ed}{}_{\basic}^{r}}^{*}\Lambda^{\boxtimes r}.
\]
First we claim that 
\begin{equation}
I\left(s\right)=I\left(0\right)\label{eq:time independence}
\end{equation}
 for all $s\geq0$. To see this, consider the extended form $\epsilon_{s}\omega=\left\{ \check{\epsilon}_{s,r}\check{\omega}_{r}\right\} \in\Omega_{\basic}^{\mathcal{R}}$
given by 
\[
\check{\epsilon}_{s,r}\check{\omega}_{r}=\frac{\check{\eta}_{r}}{D\check{\eta}_{r}}\left(e^{s^{2}D\check{\eta}_{r}}-1\right)\check{\omega}_{r},
\]
where the right hand side is interperted as in (\ref{eq:primitive of e^Dq-1}).
The collection $\left\{ \check{\epsilon}_{s,r}\check{\omega}_{r}\right\} $
is coherent and $Sym\left(r\right)$-invariant since $\left\{ \check{\eta}_{r}\right\} $
and $\left\{ \check{\omega}_{r}\right\} $ are. We have $D\left(\epsilon_{s}\omega\right)=\left(e^{s^{2}D\eta}-1\right)\omega$,
so (\ref{eq:time independence}) follows from Stokes' theorem, $\int_{\basic}D\left(\epsilon_{s}\omega\right)=0$.

Now take the limit as $s\to\infty$, and apply Proposition \ref{prop:steepest descent}.
\end{proof}

\subsection{\label{subsec:localizing eta}The group $E_{\kf}^{r}$ and localizing
forms}

For the constructions in this section and the next, we need to consider
a group $E_{\kf}^{r}$ which acts on $\check{\mm}_{\basic}^{r}$,
extending the action of $\Sym\left(r\right)$. Because the self-diffeomorphisms
(``autoequivalences'') of a stack do not form a group, we first
define these groups as groups of symmetries of a discrete object,
and only then let them act on $\check{\mm}_{\basic}^{r}$.

Let $T_{\kf}^{r}$ denote the set of trees on vertices $\left\{ v_{1},...,v_{r+1}\right\} $
with oriented edges $\left\{ e_{1},...,e_{r}\right\} $ as well as
inward-pointing half-edges labeled $\left\{ h_{x}\right\} _{x\in\kf}$.
A tree $\tc\in T_{\kf}^{r}$ can be represented by a pair of partitions
$\left(\left(\kf_{\tc}\left(v_{i}\right)\right)_{i=1}^{r+1},\left(\sigma_{\tc}\left(v_{i}\right)\right)_{i=1}^{r+1}\right)$,
so that $\sstar'_{\left[r\right]}=\coprod_{i=1}^{r+1}\sigma_{\tc}\left(v_{i}\right)$
and $\sstar''_{\left[r\right]}\coprod\left\{ 1,...,k\right\} =\coprod_{i=1}^{r+1}\kf_{\tc}\left(v_{i}\right)$.

Consider the subgroup 
\[
\hat{\rc}_{\kf}^{r}<\Sym\left(T_{\kf}^{r}\times\left(\sstar''_{\left[r\right]}\coprod\kf\right)\right)
\]
consisting of $\rho\in\Sym\left(T_{\kf}^{r}\times\left(\sstar''_{\left[r\right]}\coprod\kf\right)\right)$
such that\\
(a) there exists an element ${\alpha^{\rho}\in\Sym\left(T_{\kf}^{r}\right)}$
and a collection ${\left\{ F_{\tc}^{\rho}\in\Sym\left(\sstar''_{\left[r\right]}\coprod\kf\right)\right\} _{\tc\in T_{\kf}^{r}}}$
such that 
\[
{\rho\left(\tc,x\right)=\left(\alpha^{\rho}\left(\tc\right),F_{\tc}^{\rho}\left(x\right)\right),\text{ and }}
\]
(b) we have
\[
F_{\tc}^{\rho}\left(\kf_{\tc}\left(v_{i}\right)\right)=\kf_{\alpha^{\rho}\left(\tc\right)}\left(v_{i}\right)
\]
for all $\tc\in T_{\kf}^{r}$ $1\leq i\leq r+1$. 

Note that $\Sym\left(v_{1},...,v_{r+1}\right)$ acts on $T_{\kf}^{r}$
by relabeling the vertices, and thus forms another subgroup
\[
\Sym\left(v_{1},...,v_{r+1}\right)\times\id<\Sym\left(T_{\kf}^{r}\right)\times\Sym\left(\sstar''_{\left[r\right]}\coprod\kf\right)<\Sym\left(T_{\kf}^{r}\times\left(\sstar''_{\left[r\right]}\coprod\kf\right)\right).
\]

The conjugation action of $\Sym\left(v_{1},...,v_{r+1}\right)$ preserves
$\hat{\rc}_{\kf}^{r}$, and we take $\rc_{\kf}^{r}<\hat{\rc}_{\kf}^{r}$
to be the fixed-points of this action. 

\begin{lem}
\label{lem:R_k^r action}Let $\basic=\left(\kf,\lf,\beta\right)$
be any basic moduli specification. Then the finite group $\rc_{\kf}^{r}$
acts on $\check{\mm}_{\basic}^{r}$.
\end{lem}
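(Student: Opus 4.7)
The plan is to construct the action explicitly on each clopen component of $\check{\mm}_{\basic}^{r} = \coprod_{\tc \in \ts_{\basic}^{r}} \check{\mm}_{\tc}$ by combining a relabeling of marked points with a renumbering of the vertex factors, and then to verify well-definedness using precisely the $\Sym(v_1,\ldots,v_{r+1})$-invariance condition that cuts out $\rc_\kf^r$ inside $\hat\rc_\kf^r$.

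First I would introduce a forgetful map $\pi : \ts_\basic^r \to T_\kf^r$ that records only the combinatorial partition of $\kf \coprod \sstar''_{[r]}$ among the vertices and of $\sstar'_{[r]}$ among the edges, discarding the interior marking sets $\lf_\tc(v_i)$ and degrees $\beta_\tc(v_i)$. An element $\tc \in \ts_\basic^r$ is uniquely determined by $\pi(\tc) \in T_\kf^r$ together with the assignments $v_i \mapsto (\lf_\tc(v_i), \beta_\tc(v_i))$.

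Given $\rho \in \rc_\kf^r$ and $\tc \in \ts_\basic^r$, I set $\bar\tc = \pi(\tc)$ and form $\alpha^\rho(\bar\tc) \in T_\kf^r$. A standard combinatorial argument produces some $\tau \in \Sym(v_1,\ldots,v_{r+1})$ for which $\tau \cdot \alpha^\rho(\bar\tc)$ satisfies the vertex ordering condition (c) of Definition \ref{def:labeled tree}; transferring the $(\lf,\beta)$ data along $\tau \circ \alpha^\rho$ produces a tree $\rho \cdot \tc \in \ts_\basic^r$. The corresponding map
\[
\rho : \check{\mm}_\tc \longrightarrow \check{\mm}_{\rho \cdot \tc}
\]
is then the composition of the reordering of factors corresponding to $\tau$ with the relabeling of the boundary/node markings prescribed by $F_{\bar\tc}^\rho$, the latter carrying $\kf_{\tc}(v_i)$ bijectively to $\kf_{\rho\cdot\tc}(\tau\alpha^\rho(v_i))$ by condition (b).

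The main step, and the only nontrivial one, is showing this construction is \emph{independent of the auxiliary choice of} $\tau$. If $\tau_1, \tau_2 \in \Sym(v_1,\ldots,v_{r+1})$ both bring $\alpha^\rho(\bar\tc)$ into $\ts_\basic^r$, then $\kappa := \tau_2 \tau_1^{-1}$ stabilizes the resulting tree; the $\Sym(v_1,\ldots,v_{r+1})$-invariance defining $\rc_\kf^r \subset \hat\rc_\kf^r$ forces $F_{\bar\tc}^\rho$ to intertwine with $\kappa$ in exactly the way needed for the two candidate diffeomorphisms $\check{\mm}_\tc \to \check{\mm}_{\rho\cdot\tc}$ to coincide. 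Granted well-definedness, the group axioms reduce to chasing conditions (a) and (b) through a composition $\rho_1\rho_2$: associativity becomes bookkeeping once the marked-point permutations $F_{\bar\tc}^{\rho_1\rho_2} = F_{\alpha^{\rho_2}(\bar\tc)}^{\rho_1} \circ F_{\bar\tc}^{\rho_2}$ and the accompanying vertex renumberings are tracked carefully. The main obstacle is therefore the well-definedness step, which is really the content of passing from $\hat\rc_\kf^r$ to $\rc_\kf^r$; everything else is a matter of carefully matching labels through the identifications already implicit in the construction of $\check{\mm}_\basic^r$ in $\S\ref{subsec:Resolutions}$.
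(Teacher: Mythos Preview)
Your two-step construction (forget to $T_\kf^r$, relabel boundary markings by $F^\rho_{\bar\tc}$, then reorder vertices to restore condition~(c)) is exactly the paper's approach. However, you have located the role of the $\Sym(v_1,\ldots,v_{r+1})$-invariance backwards.

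The vertex reordering is in fact \emph{unique}. Given the labeled oriented edges, condition~(c) (with the convention that $\tc'$ is the tail-side component, consistent with the $\sfr',\sfr''$ notation in~(\ref{eq:boundary decomposition})) determines the vertex labeling completely: adding the edges $e_1,\ldots,e_r$ one at a time, at each step one merges two already totally ordered components by declaring the tail-side smaller, and at the end one has a single total order on all $r+1$ vertices. The paper records this as ``there's a unique $w=w_{\rho,\underline\tc}$''. So your ``main step'' of showing independence of the choice of $\tau$ is vacuous, and your argument that $\kappa=\tau_2\tau_1^{-1}$ must be handled via the invariance is addressing a non-issue.

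Where the $\Sym(v_1,\ldots,v_{r+1})$-invariance actually enters is in \emph{associativity}, which you wave off as bookkeeping. Acting by $\rho_2$ and then $\rho_1$, the second relabeling uses $F^{\rho_1}_{\overline{\rho_2\cdot\tc}}$ with $\overline{\rho_2\cdot\tc}=w_2\cdot\alpha^{\rho_2}(\bar\tc)$; acting by $\rho_1\rho_2$ directly uses $F^{\rho_1}_{\alpha^{\rho_2}(\bar\tc)}$. These coincide precisely because conjugation-invariance of $\rho_1$ under $\Sym(v_1,\ldots,v_{r+1})$ gives $F^{\rho_1}_{\sigma\cdot t}=F^{\rho_1}_t$ and forces $\alpha^{\rho_1}$ to commute with vertex relabeling. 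This is the content of passing from $\hat\rc_\kf^r$ to $\rc_\kf^r$; the paper says simply ``Associativity follows from the assumption that $\Sym(v_1,\ldots,v_{r+1})$ commutes with $\rc_\kf^r$.''
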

\begin{proof}
Let $\rho\in\rc_{\kf}^{r}$. The action of $\rho$ takes two steps. 

\uline{step 1}: there's an obvious forgetful map that sends $\tc\in\ts_{\basic}^{r}$
to $\underline{\tc}\in T_{\kf}^{r}$, and we map

\begin{equation}
\check{\mm}_{\tc}=\prod_{i=1}^{r+1}\check{\mm}_{\sfr_{\tc}\left(v_{i}\right)}\to\prod_{i=1}^{r+1}\check{\mm}_{\left(\left(\kf_{\alpha^{\rho}\underline{\tc}'}\left(v_{i}\right),\lf_{\tc}\left(v_{i}\right),\beta_{\tc}\left(v_{i}\right)\right),\sigma_{\alpha^{\rho}\underline{\tc}}\left(v_{i}\right)\right)}\label{eq:action step 1}
\end{equation}
by using $F_{\underline{\tc}}|_{\kf_{\underline{\tc}}\left(v_{i}\right)}$
to relabel the boundary markings. 

\uline{step 2:} There's a unique $w=w_{\rho,\underline{\tc}}\in\Sym\left(v_{1},...,v_{r+1}\right)$
which orders the vertices so that condition (c) in Definition \ref{def:labeled tree}
is satisfied, so that
\begin{equation}
\prod_{i=1}^{r+1}\check{\mm}_{\left(\left(\kf_{\alpha^{\rho}\underline{\tc}}\left(w\left(v_{i}\right)\right),\lf_{\tc}\left(w\left(v_{i}\right)\right),\beta_{\tc}\left(w\left(v_{i}\right)\right)\right),\sigma_{\alpha^{\rho}\underline{\tc}}\left(\xi\left(v_{i}\right)\right)\right)}=\check{\mm}_{\tc'}\label{eq:action step 2}
\end{equation}
for some $\tc'\in\ts_{\basic}^{r}$. 

The composition of (\ref{eq:action step 1}) and (\ref{eq:action step 2})
defines an automorphism of $\check{\mm}_{\basic}^{r}$. Associativity
follows from the assumption that $\Sym\left(v_{1},...,v_{r+1}\right)$
commutes with $\rc_{\kf}^{r}$.
\end{proof}
Now we'll now define a special subgroup of $\rc_{\kf}^{r}$. First,
consider the following elements of $\rc_{\kf}^{r}$.
\begin{itemize}
\item Any $\tau\in\Sym\left(e_{1},...,e_{r}\right)$ defines an element
of $\rc_{\kf}^{r}$. We set $\alpha^{\tau}\in\Sym\left(T_{\kf}^{r}\right)$
to be the map which relabels the edges according to $\tau$ and $F_{\tc}^{\tau}\in\Sym\left(\sstar''_{\left[r\right]}\coprod\kf\right)$,
which is independent of $\tc$ in this case, permutes $\sstar''_{\left[r\right]}$
and fixes $\kf$. 
\item $\mv_{x,e_{b}}\in\rc_{\kf}^{r}$, for $1\leq b\leq r$, $x\in\sstar'_{\left\{ 1,...,\hat{b},...,r\right\} }$,
which acts by ``moving the tail of $e_{a}$ along $e_{b}$''. More
precisely, consider some $\tc\in T_{\kf}^{r}$, and suppose $e_{b}$
is incident to $\left\{ v_{i},v_{j}\right\} $. If $x\in\sigma_{\tc}\left(v_{i}\right)$,
we define $\alpha^{\mv_{x,e_{b}}}\tc$ by removing $x$ from $\sigma_{\tc}\left(v_{i}\right)$
and adding it to $\sigma_{\tc}\left(v_{j}\right)$. If $x\not\in\sigma_{\tc}\left(v_{i}\right)\cup\sigma_{\tc}\left(v_{j}\right)$,
we set $\alpha^{\mv_{x,e_{b}}}\tc=\tc$. In either case we set $F_{\tc}^{\mv_{x,e_{b}}}=\id_{\sstar''_{\left[r\right]}\coprod\kf}$. 
\item $\mv_{x,e_{b}}\in\rc_{\kf}^{r}$ for $1\leq b\leq r$, $x\in\kf\coprod\sstar''_{\left\{ 1,...,\hat{b},...,r\right\} }$,
which acts by ``moving $x$ along $e_{b}$ and flipping $e_{b}$''.
More precisely, consider some $\tc\in T_{\kf}^{r}$, and suppose $e_{b}$
is incident to $\left(v_{i},v_{j}\right)$ \emph{where $v_{i}$ is
the tail} of $e_{b}$. If $x\in\kf_{\tc}\left(v_{i}\right)$, we define
$\tc'=\alpha^{\mv_{x,e_{b}}}\tc$ so that 
\[
\sigma_{\tc'}\left(v_{i}\right)=\sigma_{\tc}\left(v_{i}\right)\backslash\left\{ \sstar'_{b}\right\} 
\]
\[
\kf_{\tc'}\left(v_{i}\right)=\kf_{\tc}\left(v_{i}\right)\backslash\left\{ x\right\} \cup\left\{ \sstar''_{b}\right\} 
\]
\[
\sigma_{\tc'}\left(v_{j}\right)=\sigma_{\tc}\left(v_{j}\right)\cup\left\{ \sstar'_{b}\right\} 
\]
\[
\kf_{\tc'}\left(v_{j}\right)=\kf_{\tc}\left(v_{j}\right)\backslash\left\{ \sstar''_{b}\right\} \cup\left\{ x\right\} 
\]
for $k\not\in\left\{ i,j\right\} $ we set $\sigma_{\tc'}\left(v_{k}\right)=\sigma_{\tc}\left(v_{k}\right)$
and $\kf_{\tc'}\left(v_{k}\right)=\kf_{\tc}\left(v_{k}\right)$, and
$F_{\tc}^{\mv_{x,e_{b}}}$ the permutation the swaps $\left(\sstar''_{b},x\right)$.
If $x\not\in\kf_{\tc}\left(v_{i}\right)$, we set $\tc'=\tc$ and
$F_{\tc}^{\mv_{x,e_{b}}}=\id$.
\end{itemize}
We define $E_{\kf}^{r}<\rc_{\kf}^{r}$ to be the subgroup generated
by all of these elements.

In the next section we'll also need the subgroup ${E_{\kf}^{r,r'}<\rc_{\kf}^{r+r'}}$
obtained by ``restricting the action to the even-even edges $\left\{ r+1,...,r+r'\right\} $''.
More precisely, we have the following.
\begin{defn}
\label{def:E_k^r,r'}For $r,r'\geq0$, we let $E_{\kf}^{r,r'}$ be
the group generated by $\Sym\left(e_{r+1},...,e_{r+r'}\right)<\Sym\left(e_{1},...,e_{r+r'}\right)$
and the elements $\mv_{x,e_{b}}$ for $r+1\leq b\leq r+r'$ and ${x\in\sstar'_{\left\{ 1,...,\hat{b},...,r\right\} }\coprod\kf\coprod\sstar''_{\left\{ 1,....,\hat{b},...,r\right\} }}$.
\end{defn}
It is not hard to check that the action of this subgroup preserves
$\check{\mm}_{\basic}^{r,r'}$.

\begin{proof}
[Proof of Proposition \ref{prop:regular fps and extended localizing exists}]We
construct a sequence of forms $\left\{ \check{\eta}_{r}\in\Omega\left(\check{\mm}_{\basic}^{r};\rr\left[\vec{\alpha}\right]\right)^{\tb}\right\} _{r\geq0}$
satisfying the following properties, for every $r\geq0$.\\
($a_{r}$) for every $\tc_{+}\in\ts_{\basic}^{r+1}$ with $\cnt\tc_{+}=\tc$,
$\left(i_{\check{\mm}_{\tc}}^{\partial^{\tc_{+}}}\right)^{*}\check{\eta}_{r}=\left(\check{g}_{\tc_{+}}\right)^{*}\check{\eta}_{r+1}$,\\
($b_{r}$) $\check{\eta}_{r}$ is $\check{F}_{\basic}^{r}$-localizing,
and\\
($c_{r}$) $\check{\eta}_{r}$ is $E_{\kf}^{r}$-invariant.

The construction proceeds by backward induction. Suppose for $R\geq0$,
we have constructed equivariant forms $\left\{ \check{\eta}_{r}\right\} _{r\geq R+1}$
satisfying $\left(a_{r}\right)-\left(c_{r}\right)$ for $r\geq R+1$.
Note that this assumption holds vacuously for $R$ so large that $\check{\mm}_{\basic}^{R+1}=\emptyset$.
We will now construct $\check{\eta}_{R}$ satisfying $\left(a_{R}\right)-\left(c_{R}\right)$.

For $\tc_{+}\in\ts_{\basic}^{R+1}$ let $\check{\eta}_{\partial^{\tc_{+}}}=\left(\check{g}_{\tc_{+}}\right)^{*}\check{\eta}_{r+1}.$
Consider the covering map 
\[
\coprod_{\tc_{+}\in\ts_{\basic}^{R+1}}\partial^{\tc_{+}}\check{\mm}_{\cnt\tc_{+}}\to\partial\check{\mm}_{\basic}^{R}.
\]
Because $\check{\eta}_{R+1}$ is invariant under $\mv_{x,e_{R+1}}$
for $x\in\sstar'_{\left[R\right]}$, $\coprod_{\tc_{+}}\check{\eta}_{\partial^{\tc_{+}}}$
is the pullback of some form $\check{\eta}_{\partial}$ on $\partial\check{\mm}_{\basic}^{R}$. 

We show $\check{\eta}_{\partial}$ is $E_{\kf}^{R}$-invariant. Let
$\tc_{+}\in\ts_{\basic}^{R+1}$ and consider a generator ${\mv_{x,e_{b}}\in E_{\kf}^{R}}$.
We have
\[
e.\,\check{g}_{\tc_{+}}=\check{g}_{\tc_{+}}\,\partial^{\tc_{+}}\left(\mv_{x,e_{b}}.\right)
\]
for $e\in\left\{ \mv_{x,e_{b}},\mv_{x,e_{b}}\circ\mv_{\sstar'_{b},e_{R+1}},\mv_{x,e_{b}}\circ\mv_{x,e_{R+1}}\right\} \subset E_{\kf}^{R+1}$.
Since $\check{g}_{\tc_{+}}$ is also $\Sym\left(e_{1},...,e_{R}\right)\to\Sym\left(e_{1},...,e_{R+1}\right)$-equivariant,
$E_{\kf}^{R}$-invariance of $\check{\eta}_{\partial}$ follows from
$E_{\kf}^{R+1}$-invariance of of $\check{\eta}_{R+1}$. 

Next we want to argue that $\check{\eta}_{\partial}$ can be extended
inward. For this it suffices to check that $\left(i_{\partial\check{\mm}_{\basic}^{R}}^{\partial}\right)^{*}\check{\eta}_{\partial}$
is $\zz/2$-invariant with respect to the action that switches the
local boundary components at a corner. For $\tc_{1}\in\ts_{\basic}^{R+1}$
and $\tc_{0}=\cnt\tc_{1}$, the decomposition into horizontal and
vertical boundaries with respect to $\check{g}_{\tc_{1}}$,
\begin{multline*}
\partial\partial^{\tc_{1}}\check{\mm}_{\tc_{0}}=\partial_{-}^{\check{g}_{\tc_{1}}}\partial^{\tc_{1}}\check{\mm}_{\tc_{0}}\coprod\partial_{+}^{\check{g}_{\tc_{1}}}\partial^{\tc_{1}}\check{\mm}_{\tc_{0}}=\\
=\left(\acute{g}_{\tc_{1}}^{\emptyset}\right)_{-}^{-1}\left(\partial_{-}^{\grave{\For}_{\basic}^{R+1,\left\{ R+1\right\} }}\check{\mm}_{\tc_{1}}\coprod\partial_{+}^{\grave{\For}_{\basic}^{R+1,\left\{ R+1\right\} }}\check{\mm}_{\tc_{1}}\right),
\end{multline*}
induces a decomposition $\partial^{2}\check{\mm}_{\basic}^{R}=\partial_{-}^{2}\check{\mm}_{\basic}^{R}\coprod\partial_{+}^{2}\check{\mm}_{\basic}^{R}$.
We deal with these two components separately. 

First, we have 
\[
\partial_{-}^{2}\check{\mm}_{\basic}^{R}\xleftarrow{c_{-}}\coprod_{\tc_{2}\in\ts_{\basic}^{R}}\partial^{\tc_{2}}\partial^{\tc_{1}}\check{\mm}_{\tc_{0}}\xrightarrow{h_{-}}\coprod_{\tc_{2}\in\ts_{\basic}^{R+2}}\check{\mm}_{\tc_{2}},
\]
where $\tc_{i-1}=\cnt\tc_{i}$,  $\partial^{\tc_{2}}\partial^{\tc_{1}}\check{\mm}_{\tc_{0}}:=\left(\check{g}_{\tc_{1}}\right)_{-}^{-1}\left(\partial^{\tc_{2}}\check{\mm}_{\tc_{1}}\right)$,
$c_{-}$ is the covering map, and $h_{-}=\check{g}_{\tc_{2}}\circ\left(\check{g}_{\tc_{1}}\right)_{-}$.
The diagram is $\zz/2$-equivariant where $\zz/2$ acts on $\tc_{2}$
via $\Sym\left(e_{R+1},e_{R+2}\right)<E_{\kf}^{R+2}$, so $\zz/2$-invariance
of $\check{\eta}_{\partial}|_{\partial_{-}^{2}}$ follows from $\Sym\left(R+2\right)$
invariance of $\check{\eta}_{R+2}$.

Consider next 
\[
\partial_{+}^{2}\check{\mm}_{\basic}^{R}\xleftarrow{c_{+}}\coprod_{\tc_{1}\in\ts_{\basic}^{R+1}}\partial_{+}\partial^{\tc_{1}}\check{\mm}_{\tc_{0}}\xrightarrow{h_{+}}\coprod_{\tc_{1}\in\ts_{\basic}^{R+1}}\check{\mm}_{\tc_{1}},
\]
where $c_{+}$ is the covering map and $h_{+}:=\left(\check{g}_{\tc_{1}}\right)_{+}$.
Since $\im h_{+}$ is exhausted by clopen components where $a\in\kf\coprod\sstar''_{\left[R\right]}$
bubbles off on a ghost disc together with $\sstar'_{R+1}$, we find
$c_{+},h_{+}$ are $\zz/2$-equivariant, where we act on the domain
and codomain of $h_{+}$ by $\zz/2\simeq\left\langle \mv_{a,e_{R+1}}\right\rangle <E_{\kf}^{R+1}$.
In other words, $\zz/2$-invariance of $\check{\eta}_{\partial}|_{\partial_{+}^{2}}$
follows from $\left\langle \mv_{a,e_{R+1}}\right\rangle $ invariance
of $\check{\eta}_{R+1}$. 

By Proposition \ref{prop:localizing eta exists}, there exists a form
$\check{\eta}_{R}$ which satisfies conditions $\left(a_{R}\right)$
and $\left(b_{R}\right)$. By averaging, we ensure that $\check{\eta}_{R}$
also satisfies $\left(c_{R}\right)$. This completes the proof of
the inductive step. Clearly, $\check{\eta}=\left\{ \check{\eta}_{r}\right\} \in\Omega_{\basic}^{\rc}$
is an extended localizing form.
\end{proof}

\subsection{\label{subsec:Blowup localizable}Blow up map is localizable}

In this subsection we prove Proposition \ref{prop:blowup is localizable}
and derive a slightly more explicit form of (\ref{eq:sing loc formula}).
Focus on some $\tc\in\ts_{\basic}^{r,r'}$. Let 
\[
{\mbox{ed}{}_{\tc}^{\leftarrow}:=\prod_{i=1}^{r}\mbox{ed}{}_{i}^{\basic,r}:\mm_{\tc}\to\left(L\times L\right)^{r}}
\]
and
\[
{\mbox{ed}{}_{\tc}:=\prod_{i=r+1}^{r+r'}\mbox{ed}{}_{i}^{\basic,r}:\mm_{\tc}\to\left(L\times L\right)^{r'}},
\]
so 
\[
\mbox{ed}{}_{\basic}^{r}|_{\mm_{\tc}}=\mbox{ed}{}_{\tc}^{\leftarrow}\times\mbox{ed}{}_{\tc}.
\]
Let $i_{\tc}:F_{\tc}\to\mm_{\tc}$ be the pullback of $\check{F}_{\basic}^{r}\to\check{\mm}_{\basic}^{r}$.
It follows from the description of the fixed points in $\S$\ref{subsec:Fixed-point-profiles.}
that $\ed_{\tc}\circ i_{\tc}$ factors through $\left(p_{0},p_{0}\right)\hookrightarrow L\times L$
whereas $\ed^{\leftarrow}\circ i_{\tc}$ factors through the open
inclusion $L\times L\backslash\Delta\to L\times L$. In particular,
we have $F_{\tc}\subset\Delta_{\tc}:=\ed{}_{\tc}^{-1}\left(\Delta^{r'}\right)$.
Recall the blow up $\widetilde{L\times L}\overset{\bu_{\Delta}}{\longrightarrow}L\times L$
was defined using a tubular neighbourhood $N_{\Delta}\xrightarrow{\gamma_{\Delta}}L\times L$
(\ref{eq:diagonal tube}). Fix an open neighbourhood $p_{0}\in U\subset L$
and a trivialization, $V_{0}:=N_{\Delta}|_{U}\simeq U\times N_{0}=U\times N_{\Delta}|_{p_{0}}$.
We set $\gamma_{0}:=\gamma_{\Delta}|_{V_{0}}$ and $q_{0}:U\times N_{0}\to N_{0}$
the projection. 

The proof of the following Lemma appears in the next subsection.
\begin{lem}
\label{lem:N_T->N_0 compatible flows}For $\tc\in\ts_{\basic}^{r,r'}$,
there exists a $\tb$-equivariant tubular neighborhood $N_{\tc}\overset{j_{\tc}}{\longleftarrow}\mathcal{V}_{\tc}\overset{\gamma_{\tc}}{\longrightarrow}\mm_{\tc}$
making the diagram 
\begin{equation}
\xymatrix{N_{\tc}\ar[d]_{\psi_{\tc}} & \mathcal{V}_{\tc}\ar[d]\ar[l]_{j_{\tc}}\ar[r]^{\gamma_{\tc}} & \mm_{\tc}\ar[d]^{\mbox{ed}_{\tc}}\\
N_{0}^{r'} & V_{0}^{r'}\ar[l]^{q_{0}^{r'}}\ar[r]_{\gamma_{\Delta}^{r'}} & \left(L\times L\right)^{r'}
}
\label{eq:bottom faces}
\end{equation}
commute, where $\psi_{\tc}$ comes from the linearization of $\ed_{\tc}$.
\end{lem}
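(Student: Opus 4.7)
The plan is to build $\gamma_\tc$ in two steps: invoke the equivariant tubular neighborhood theorem to obtain a preliminary candidate, then apply an equivariant straightening to force exact commutativity of the left square. Since $\ed_\tc(F_\tc) = (p_0,p_0)^{r'}$ is a single $\tb$-fixed point, $d\ed_\tc|_{F_\tc}$ annihilates $TF_\tc$ and descends to a bundle map $N_\tc \to T(L\times L)^{r'}|_{(p_0,p_0)^{r'}}$; composing with $dq_0^{r'}$ yields the $\tb$-equivariant bundle map $\psi_\tc: N_\tc \to N_0^{r'}$ that appears in the lemma.

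By Lemma~\ref{lem:tubular exists}, there is a $\tb$-equivariant tubular neighborhood $\gamma_\tc^{(0)}: \mathcal{V}_\tc^{(0)} \to \mm_\tc$ of $F_\tc$. After shrinking so that $\ed_\tc \circ \gamma_\tc^{(0)}$ lands in $V_0^{r'}$, define the middle vertical map $\phi_\tc^{(0)} := (\gamma_\Delta^{r'})^{-1} \circ \ed_\tc \circ \gamma_\tc^{(0)}$; this makes the right square of (\ref{eq:bottom faces}) commute by construction. By the definition of $\psi_\tc$, the left-square discrepancy $\xi^{(0)} := q_0^{r'} \circ \phi_\tc^{(0)} - \psi_\tc \circ j_\tc$ is a $\tb$-equivariant smooth map $\mathcal{V}_\tc^{(0)} \to N_0^{r'}$ that vanishes to second order along the zero section of $N_\tc$.

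To eliminate $\xi^{(0)}$, construct a $\tb$-equivariant self-diffeomorphism $\Psi$ of a neighborhood of $F_\tc$ in $N_\tc$, fixing $F_\tc$ pointwise with identity derivative there, satisfying $q_0^{r'} \circ \phi_\tc^{(0)} \circ \Psi = \psi_\tc \circ j_\tc$. Fibrewise over $F_\tc$ this reduces to the standard problem: given a smooth map $F_p: N_\tc|_p \to N_0^{r'}$ with $F_p(0) = 0$ and $dF_p|_0 = \psi_\tc|_p$, find a local diffeomorphism $\Psi_p$ fixing the origin with identity derivative so that $F_p \circ \Psi_p = \psi_\tc|_p$. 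This is solvable by the implicit function theorem provided $\psi_\tc|_p$ admits a smooth right inverse onto its image, which follows from the exact sequence $0 \to M_\tc \to N_\tc \to S_\tc \to 0$ of (\ref{eq:M-N-S ses}): the smoothing subbundle $M_\tc$ has a direct summand corresponding to the $r'$ even-even edges, on which $\psi_\tc$ is fibrewise surjective onto $N_0^{r'}$ because, via the gluing description of the fixed-point components in $\S$\ref{subsec:closed and open fps}, the smoothing parameter of each even-even edge is precisely the displacement in $L\times L$ that $\ed_\tc$ reads off. $\tb$-equivariance of $\Psi$ is achieved by using the equivariant implicit function theorem (all inputs being $\tb$-equivariant), with Haar averaging to symmetrize the chosen right inverse if necessary. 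Setting $\gamma_\tc := \gamma_\tc^{(0)} \circ \Psi$ and $\phi_\tc := \phi_\tc^{(0)} \circ \Psi$ then produces the diagram (\ref{eq:bottom faces}).

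The main obstacle is the fibrewise straightening step — specifically, identifying a subbundle of $N_\tc$ on which $\psi_\tc$ surjects onto $N_0^{r'}$ so that the implicit function theorem closes — which relies on unpacking the explicit gluing description of the fixed-point components and matching smoothing parameters of even-even nodes with the normal displacement in $L\times L$.
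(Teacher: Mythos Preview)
Your overall strategy---take a generic equivariant tubular neighborhood and then post-compose with a fibre-preserving straightening diffeomorphism $\Psi$ built from a right inverse of $\psi_\tc$---is sound and genuinely different from the paper's proof. The paper instead factors the construction in two stages: first a tubular neighborhood for $F_\tc\subset\Delta_\tc:=\ed_\tc^{-1}(\Delta^{r'})$ via Lemma~\ref{lem:tubular exists}, and then a tubular neighborhood for an open piece $U_\tc\subset\Delta_\tc$ inside $\mm_\tc$ obtained by flowing along a \emph{b-tangent} section of the linearized map $TW_1\to N_0^{r'}$ (Lemma~\ref{lem:b-tangent section}). The paper's route makes the corner compatibility explicit through b-tangency of the flow; your route sidesteps this because $\Psi$ is fibre-preserving over $F_\tc$ and the corners of $N_\tc$ come entirely from the base, so no b-tangency lemma is needed. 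Both arrive at the same place; yours is shorter but less constructive.

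There is, however, a genuine confusion in your justification of the key surjectivity of $\psi_\tc$. You invoke the sequence (\ref{eq:M-N-S ses}) and speak of ``the smoothing subbundle $M_\tc$'' and ``the smoothing parameter of each even-even edge'', but in the paper's notation $S_\vp$ is the smoothing bundle and $M_\vp$ is its \emph{complement}; more importantly, the even-even edges of $\tc$ are tree edges indexing pairs of marked points $(\sstar'_i,\sstar''_i)$ on distinct moduli factors, not nodes of the domain curve, so they carry no smoothing parameter at all. The correct reason $\psi_\tc$ is fibrewise surjective is simply that $\ed_\tc$ is transverse to $\Delta^{r'}$ (stated in \S\ref{subsec:Extended forms and integ}), which in turn follows from the infinitesimal transitivity of the $O(2m+1)$-action on $L$: translating each even-degree moduli factor independently moves the evaluation at $\sstar''_i$ off the diagonal. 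With this correction your argument goes through.
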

\begin{proof}
[Proof of Proposition \ref{prop:blowup is localizable}]Consider $\tc\in\ts_{\basic}^{r,r'}$
and let $\nu:N_{0}\overset{}{\longrightarrow}L\times L$ be the composition
$N_{0}\to N_{\Delta}\overset{\gamma_{\Delta}}{\longrightarrow}L\times L$.
We define $\bu_{N_{0}}$ and $\tilde{\nu}$ by the following cartesian
square 
\[
\xymatrix{\widetilde{N}_{0}=S\left(N_{0}\right)\times[0,\infty)\ar[r]^{\tilde{\nu}}\ar[d]_{\bu_{0}} & \widetilde{L\times L}\ar[d]^{\bu_{\Delta}}\\
N_{0}\ar[r]_{\nu} & L\times L
}
,
\]
We construct a diagram of cartesian squares
\begin{equation}
\xymatrix{\widetilde{N}_{0}^{r'}\ar[d]_{\bu_{0}^{r'}} & \widetilde{V}_{0}^{r'}=\left(U\times\widetilde{N}_{0}\right)^{r'}\ar[d]\ar[l]\ar[r] & \left(\widetilde{L\times L}\right)^{r'}\ar[d]^{\bu_{\Delta}^{r'}}\\
N_{0}^{r'} & V_{0}^{r'}\ar[l]^{q_{0}^{r'}}\ar[r]_{\gamma_{\Delta}^{r'}} & \left(L\times L\right)^{r'}
}
.\label{eq:back faces}
\end{equation}
Build a diagram modeled on a row of two cubes, by gluing (\ref{eq:bottom faces}\ref{eq:back faces})
along their bottom row. The remaining vertices of the diagram are
the pullbacks
\[
\widetilde{N}_{\tc}=N_{\tc}\times_{N_{0}^{r'}}\widetilde{N}_{0}^{r'},\;\widetilde{\mathcal{V}}_{\tc}=\mathcal{V}_{\tc}\times_{V_{0}^{r'}}\widetilde{V}_{0}^{r'}\text{ and }\widetilde{\mm}_{\tc}=\mm_{\tc}\times_{\left(L\times L\right)^{r+r'}}\left(\widetilde{L\times L}\right)^{r+r'}
\]
and we obtain maps $\widetilde{N}_{\tc}\leftarrow\widetilde{\mathcal{V}}_{\tc}\to\widetilde{\mm}_{\tc}$
as the pullback of $N_{\tc}\leftarrow V_{\tc}\to\mm_{\tc}$. One face
of the row of cubes is the diagram
\[
\xymatrix{\widetilde{N}_{\tc}\ar[d] & \widetilde{\mathcal{V}}_{\tc}\ar[l]\ar[d]\ar[r] & \widetilde{\mm}_{\tc}\ar[d]\\
N_{\tc} & \mathcal{V}_{\tc}\ar[r]\ar[l] & \mm_{\tc}
}
,
\]
which is cartesian by a simple diagram chase. The maps $\widetilde{\mu}_{a}:\widetilde{N}_{\tc}\to\widetilde{N}_{\tc}$
for $a\geq0$ are defined by the linear structure on $N_{\tc}$ and
the obvious rescaling of ${\widetilde{N}_{0}^{r'}=\left(S\left(N_{0}\right)\times[0,\infty)\right)^{r'}}$.
Smoothness of $\tilde{\mu}_{\bullet}:\widetilde{N}_{\tc}\times\rr_{\geq0}\to\widetilde{N}_{\tc}$
follows by elementary arguments from the fact the map 
\[
[0,\infty)\times\rr_{\geq0}\to[0,\infty):\left(x,y\right)\mapsto x\cdot y
\]
is smooth, see \cite[Example 2.3, (vii)]{joyce-generalized}.

For $\tc\in\ts_{\basic}^{r+r'}\backslash\ts_{\basic}^{r,r'}$ with
$F_{\tc}\neq\emptyset$, we use an element of $\Sym\left(r+r'\right)$
to transfer the construction from some $\tc'\in\ts_{\basic}^{r,r'}$.
Taking a disjoint union over all such $\tc$ with $F_{\tc}\neq\emptyset$
we obtain the data specified in the statement of the proposition;
the conditions of Definition \ref{def:localizable map} are readily
verified.
\end{proof}
Let us now take a closer look at the singular localization formula.
For $\vp\in\pc_{\basic}$ a fixed-point profile, let $\widetilde{N}_{\vect\phi}\xrightarrow{\bu_{\vect\phi}}N_{\vect\phi}$
be defined by the cartesian square 
\[
\xymatrix{\widetilde{N}_{\vect\phi}\ar[rr]^{\widetilde{\delta}_{\vect\phi}=\prod_{i=1}^{r'}\widetilde{\delta}_{\vp}^{i}}\ar[d]_{\bu_{\vect\phi}} &  & \widetilde{N}_{0}^{r'}\ar[d]^{\bu_{0}^{r'}}\\
N_{\vect\phi}\ar[rr]_{\delta_{\vect\phi}=\prod\delta_{\vp}^{i}} &  & N_{0}^{r'}
}
.
\]

\begin{cor}
\label{cor:sing formula by profiles}We have
\begin{equation}
\int_{\basic}\omega=\sum_{r,r'\geq0}\frac{1}{r!\cdot r'!}\sum_{\vp\in\pc_{\basic}^{r,r'}}\frac{\left(-1\right)^{r\cdot r'}\xi_{\vp}}{\boldsymbol{s}\left(\vp\right)!}\int_{\widetilde{N}_{\vp}}\bu_{N_{\vp}}^{*}\left(e^{D\eta_{\vp}}\pi_{\vp}^{*}\left(\omega_{\vp}\right)\right)\cdot\widetilde{\delta}_{\vp}^{*}\left(-\theta_{0}\right)^{\boxtimes r'}\label{eq:sing fp formula by fp profile}
\end{equation}
Hereafter, $\eta_{\vect\phi}=f_{\vect\phi}^{*}\check{\eta}_{\vect\phi}$
with $\check{\eta}_{\vect\phi}=\check{\eta}_{r+r'}^{\left(2\right)}|_{\check{N}_{\vect\phi}}$,
$\omega_{\vp}=\grave{\For}_{\vp}^{*}\check{\omega}_{r+r'}|_{\check{F}_{\vp}}$,
and $-\theta_{0}=-\theta_{\Delta}|_{p_{0}}$, which we consider as
a form on $\widetilde{N}_{0}=S\left(N_{0}\right)\times[0,\infty)$.
\end{cor}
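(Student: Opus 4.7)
\textbf{Proof plan (Corollary \ref{cor:sing formula by profiles}).} The plan is to unpack Theorem \ref{thm:singular localization formula} by decomposing $\widetilde{N}_{\basic}^{R}$ (with $R$ playing the role of the index $r$ in that theorem) according to the combinatorial type of its fixed-point components, and then to analyze the asymptotic behavior of $\Lambda^{\boxtimes R}$ on each piece. First, write $\check{\mm}_{\basic}^{R}=\coprod_{\tc\in\ts_{\basic}^{R}}\check{\mm}_{\tc}$. A fixed-point component in $\check{\mm}_{\tc}$ is nonempty only if $\tc$ is separated, i.e.\ $\tc\in\ts_{\basic}^{r,r'}$ for some decomposition $R=r+r'$. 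The action of $\Sym(R)$ on $\ts_{\basic}^{R}$ descends to a free action on the set of separated-tree isomorphism classes modulo the $\Sym(r)\times\Sym(r')$ internal symmetry, so summing over $\ts_{\basic}^{R}$ and dividing by $R!$ is the same as summing over $r,r'\geq 0$ and $\tc\in\ts_{\basic}^{r,r'}$, with denominator $r!\,r'!$.

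Next, Lemma \ref{lem:fps by profiles} breaks each $\check{F}_{\tc}$ as $\coprod_{\vs}\bigl(\coprod_{\vp\in\pc_{\vs}(\tc)}\check{F}_{\vp}\bigr)_{\Sym(\vs)}$, which simultaneously identifies the relevant suborbifolds $\check{F}_{\vp}$, their normal bundles $\check{N}_{\vp}$, and their blow-ups $\widetilde{N}_{\vp}$ (via the cartesian square in the definition preceding the Corollary), and contributes the factor $1/\vs(\vp)!$. Using coherence of the extended form $\omega$ and of the extended localizing form $\eta$, the restrictions of $\check{\omega}_{R}$ and $\check{\eta}_{R}^{(2)}$ to $\check{F}_{\vp}$ are exactly the data appearing in the Corollary's integrand.

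The heart of the argument is to identify $\bigl(\tilde{\gamma}_{\basic}^{R}\tilde{\mu}_{0}\bigr)^{*}\bigl(\widetilde{\ed}_{\basic}^{R}\bigr)^{*}\Lambda^{\boxtimes R}$ on each $\widetilde{N}_{\vp}$. Since the edges split as odd-even (indices $1,\dots,r$) and even-even ($r+1,\dots,R$), the $r$-fold and $r'$-fold tensor factors of $\Lambda$ are treated separately. For odd-even edges, $\ed_{i}\circ i_{F_{\tc}}$ factors through $L\times L\setminus\Delta$, so $\Lambda$ is regular, pulls back unchanged along $\bu_{N_{\basic}^{R}}$, and integrating along the fibers of $\acute{\For}_{\vp}\colon\widehat{F}_{\vp}\to F_{\vp}$ gives, by definition, the form $\xi_{\vp}$ (up to the sign that $\xi_{\vp}$ already carries). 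For even-even edges, both evaluation maps land at $(p_{0},p_{0})$ on $\check{F}_{\vp}$; the compatibility of tubular neighborhoods supplied by Lemma \ref{lem:N_T->N_0 compatible flows} lets us replace the factor of $\Lambda$ coming from each such edge with its $\tilde{\mu}_{0}$-rescaled limit on the $\widetilde{N}_{0}$ fiber. By the asymptotic identity (\ref{eq:Lambda asymptotics}) and the linear rescaling, the $\bu_{\Delta}^{*}\Upsilon$ piece decays and only the angular term survives, producing $-\theta_{0}$ after accounting for the orientation of $\widetilde{N}_{0}=S(N_{0})\times[0,\infty)$ relative to the outward-normal convention on $\widetilde{L\times L}$; this is the origin of the $(-\theta_{0})^{\boxtimes r'}$ and, together with the sign convention in the definition of $\xi_{\vp}$, of the prefactor $(-1)^{rr'}$.

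\textbf{Main obstacle.} The substantive analytic point is the even-even asymptotic calculation: one must check that after the $\tilde{\mu}_{0}$ rescaling the integrand converges \emph{as a differential form} (not merely in integral pairing) to $(-\theta_{0})^{\boxtimes r'}$ times the expected factors. Lemma \ref{lem:N_T->N_0 compatible flows} is tailored to make this work; the remaining bookkeeping is the careful tracking of signs and combinatorial factors through the two decompositions above, which is largely formal once the sign convention of $\xi_{\vp}$ and the orientation of $\widetilde{N}_{0}$ are fixed.
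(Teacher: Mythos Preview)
Your proposal is correct and follows essentially the same route as the paper: decompose the right-hand side of Theorem~\ref{thm:singular localization formula} by separated trees and then by fixed-point profiles via Lemma~\ref{lem:fps by profiles}, split $\Lambda^{\boxtimes R}$ into odd-even and even-even factors, push the odd-even factors along the fibers of $\acute{\For}_{\vp}$ to produce $\xi_{\vp}$ (this is the projection formula for $\acute{f}_{\vp}^{\sim}$ in the paper's proof), and identify the even-even factors with $(-\theta_{0})^{\boxtimes r'}$.

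One clarification: the ``main obstacle'' you describe is not present. The retraction $\tilde{\mu}_{0}$ (equivalently $E\colon\widetilde{N}_{0}\to S(N_{0})\to\widetilde{N}_{0}$) is an honest smooth map, not a rescaling limit; all the analysis was already absorbed into Theorem~\ref{thm:singular localization formula} via Proposition~\ref{prop:steepest descent}. What remains here is purely the algebraic identification of $\widetilde{\ed}_{\basic}^{R}\circ(\tilde{\gamma}_{\basic}^{R}\tilde{\rho}_{0})$ restricted to $\widehat{N}_{\vp}^{\sim}$ as the product $\ed_{\tc(\vp)}^{\leftarrow}\times E^{r'}\widetilde{\delta}_{\vp}$, and the $\bu_{\Delta}^{*}\Upsilon$ piece vanishes not by decay but because after composition with $E$ it is pulled back from the single point $(p_{0},p_{0})$, where any odd-degree equivariant form is zero.
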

\begin{proof}
We have $\mm_{\basic}^{R}=\coprod_{\tc\in\ts_{\basic}^{R}}\mm_{\tc}$.
The $\Sym\left(R\right)$ orbits of $\coprod_{r+r'=R}\ts_{\basic}^{r,r'}$
cover $\ts_{\basic}^{R}$, and we use Lemma \ref{lem:fps by profiles}
to decompose further into into fixed-point profiles:
\[
\frac{1}{R!}\int_{\widetilde{N}_{\basic}^{r}}\ii=\sum_{r+r'=R}\frac{1}{r!}\frac{1}{r'!}\sum_{\tc\in\ts_{\basic}^{r,r'}}\int_{\widetilde{N}_{\tc}}\ii=\sum_{r+r'=R}\frac{1}{r!}\frac{1}{r'!}\sum_{\vp\in\pc_{\basic}^{r,r'}}\frac{1}{\boldsymbol{s}\left(\vp\right)!}\int_{\widehat{N}_{\vp}^{\sim}}\ii.
\]
where $\widehat{N}_{\vp}\xrightarrow{\pi_{\widehat{F}_{\vp}}}\widehat{F}_{\vp}$
is the normal bundle associated with $\widehat{F}_{\vp}\to\mm_{\tc\left(\vp\right)}$
(cf. $\S$\ref{subsec:fp for generalized forms}), and $\widehat{N}_{\vp}^{\sim}\xrightarrow{\bu_{\widehat{N}_{\vp}}}\widehat{N}_{\vp}$
is the pullback of $\bu_{N_{\basic}^{r}}$.

We have

\[
\bu_{\basic}^{r}\tilde{\gamma}_{\basic}^{r}\tilde{\rho}_{0}=\gamma_{\basic}^{r}\rho_{0}\bu_{N_{\basic}^{r}}
\]
and
\[
\widetilde{\ed}_{\basic}^{r}\left(\tilde{\gamma}_{\basic}^{r}\tilde{\rho}_{0}\right)|_{\widehat{N}_{\vp}^{\sim}}=\ed_{\tc\left(\vp\right)}^{\leftarrow}|_{\widehat{F}_{\vp}}\pi_{\widehat{F}_{\vp}}\bu_{\widehat{N}_{\vp}^{\sim}}\times E^{r'}\widetilde{\delta}_{\vp}\acute{f}_{\vp}^{\sim}
\]
where $E$ is the retraction $\widetilde{N}_{0}\to S\left(N_{0}\right)\to\widetilde{N}_{0}$,
$\acute{f}_{\vp}^{\sim}$ is defined by the cartesian square $\acute{f}_{\vp}\bu_{\widehat{N}_{\vp}}=\bu_{\vp}\acute{f}_{\vp}^{\sim}$,
and $\left(\tilde{\gamma}_{\basic}^{r}\tilde{\rho}_{0}\right)|_{\widehat{N}_{\vp}^{\sim}}$
denotes the pullback of $\tilde{\gamma}_{\basic}^{r}\tilde{\rho}_{0}$
along the étale map $\widehat{N}_{\vp}^{\sim}\to\widetilde{N}_{\tc}\to\widetilde{N}_{\basic}^{r}$.

Now we compute
\begin{multline*}
\int_{\widehat{N}_{\vp}^{\sim}}\left[\bu_{N_{\basic}^{r}}^{*}e^{D\eta_{r}^{\left(2\right)}}\cdot\left(\tilde{\gamma}_{\basic}^{r}\tilde{\rho}_{0}\right)^{*}\left(\left(\bu_{\basic}^{r}\right)^{*}\omega_{r}\cdot\left(\widetilde{\ed}_{\basic}^{r}\right)^{*}\Lambda^{\boxtimes r}\right)\right]|_{\widehat{N}_{\vp}^{\sim}}=\left(-1\right)^{r\cdot r'}\int_{\widehat{N}_{\vp}^{\sim}}\bigg\{\\
\left(\acute{f}_{\vp}^{\sim}\right)^{*}\bu_{N_{\vp}}^{*}\left(\grave{f}_{\vp}^{*}e^{D\check{\eta}_{r}^{\left(2\right)}}\left(\pi_{\basic}^{r}\right)^{*}\check{\omega}_{r}|_{\check{F}_{\basic}^{r}}\left(E^{r'}\widetilde{\delta}_{\vp}\right)^{*}\Lambda^{\boxtimes r'}\right)\left(\ed_{\tc\left(\vp\right)}^{\leftarrow}\pi_{\widehat{F}_{\vp}}\bu_{\widehat{N}_{\vp}^{\sim}}\right)^{*}\Lambda^{\boxtimes r}\bigg\}=\\
=\xi_{\vp}\int_{\widetilde{N}_{\vp}}\bu_{N_{\vp}}^{*}\left(\grave{f}_{\vp}^{*}\left(e^{D\check{\eta}_{r}^{\left(2\right)}}\left(\pi_{\basic}^{r}\right)^{*}\check{\omega}_{r}|_{\check{F}_{\basic}^{r}}\right)\widetilde{\delta}_{\vp}^{*}\left(-\theta_{0}\right)^{\boxtimes r'}\right),
\end{multline*}
where in the last equality we applied the projection formula (see
\cite[Lemma 63]{twA8} for our sign conventions) for $\acute{f}_{\vp}^{\sim}$,
using the push-pull property for the cartesian commutative square
$\acute{\For}_{\vp}\left(\pi_{\widehat{F}_{\vp}}\bu_{\widehat{N}_{\vp}}\right)=\left(\pi_{\vp}\bu_{\vp}\right)\acute{f}_{\vp}$.
Eq (\ref{eq:sing fp formula by fp profile}) follows.
\end{proof}
In the Section \ref{sec:Resummation} we'll explain how to regularize
(\ref{eq:sing fp formula by fp profile}) to derive the general fixed-point
formula (\ref{eq:general fp formula}).

\subsection{Tubular neighbourhood construction}

\begin{defn}
Let $\mm$ be an orbifold with corners and let $V$ be a vector space.

(a) A vector bundle map $\delta:T\mm\to V\times\mm$ is \emph{strongly
surjective} if $\delta\circ di_{\mm}^{\partial^{c}}:T\partial^{c}\mm\to V\times\partial^{c}\mm$
is fiberwise surjective for every $c\geq0$. 

(b) Let $\mm$ be an orbifold with corners. A \emph{vector field}
on $\mm$ is a section $v:\mm\to T\mm$ of the map $T\mm\to\mm$.
A vector field $v$ will be called \emph{b-tangent }if $\left(i_{\mm}^{\partial^{c}}\right)^{-1}\left(v\right):\partial^{c}\mm\to\left(i_{\mm}^{\partial^{c}}\right)^{*}T\mm$
factors through $di_{\mm}^{\partial^{c}}:T\partial^{c}\mm\to\left(i_{\mm}^{\partial^{c}}\right)^{*}T\mm$
for all $c\geq0$. More generally if $v:\mathcal{E}\to\mm$ is a vector
bundle over $\mm$ we say a map $\mathcal{E}\to T\mm$ is b-tangent
if $v\circ s$ is b-tangent for every section $s:\mm\to\mathcal{E}$.
\end{defn}
\begin{lem}
\label{lem:b-tangent section}Let $\mm$ be a $\tb$-orbifold with
corners and $V$ a $\tb$-representation of finite rank. Let $\delta:T\mm\to V\times\mm$
a strongly surjective $\tb$-equivariant map. Then there exists a
$\tb$-equivariant b-tangent map $\sigma:V\times\mm\to T\mm$ with
$\delta\sigma=\id$.
\end{lem}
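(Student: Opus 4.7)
The plan is to identify b-tangent sections with sections of an auxiliary vector bundle, the b-tangent bundle $T^b\mm$, whose sections are exactly the b-tangent vector fields. In local coordinates $(x,y) \in \rr^k \times \rr_{\geq 0}^c$ it is trivialized by the frame $\partial_{x_1},\ldots,\partial_{x_k}, y_1 \partial_{y_1},\ldots, y_c \partial_{y_c}$, and comes with a canonical anchor map $\iota^b: T^b\mm \to T\mm$ that is an isomorphism over the interior of $\mm$. A vector bundle map $\sigma: V \times \mm \to T\mm$ is b-tangent if and only if it factors through $\iota^b$, so the task reduces to constructing a $\tb$-equivariant factorization $\bar\sigma: V \times \mm \to T^b\mm$ with $\delta \circ \iota^b \circ \bar\sigma = \id$.

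First I would verify that the composite $\bar\delta := \delta \circ \iota^b : T^b\mm \to V \times \mm$ is fiberwise surjective. At a point $p \in \partial^c\mm$ in the coordinates above, the vectors $y_j \partial_{y_j}|_{y=0}$ vanish, so the image of $(\iota^b)_p$ is precisely $T_p \partial^c\mm$. Consequently $\bar\delta_p(T^b_p\mm) = \delta_p(T_p\partial^c\mm) = V$ by strong surjectivity of $\delta$. Thus $\bar\delta$ is a surjection of $\tb$-equivariant vector bundles, giving a short exact sequence
\[
0 \to \ker \bar\delta \to T^b\mm \xrightarrow{\bar\delta} V \times \mm \to 0.
\]

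To split this sequence $\tb$-equivariantly, I would equip $T^b\mm$ with a $\tb$-invariant fiberwise inner product, obtained by averaging any smooth metric against Haar measure on the compact torus $\tb$. The orthogonal complement of $\ker \bar\delta$ then maps isomorphically onto $V \times \mm$, furnishing a smooth $\tb$-equivariant splitting $\bar\sigma$ of $\bar\delta$. Setting $\sigma := \iota^b \circ \bar\sigma$ yields the desired $\tb$-equivariant b-tangent bundle map with $\delta \sigma = \id$. No serious obstacle is expected: the only technical point is that $T^b\mm$ and its anchor be well-defined $\tb$-equivariantly on an orbifold with corners, which follows from the local description and the functoriality recorded in the appendix, while compactness of $\tb$ makes the averaging step routine.
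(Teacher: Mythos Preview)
Your argument is correct and takes a genuinely different route from the paper. The paper works in the local model $\rr_c^n$ with $V=\rr$ and trivial $\tb$-action, writes down an explicit b-tangent section
\[
\sigma_j=\frac{1}{\sum_{i\le c} x_i a_i^2+\sum_{i>c} a_i^2}\cdot\begin{cases} x_j a_j & j\le c\\ a_j & j>c\end{cases},
\]
and then globalizes by a partition of unity and Haar averaging. Your approach instead repackages b-tangency as factoring through the b-tangent bundle $T^b\mm$, notes that strong surjectivity of $\delta$ is exactly fiberwise surjectivity of $\bar\delta=\delta\circ\iota^b$, and splits the resulting short exact sequence with an invariant metric. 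The paper's version is more elementary in that it never invokes $T^b\mm$ and yields a concrete formula; yours is cleaner and more conceptual, handles general $V$ in one stroke without reducing to rank one, and replaces the explicit local computation by a single appeal to standard equivariant bundle theory. The only point worth making explicit is that the smooth lift through $\iota^b$ exists (divisibility of the corner components by the corresponding coordinate), but this is the standard content of the b-tangent bundle construction.
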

\begin{proof}
Consider first the case $\tb$ acts trivially on $V=\rr$ and $\mm=\rr_{c}^{n}$.
In terms of local coordinates $x=\left(x_{1},...,x_{n}\right)$ for
$\rr_{c}^{n}$ with $x_{i}\geq0$ for $1\leq i\leq c$, we can write
$\delta=a_{1}\left(x\right)dx_{1}+\cdots+a_{n}\left(x\right)dx_{n}$
for some functions $a_{i}:\rr_{c}^{n}\to\rr$ such that $\left(a_{1},...,a_{n}\right)\neq\boldsymbol{0}$
everywhere and $\left(a_{1}\left(x\right),...,\widehat{a_{i}}\left(x\right),...,a_{n}\left(x\right)\right)\neq\boldsymbol{0}$
for all $x$ with $x_{i}=0$. We take $\sigma=\sum_{i=1}^{n}\sigma_{i}\frac{\partial}{\partial x_{i}}$
for 
\[
\sigma_{j}=\frac{1}{\sum_{i=1}^{c}x_{i}a_{i}^{2}+\sum_{i=c+1}^{n}a_{i}^{2}}\cdot\begin{cases}
x_{j}a_{j} & \mbox{if }1\leq j\leq c\\
a_{j} & \mbox{if }c+1\leq j\leq n
\end{cases}.
\]
The general case follows, using a partition of unity and averaging
with respect to a Haar measure on $\tb$.
\end{proof}
\begin{proof}
[Proof of Lemma \ref{lem:N_T->N_0 compatible flows}]Use Lemma \ref{lem:tubular exists}
to fix some $\tb$-equivariant tubular neighbourhood  for 
\[
{F_{\tc}\subset\Delta_{\tc}:=\left(\ed_{\tc}\right)^{-1}\left(\Delta^{r'}\right)}.
\]
We thus reduce to proving the following.
\begin{claim*}
There exists an open $\tb$-invariant open neighbourhood $U_{1}$,
\[
p_{0}^{\times r'}\subset U_{1}\subset U_{0}^{r'}\subset L^{\times r'}\hookrightarrow\left(L\times L\right)^{r'},
\]
so that the closed immersion
\[
{U_{\tc}:=\ed_{\tc}^{-1}\left(U_{1}\right)\xrightarrow{f}\mm_{\tc}}
\]
given by the composition ${U_{\tc}\to\Delta_{\tc}\to\mm_{\tc}}$
admits a $\tb$-equivariant tubular neighbourhood
\[
N_{f}=N_{0}^{r'}\times U_{\tc}\xleftarrow{j_{f}}\mathcal{V}_{f}\xrightarrow{\gamma_{f}}\mm_{\tc}
\]
making the following diagram commute
\[
\xymatrix{N_{0}^{r'}\times U_{\tc}\ar[d]_{\pr_{1}} & \mathcal{V}_{f}\ard{l}{j_{f}}\ar[r]^{\gamma_{f}}\ar[d] & \mm_{\tc}\ar[d]^{\mbox{ed}_{\tc}}\\
N_{0}^{r'} & V_{0}^{r'}\ar[l]^{q_{0}^{r'}}\ard{r}{\gamma_{\Delta}^{r'}} & \left(L\times L\right)^{r'}
}
.
\]
\end{claim*}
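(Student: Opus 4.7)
The plan is to construct $\gamma_f$ by exponentiating a $\tb$-equivariant b-tangent splitting of the derivative of $\ed_\tc$, and then cut down $U_1$ so that nothing leaves $V_0^{r'}$. First I would pick a small $\tb$-invariant open neighbourhood $U_1 \subset U^{r'}$ of $p_0^{\times r'}$ and a $\tb$-invariant open neighbourhood $W$ of $U_\tc$ in $\mm_\tc$ with $\ed_\tc(W)\subset V_0^{r'}$. The trivialization $V_0 = U \times N_0$ then gives a smooth $\tb$-equivariant map
\[
\rho := \pr_{N_0^{r'}} \circ\, \ed_\tc|_W : W \longrightarrow N_0^{r'},
\]
whose zero locus contains $U_\tc$. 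By the transversality of $\ed_\tc$ with $\Delta^{r'}$ recalled in $\S$\ref{subsec:Extended forms and integ}, together with b-submersivity of the evaluation maps at every corner depth, the differential $d\rho|_{U_\tc}: T\mm_\tc|_{U_\tc} \to N_0^{r'} \times U_\tc$ is strongly surjective in the sense of Lemma \ref{lem:b-tangent section}.

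Apply that lemma to obtain a $\tb$-equivariant b-tangent section
\[
\sigma: N_0^{r'} \times U_\tc \longrightarrow T\mm_\tc|_{U_\tc}
\]
of $d\rho|_{U_\tc}$. Using a $\tb$-invariant partition of unity on $W$ and averaging over a Haar measure on $\tb$, extend $\sigma$ to a $\tb$-equivariant b-tangent bundle map $\tilde\sigma: N_0^{r'} \times W' \to TW'$ on an open $\tb$-invariant $W' \subset W$ containing $U_\tc$, still satisfying $d\rho\circ \tilde\sigma(v,q) = v$ identically.

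Now integrate. For each small $v \in N_0^{r'}$, view $q\mapsto \tilde\sigma(v,q)$ as a b-tangent $\tb$-equivariant vector field on $W'$; its time-one flow is defined on some $\tb$-invariant open neighbourhood of $U_\tc$, so for $(v,p)$ in a suitable open neighbourhood $\mathcal{V}_f$ of the zero section of $N_0^{r'} \times U_\tc$ we may set
\[
\gamma_f(v, p) := \Phi_1^{\tilde\sigma(v,-)}(p),
\]
and take $j_f: \mathcal{V}_f \hookrightarrow N_0^{r'}\times U_\tc$ to be the canonical open inclusion. Since $\tfrac{d}{dt}\rho(q(t)) = d\rho(\tilde\sigma(v,q(t))) = v$, we get $\rho\circ \gamma_f(v,p) = v$, which is precisely the $N_0^{r'}$-component of the commutativity; the remaining $U^{r'}$-component is automatic once $\ed_\tc(\gamma_f(v,p))\in V_0^{r'}$, which holds after shrinking $\mathcal{V}_f$. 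The differential of $\gamma_f$ at $(0,p)$ is $(v,w)\mapsto \sigma(v,p) + w$, which is an isomorphism because $\sigma$ splits the normal bundle sequence of $U_\tc \hookrightarrow \mm_\tc$, so $\gamma_f$ is an open embedding onto a neighbourhood of $U_\tc$ after a further shrinking.

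The main obstacle will be keeping $\tb$-equivariance and the corner structure alive through all three steps simultaneously. Equivariance is ensured by invoking the equivariant version of Lemma \ref{lem:b-tangent section} and averaging during the partition-of-unity extension. The corner structure is handled by b-tangency of $\tilde\sigma$: the flow of a b-tangent $\tb$-equivariant vector field preserves the depth stratification, so $\gamma_f$ is a morphism in $\manc$ and not only on the interior. The orbifold aspect is dealt with by performing the construction in $\tb$-invariant local atlas charts compatible with the tubular neighbourhood of $F_\tc \subset \Delta_\tc$ fixed at the start of the proof, then descending to the quotient using isotropy-invariant choices.
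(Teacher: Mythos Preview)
Your proposal is correct and follows essentially the same route as the paper: define the map $\rho$ (the paper calls it $Q$), use transversality of $\ed_\tc$ to the diagonal at every corner depth to get strong surjectivity of $d\rho$, invoke Lemma~\ref{lem:b-tangent section} for a $\tb$-equivariant b-tangent splitting, and then flow to obtain $\gamma_f$.

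The one point worth tightening is your application of Lemma~\ref{lem:b-tangent section}. As stated, the lemma takes $\delta:T\mm\to V\times\mm$ and returns $\sigma:V\times\mm\to T\mm$; it does not directly produce a section into $T\mm_\tc|_{U_\tc}$ over the submanifold $U_\tc$. Your subsequent partition-of-unity extension to $W'$, ``still satisfying $d\rho\circ\tilde\sigma(v,q)=v$ identically,'' tacitly uses that $d\rho$ remains strongly surjective on $W'$, which you have not said. The paper sidesteps both issues at once: it observes that strong surjectivity of $d\rho$ is an open condition (following from transversality at every depth), so it holds on an open $W_1\supset U_\tc$ inside $\mm_\tc$, and then applies Lemma~\ref{lem:b-tangent section} directly with $\mm=W_1$. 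This yields the b-tangent section on $W_1$ in one step, with the identity $d\rho\circ\sigma_1=\id$ holding on all of $W_1$ and hence along every flow line, making the commutativity check immediate. Your integration and verification of the diagram then go through exactly as you wrote.
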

We prove the claim. Set $W_{0}=\ed_{\tc}^{-1}\left(\gamma_{\Delta}^{r'}\left(V_{0}^{r'}\right)\right)\subset\mm_{\tc}$,
so that 
\[
Q:=\left(q_{0}\gamma_{\Delta}^{-1}\right)^{\times r'}\ed_{\tc}:W_{0}\to N_{0}^{r'}
\]
is well-defined, and let 
\[
\delta_{1}:=P\circ dQ:TW_{0}\to N_{0}^{r'}\times W_{0}
\]
be the linearization of $Q$ composed with the tautological ``parallel
transport to $0\in N_{0}$'' map $P:TN_{0}^{r'}\to T_{0}N_{0}^{r'}=N_{0}^{r'}$. 

For every $c\geq0$ the map ${\mbox{ed}{}_{\tc}\circ i_{\mm_{\tc}}^{\partial^{c}}}$
is transverse to the diagonal $\Delta^{r'}\to\left(L\times L\right)^{r'}$.
It follows that there exists a possibly smaller open neighbourhood
$\ed_{\tc}^{-1}\left(\left(p_{0},p_{0}\right)\right)\subset W_{1}\subset W_{0}$
such that the restriction $\delta_{1}:=TW_{1}\to N_{0}^{r'}\times W_{1}$
of $\delta_{0}$ is strongly surjective. 

Apply Lemma \ref{lem:b-tangent section} to construct a b-tangent
map $\sigma_{1}:N_{0}^{r'}\times W_{1}\to TW_{1}$ with $\delta_{1}\sigma_{1}=\id$.
Exponential flow along $\sigma_{1}$, defined for for some sufficiently
small suborbifold $\mathcal{V}_{f}$ of $N_{0}^{r'}\times U_{\tc}$
containing $0\times U_{\tc}$, gives the desired $\gamma_{f}$. This
completes the proof of Lemma \ref{lem:N_T->N_0 compatible flows}.
\end{proof}
\begin{rem}
If we want the flow to be defined uniformly, so that $\mathcal{V}_{f}=U_{\tc}\times V_{1}$
for some open neighbourhood $0\in V_{1}\subset N_{0}^{r'}$, we can
restrict $U_{\tc}$ further to a precompact open subset of $U_{\tc}$.
\end{rem}

\section{\label{sec:Resummation}Resummation}

For $\underline{\vp}\in\pc_{\basic}^{r,0}$, we denote $\pc_{\underline{\vp}}^{r'}:=\left\{ \vp\in\pc_{\basic}^{r,r'}|\cnt^{r'}\left(\vp\right)=\underline{\vp}\right\} $,
and set
\begin{equation}
\cont_{\underline{\vp}}=\sum_{r'\geq0}\frac{1}{r'!}\sum_{\vp\in\pc_{\underline{\vp}}^{r'}}\frac{\xi_{\vp}}{\boldsymbol{s}\left(\vp\right)!}\int_{\widetilde{N}_{\vp}}\bu_{N_{\vp}}^{*}\left(e^{D\eta_{\vp}}\pi_{\vp}^{*}\left(\omega_{\vp}\right)\right)\cdot\widetilde{\delta}_{\vp}^{*}\left(-\theta_{0}\right)^{\boxtimes r'}\label{eq:cont_vp}
\end{equation}
so that, by Corollary \ref{cor:sing formula by profiles}, 
\[
\int_{\basic}\omega=\sum_{r\geq0}\frac{1}{r!}\sum_{\underline{\vp}\in\pc_{\basic}^{r,0}}\text{Cont}_{\underline{\vp}}.
\]

The main result of this section is the following.
\begin{prop}
\label{thm:resummation}For $\underline{\vp}\in\pc_{\basic}^{r,0}$
we have
\[
\cont_{\underline{\vp}}=\frac{\xi_{\underline{\vp}}}{\boldsymbol{s}\left(\underline{\vp}\right)!}\int_{F_{\underline{\vp}}}e_{M}^{-1}\cdot e_{S}^{-1}\cdot\check{\omega}_{\underline{\vp}}
\]
where $e_{M},e_{S}$ are canonical equivariant Euler forms for $M_{\underline{\vp}},S_{\underline{\vp}}$,
respectively (cf. (\ref{eq:can theta_M}) and (\ref{eq:can theta_S})).
\end{prop}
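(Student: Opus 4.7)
The plan is to pass from the singular sum (\ref{eq:cont_vp}) to a single regular integral on $F_{\underline{\vp}}$ in two stages: first, fiberwise integration along $\widetilde{N}_{\vp} \to F_{\vp}$ turns each contribution into an integral over a region modeled on $F_{\vp} \times [0,1]^{r'}$, where the cube factors are the compactified radial directions of $\widetilde{N}_0 = S(N_0) \times [0,\infty)$ associated to the $r'$ even-even edges; second, summing over $r' \geq 0$ and $\vp \in \pc_{\underline{\vp}}^{r'}$ telescopes the resulting expressions, via the coherence of $\eta$ and $\omega$, into the desired regular integral over $F_{\underline{\vp}}$.

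First I would use Lemma \ref{lem:xi value} to pull the factor $\xi_{\underline{\vp}}$ out of the sum, since $\xi_{\vp} = \xi_{\underline{\vp}}$ whenever $\cnt^{r'}\vp = \underline{\vp}$. Next, $e^{D\eta_{\vp}}$ is rapidly decaying quadratically along the fibers of the normal bundle $N_{\vp} \to F_{\vp}$, so fiber integration produces an equivariant Segre form of $N_{\vp}$ in the sense of \S\ref{subsec:Equivariant-Segre-forms.}. Because $\eta_{\vp}$ is pulled back from $\check{\eta}_{\vp}$, this Segre form itself descends to $\check{F}_{\vp}$, and the exact sequence (\ref{eq:M-N-S ses}) together with the direct sum decomposition (\ref{eq:S_Gamma direct sum decomposition}) allows us to write it, modulo an exact error, as a product of a Segre form for $M_{\vp}$ and Segre forms for each line bundle summand of $S_{\vp}$. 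Lemma \ref{lem:euler*segre =00003D 1} then identifies these Segre forms with $e_M^{-1}$ and $e_S^{-1}$ up to $D$-exact terms of a controlled shape.

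Each singular factor $\widetilde{\delta}_{\vp}^*(-\theta_0)$ contributed by an even-even edge sits on $\widetilde{N}_0 = S(N_0) \times [0,\infty)$, and its product with the Segre-form contribution for the corresponding summand of $S_{\vp}$ integrates over the fiber to give a piece supported on the radial interval, with endpoints at $t = 0$ (the exceptional divisor, corresponding to a deeper corner $\partial^{\vp_+}\check{F}_{\vp}$ for some $\vp_+$ with one additional even-even edge) and at $t = \infty$ (a term regular in the interior of $\check{F}_{\vp}$). Applying Stokes' theorem in the radial direction, the boundary term at $t = 0$ for $\vp$ at level $r'$ matches, via the coherence of $\check{\eta}$ and the naturality square (\ref{eq:M-N-S check gamma}), the corresponding bulk term for $\vp_+$ at level $r'+1$. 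Summing over $r'$, the telescoping collapses the entire series to the single integral $\int_{F_{\underline{\vp}}} e_M^{-1} e_S^{-1} \check{\omega}_{\underline{\vp}}$, as required. The existence of canonical angular forms realizing (\ref{eq:can theta_M}) and (\ref{eq:can theta_S}) is established by a backward recursion on the depth of corners of $\check{F}_{\underline{\vp}}$, directly analogous to the construction in Proposition \ref{prop:regular fps and extended localizing exists}.

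The principal obstacle will be the combinatorial and orientation bookkeeping required to make the boundary cancellation exact: the multiplicities coming from the $\Sym(r')$- and $\Sym(\boldsymbol{s}(\vp))$-orbits on $\pc_{\underline{\vp}}^{r'}$ must combine with the sign $(-1)^{r \cdot r'}$ from (\ref{eq:sing fp formula by fp profile}) and with the orientation behavior of $\ff_{\basic}^r$ on sorted odd-even profiles (Proposition \ref{prop:sorted is good}) to produce precisely the factor $1/(r'! \boldsymbol{s}(\vp)!)$ demanded by the telescoping. A secondary difficulty is that the Segre form lives naturally on $N_{\vp}$ while the singular factor $-\theta_0$ lives on the blow-up $\widetilde{N}_{\vp}$, so the fiberwise integration must be performed directly on $\widetilde{N}_{\vp}$, using the explicit description of $\bu_{N_{\vp}}$ from the proof of Proposition \ref{prop:blowup is localizable} to identify the blow-up parameter with the radial coordinate of the relevant summand of $S_{\vp}$.
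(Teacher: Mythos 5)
Your overall shape---radial Stokes, telescoping over the even-even edges, identification of the fiber integral with $e_M^{-1}e_S^{-1}$ via Segre forms---is the right target, but the proposal elides exactly the obstruction that Section \ref{sec:Resummation} is built to overcome, and as written the key cancellation step would fail.

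The gap is in the second paragraph, where you assert that because $\eta_{\vp}$ is pulled back from $\check{\eta}_{\vp}$, the fiber integral descends to $\check{F}_{\vp}$ and factors, modulo exact error, into a Segre form for $M_{\vp}$ times Segre forms for the summands of $S_{\vp}$. The integrand is not just $e^{D\eta_{\vp}}$: it carries the singular factor $\widetilde{\delta}_{\vp}^{*}\left(-\theta_0\right)^{\boxtimes r'}$, and $\delta_{\vp}^{i}$ is \emph{not} the pullback of any map $\check{N}_{\vp}\to N_0\times\check{F}_{\vp}$ --- only its restriction $\delta_{M_{\vp}}^{i}$ to $M_{\vp}$ descends. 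Relatedly, the short exact sequences in (\ref{eq:M-N-S check gamma}) admit no splittings compatible with the boundary attaching maps, because $\acute{\delta}_{\vp_+}^{r'+1}$ does not factor through $\grave{f}_{\vp_+}$. This is why the $r'>0$ terms are genuinely nonzero (\S\ref{subsec:problems with singular} computes an explicit nonvanishing example) and why the bare $r'=0$ term is ill-defined: its Segre form satisfies no boundary condition, so it cannot be matched against canonical Euler forms. Consequently your telescoping step --- matching the $t=0$ boundary term of $\vp$ at depth $r'$ with the bulk term of $\vp_+$ at depth $r'+1$ "via coherence and (\ref{eq:M-N-S check gamma})" --- does not close up as stated.

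The missing idea is the construction of compatible splittings $\check{\zeta}_{\vp}:\check{N}_{\vp}\to\check{M}_{\vp}$ satisfying the modified-coherence property (\ref{eq:pseudo-coherence}), the corrected map $\hat{\delta}_{\vp}=\delta_{M_{\vp}}\circ\zeta_{\vp}$ which \emph{does} factor through the forgetful map, and the interpolating isomorphisms $A_{\vpb}(T,\boldsymbol{t})=\id+(1-T)[\cdots]$ indexed by \emph{flags} carrying two independent parameters: the depth $r'$ (number of blown-up singular factors) and the length $l$ (number of cube factors $I^l$ adjoined at corners). The actual argument is a double exchange: Proposition \ref{prop:regularizing} trades depth for length via Stokes on $\widetilde{N}_{\vpb}\times I\times I^{l}$, with the exceptional-divisor faces of depth $r'+1$ cancelling against the $t_j=1$ faces of depth $r'$, length $l+1$ (Lemma \ref{lem:S canceled by t=00003D1}), and the vertical boundary faces cancelling against the $t_j=0$ faces (Lemma \ref{lem:partial- cancels t=00003D0}); then Proposition \ref{prop:resumming} collapses the resulting sum over length-$l$ flags onto a single quadratic localizing form $q=q_M\oplus q_S$ whose boundary conditions (\ref{eq:bconds sum}) are exactly what make the identification with canonical $e_M^{-1}e_S^{-1}$ legitimate. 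Your final paragraph correctly flags the orientation and multiplicity bookkeeping as a difficulty, but the structural obstruction above is the one that actually forces the shape of the proof.
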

Theorem \ref{thm:general fixed point formula} immediately follows
from this.

\subsection{\label{subsec:problems with singular}The difficulty with the singular
formula}

This subsection tries to explain the difficulty in computing $\text{Cont}_{\underline{\vp}}$,
and gives an indication of how we overcome this problem. The subsections
that follow are logically independent of this discussion.

\subsubsection{$r'\left(\vp\right)>0$ integrals don't vanish}

One might expect the terms in (\ref{eq:cont_vp}) with $r'\left(\vect\phi\right)>0$
to vanish. Indeed, $\eta_{\vect\phi}=f_{\vect\phi}^{*}\check{\eta}_{\vect\phi}$
is pulled back from a space of strictly lower dimension \textendash{}
but $\widetilde{\delta}_{\vect\phi}$ does not factor through $f_{\vect\phi}$.
The next example shows that these contributions can be nonzero.
\begin{example}
Consider the basic moduli specification $\basic=\left(\left\{ 1,2,3\right\} ,\left\{ 1\right\} ,2\right)$
for $\left(\cc\pp^{2},\rr\pp^{2}\right)$, and let $\vect\phi_{1}\in\pc_{\basic}^{0,1}$
be a fixed-point profile with $\tc=\tc\left(\vp_{1}\right)$ given
by $\tc_{0}=\left\{ v_{1},v_{2}\right\} $, $\sfr_{\tc}\left(v_{1}\right)=\left(\left(\left\{ 1\right\} ,\left\{ 1\right\} ,2\right),\sstar'_{1}\right)$
and $\sfr_{\tc}\left(v_{2}\right)=\left(\left(\left\{ \sstar''_{1},2,3\right\} ,\emptyset,0\right),\emptyset\right)$.
We have
\[
F_{\vect\phi_{1}}=F_{D_{1}}\left(\vect\phi_{1}\right)\times F_{\Sigma_{1}^{1}}\left(\vect\phi_{1}\right)\times F_{D_{2}}\left(\vect\phi_{1}\right)=\left[0,2\pi\right]\times\pt\times\pt
\]
where $\alpha\in\left[0,2\pi\right]=F_{D_{1}}\left(\vect\phi_{1}\right)$
is represented by the unit disc with the markings $\star_{1,1},1,\sstar'_{1}$
at $0,1,e^{\sqrt{-1}\alpha}\in\cc$, respectively. $\check{F}_{\vect\phi_{1}}=\pt$
and we have an equality of vector spaces
\[
\check{N}_{\vect\phi_{1}}=E\oplus\cc_{s}\oplus\left(T_{p_{0}}L\right)^{2}.
\]
where we've used the evaluation maps at $\sstar'_{1},\sstar''_{1}$
together with the translation action of the lie algebra of $O\left(3\right)\times O\left(3\right)$
to split off $\left(T_{p_{0}}L\right)^{2}\simeq\cc_{x,y}^{2}\simeq\rr_{x_{1},x_{2},y_{1},y_{2}}^{4}$.
We also split off $\cc_{s}$ using the map $\check{N}_{\vect\phi_{1}}\to\check{S}_{\vect\phi_{1}}=\mathbb{L}_{\star_{1,1}}^{\vee}\otimes\mathbb{L}_{\star'_{1,1}}^{\vee}\simeq\cc$
and its natural section which keeps the $v_{2}$ disc fixed and smoothes
the $\star'_{1,1}$-node so the boundary of the $v_{1}$ disc is a
circle centered at $p_{0}\in\rr\pp^{2}$.

We have 
\[
\delta_{\vect\phi_{1}}:N_{\vect\phi_{1}}=E\times\cc\times\left(T_{p_{0}}L\right)^{2}\times[0,2\pi]\to N_{0}=T_{p_{0}}L
\]
\[
\left(e,s,x,y,\alpha\right)\mapsto y-x-s\cdot e^{\sqrt{-1}\alpha}.
\]
We can take
\[
\eta_{\vect\phi_{1}}=\eta_{E}+\eta'=\eta_{E}+\left|s\right|^{2}d\arg s+x_{1}dx_{2}-x_{2}dx_{1}+y_{1}dy_{2}-y_{2}dy_{1}
\]
and then 
\begin{multline*}
\int_{N_{\vect\phi_{1}}}e^{D\eta_{\vect\phi_{1}}}\pi_{\vect\phi_{1}}^{*}\omega_{1}\tilde{\delta}_{\vect\phi_{1}}^{*}\left(-\theta_{0}\right)=\check{\omega}_{1}\left(\check{F}_{\vect\phi_{1}}\right)\cdot e_{E}^{-1}\times\bigg[\\
\int_{\cc\times\left(T_{p_{0}}L\right)^{2}\times[0,2\pi]}e^{-\left|s\right|^{2}-\left|x\right|^{2}-\left|y\right|^{2}+\lambda_{1}^{-1}d\eta'}d\arg\left(y-x-s\cdot e^{\sqrt{-1}\alpha}\right)\bigg]=\\
=\frac{\check{\omega}_{1}\left(\check{F}_{\vect\phi_{1}}\right)\cdot e_{E}^{-1}}{\lambda_{1}^{3}}\int e^{-x_{1}^{2}-x_{2}^{2}-y_{1}^{2}-y_{2}^{2}-\left(x_{1}-y_{1}\right)^{2}-\left(x_{2}-y_{2}\right)^{2}}\,dx_{1}\,dx_{2}\,dy_{1}\,dy_{2}=\\
=\frac{\check{\omega}_{1}\left(\check{F}_{\vect\phi_{1}}\right)\cdot e_{E}^{-1}}{\lambda_{1}^{3}}\cdot\frac{1}{3}\neq0.
\end{multline*}
\end{example}

\subsubsection{$r'\left(\vp\right)=0$ integral is ill-defined}

A closely related problem is that the contribution of the unique
$r'=0$ summand to (\ref{eq:cont_vp}),
\[
\frac{\xi_{\underline{\vp}}}{\boldsymbol{s}\left(\underline{\vp}\right)!}\int_{N_{\underline{\vp}}}e^{D\eta_{\underline{\vp}}}\pi_{\underline{\vp}}^{*}\omega_{\underline{\vp}}=\frac{\xi_{\underline{\vp}}}{\boldsymbol{s}\left(\underline{\vp}\right)!}\int_{F_{\underline{\vp}}}\sigma\,\omega_{\underline{\vp}},
\]
seems to depend on the choice of $\eta_{\underline{\vp}}$. Since
this integral is non-singular (i.e. it contains no $-\theta_{0}$
factors), we could express it as an integral of a Segre form $\sigma$
on $F_{\vect\phi}=\check{F}_{\vect\phi}$. But $\sigma$ does not
satisfy any boundary condition. More precisely, although   
\[
\eta_{\underline{\vp}}|_{\partial^{\vp_{+}}N_{\underline{\vp}}}=\left(f_{\vp_{+}}\circ\gamma_{\vp_{+}}\right)^{*}\check{\eta}_{\vp_{+}}
\]
for every $\vp_{+}\in\pc_{\underline{\vp}}^{1}$, this does not constitute
a boundary condition since 
\[
\dim\check{N}_{\vp_{+}}>\dim\partial^{\vp_{+}}N_{\underline{\vp}}.
\]
This problem is also related to the fact $\delta_{\vp_{+}}$ does
not factor through $f_{\vp_{+}}$; indeed, otherwise $\eta_{\underline{\vp}}|_{\partial^{\vp_{+}}N_{\underline{\vp}}}$
would be pulled back from 
\[
K_{\vp_{+}}=\ker\left(\check{N}_{\vp_{+}}\to N_{0}\right)
\]
 and we would have had
\[
\sigma|_{\partial^{\vp_{+}}F_{\underline{\vp}}}=\For_{\vp_{+}}^{*}\left(K_{\vp_{+}}\to\check{F}_{\vp_{+}}\right)_{*}e^{D\check{\eta}_{\vp_{+}}}.
\]

\subsubsection{\label{subsubsec:no pulled back splitting}(\ref{eq:M-N-S check gamma})
doesn't split}

Consider the diagram (\ref{eq:M-N-S check gamma}). The maps $\acute{\delta}_{M_{\vp_{+}}}^{r'+1}:=\delta_{M_{\vp_{+}}}^{r'+1,\left\{ r'+1\right\} }$
and $\acute{\delta}_{\vp_{+}}^{r'+1}:=\delta_{\vp_{+}}^{r'+1,\left\{ r'+1\right\} }$
are the cokernels of the fiberwise-injective maps $\acute{\gamma}_{M_{\vp_{+}}}$
and $\acute{\gamma}_{\vp_{+}}$, respectively (the map $\acute{\gamma}_{S_{\vp_{+}}}$
is cartesian). Since $\acute{\delta}_{\vp_{+}}^{r'+1}$ does not factor
through $\grave{f}_{\vp_{+}}^{\left\{ r'+1\right\} }$ it is not possible
to have compatible splittings for the three short exact sequences
in the diagram (\ref{eq:M-N-S check gamma}) (that is, splittings
so that (\ref{eq:M-N-S check gamma}) with the horizontal arrows reversed
by the splitting, also commutes). This may be seen as a third problem
with the integrals appearing in the singular localization formula
(though of course, it is closely related to the previous two).

\subsubsection{Resolution}

In $\S$\ref{subsec:Splitting-sess} we will construct sections $\check{N}_{\vp}\xrightarrow{\check{\zeta}_{\vp}}\check{M}_{\vp}$,
whose pullback we denote by $\zeta_{\vp}:N_{\vp}\to M_{\vp}$. The
l-bundle map 
\[
\hat{\delta}_{\vp}=\delta_{M_{\vp}}\circ\zeta_{\vp}:N_{\vp}\to N_{0}^{r'}\times F_{\vp}
\]
is a kind of correction for $\delta_{\vp}$, with the effect of $S_{\vp}$
pushing on the tail markings $\sstar'_{i}$ switched off. Indeed,
$\hat{\delta}_{\vp}$ (more precisely, $\left(\id\times\For_{\vp}\right)\circ\hat{\delta}_{\vp}$)
factors through the forgetful map $f_{\vp}$. We will see that $\delta_{\vp}$
and $\hat{\delta}_{\vp}$ admit a common section 
\[
s_{\vp}:N_{0}^{r'}\times F_{\vp}\to N_{\vp}
\]
so the isomorphism 
\[
A_{\vp}=\id+s_{\vp}\left(\hat{\delta}_{\vp}-\delta_{\vp}\right)
\]
is isotopic to the identity and satisfies $\delta_{\vp}A_{\vp}=\hat{\delta}_{\vp}$.
Setting $\hat{\gamma}_{\vp}:=A_{\vp}^{-1}\gamma_{\vp}$ we find that
$\hat{f}_{\vp}\hat{\gamma}_{\vp}$ factors through a map $\partial^{\vp}N_{\cnt\vp}\simeq\ker\hat{\delta}_{\vp}\to\ker\delta_{M_{\vp}}$
and $\hat{\gamma}_{\vp}=\gamma_{M_{\vp}}\oplus\gamma_{S_{\vp}}$ with
respect to the splittings induced by $\zeta_{\cnt\vp},\zeta_{\vp}$,
so the corrections $\hat{\gamma}_{\vp},\hat{\delta}_{\vp}$ address
all of the problems mentioned above. In $\S$\ref{subsec:Regularizing-and-resumming}
we will use the isotopies $\id\Rightarrow A_{\vp}$ to resum $\text{Cont}_{\vp}$.

\subsection{\label{subsec:Splitting-sess}Splitting short exact sequences}

For $\vp\in\pc_{\basic}^{r,r'}$, we set $\check{\delta}_{M_{\vp}}=\prod_{i=1}^{r'}\check{\delta}_{M_{\vp}}^{i}$.
We denote $F_{\basic}^{r,r'}=\coprod_{\vp\in\pc_{\basic}^{r,r'}}F_{\vp}$
and similarly for other fixed-point local constructions, e.g. $N_{\basic}^{r,r'}=\coprod_{\vp\in\pc_{\basic}^{r,r'}}N_{\vp}$,
$\delta_{\basic}^{r,r'}:=\coprod_{\vp\in\pc_{\basic}^{r,r'}}\delta_{\vp}$.

There's a natural action of the group $E_{\kf}^{r,r'}$ (see Definition
\ref{def:E_k^r,r'}) on $\pc_{\basic}^{r,r'}$, preserving the node
labels. The action of $E_{\kf}^{r,r'}$ on $\check{\mm}_{\basic}^{r,r'}$
then induces an action on $\check{N}_{\basic}^{r,r'},\check{F}_{\basic}^{r,r'}$
etc., compatible with the forgetful map to $\pc_{\basic}^{r,r'}$.
Similarly, $\Sym\left(r'\right)$ acts on $F_{\basic}^{r,r'}$. 

Considered  as a groupoid, $\pc_{\basic}^{r,r'}$ acts on $\check{F}_{\basic}^{r,r'}$
in a natural way\footnote{Concretely, this means that if we choose a representative $\vp_{\alpha}$
for each equivalence class ${\oo_{\alpha}=\Sym\left(\boldsymbol{s}\left(\vp_{\alpha}\right)\right)\vp_{\alpha}\subset\pc_{\basic}^{r,r'}}$
then we have an action of $\Sym\left(\boldsymbol{s}\left(\vp_{\alpha}\right)\right)$
on $\coprod_{\vp\in\oo_{\alpha}}\check{F}_{\vp}$.}. The map ${\check{F}_{\basic}^{r,r'}\to\left(\check{\mm}_{\basic}^{r,r'}\right)^{\tb}}$
is the quotient map for this action. This action lifts to \linebreak{}
${F_{\basic}^{r,r'},N_{\basic}^{r,r'},M_{\basic}^{r,r'}}$, and so
on, and the maps $\delta_{M_{\basic}^{r,r'}},\mu_{\basic}^{r,r',S},...$
respect it, as does the $E_{\basic}^{r,r'}$ action. We will tacitly
assume all of the constructions below continue to respect this action,
by averaging.

$\Sym\left(r'\right)$ acts on $N_{0}^{r'}\times F_{\basic}^{r,r'}$
and on $N_{0}^{r'}\times\check{F}_{\basic}^{r,r'}$ diagonally. For
$N_{0}^{r'}\times\check{F}_{\basic}^{r,r'}$, this extends to an action
of $E_{\kf}^{r,r'}$as follows. For $1\leq b\leq r'$ and $x\in\sstar'{}_{\left[r+r'\right]\backslash r+b}\coprod\kf\coprod\sstar''_{\left[r+r'\right]\backslash r+b}$,
\[
\mv_{x,e_{b}}\left(\left(y_{1},...,y_{r}\right),f\right)=\left(\left(y_{1}',...,y_{r}'\right),f'\right)
\]
where $f'=\mv_{x,e_{b}}.\,f$ and, letting $v_{i}$ and $v_{j}$ denote
the tail and head of $e_{r+b}$ in $\check{\pi}_{\basic}^{r+r'}\left(f\right)$,
we consider the following cases
\begin{itemize}
\item if $x\in\kf\coprod\sstar''_{\left[r\right]}$ and $x\in\sigma_{\tc}\left(v_{i}\right)$,
we set $y_{b}'=-y_{b}$, $y_{i}'=y_{i}$ for $i\neq b$
\item if $x=\sstar''_{r+a}$ for $a\neq b$, and $x\in\kf_{\tc}\left(v_{i}\right)$,
we set $y_{b}'=-y_{b}$, $y_{a}'=y_{a}+y_{b}$, $y_{i}'=y_{i}$ for
$i\neq a,b$.
\item if $x=\sstar'_{r+a}$ $a\neq b$ and $x\in\sigma_{\tc}\left(v_{i}\right)$,
we set $y_{a}'=y_{a}+y_{b}$ and $y_{i}'=y_{i}$ for $i\neq a$
\item if $x=\sstar'_{r+a}$ $a\neq b$ and $x\in\sigma_{\tc}\left(v_{j}\right)$,
we set $y'_{a}=y_{a}-y_{b}$ and $y'_{i}=y_{i}$ for $i\neq a$.
\item in all other cases we set $y_{i}'=y_{i}$ for all $i$.
\end{itemize}
The maps $\check{\delta}_{M_{\basic}^{r,r'}}=\coprod_{\vp\in\pc_{\basic}^{r,r'}}\check{\delta}_{M_{\vp}}$
and $\coprod_{\left\{ \vect\phi|r'\left(\vect\phi\right)=r'\right\} }\delta_{\vect\phi}$
are naturally $E_{\kf}^{r,r'}$ and $\Sym\left(r'\right)$ equivariant,
respectively. 
\begin{lem}
For every $r,r'\geq0$ there exists a $\tb\times E_{\kf}^{r,r'}$-equivariant
linear map 
\[
N_{0}^{r'}\times\check{F}_{\basic}^{r,r'}\xrightarrow{\check{s}_{M_{\basic}^{r,r'}}}\check{M}_{\basic}^{r,r'}
\]
 such that the following properties hold.

(a) $\check{s}_{M_{\basic}^{r,r'}}$ is a section of $\check{\delta}_{M_{\basic}^{r,r'}}$,
$\check{\delta}_{M_{\basic}^{r,r'}}\circ\check{s}_{M_{\basic}^{r,r'}}=\id$,

(b) if we let $s_{M_{\basic}^{r,r'}}:N_{0}^{r'}\times F_{\basic}^{r,r'}\to M_{\basic}^{r,r'}$
be the pullback defined by 
\[
{f_{M_{\basic}^{r,r'}}s_{M_{\basic}^{r,r'}}=\check{s}_{M_{\basic}^{r,r'}}\left(\id\times\grave{\For}_{\basic}^{r+r',\left\{ r+1,...,r+r'\right\} }\right)}
\]
 then the composition 
\[
N_{0}^{r'}\times F_{\basic}^{r,r'}\xrightarrow{s_{M_{\basic}^{r,r'}}}M_{\basic}^{r,r'}\to N_{\basic}^{r,r'},
\]
which we denote $s_{\basic}^{r,r'}$, is a $\Sym\left(r'\right)$-equivariant
section of $\delta_{\basic}^{r,r'}$.
\end{lem}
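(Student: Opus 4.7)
The plan is to construct $\check{s}_{M_{\basic}^{r,r'}}$ one vertex-orbit at a time and then assemble the pieces. For a single profile $\vp \in \pc_{\basic}^{r,r'}$, recall from $\S$\ref{subsec:fp for generalized forms} that $\check{M}_{\vp}$ is the subbundle of $\check{N}_{\vp}$ parameterizing the smoothings of those nodes whose two endpoints both map to $p_0$, i.e.\ one direct summand for each even-even edge $r+1 \leq i \leq r+r'$. The map $\check{\delta}_{M_{\vp}}^{i}$ is, on each summand, the linearization of the evaluation at the tail marking $\sstar'_i$, valued in $N_0 = T_{p_0} L$. Since the endpoint carrying $\sstar'_i$ sits on an even-degree disc collapsed to $p_0 \in L$, the infinitesimal action of $O(2m+1)$ on $L$ fixing $p_0$ supplies a canonical $\tb$-equivariant splitting: given $y \in N_0$, translate the disc component containing the tail of $e_{r+i}$ by $y$, and leave all other components fixed. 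First I would write out this prescription carefully and verify that it descends to a well-defined map on $\check{F}_{\basic}^{r,r'}$, independent of the local trivialization of $N_0$ near $p_0$.

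Next I would check the two compatibility properties. Property (a) is immediate from the construction since $\check{\delta}_{M_{\vp}}^i$ by definition reads off the displacement induced by the translation. For property (b), observe that $\mu_{\vp}$ includes $M_{\vp}$ as a subbundle of $N_{\vp}$ with $\delta_{\vp} \circ \mu_{\vp} = \check{\delta}_{M_{\vp}}$ after composition with the appropriate forgetful maps, so $s_{\basic}^{r,r'} := \mu_{\basic}^{r,r'} \circ s_{M_{\basic}^{r,r'}}$ is automatically a section of $\delta_{\basic}^{r,r'}$; the pullback compatibility $f_{M_{\basic}^{r,r'}} s_{M_{\basic}^{r,r'}} = \check{s}_{M_{\basic}^{r,r'}}\left(\id\times\grave{\For}_{\basic}^{r+r',\{r+1,\ldots,r+r'\}}\right)$ is built into our definition because the translation of the tail disc does not depend on the superfluous boundary labels.

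The main obstacle is verifying $E_{\kf}^{r,r'}$-equivariance, which must be checked on the generators of Definition \ref{def:E_k^r,r'}. The generators $\tau \in \Sym(e_{r+1},\ldots,e_{r+r'})$ are handled by the symmetric nature of the construction. For $\mv_{x,e_b}$ with $r+1\leq b\leq r+r'$, the subtle case is $x = \sstar''_{r+a}$ or $x = \sstar'_{r+a}$ with $a \neq b$: moving $x$ across $e_{r+b}$ effectively reassigns which vertex carries $x$, and the tail endpoint of edge $e_{r+a}$ gets translated by $\pm y_b$ relative to its old physical position. This is precisely the rule for the $E_{\kf}^{r,r'}$-action on $N_0^{r'} \times \check{F}_{\basic}^{r,r'}$ stated just before the lemma; the signs and shifts were chosen so that the "translate the tail only" section intertwines the two actions. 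I would check each case (with particular care when $e_{r+b}$ is flipped, as this changes which endpoint is declared to be the tail) directly from the explicit group action on $\pc_{\basic}^{r,r'}$.

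Finally, to produce the full map on $\coprod_{\vp} \check{F}_{\vp}$ rather than just on orbit representatives, and to obtain equivariance with respect to the internal groupoid $\pc_{\basic}^{r,r'}$ acting on $\check{F}_{\basic}^{r,r'}$ (cf.\ the footnote preceding the lemma), I would apply a standard averaging argument over the finite groups $\Sym(\boldsymbol{s}(\vp)) \times \tb$, using compactness to ensure convergence. Since the construction is linear fiberwise and $\tb$ acts by isometries on $N_0$, averaging preserves properties (a) and (b).
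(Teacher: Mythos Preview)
Your explicit ``translate only the tail vertex'' prescription is not a section of $\check{\delta}_{M_{\basic}^{r,r'}}$ once two even-even edges interact. Take a path of three even-degree vertices $v_1\to v_2\to v_3$ with even-even edges $e_{r+1}:v_1\to v_2$ and $e_{r+2}:v_2\to v_3$. Your $\check{s}^2(y_2)$ translates $v_2$ by $y_2$; but $v_2$ is the \emph{head} of $e_{r+1}$, so $\check{\delta}^1\bigl(\check{s}^2(y_2)\bigr)=-y_2\neq 0$. Hence $\check{\delta}\circ\check{s}\neq\id$, and property (a) fails. The same happens whenever two even-even edges share a tail vertex. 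Your claim that ``(a) is immediate'' overlooks that $\check{\delta}^i$ is the linearized \emph{difference} of tail and head positions (it comes from $\ed_i=\ev_{\sstar'_i}\times\ev_{\sstar''_i}$ landing in the normal to the diagonal), so moving one vertex perturbs $\check{\delta}^a$ for every edge $e_{r+a}$ incident to it.

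There is also a mischaracterization of $\check{M}_{\vp}$: it is the \emph{kernel} of the node-smoothing map $\check{N}_{\vp}\to\check{S}_{\vp}$ in (\ref{eq:M-N-S ses}), not a direct sum indexed by even-even edges. This does not break your map (vertex translations do land in $\check{M}_{\vp}$), but it signals the source of the confusion.

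The paper's argument sidesteps the combinatorics entirely. The $O(2m+1)^{r+r'+1}$-action on $\check{\mm}_{\basic}^{r,r'}$ (one copy per vertex) induces a canonical $\tb\times E_{\kf}^{r,r'}$-equivariant map $\alpha:(T_{p_0}L)^{r+r'+1}\times\check{F}_{\basic}^{r,r'}\to\check{M}_{\basic}^{r,r'}$, with $E_{\kf}^{r,r'}$ acting on the left by permuting the vertex copies. Since $\check{\delta}_{M}\circ\alpha$ is surjective one chooses \emph{any} linear section $\beta:N_0^{r'}\times\check{F}\to(T_{p_0}L)^{r+r'+1}\times\check{F}$ and then averages $\beta$ over the finite group $\tb\times E_{\kf}^{r,r'}$; the composite $\check{s}=\alpha\circ\beta$ is then automatically an equivariant section. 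Part (b) follows by pullback functoriality, since $\delta_{\basic}^{r,r'}\circ\mu_{\basic}^{r,r'}$ is the pullback of $\check{\delta}_{M_{\basic}^{r,r'}}$. Your strategy can be repaired along these lines: drop the explicit formula, invoke surjectivity of $\check{\delta}_M\circ\alpha$, and average over $E_{\kf}^{r,r'}$ rather than only over $\Sym(\boldsymbol{s}(\vp))\times\tb$.
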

\begin{proof}
The $O\left(2m+1\right)^{r+r'+1}$-action on $\check{\mm}_{\basic}^{r,r'}$
induces a canonical map $\left(T_{p_{0}}L\right)^{r+r'+1}\times\check{F}_{\basic}^{r,r'}\xrightarrow{\alpha}\check{M}_{\basic}^{r,r'}$
which is $\tb\times E_{\kf}^{r,r'}$ equivariant, where $E_{\kf}^{r,r'}$
acts on $\left(T_{p_{0}}L\right)^{r+r'+1}$ by reordering the factors
according to $w_{\rho,\underline{\tc}}$ (see the proof of Lemma \ref{lem:R_k^r action}).
The map $\check{\delta}_{M_{\basic}^{r,r'}}\circ\alpha$ is surjective,
and we choose a vector bundle map
\[
N_{0}^{r'}\times\check{F}_{\basic}^{r,r'}\xrightarrow{\beta}\left(T_{p_{0}}L\right)^{r+r'+1}\times\check{F}_{\basic}^{r,r'}
\]
over $\id_{\check{F}_{\basic}^{r,r'}}$ which satisfies $\check{\delta}_{\vect\phi}\circ\alpha\circ\beta=\id$.
By averaging, we may assume without loss of generality that $\beta$
is $\tb\times E_{\kf}^{r,r'}$ equivariant, and we set $\check{s}_{M_{\basic}^{r,r'}}=\alpha\circ\beta$;
clearly, $\check{s}_{M_{\basic}^{r,r'}}$ is a $\tb\times E_{\kf}^{r,r'}$
equivariant section as claimed.

(b) follows from (a) by functoriality of the pullback: $\delta_{\basic}^{r,r'}\circ\mu_{\basic}^{r,r'}$
is the pullback of $\check{\delta}_{M_{\basic}^{r,r'}}$.
\end{proof}
We decompose $\check{s}_{M_{\basic}^{r,r'}}=\bigoplus_{i=1}^{r'}\check{s}_{M_{\basic}^{r,r'}}^{i}$
for $\check{s}_{M_{\basic}^{r,r'}}^{i}:N_{0}\times\check{F}_{\basic}^{r,r'}\to\check{M}_{\basic}^{r,r'}$.

Let $\vp_{+}\in\pc_{\basic}^{r,r'+1}$. To avoid excessive labels,
we use the following notation
\[
\acute{\delta}_{M_{\vp_{+}}}^{r'+1}=\acute{\delta}_{M_{\vp_{+}}}^{r'+1,\left\{ r+r'+1\right\} }:\acute{N}_{\vp_{+}}^{\left\{ r+r'+1\right\} }\to N_{0}\times\acute{F}_{\vp_{+}}^{\left\{ r+r'+1\right\} }
\]
 and 
\[
\acute{s}_{M_{\vp_{+}}}^{r'+1}:N_{0}\times\acute{F}_{\vp_{+}}^{\left\{ r+r'+1\right\} }\to\acute{N}_{\vp_{+}}^{\left\{ r+r'+1\right\} }
\]
for the pullbacks of $\check{\delta}_{M_{\basic}^{r,r'+1}}^{r'+1}$
and $\check{s}_{M_{\basic}^{r,r'+1}}^{r'+1}$.
\begin{prop}
\label{prop:splitting construction}Let $\basic$ be a basic moduli
specification. There exist $\tb\times E_{\kf}^{r,r'}$-equivariant
sections of $\check{\mu}_{\basic}^{r,r'}$
\[
\check{\zeta}_{\basic}^{r,r'}:\check{N}_{\basic}^{r,r'}\to\check{M}_{\basic}^{r,r'}
\]
for $r,r'\geq0$, such that the following \emph{modified-coherence}
property holds.

For $\vp_{+}\in\pc_{\basic}^{r,r'+1}$ let $\acute{\zeta}_{\vp_{+}}:\acute{N}_{\vp_{+}}^{\left\{ r+r'+1\right\} }\to\acute{M}_{\vp_{+}}^{\left\{ r+r'+1\right\} }$
be the pullback, defined by ${\grave{f}_{M_{\vp_{+}}}^{\left\{ r+r'+1\right\} }\acute{\zeta}_{\vp_{+}}=\check{\zeta}_{\basic}^{r,r'+1}|_{\check{F}_{\vp_{+}}}\grave{f}_{\vp_{+}}^{\left\{ r+r'+1\right\} }}$.
Setting $\vect\phi=\cnt\left(\vp_{+}\right)$ we have

\begin{equation}
\acute{\gamma}_{M_{\vp_{+}}}^{\emptyset}\,\check{\zeta}_{\vp}|_{\partial^{\vp_{+}}\check{N}_{\vp}}=\left(\id-\acute{s}_{M_{\vp_{+}}}^{r'+1}\acute{\delta}_{M_{\vp_{+}}}^{r'+1}\right)\,\acute{\zeta}_{\vp{}_{+}}\,\acute{\gamma}_{\vp_{+}}^{\emptyset}.\label{eq:pseudo-coherence}
\end{equation}
\end{prop}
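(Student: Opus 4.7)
The plan is to construct $\check{\zeta}_{\basic}^{r,r'}$ by downward induction on $r'$. For $r'$ large enough that $\pc_{\basic}^{r,r'+1}=\emptyset$ the boundary prescription (\ref{eq:pseudo-coherence}) is vacuous, so any $\tb\times E_{\kf}^{r,r'}$-equivariant section of $\check{\mu}_{\basic}^{r,r'}$ suffices, and such a section exists by averaging a pointwise linear splitting. Inductively, suppose $\check{\zeta}_{\basic}^{r,r''}$ has been constructed with the required properties for all $r''>r'$. First I would verify that (\ref{eq:pseudo-coherence}) yields a well-defined boundary prescription. Since $\acute{\delta}_{M_{\vp_+}}^{r'+1}\acute{s}_{M_{\vp_+}}^{r'+1}=\id$, the operator $\id-\acute{s}_{M_{\vp_+}}^{r'+1}\acute{\delta}_{M_{\vp_+}}^{r'+1}$ is a linear projection of $\acute{M}_{\vp_+}^{\{R\}}$ onto $\ker\acute{\delta}_{M_{\vp_+}}^{r'+1}$, and the left column of (\ref{eq:M-N-S check gamma}) identifies this kernel with the image of the closed immersion $\acute{\gamma}_{M_{\vp_+}}^{\emptyset}$. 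Hence the right-hand side of (\ref{eq:pseudo-coherence}) factors uniquely through $\acute{\gamma}_{M_{\vp_+}}^{\emptyset}$, defining $\check{\zeta}_{\vp}|_{\partial^{\vp_+}\check{N}_{\vp}}$; a diagram chase using the fact that $\acute{\zeta}_{\vp_+}$ is a section of $\mu_{\vp_+}$ confirms that this prescription is a section of $\check{\mu}_{\vp}$ over the boundary.

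The main obstacle is consistency at corners of codimension $c\geq 2$. Such a corner is indexed by a profile $\vp_{++}\in\pc_{\basic}^{r,r'+c}$ together with an ordering of the $c$ contracted edges, and one must verify that iteratively applying (\ref{eq:pseudo-coherence}) produces the same map regardless of the ordering. The permutation symmetry under $\Sym(c)<E_{\kf}^{r,r'+c}$ is already built into the inductively ensured equivariance of $\check{\zeta}_{\basic}^{r,r'+c}$; beyond this, one needs the correction projections $\id-\acute{s}_i\acute{\delta}_i$ and $\id-\acute{s}_j\acute{\delta}_j$ associated with distinct even-even edges $i\neq j$ to commute, which reduces to the identities $\acute{\delta}_i\acute{s}_j=0$. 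I would secure these by constructing the family $\{\check{s}_{M_{\basic}^{r,r'}}\}$ compatibly with the natural direct sum decomposition of $\check{M}_{\basic}^{r,r'}$ indexed by the ghost-disc vertices: since $\check{\delta}^i$ depends only on the two ghost discs at the endpoints of edge $i$, choosing each $\check{s}^j$ to translate a single ghost disc of edge $j$ via the corresponding $O(2m+1)$-factor gives $\check{\delta}^i\check{s}^j=0$ for $i\neq j$ by construction. This refinement of the $\check{s}_M$-construction of $\S$\ref{subsec:Splitting-sess} preserves its $\tb\times E_{\kf}^{r,r'}$-equivariance because the chosen direct sum decomposition and the compatible family of $O(2m+1)$-translations are themselves equivariant.

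With boundary consistency established, the extension of $\check{\zeta}_{\vp}$ inward proceeds along the lines of the proof of Proposition \ref{prop:regular fps and extended localizing exists}: I extend the prescribed boundary data to a smooth section over $\check{N}_{\basic}^{r,r'}$ using a partition of unity subordinate to collar neighbourhoods of the boundary strata, together with an arbitrary local linear splitting of $\check{\mu}_{\basic}^{r,r'}$. Averaging the result over the compact group $\tb\times E_{\kf}^{r,r'}$ enforces the required equivariance, and since the boundary data is already equivariant by construction, this averaging does not destroy (\ref{eq:pseudo-coherence}). This closes the inductive step and proves the proposition.
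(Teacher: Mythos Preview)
Your inductive scheme and the mechanism for reading off the boundary value from (\ref{eq:pseudo-coherence}) match the paper. Two points deserve correction.

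First, the ``refinement'' of $\check{s}_{M}$ you propose is not needed. By construction $\check{s}_{M_{\basic}^{r,r'}}$ is a section of the full map $\check{\delta}_{M_{\basic}^{r,r'}}=\prod_{i}\check{\delta}_{M_{\basic}^{r,r'}}^{i}:\check{M}_{\basic}^{r,r'}\to N_{0}^{r'}\times\check{F}_{\basic}^{r,r'}$, and the components $\check{s}_{M}^{j}$ are just its restrictions to the $N_{0}$-summands. Hence $\check{\delta}_{M}^{i}\check{s}_{M}^{j}=\delta_{ij}\,\id$ automatically, and the projections $\id-\check{s}^{i}\check{\delta}^{i}$ already commute. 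No special choice of ghost-disc translations is required.

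Second, and this is a genuine gap, your corner analysis only covers the $\partial_{-}^{2}$ part of $\partial^{2}\check{F}_{\basic}^{r,r'}$. It is not true that the $\zz/2$-consistency at every codimension-two corner reduces to iterating (\ref{eq:pseudo-coherence}) along a flag $\vp\leftarrow\vp_{+}\leftarrow\vp_{++}$. The map $\check{g}_{\vp_{+}}:\partial^{\vp_{+}}\check{F}_{\vp}\to\check{F}_{\vp_{+}}$ factors through the forgetful map $\grave{\For}^{\{R+1\}}$ that drops $\sstar'_{R+1}$, so $\partial(\partial^{\vp_{+}}\check{F}_{\vp})$ has a horizontal part $\partial_{+}$ lying over the \emph{interior} of $\check{F}_{\vp_{+}}$: this is where $\sstar'_{R+1}$ bubbles off on a ghost disc together with some $a\in\kf\coprod\sstar''_{[R]}$. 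At such a corner the two adjacent faces $\partial^{\vp_{1}}\check{F}_{\vp}$ and $\partial^{\vp_{1}'}\check{F}_{\vp}$ are related not by swapping two contracted edges but by $\mv_{a,e_{R+1}}\in E_{\kf}^{r,r'+1}$, and there is no $\vp_{++}$ through which the formula can be iterated (indeed $\check{g}_{\vp_{1}}$ collapses the extra node, so $\check{g}_{\vp_{1}}(p)$ sits in the interior of $\check{F}_{\vp_{1}}$). The paper handles this case separately: $\zz/2$-invariance on $\partial_{+}^{2}$ follows from the inductively established $E_{\kf}^{r,r'+1}$-equivariance of $\check{\zeta}_{\vp_{+}}$ (and of $\check{s}_{M},\check{\delta}_{M}$), specifically invariance under $\mv_{a,e_{R+1}}$. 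The same circle of ideas is also used, already at codimension one, to show that the data $\{\check{\zeta}_{\partial^{\vp_{+}}}\}$ descends along the \'etale cover $\coprod_{\vp_{+}}\partial^{\vp_{+}}\check{F}_{\cnt\vp_{+}}\to\partial\check{F}_{\basic}^{r,r'}$ (this needs invariance under $\mv_{\sstar'_{a},e_{R+1}}$), and to verify that the descended $\check{\zeta}_{\partial}$ is $E_{\kf}^{r,r'}$-equivariant before you average. Your proposal invokes only the $\Sym(c)$-part of $E_{\kf}^{r,r'+c}$, which is not enough; the $\mv$-generators are doing real work here and must be brought in explicitly.
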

\begin{proof}
Fix $r$. The proof is by reverse induction on $R\geq r$: assume
that we're given $\tb\times E_{\kf}^{r,r'}$-equivariant sections
$\check{\zeta}_{\basic}^{r,r'}$ of $\mu_{\basic}^{r,r'}$ defined
for all $r'\geq R-r+1$ such that (\ref{eq:pseudo-coherence}) holds
for all $\vp_{+}\in\pc_{\basic}^{r,\geq\left(R-r+2\right)}$. Note
that for sufficiently large $R$, this assumption holds vacuously.
We will now prove that there exists a $\tb\times E_{\kf}^{r,R-r}$-equivariant
section $\check{\zeta}_{\basic}^{r,R-r}:\check{N}_{\basic}^{r,R-r}\to\check{M}_{\basic}^{r,R-r}$
so that (\ref{eq:pseudo-coherence}) holds for all $\vp_{+}\in\pc_{\basic}^{r,R-r+1}$.

Set $r'=R-r$. First, for each $\vp_{+}\in\pc_{\basic}^{r,r'+1}$
we define $\check{\zeta}_{\partial^{\vp_{+}}}\in\Hom\left(\partial^{\vp_{+}}\check{N}_{\vp},\partial^{\vp_{+}}\check{M}_{\vp}\right)$
by (\ref{eq:pseudo-coherence}). Since $\acute{\gamma}_{M_{\vp_{+}}}^{\emptyset}$
is injective on fibers, such $\check{\zeta}_{\partial^{\vp_{+}}}$
is unique, and it exists since ${\acute{\delta}_{M_{\vp_{+}}}^{r'+1}\left(\id-\acute{s}_{M_{\vp_{+}}}^{r'+1}\acute{\delta}_{M_{\vp_{+}}}^{r'+1}\right)=0}$
(compare with $\S$\ref{subsubsec:no pulled back splitting}).

Consider the étale map ${\coprod_{\vp_{+}\in\pc_{\basic}^{r,r'+1}}\partial^{\vp_{+}}\check{F}_{\cnt\left(\vp_{+}\right)}\xrightarrow{q}\partial\check{F}_{\basic}^{r,r'}}.$
The invariance of \linebreak{}
${\left(\id-\acute{s}_{M_{\vp_{+}}}^{r'+1}\acute{\delta}_{M_{\vp_{+}}}^{r'+1}\right)\acute{\zeta}_{\basic}^{r}}$
under ${\left\{ \mv_{\sstar'_{a},e_{R+1}}\right\} _{1\leq a\leq R}\subset E_{\kf}^{r,r'+1}}$
(and the $\pc_{\basic}^{r,r'+1}$ groupoid action) implies that $\coprod_{\vp_{+}}\check{\zeta}_{\partial^{\vp_{+}}}$
is the $q$-pullback of a section $\partial\check{N}_{\basic}^{r,r'}\xrightarrow{\check{\zeta}_{\partial}}\partial\check{M}_{\basic}^{r,r'}$.

A straightforward computation shows $\check{\zeta}_{\partial}\partial\mu_{\basic}^{r,r'}=\id$.
We show $\check{\zeta}_{\partial}$ is $\tb\times E_{\kf}^{r,r'}$-equivariant.
Clearly, $\tb\times\Sym\left(e_{r+1},...,e_{r+r'}\right)$ equivariance
follows from equivariance of $\left(\id-\acute{s}_{M_{\vp_{+}}}^{r'+1}\acute{\delta}_{M_{\vp_{+}}}^{r'+1}\right)\acute{\zeta}_{\basic}^{r}$.
Now let $\vp_{+}\in\pc_{\basic}^{r,r'+1}$ and consider a generator
$\mv_{x,e_{b}}\in E_{\kf}^{r,r'}$. We have 
\begin{equation}
\alpha\;\acute{\gamma}_{\vp_{+}}^{\emptyset}=\acute{\gamma}_{\vp_{+}}^{\emptyset}\;\left(\mv_{x,e_{b}}.\right)\label{eq:lift of E gen}
\end{equation}
where $\alpha:\acute{N}_{\vp_{+}}^{\left\{ R+1\right\} }\to\acute{N}_{\vp_{+}}^{\left\{ R+1\right\} }$
satisfies
\[
\grave{f}_{\vp_{+}}^{\left\{ R+1\right\} }\alpha=e.\,\grave{f}_{\vp_{+}}^{\left\{ R+1\right\} }
\]
for some $e\in\left\{ \mv_{x,e_{b}},\mv_{x,e_{b}}\circ\mv_{\sstar'_{b},e_{r+R+1}},\mv_{x,e_{b}}\circ\mv_{x,e_{r+R+1}}\right\} \subset E_{\kf}^{r,r'+1}$.
Similarly, 
\[
\alpha'\;\acute{\gamma}_{M_{\vp_{+}}}^{\emptyset}=\acute{\gamma}_{M_{\vp_{+}}}^{\emptyset}\;\left(\mv_{x,e_{b}}.\right)
\]
where $\alpha':\acute{M}_{\vp_{+}}^{\left\{ R+1\right\} }\to\acute{M}_{\vp_{+}}^{\left\{ R+1\right\} }$
is a $\grave{f}_{M_{\vp_{+}}}^{\left\{ R+1\right\} }$-lift of $e$.
We have  
\[
\alpha'\left(\id-\acute{s}_{M_{\vp_{+}}}^{r'+1}\acute{\delta}_{M_{\vp_{+}}}^{r'+1}\right)\,\acute{\zeta}_{\vp{}_{+}}=\left(\id-\acute{s}_{M_{\vp_{+}}}^{r'+1}\acute{\delta}_{M_{\vp_{+}}}^{r'+1}\right)\,\acute{\zeta}_{\vp{}_{+}}\alpha
\]
which implies that $\check{\zeta}_{\partial}$ is invariant under
$\mv_{x,e_{b}}$ and hence under $E_{\kf}^{r,r'}$.

We want to show there exists an extension $\check{\zeta}_{\basic}^{r,r'}$
of $\check{\zeta}_{\partial}$ inward. For this it suffices to check
that $\partial\zeta_{\partial}:\partial^{2}\check{N}_{\basic}^{r,r'}\to\partial^{2}\check{M}_{\basic}^{r,r'}$
is $\zz/2$ equivariant with respect to the action that switches boundary
components. The argument is similar to the one given in the proof
of Proposition \ref{prop:localizing eta exists}. We have a decomposition
$\partial^{2}\check{F}_{\basic}^{r,r'}=\partial_{+}^{2}\check{F}_{\basic}^{r,r'}\coprod\partial_{-}^{2}\check{F}_{\basic}^{r,r'}$
induced from the decomposition $\partial^{2}\check{\mm}_{\basic}^{r,r'}=\partial_{-}^{2}\check{\mm}_{\basic}^{r,r'}\coprod\partial_{+}^{2}\check{\mm}_{\basic}^{r,r'}$.
For any 2-flag\emph{ }$\left(\vp_{0},\vp_{1},\vp_{2}\right)$, $\vp_{2}\in\pc_{\basic}^{r,r'+2}$,
$\vp_{i-1}=\cnt\vp_{i}$, there are maps $\partial^{\vp_{2}}\partial^{\vp_{1}}\check{N}_{\vp}\xrightarrow{\gamma}\check{N}_{\vp_{2}}$
and $\partial^{\vp_{2}}\partial^{\vp_{1}}\check{M}_{\vp}\xrightarrow{\gamma_{M}}\check{M}_{\vp_{2}}$
induced by $g$, and we have 
\[
\gamma_{M}\partial^{\vp_{2}}\check{\zeta}_{\partial^{\vp_{1}}}=\underbrace{\left(\id-\check{s}_{M_{\vp_{2}}}^{r'+1}\check{\delta}_{M_{\vp_{2}}}^{r'+1}-\check{s}_{M_{\vp_{2}}}^{r'+2}\check{\delta}_{M_{\vp_{2}}}^{r'+2}\right)\check{\zeta}_{\vp{}_{2}}}_{\rho}\,\gamma,
\]
so $\zz/2$-equivariance of $\partial\check{\zeta}_{\partial}|_{\partial_{-}^{2}F_{\basic}^{r,r'}}$
follows from $\Sym\left(e_{R+1},e_{R+2}\right)$-equivariance of $\rho$.
$\zz/2$-equivariance of $\partial\check{\zeta}_{\partial}|_{\partial_{+}^{2}F_{\basic}^{r,r'}}$
follows from $\left\langle \mv_{a,e_{R+1}}\right\rangle $ equivariance
of \linebreak{}
${\left(\id-\check{s}_{M_{\vp_{+}}}^{r'+1}\check{\delta}_{M_{\vp_{+}}}^{r'+1}\right)\,\check{\zeta}_{\vp{}_{+}}}$.
We conclude that an extension $\check{\zeta}_{\basic}^{r,r'}$ of
$\check{\zeta}_{\partial}$ exists. 

By averaging, we can ensure that $\check{\zeta}_{\basic}^{r,r'}$
is $\tb\times E_{\kf}^{r,r'}$-equivariant. This completes the inductive
step and the proof of the proposition.
\end{proof}
For $\vp\in\pc_{\basic}^{r,r'}$, define ${\zeta_{\vp}:N_{\vp}\to M_{\vp}}$
by requiring ${f_{M_{\vp}}\zeta_{\vp}=\check{\zeta}_{\basic}^{r,r'}f_{\vp}}$
and set 
\[
\hat{\delta}_{\vp}=\prod_{i=1}^{r'}\hat{\delta}_{\vp}^{i}=\delta_{M_{\vp}}\circ\zeta_{\vp}.
\]

\subsection{\label{subsec:Regularizing-and-resumming}Regularizing and resumming}

A\emph{ flag $\vpb$ of length $l\left(\vp_{\bullet}\right)=l$ and
depth $r'\left(\vp_{\bullet}\right)=r'$} is a tuple 
\[
\left(\vp_{0},\vp_{1},...,\vp_{l}\right)\in\pc_{\basic}^{r,r'}\times\cdots\times\pc_{\basic}^{r,r'+l},
\]
such that $\vp_{i-1}=\cnt\vp_{i}$ for $1\leq i\leq l$. We say $\vpb$
\emph{lies over }$\underline{\vp}=\cnt^{r'}\vp_{0}\in\pc_{\basic}^{r,0}$.
We fix\emph{ }some \emph{$\underline{\vp}\in\pc_{\basic}^{r,0}$ }and
denote by $\Psi^{r',l}$ the set of flags of depth $r'$ and length
$l$ over $\underline{\vp}$.\emph{ }

$\Sym\left(r'\right)\times\Sym\left(l\right)<\Sym\left(r+r'+l\right)$
acts on $\Psi^{r',l}$ by relabeling the edges of $\vp_{l}$ (a flag
$\vpb$ is uniquely specified by $\vp_{l}$ and the length). Each
flag $\vpb$ specifies a clopen corner component $F_{\vpb}=\partial^{\vp_{l}}F_{\vect\phi_{0}}\subset\partial^{l}F_{\vp_{0}}$
whose local boundary faces correspond, in this order, to the edges
$\left(r+r'+1,...,r+r'+l\right)$ of $\tc\left(\vp_{l}\right)$. We
use similar notation for corner components of the other spaces, bundles
and maps, e.g. $\partial^{\vpb}N_{\vp_{0}}=\partial^{\vp_{l}}N_{\vp_{0}}\subset\partial^{l}N_{\vp_{0}}$.
The action of ${\Sym\left(r'\right)\times\Sym\left(l\right)}$ lifts
to $\coprod_{\vpb\in\Psi^{r,l'}}F_{\vpb}$. We set $N_{\vpb}:=N_{\vp_{0}}|_{F_{\vpb}}=\partial^{\vpb}N_{\vp_{0}}\xrightarrow{\pi_{\vpb}}F_{\vpb}$.

Let $I=\left[0,1\right]$ be the unit interval. For each $\vpb\in\Psi^{r',l}$
we define
\[
A_{\vpb}\left(T,t_{1},...,t_{l}\right):N_{\vpb}\times I_{T}\times I_{\left(t_{1},...,t_{l}\right)}^{l}\to N_{\vpb}\times I\times I^{l}
\]
by 
\begin{multline}
\gamma_{\vpb}A_{\vpb}\left(T,\vect t\right):=\left(\id+\left(1-T\right)\cdot\left[\sum_{i=1}^{r'}s_{\vp_{l}}^{i}\left(\hat{\delta}_{\vp_{l}}^{i}-\delta_{\vp_{l}}^{i}\right)+\sum_{i=1}^{l}t_{i}\cdot s_{\vp_{l}}^{r'+i}\hat{\delta}_{\vect\phi_{l}}^{r'+i}\right]\right)\gamma_{\vpb}\label{eq:def of A_psi(t,mu)-1}
\end{multline}

\begin{lem}
\label{lem:A properties}$A_{\vpb}$ satisfies the following properties.

(a) Let $\vpb=\left(\vp_{0},...,\vp_{l}\right)$ be a flag of length
$l$ and depth $r'$. We can write $A_{\vpb}\left(T,\boldsymbol{t}\right)$
as a product of commuting operators
\[
A_{\vpb}\left(T,\boldsymbol{t}\right)=\prod_{i=1}^{r'}A_{\vpb,i}\left(T\right)\cdot\prod_{j=r'+1}^{r'+l}A_{\vpb,j}\left(T,t_{j}\right)
\]
with 
\[
{A_{\vpb,i}\left(T\right):=\id+\left(1-T\right)s_{\vp_{l}}^{i}\left(\hat{\delta}_{\vp_{l}}^{i}-\delta_{\vp_{l}}^{i}\right)}
\]
 for $1\leq i\leq r'$ and
\[
{A_{\vpb,j}\left(T,t_{j}\right):=\id+\left(1-T\right)t_{j}\cdot s_{\vp_{l}}^{j}\left(\hat{\delta}_{\vp_{l}}^{j}\right)}
\]
for ${r'+1\leq j\leq r'+l}$. The operators $A_{\vpb,i}\left(T\right)$
for $1\leq i\leq r'$ are pulled back along ${\acute{f}_{\vpb}^{\left\{ i\right\} }:=\partial^{\vpb}\acute{f}_{\vp_{0}}^{\left\{ i\right\} }}$.

(b) Let $\vpb=\left(\vp_{0},...,\vp_{l+1}\right)$ be a flag of length
$l+1$, and let $\vpb^{\geq1}=\left(\vp_{1},...,\vp_{l+1}\right)$
and $\vpb^{\leq l}=\left(\vp_{0},...,\vp_{l}\right)$ be its truncations.
Then 

\[
A_{\vpb}\left(T,t_{1},...,t_{l},t_{l+1}=0\right)=\partial^{\vp_{l+1}}A_{\vpb^{\leq l}}\left(T,t_{1},...,t_{l}\right)
\]
and
\[
\partial^{\vpb^{\geq1}}\gamma_{\vp_{1}}\left[A_{\vpb}\left(T,t_{1}=1,t_{2},...,t_{l+1}\right)\right]=\left[A_{\vpb^{\geq1}}\left(T,t_{2},...,t_{l+1}\right)\right]\partial^{\vpb^{\geq1}}\gamma_{\vp_{1}}.
\]

(c) \textup{Let $\vpb\in\Psi^{r',l}$. for all $1\leq i\leq r'$ we
have}
\[
\partial^{\vpb}\delta_{\vp_{0}}^{i}\circ A_{\vpb}\left(0,t_{1},...,t_{l}\right)=\partial^{\vpb}\hat{\delta}_{\vp_{0}}^{i}.
\]
\end{lem}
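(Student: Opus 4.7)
The strategy is to unpack the definition (\ref{eq:def of A_psi(t,mu)-1}) carefully and verify each property by direct computation, using the structural facts established in $\S$\ref{subsec:Splitting-sess}, especially that $s_{\vp}=\bigoplus_i s_{\vp}^i$ and $\delta_{\vp}=\prod_i \delta_{\vp}^i$ respect the direct sum decomposition of $\check{S}_{\vp}$ in (\ref{eq:S_Gamma direct sum decomposition}), and hence $\delta_{\vp_l}^i\circ s_{\vp_l}^j=\delta_{ij}\cdot\id$. For part (a), expand the sum inside the brackets in (\ref{eq:def of A_psi(t,mu)-1}) as a sum of $r'+l$ rank-one perturbations. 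Any two such perturbations $s_{\vp_l}^i(\cdots)$ and $s_{\vp_l}^j(\cdots)$ with $i\neq j$ compose to zero, because the image of $s_{\vp_l}^i$ lies in the $i$'th summand of $\check{S}_{\vp_l}$ (via the section of $\mu_{\vp_l}$ induced by $\zeta_{\vp_l}$), while $\hat{\delta}_{\vp_l}^j$ and $\delta_{\vp_l}^j$ annihilate that summand. This simultaneously establishes that the full operator factors as an ordered product and that the factors commute. The claim that $A_{\vpb,i}(T)$ for $1\le i\le r'$ is pulled back along $\acute{f}_{\vpb}^{\{i\}}$ follows because $\hat{\delta}_{\vp_l}^i=\delta_{M_{\vp_l}}^i\circ\zeta_{\vp_l}$ factors through $f_{\vp_l}$ by construction (see the paragraph after (\ref{eq:M-N-S check gamma})), while $\delta_{\vp_l}^i$ and $s_{\vp_l}^i$ for odd-even edges involve only the $i$'th tail/head evaluation and the $O(2m+1)^{r+r'+1}$-generated section, all of which are pulled back across the forgetful map that forgets $\sstar'_i$.

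For part (b), the first identity is immediate from the formula in (a): the factor $A_{\vpb,r'+l+1}(T,0)=\id$, and the remaining factors $A_{\vpb,i}(T)$ and $A_{\vpb,j}(T,t_j)$ for $1\le j\le l$ depend only on $s^{\bullet}, \delta^{\bullet}, \hat\delta^{\bullet}$ which are compatible with the restriction to $\partial^{\vp_{l+1}}N_{\vp_0}\hookrightarrow N_{\vp_0}$ by naturality of the splittings $\check\zeta$. The second identity is the more delicate of the two and is where the modified-coherence property (\ref{eq:pseudo-coherence}) of Proposition \ref{prop:splitting construction} enters: setting $t_1=1$, the factor $A_{\vpb,1}(T,1)$ becomes $\id+(1-T)\,s_{\vp_{l+1}}^1\hat{\delta}_{\vp_{l+1}}^1$, which restricted along $\partial^{\vpb^{\geq 1}}\gamma_{\vp_1}$ becomes precisely the operator conjugating $A_{\vpb^{\geq 1}}$ by $\gamma_{\vp_1}$, once one rewrites $s\hat\delta\,\gamma=\gamma\,(\id-$ something correcting $\zeta$'s failure of coherence$)$; this is exactly what (\ref{eq:pseudo-coherence}) provides, absorbing the correction into a relabeling of the remaining factors. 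I expect this step to be the main obstacle, since it requires tracking the $\acute{\gamma}_{M_{\vp_+}}^\emptyset$ vs $\acute{\gamma}_{\vp_+}^\emptyset$ compatibility across all factors simultaneously; the plan is to reduce it to a single edge at a time using the commuting factorization of (a).

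Part (c) is a direct consequence of (a) together with the section property $\delta_{\vp_l}^i\,s_{\vp_l}^i=\id$. At $T=0$, each factor $A_{\vpb,j}(0,t_j)$ for $j\neq i$ commutes with $\partial^{\vpb}\delta_{\vp_0}^i$ and is annihilated on the left by it modulo the identity, so only the $i$'th factor contributes: one computes
\[
\partial^{\vpb}\delta_{\vp_0}^i\circ A_{\vpb,i}(0)=\partial^{\vpb}\delta_{\vp_0}^i+\partial^{\vpb}\delta_{\vp_0}^i\circ s_{\vp_l}^i\left(\hat{\delta}_{\vp_l}^i-\delta_{\vp_l}^i\right)=\partial^{\vpb}\hat{\delta}_{\vp_0}^i,
\]
after identifying $\partial^{\vpb}\delta_{\vp_0}^i$ with $\delta_{\vp_l}^i$ restricted to $\partial^{\vpb}N_{\vp_0}$ via the natural inclusion (and similarly for $\hat{\delta}$). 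No new ideas enter beyond those already used for (a), so (c) should follow as a short corollary of the factorization.
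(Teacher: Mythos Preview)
Your approach is the same as the paper's, which proves the lemma with the single word ``Straightforward''; you are simply fleshing out that verification, and parts (a) and (c) are handled correctly.

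There is one misdiagnosis in part (b). You identify the second identity (the $t_1=1$ case) as the ``main obstacle'' and claim that the modified-coherence property (\ref{eq:pseudo-coherence}) is what makes it work. In fact the second identity is the easier of the two: using the factorization from (a), the only discrepancy between $A_{\vpb}|_{t_1=1}$ and $A_{\vpb^{\geq1}}$ is the term $(1-T)\,s_{\vp_{l+1}}^{r'+1}\delta_{\vp_{l+1}}^{r'+1}$, and this vanishes after composing with $\gamma_{\vpb}$ because $\delta_{\vp_{l+1}}^{r'+1}$ is the cokernel of $\gamma_{\vp_1}$ (see the remark just after (\ref{eq:M-N-S check gamma})). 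No appeal to (\ref{eq:pseudo-coherence}) is needed here; your proposed rewriting ``$s\hat\delta\,\gamma=\gamma(\id-\text{correction})$'' is not the operative mechanism.

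If (\ref{eq:pseudo-coherence}) enters at all in this lemma, it is rather in the \emph{first} identity of (b), where one must check that the operators $s_{\vp_{l+1}}^{i}(\hat\delta_{\vp_{l+1}}^{i}-\delta_{\vp_{l+1}}^{i})$ and $s_{\vp_{l+1}}^{r'+j}\hat\delta_{\vp_{l+1}}^{r'+j}$ on $N_{\vp_{l+1}}$ intertwine, via $\gamma_{\vp_{l+1}}$, with the corresponding operators built from the $\vp_l$-level data on $\partial^{\vp_{l+1}}N_{\vp_l}$; the compatibility of $\hat\delta$ across levels is exactly the content of (\ref{eq:pseudo-coherence}). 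Your sketch for the first identity (``compatible by naturality of $\check\zeta$'') is correct in spirit but glosses over this. Also, a minor slip: in (a) you refer to ``odd-even edges'' when discussing the indices $1\le i\le r'$; these are the even-even edges.
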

\begin{proof}
Straightforward.
\end{proof}
We set $\delta_{\vpb}=\partial^{\vpb}\delta_{\vp_{0}}$ and define
$\widetilde{\delta}_{\vpb}$ and $\beta_{\vpb}$ so that there's
a cartesian diagram 
\[
\xymatrix{\widetilde{N}_{\vpb}\ar[r]^{\beta_{\vpb}}\ar[d]_{\widetilde{\delta}_{\vpb}=\prod_{i=1}^{r'}\widetilde{\delta}_{\vpb}^{i}} & N_{\vpb}\ar[d]^{\delta_{\vpb}}\\
\widetilde{N}_{0}^{r'}\times F_{\vpb}\ar[r]_{\beta_{0}^{r'}} & N_{0}^{r'}\times F_{\vpb}
}
\]
Setting

\[
\omega_{\vpb}=\gamma_{\vpb}^{*}\omega_{r+r'+l}=\left(i_{F_{\vp_{0}}}^{\partial^{\vpb}}\right)^{*}\omega_{r+r'}\text{ and }\eta_{\vpb}=\gamma_{\vpb}^{*}\eta_{\vp_{l}}=\left(i_{N_{\vp_{0}}}^{\partial^{\vpb}}\right)^{*}\eta_{\vp_{0}},
\]
we define an equivariant form on $\widetilde{N}_{\vpb}\times I\times I^{l}$
by

\begin{equation}
\ii_{\vpb}=\xi_{\vp_{0}}\beta_{\vpb}^{*}\left(e^{DA_{\vpb}^{-1}\left(T,t_{1},...,t_{l}\right)^{*}\eta_{\vpb}}\pi_{\vpb}^{*}\omega_{\vpb}\right)\widetilde{\delta}_{\vpb}^{*}\left(-\theta_{0}\right)^{r'}.\label{eq:resum integrand}
\end{equation}

The following proposition is the heart of the resummation result,
replacing $\text{Cont}_{\underline{\vp}}$'s original sum of singular
integrands corresponding to flags of increasing \emph{depth} by a
sum of regular, depth zero integrands indexed by flags of increasing
\emph{length}, whose domain of integration are corners of $N_{\underline{\vp}}$. 

In what follows $\widetilde{N}_{\vpb}$ is oriented using the outward
normal orientation, and ${\widetilde{N}_{\vpb}\times I\times I^{l}}$
is oriented as a product of oriented spaces.
\begin{prop}
\label{prop:regularizing}We have
\begin{align}
\text{Cont}\left(\underline{\vp}\right)=- & \sum_{r'\geq0}\frac{1}{r'!}\sum_{\vpb\in\Psi^{r',0}}\frac{1}{\boldsymbol{s}\left(\vp_{0}\right)!}\int_{\widetilde{N}_{\vpb\times\left\{ 1\right\} }}\ii_{\vpb}\nonumber \\
= & \sum_{l\geq0}\frac{1}{l!}\sum_{\vpb\in\Psi^{0,l}}\frac{1}{\boldsymbol{s}\left(\vp_{l}\right)!}\int_{N_{\vpb}\times0\times I^{l}}\ii_{\vpb}.\label{eq:prop 9}
\end{align}
\end{prop}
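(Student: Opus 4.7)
The plan has two parts. The first equality is essentially tautological: by Lemma~\ref{lem:A properties}(a), $A_{\vpb}(1, \vect{t}) = \id$, so for any length-zero flag $\vpb = (\vp_0)$ with $\vp_0 \in \pc_{\underline{\vp}}^{r'}$ we have
\[
\ii_{\vpb}|_{T=1} = \xi_{\vp_0}\,\beta_{\vp_0}^*\bigl(e^{D\eta_{\vp_0}}\,\pi_{\vp_0}^*\omega_{\vp_0}\bigr)\,\widetilde{\delta}_{\vp_0}^*(-\theta_0)^{\boxtimes r'},
\]
which matches exactly the integrand of $\text{Cont}_{\underline{\vp}}$ in (\ref{eq:cont_vp}). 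The overall sign $-$ then reflects the boundary orientation of $\{T=1\} \subset \partial I$ under the outward-normal convention.

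For the second equality I apply Stokes' theorem iteratively on the cubes $I_T \times I_{\vect{t}}^l$. The key input is the identity $\int_{\widetilde{N}_{\vpb} \times I \times I^l} D\ii_{\vpb} = 0$, which I will prove using $D\omega = 0$ and $D^2 = 0$ (so that $e^{DA^{-1*}\eta}\,\pi^*\omega$ is genuinely $D$-closed), together with the fact that $D\theta_0$ is a scalar multiple of the equivariant Euler class of $N_0$; any $D$-derivative landing on the singular factor $(-\theta_0)^{\boxtimes r'}$ thus has too low angular degree on the fiber $\widetilde{N}_0^{r'}$ to survive fiber integration once paired with the cube factors $dT \wedge d\vect{t}$.

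Summing $\int D\ii_{\vpb} = 0$ over all $(r', l)$ and $\vpb \in \Psi^{r', l}$ with weights $\frac{1}{r'!\,l!\,\boldsymbol{s}(\vp_l)!}$ produces, via Stokes, a telescoping relation among five classes of boundary integrals: the $T=0$ and $T=1$ faces, the $t_j = 0$ and $t_j = 1$ faces, and the corner boundaries $\partial^{\vp_{l+1}}\widetilde{N}_{\vp_0}$ of $\widetilde{N}_{\vpb}$. Lemma~\ref{lem:A properties}(b) identifies, after $\Sym(l)$-symmetrization, each $t_j = 0$ face with the pullback along $\gamma_{\vp_l}$ of $\ii_{\vpb^{\leq l-1}}$ to $\partial^{\vp_l}\widetilde{N}_{\vp_{l-1}}$; these cancel in pairs with the $\partial \widetilde{N}$-corner contributions of length-$(l-1)$ flags. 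Likewise, the conjugation identity in Lemma~\ref{lem:A properties}(b) realizes each $t_1 = 1$ face as a $\gamma_{\vp_1}$-pullback of the flag $\vpb^{\geq 1}$ of length $l-1$ and depth $r'+1$, producing a second chain of cancellations. Finally, Lemma~\ref{lem:A properties}(c) says $\delta_{\vp_0} \circ A_{\vpb}(0, \vect{t}) = \hat{\delta}_{\vp_0}$, so on the $T=0$ face with $r' \geq 1$ the composition $\widetilde{\delta} \circ \widetilde{A}(0)$ becomes $\widetilde{\hat\delta}$, which factors through $\grave{f}_{\vp_0}$; combined with the previous pairings, this forces the $T = 1, l \geq 1$ and $T = 0, r' \geq 1$ faces to cancel against one another. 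What survives is exactly the $T = 1, l = 0$ face (equal to $-\text{Cont}_{\underline{\vp}}$ by the first equality) and the $T = 0, r' = 0$ face (equal to the RHS of (\ref{eq:prop 9})).

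The hardest part will be a careful bookkeeping of signs, orientations, and combinatorial factors: aligning the outward-normal orientation on the various $\widetilde{N}_{\vpb}$ and their corners, the sign convention (\ref{eq:xi Sym(r) sign}) for $\xi_{\vpb}$ under $\Sym(r')$-action, the factor $(-1)^{r r'}$ carried in from Corollary~\ref{cor:sing formula by profiles}, and the interaction of the two symmetrizations $\Sym(l)$ and $\Sym(r')$ with the $\gamma_{\vp_j}$-pullbacks in Lemma~\ref{lem:A properties}(b). I expect the cleanest presentation to package everything as a bicomplex on $\coprod_{\vpb} \widetilde{N}_{\vpb} \times I \times I^l$, whose total differential unifies $D$ with the boundary operators $\partial_T$, $\partial_{t_j}$, $\vpb \mapsto \vpb^{\leq l-1}$, $\vpb \mapsto \vpb^{\geq 1}$, and $\vpb \mapsto (\vpb, \vp_{l+1})$, so that the whole proposition emerges as a single application of Stokes.
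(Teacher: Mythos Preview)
Your overall architecture is the paper's: Stokes on $\widetilde{N}_{\vpb}\times I_T\times I^l$, summed with the weights $\frac{1}{r'!\,l!\,\boldsymbol{s}(\vp_l)!}$, and a telescoping of boundary faces. The first equality is handled exactly as in the paper. But your accounting of $\partial(\widetilde{N}_{\vpb}\times I\times I^l)$ has two genuine gaps and one mispairing.

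\textbf{Missing boundary components.} You list only the $T$-faces, the $t_j$-faces, and the corners $\partial^{\vp_{l+1}}\widetilde{N}_{\vpb}$. You have overlooked that $\widetilde{N}_{\vpb}$ itself carries \emph{exceptional} boundary from the blow-up: the loci $(\widetilde{\delta}_{\vpb}^i)^{-1}(S(N_0))$ for $1\le i\le r'$. These do not vanish and are not subsumed in your ``corner'' term; in the paper they are exactly what cancels against the $t_j=1$ faces (Lemma~\ref{lem:S canceled by t=00003D1}). Your phrase ``a second chain of cancellations'' for the $t_1=1$ face has no specified partner, and this is the missing partner. Separately, the moduli-theoretic boundary $\partial N_{\vpb}$ splits as $\partial_-\coprod\partial_+$ relative to $f_{\vpb}$; only $\partial_-$ pairs with the $t_j=0$ faces (Lemma~\ref{lem:partial- cancels t=00003D0}), while $\partial_+$ is killed by the orientation-reversing involution $\tau_{\vpb}$ (Lemma~\ref{lem:partial + =00003D 0}). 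You do not mention this split or the involution.

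\textbf{A mispairing.} You claim the $T=1$, $l\ge 1$ faces ``cancel against'' the $T=0$, $r'\ge 1$ faces. They do not cancel each other; each vanishes on its own by a pullback-from-lower-dimension argument. For $T=1$, $A_{\vpb}(1,\vect t)=\id$ is independent of $\vect t$, so $\ii_{\vpb}|_{T=1}$ is pulled back along $\widetilde{N}_{\vpb}\times I^l\to\widetilde{N}_{\vpb}$ and the $I^l$-integral vanishes for $l\ge 1$. For $T=0$ with $r'\ge 1$, Lemma~\ref{lem:A properties}(c) shows (after change of variables by $A_{\vpb}(0,\vect t)$) that the integrand is pulled back along $\widehat{N}_{\vpb}\to\check{N}_{\vpb}$, a strict drop in dimension. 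These are independent zero arguments, not a telescoping pair.

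Finally, your justification of $\int D\ii_{\vpb}=0$ by an ``angular degree'' count is not quite the paper's argument and is underspecified. The paper (Lemma~\ref{lem:int D I psi =00003D0}) instead uses $\Sym(r')$-symmetry to reduce to a single factor, changes variables by $A_{\vpb,1}$, and observes the resulting integrand lies in $\operatorname{im}(\acute{f}_{\vpb}^{[r'+l]\setminus\{1\}})^*$, hence is pulled back from a lower-dimensional space.
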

\begin{proof}
The first equality is immediate (the minus sign appears because the
orientation on $1\in\partial I$ is negative). By Lemma \ref{lem:int D I psi =00003D0}
below, 
\[
0=\sum_{\vpb\in\Psi^{r',l}}\frac{1}{r'!\cdot l!\cdot\boldsymbol{s}\left(\vp_{l}\right)!}\int_{\widetilde{N}_{\vpb}\times I\times I^{l}}D\ii_{\vpb}=\sum_{\vpb\in\Psi^{r',l}}\frac{1}{r'!\cdot l!\cdot\boldsymbol{s}\left(\vp_{l}\right)!}\int_{\partial\left(\widetilde{N}_{\vpb}\times I\times I^{l}\right)}\ii_{\vpb}.
\]
We have
\begin{multline*}
\partial\left(\widetilde{N}_{\vpb}\times I\times I^{l\left(\vpb\right)}\right)=\left(\left(\partial_{-}\widetilde{N}_{\vpb}\coprod\partial_{+}\widetilde{N}_{\vpb}\coprod\coprod_{i=1}^{r'}\left(\widetilde{\delta}_{\vpb}^{i}\right)^{-1}\left(S\left(N_{0}\right)\right)\right)\times I\times I^{l}\right)\coprod\\
\coprod\left(\widetilde{N}_{\vpb}\times\left\{ 0\right\} \times I^{l}\right)\coprod\left(\widetilde{N}_{\vpb}\times\left\{ 1\right\} \times I^{l}\right)\coprod\left(\widetilde{N}_{\vpb}\times I\times\partial\left(I^{l}\right)\right).
\end{multline*}
Here $\partial_{\pm}\widetilde{N}_{\vpb}$ denotes the $\beta_{\vpb}$-inverse
image of the decomposition \linebreak{}
${\partial N_{\vpb}=\partial_{+}^{f_{\vpb}}N_{\vpb}\coprod\partial_{-}^{f_{\vpb}}N_{\vpb}}$
into horizontal and vertical components w.r.t. $f_{\vpb}$. By Lemma
\ref{lem:partial + =00003D 0} we have ${\int_{\partial_{+}\widetilde{N}_{\vpb}\times I\times I^{l}}\ii_{\vpb}=0}$,
and by Lemmas \ref{lem:partial- cancels t=00003D0},\ref{lem:S canceled by t=00003D1}
the contributions of \linebreak{}
${\widetilde{N}_{\vpb}\times I\times\partial\left(I^{l}\right)}$,
${\partial_{-}\widetilde{N}_{\vpb}}$ and ${\coprod_{i=1}^{r'}\left(\widetilde{\delta}_{\vpb}^{i}\right)^{-1}\left(S\left(N_{0}\right)\right)}$
cancel out. 

$\int_{\widetilde{N}_{\vpb}\times\left\{ 1\right\} \times I^{l}}\ii_{\vpb}=0$
unless $l=0$, since $A_{\vpb}\left(T=1,t_{1},...,t_{l}\right)=\id$
is independent of $t_{1},...,t_{l}$ so $\ii_{\vpb}|_{T=1}$ is pulled
back along the projection $\widetilde{N}_{\vpb}\times\left\{ 1\right\} \times I^{l}\to\widetilde{N}_{\vpb}$.
The $l=0$ contributions give the middle expression in (\ref{eq:prop 9}),
without the minus sign.

$\int_{\widetilde{N}_{\vpb}\times\left\{ 0\right\} \times I^{l}}\ii_{\vpb}=0$
unless $r'=0$. Indeed, using Lemma \ref{lem:A properties}(c) we
find that
\[
\int_{\widetilde{N}_{\vpb}\times0\times I^{l}}\ii_{\vpb}=\int_{\widehat{N}_{\vpb}}A_{\vpb}\left(0,\boldsymbol{t}\right)^{*}\ii_{\vpb}
\]
where $\widehat{N}_{\vpb}=A_{\vpb}\left(0,\boldsymbol{t}\right)^{-1}\widetilde{N}_{\vpb}$
sits in following diagram where both squares are cartesian
\[
\xymatrix{\widehat{N}_{\vpb}\ar[r]\ar[d] & N_{\vpb}\ar[d]^{\hat{\delta}_{\vpb}}\ar[r] & \check{N}_{\vpb}\ar[d]^{\check{\delta}_{\vpb}}\\
\widetilde{N}_{0}^{r'}\times F_{\vpb}\ar[r] & N_{0}^{r'}\times F_{\vpb}\ar[r] & N_{0}^{r'}\times\check{F}_{\vpb}
}
\]
$A_{\vpb}\left(0,\boldsymbol{t}\right)^{*}\ii_{\vpb}$ is pulled
back along $\widehat{N}_{\vpb}\times\left\{ 0\right\} \times I^{l}\to\check{N}_{\vpb}\times\left\{ 0\right\} \times I^{l}$.
For $r'>0$, $\dim\check{N}_{\vpb}<\dim\widehat{N}_{\vpb}$. The $r'=0$
contributions give the right expression in (\ref{eq:prop 9}), and
this concludes the proof of the proposition.
\end{proof}
Let $\vpb=\left(\vp_{0},...,\vp_{l+1}\right)$ be a flag of length
$l+1$ and let $\vpb^{\geq1}=\left(\vp_{1},...,\vp_{l+1}\right)$
and $\vp^{\leq l}=\left(\vp_{0},...,\vp_{l}\right)$ be its truncations.
By Lemma \ref{lem:A properties}(b), 
\begin{equation}
\left(i_{\widetilde{N}_{\vpb^{\leq l}}}^{\partial^{\vp_{l+1}}}\right)^{*}\ii_{\vpb^{\leq l}}=\ii_{\vpb}|_{\left\{ t_{l+1}=0\right\} }.\label{eq:psi leq l and t=00003D0}
\end{equation}
Factor $\beta_{\vpb^{\geq1}}$ as
\[
\widetilde{N}_{\vpb^{\geq1}}\xrightarrow{\beta_{\vpb^{\geq1}}^{1}}N_{\psi_{\vpb^{\geq1}}}^{\neq1}\xrightarrow{\beta_{\vpb^{\geq1}}^{\neq1}}N_{\vpb^{\geq1}}
\]
where $\beta_{\vpb^{\geq1}}^{\neq1}=\left(\delta_{\vpb^{\geq1}}^{\neq1}\right)^{-1}\left(\beta_{0}^{r'}\right)$
denotes the partial blow up and $\beta_{\vpb^{\geq1}}^{1}$ is defined
similarly. Set
\[
\ii_{\vpb^{\geq1}}^{\neq1}:=\left(\beta_{\vpb^{\geq1}}^{\neq1}\right)^{*}\left(e^{DA_{\vpb^{\geq1}}^{-1}\left(T,t_{2},...,t_{l+1}\right)^{*}\eta_{\vpb^{\geq1}}}\pi_{\vpb^{\geq1}}^{*}\omega_{\vpb^{\geq1}}\right)\left(\widetilde{\delta}_{\vpb}^{\neq1}\right)^{*}\left(-\theta_{0}\right)^{r'},
\]
so that 
\begin{equation}
\ii_{\vpb^{\geq1}}=\left(-1\right)^{r'}\left(\beta_{\vpb^{\geq1}}^{1}\right)^{*}\ii_{\vpb^{\geq1}}^{\neq1}\cdot\left(\widetilde{\delta}_{\vpb^{\geq1}}^{1}\right)^{*}\left(-\theta_{0}\right).\label{eq:decompose ii}
\end{equation}
There's a map ${\widetilde{N}_{\vpb}\xrightarrow{\partial^{\vpb^{\geq1}}\widetilde{\gamma}_{\vp_{1}}}\widetilde{N}_{\vpb^{\geq1}}^{\neq1}}$
induced from $\partial^{\vpb^{\geq1}}\gamma_{\vp_{1}}$, and using
Lemma \ref{lem:A properties}(b) again, we have
\begin{equation}
\left(\partial^{\vpb^{\geq1}}\widetilde{\gamma}_{\vp_{1}}\right)^{*}\ii_{\vpb^{\geq1}}^{\neq1}=\ii_{\vpb}|_{\left\{ t_{1}=1\right\} }.\label{eq:psi geq 1 and t =00003D 1}
\end{equation}

\begin{lem}
\label{lem:int D I psi =00003D0}We have 
\[
\int_{N_{\vpb}\times I\times I^{l}}D\ii_{\vpb}=0
\]
\end{lem}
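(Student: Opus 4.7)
My plan is to prove the lemma by showing that $D\ii_{\vpb}$ vanishes identically, or at least that its only nonvanishing piece has no component of top De Rham degree on $\widetilde{N}_{\vpb}\times I\times I^l$. Applying the graded Leibniz rule to the product
$$\ii_{\vpb}=\xi_{\vp_0}\cdot\beta_{\vpb}^*(\mathcal{E}\cdot\Omega)\cdot\Theta,$$
where $\mathcal{E}:=e^{DA_{\vpb}^{-1*}\eta_{\vpb}}$, $\Omega:=\pi_{\vpb}^*\omega_{\vpb}$, $\Theta:=\widetilde{\delta}_{\vpb}^*(-\theta_0)^{\boxtimes r'}$, and using that $\beta_{\vpb}^*$ commutes with $D$, reduces $D\ii_{\vpb}$ to at most three terms.

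Two of these terms vanish. First, $D\mathcal{E}=0$: the automorphism $A_{\vpb}$ is $\tb$-equivariantly constructed from the equivariant sections $s_{\vp_l}^i$, $\delta_{\vp_l}^i$ and $\hat{\delta}_{\vp_l}^i$ of Section~\ref{subsec:Splitting-sess}, while $\eta_{\vpb}$ is $\tb$-invariant as a restriction of the extended localizing form $\eta$; hence $A_{\vpb}^{-1*}\eta_{\vpb}$ is $\tb$-invariant, $D^2=-\sum_j\lambda_jL_{\xi_j}$ vanishes on it, and $D(e^{Dq})=e^{Dq}\cdot D(Dq)=0$. Second, $D\Omega=0$: since $\omega=\{\check{\omega}_r\}$ satisfies $D\omega=0$ as an extended form, $D\check{\omega}_{r+r'+l}=0$, and because $\omega_{\vpb}=\gamma_{\vpb}^*\omega_{r+r'+l}=\gamma_{\vpb}^*(\For_{\basic}^{r+r'+l})^*\check{\omega}_{r+r'+l}$ and pullbacks commute with $D$, we get $D\omega_{\vpb}=0$ and thus $D\Omega=0$.

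The remaining term is $\pm\,\xi_{\vp_0}\beta_{\vpb}^*(\mathcal{E}\Omega)\cdot D\Theta$, and this is where the main obstacle lies. Here I would use the defining relations $(i_{\widetilde{L\times L}}^{\partial})^*\Lambda=\theta_{\Delta}+(\pi_\Delta^{S(N_\Delta)})^*\Upsilon|_\Delta$ and $D\Lambda=-\widetilde{\pr}_2^*p_0^L$ to compute $D(-\theta_0)$ explicitly: it equals the $\rr[\vec\lambda]$-valued equivariant Euler $e(N_0)=\lambda_1\cdots\lambda_m$ plus a lower-order correction coming from $\Upsilon|_{p_0}$ (which can be arranged to vanish by a judicious choice of $\Lambda$, or absorbed since integration sees only the top De Rham component). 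After this substitution, $D\Theta$ is a sum of terms in which one factor of $-\theta_0$ (De Rham degree $2m-1$) is replaced by a form of De Rham degree zero, so the integrand $\beta_{\vpb}^*(\mathcal{E}\Omega)\cdot D\Theta$ has top De Rham degree strictly less than $\dim(\widetilde{N}_{\vpb}\times I\times I^l)$ and its integral vanishes. The equivariant Stokes identity $\int_M D\eta=\int_{\partial M}\eta$—valid for $\tb$-invariant $\eta$ because the $\iota_{\xi_j}$-part of $D$ contributes only to components of De Rham degree below the top, which vanish upon integration—then closes the argument and gives $\int D\ii_{\vpb}=0$.
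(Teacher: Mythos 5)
Your identification of the two vanishing pieces is correct and agrees with the paper: $D\bigl(e^{DA_{\vpb}^{-1}(T,\boldsymbol{t})^{*}\eta_{\vpb}}\pi_{\vpb}^{*}\omega_{\vpb}\bigr)=0$ because the argument of the exponential is $\tb$-invariant and $\omega$ is $D$-closed, and $D\theta_{0}$ is a constant. The gap is in how you dispose of the remaining term. You claim that replacing one factor $-\theta_{0}$ (De Rham degree $2m-1$) by the degree-zero constant $D(-\theta_{0})$ forces the product below top De Rham degree. This is false: $e^{DA_{\vpb}^{-1}(T,\boldsymbol{t})^{*}\eta_{\vpb}}$ is an \emph{inhomogeneous} form, $e^{-f}\sum_{i}(dq)^{i}/i!$, with components in every even degree up to the dimension of the total space --- this is precisely why its fiber integral (the Segre form) is nonzero --- so the $2m-1$ missing degrees are readily supplied by higher components of the exponential. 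The example in $\S$\ref{subsec:problems with singular} is there exactly to defeat this kind of naive degree count: an integral whose integrand is ``pulled back from a lower-dimensional space except for a $\theta_{0}$ factor'' is computed there to be nonzero.

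What actually makes the remaining term vanish is structural. After reducing to $i=1$ by $\Sym(r')$-symmetry and changing variables by the factor $A_{\vpb}^{1}(T,\boldsymbol{t})$ of the commuting product decomposition in Lemma \ref{lem:A properties}(a), the integrand with the first $\theta_{0}$ removed lies in $\im\bigl(\acute{f}_{\vpb}^{\left[r'+l\right]\backslash\left\{1\right\}}\bigr)^{*}$, i.e.\ it is pulled back along a forgetful map whose codomain has strictly smaller dimension than its domain; only then does its top-degree component, and hence its integral, vanish. This step uses the carefully engineered fact that $A_{\vpb}$ corrects $\delta_{\vpb}$ so that everything except the $\theta_{0}$ factors factors through the forgetful maps --- it is the heart of the lemma and cannot be replaced by degree bookkeeping. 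Your closing appeal to Stokes' theorem is also misplaced: the lemma is used in Proposition \ref{prop:regularizing} \emph{together with} Stokes to conclude that the boundary contributions sum to zero, so it must be established directly; and if your pointwise degree claim were true you would not need Stokes at all.
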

\begin{proof}
$D\left(e^{DA_{\vpb}^{-1}\left(T,t_{1},...,t_{l}\right)^{*}\eta_{\vpb}}\pi_{\vpb}^{*}\omega_{\vpb}\right)=0$
and $D\theta_{0}$ is a constant. Using $\Sym\left(r'\right)$ symmetry
we reduce to showing 
\[
\int_{N_{\vpb}^{\neq1}\times I\times I^{l}}\ii_{\vpb}^{\neq1}=\int_{N_{\vpb}^{\neq1}\times I\times I^{l}}A_{\vpb}^{1}\left(T,t_{1},...,t_{l}\right)^{*}\ii_{\vpb}^{\neq1}
\]
vanishes. But $A_{\vpb}^{1}\left(T,t_{1},...,t_{l}\right)^{*}\ii_{\vpb}^{\neq1}\in\im\left(\acute{f}_{\vpb}^{\left[r'+l\right]\backslash\left\{ i\right\} }\right)^{*}$
and the codomain of $\acute{f}_{\vpb}^{\left[r'+l\right]\backslash\left\{ i\right\} }$
has lower dimension than its domain, so the integral does vanish.
\end{proof}

\begin{lem}
\label{lem:partial + =00003D 0}We have 
\[
\int_{\partial_{+}\widetilde{N}_{\vpb}\times I\times I^{l}}\ii_{\vpb}=0.
\]
\end{lem}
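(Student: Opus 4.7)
The plan is to exhibit a fibered, orientation-reversing involution of $\partial_{+}\widetilde{N}_{\vpb}$ that preserves the integrand $\ii_{\vpb}$, and conclude vanishing by the standard ``odd involution'' argument. The model is the involution $\tau_{\basic}^{r+r'+l}:\partial_{+}\mm_{\basic}^{r+r'+l}\to\partial_{+}\mm_{\basic}^{r+r'+l}$ introduced in \S\ref{subsec:Resolutions}, which (i) swaps the last two superfluous boundary markings on the wobbly ghost-disc component, (ii) commutes with $\For_{\basic}^{r+r'+l}$ and with the evaluation maps, and (iii) satisfies (\ref{eq:or rev involution}), i.e. reverses the $\jc_{\basic}^{r+r'+l}$-induced orientation.

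First I would pull $\tau$ back along the tower of constructions producing $\widetilde{N}_{\vpb}\times I\times I^{l}$. Concretely: since $\tau$ preserves the $\tb$-fixed locus $\check{F}_{\basic}^{r+r'+l}$ (it only permutes labels) and commutes with the forgetful maps that cut out $F_{\vp_{0}}^{S}$ and its corners $F_{\vpb}$, it induces an involution on $\partial_{+}F_{\vpb}$ and, by differentiation, on the normal bundle $\partial_{+}N_{\vpb}\to\partial_{+}F_{\vpb}$. Because $\tau$ fixes the boundary and interior evaluation maps (it only relabels $\sigma$-markings on a ghost-disc component), it is compatible with $\ed^{\leftarrow}_{\tc(\vp_{0})}$, so it lifts to the blow up $\partial_{+}\widetilde{N}_{\vpb}$. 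Acting trivially on the factor $I\times I^{l}$ gives the desired involution $\widetilde{\tau}$.

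Next I would check $\widetilde{\tau}^{*}\ii_{\vpb}=\ii_{\vpb}$. Each factor of (\ref{eq:resum integrand}) is invariant: $\omega_{\vpb}$ is the pullback of the $\Sym\left(r+r'+l\right)$-invariant, coherent component $\check{\omega}_{r+r'+l}$, and $\tau$ just permutes labels; $\eta_{\vpb}$ is the pullback of $\check{\eta}_{r+r'+l}$, which was constructed in the proof of Proposition~\ref{prop:regular fps and extended localizing exists} to be $E_{\kf}^{r+r'+l}$-invariant (and in particular invariant under the relabeling performed by $\tau$); the operator $A_{\vpb}$ is built from $\zeta_{\vp_{l}},\,s_{\vp_{l}}^{i},\,\hat{\delta}_{\vp_{l}}^{i},\,\delta_{\vp_{l}}^{i}$, all of which were arranged in \S\ref{subsec:Splitting-sess} to be $\tb\times E_{\kf}^{r,r'+l}$-equivariant, hence $\tau$-invariant; and the $\Lambda$ contribution is encapsulated by the scalar $\xi_{\vp_{0}}$ (Lemma~\ref{lem:xi value}), which depends only on the profile $\vp_{0}$ and is manifestly untouched by a relabeling of superfluous markings on the wobbly component. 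Finally, the angular form factor $\widetilde{\delta}_{\vpb}^{\,*}(-\theta_{0})^{r'}$ is pulled back from the $\sstar'_{[r]}$-markings, which are not moved by $\tau$.

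The orientation of $\partial_{+}\widetilde{N}_{\vpb}\times I\times I^{l}$ is, by construction, induced from the $\jc_{\basic}^{r+r'+l}$-orientation of the ambient $\mm_{\basic}^{r+r'+l}$, so (\ref{eq:or rev involution}) gives $\widetilde{\tau}^{*}\operatorname{or}=-\operatorname{or}$. Combining with invariance of the integrand,
\begin{equation*}
\int_{\partial_{+}\widetilde{N}_{\vpb}\times I\times I^{l}}\ii_{\vpb}=-\int_{\partial_{+}\widetilde{N}_{\vpb}\times I\times I^{l}}\widetilde{\tau}^{*}\ii_{\vpb}=-\int_{\partial_{+}\widetilde{N}_{\vpb}\times I\times I^{l}}\ii_{\vpb},
\end{equation*}
which forces the integral to vanish. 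The main obstacle is the bookkeeping in the middle step: one must verify that every auxiliary piece of data ($\check{\eta}_{\bullet}$, $\check{\zeta}_{\bullet}$, $\check{s}_{M_{\bullet}}$, the tubular neighbourhood chosen in Lemma~\ref{lem:N_T->N_0 compatible flows}, and the blow-up datum of Proposition~\ref{prop:blowup is localizable}) is actually invariant under the specific label-swap carried out by $\tau$, and not merely under the larger groups used to define it. This is handled by observing that $\tau$ sits inside $E_{\kf}^{r+r'+l}$ and that all averaging used in the constructions was over groups containing it; one then carries out the same averaging argument on the tubular-neighbourhood data to arrange $\tau$-equivariance throughout.
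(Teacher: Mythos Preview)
Your approach is exactly the paper's: lift $\tau_{\basic}^{r+r'}$ to an orientation-reversing involution $\widetilde{\tau}_{\vpb}$ of $\partial_{+}\widetilde{N}_{\vpb}\times I\times I^{l}$ and check $\widetilde{\tau}_{\vpb}^{*}\ii_{\vpb}=\ii_{\vpb}$. The paper compresses the invariance check into a single observation: the induced $\tau_{\vpb}$ commutes with $f_{\vpb}$ and with $\delta_{\vpb}$, hence lifts to the blow-up with $\beta_{\vpb}\widetilde{\tau}_{\vpb}=\tau_{\vpb}\beta_{\vpb}$ and $\widetilde{\delta}_{\vpb}^{i}\widetilde{\tau}_{\vpb}=\widetilde{\delta}_{\vpb}^{i}$, and these two compatibilities already force invariance of every factor in (\ref{eq:resum integrand}).

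Your justification via ``$\tau$ sits inside $E_{\kf}^{r+r'+l}$'' is not quite right and is in any case unnecessary. The involution $\tau$ is only defined on $\partial_{+}$, and swapping two $\sstar'$-labels on the wobbly component while leaving the corresponding $\sstar''$-labels fixed is not among the generators $\Sym(e_{\bullet})$, $\mv_{x,e_{b}}$ of $E_{\kf}$. The correct and much simpler reason is that $\tau$ commutes with $\For_{\basic}^{r+r'}$ and therefore descends to the \emph{identity} on the checked space; consequently anything pulled back along $f_{\vpb}$ --- in particular $\eta_{\vpb}$, $\omega_{\vpb}$, and the operators $\zeta_{\vp_{l}}$, $s_{\vp_{l}}^{i}$, $\hat{\delta}_{\vp_{l}}^{i}$ entering $A_{\vpb}$ --- is automatically $\tau_{\vpb}$-invariant, with no need to invoke $E_{\kf}$-equivariance or to average the tubular-neighbourhood or blow-up data. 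Commutation with $\delta_{\vpb}$ (the one ingredient that is \emph{not} an $f_{\vpb}$-pullback) holds because all boundary markings on the wobbly ghost disc evaluate to the same point of $L$, so permuting their labels leaves each $\ev_{\sstar'_{i}}$ unchanged.
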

\begin{proof}
The involution $\tau_{\basic}^{r}:\partial_{+}\mm_{\basic}^{r}\to\partial_{+}\mm_{\basic}^{r}$
(cf. (\ref{eq:or rev involution})) induce an orientation-reversing
automorphism $\tau_{\vpb}$ of $\partial_{+}N_{\vpb}\times I\times I^{l}$
which commutes with $f_{\vpb}$ and $\delta_{\vpb}$, and so lifts
to an orientation-reversing automorphism $\widetilde{\tau}_{\vpb}$
of $\partial_{+}\widetilde{N}_{\vpb}\times I\times I^{l}$, such that
$\beta_{\vpb}\widetilde{\tau}_{\vpb}=\tau_{\vpb}\beta_{\vpb}$ and
$\widetilde{\delta}_{\vpb}^{i}\widetilde{\tau}_{\vpb}=\widetilde{\delta}_{\vpb}^{i}$.
These properties of $\tau_{\vpb}$ and $\widetilde{\tau}_{\vpb}$
imply $\widetilde{\tau}_{\vpb}^{*}\ii_{\vpb}=\ii_{\vpb}$, and the
claim follows.
\end{proof}
\begin{lem}
\label{lem:partial- cancels t=00003D0}For $r',l\geq0$ we have 

\begin{multline}
\sum_{\vpb'\in\Psi^{r',l}}\frac{1}{l!\cdot\boldsymbol{s}\left(\vp_{l}\right)!}\int_{\partial_{-}\widetilde{N}_{\vpb'}\times I\times I^{l}}\ii_{\vpb'}=\\
=\sum_{\vpb\in\Psi^{r',l+1}}\frac{1}{\left(l+1\right)!\cdot s\left(\vp_{l+1}\right)!}\sum_{j=1}^{l+1}\int_{\widetilde{N}_{\vpb}\times I\times I^{j-1}\times\left\{ 0\right\} \times I^{l-j}}\ii_{\vpb}.\label{eq:lemma 12-1}
\end{multline}
\end{lem}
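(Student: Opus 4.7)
The plan is to establish a correspondence between boundary components of $\partial_{-}\widetilde{N}_{\vpb'}$ on the LHS and $\{t_{l+1} = 0\}$-faces on the RHS, and then exploit $\Sym(l+1)$-symmetry among the coordinates $(t_1, \ldots, t_{l+1})$ to redistribute a single $j = l+1$ contribution equally across all positions $j \in \{1, \ldots, l+1\}$.

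First I would decompose
\[
\partial_{-}\widetilde{N}_{\vpb'} = \coprod_{\vp_{l+1}:\,\cnt\vp_{l+1}=\vp'_l}\partial^{\vp_{l+1}}\widetilde{N}_{\vpb'},
\]
where each summand is, by equation (\ref{eq:psi leq l and t=00003D0}), a restriction of $\ii_{\vpb'}$ that agrees with $\ii_\vpb|_{\{t_{l+1}=0\}}$ for the unique flag $\vpb \in \Psi^{r', l+1}$ formed by appending $\vp_{l+1}$ to $\vpb'$. The assignment $\vpb \mapsto (\vpb^{\le l}, \vp_{l+1})$ is a bijection between $\Psi^{r', l+1}$ and the set of such pairs. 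Under this bijection the factorial $\boldsymbol{s}(\vp'_l)!$ on the LHS matches $\boldsymbol{s}(\vp_{l+1})!$ on the RHS, because the contraction $\cnt$ of the even-even edge $r+r'+l+1$ affects only the two even vertices that it joins, and the merging of their $(P^j \cup \Sigma^j)$-branch lists corresponds bijectively to the extension data encoded in $\vp_{l+1}$. Tracking the outward-normal orientation at $0 \in I = [0,1]$ against the orientation of $\partial_{-}\widetilde{N}_{\vpb'}$, the LHS is rewritten as
\[
\sum_{\vpb \in \Psi^{r', l+1}} \frac{1}{l!\cdot\boldsymbol{s}(\vp_{l+1})!}\int_{\widetilde{N}_\vpb \times I \times I^l \times \{0\}}\ii_\vpb,
\]
which is exactly the $j = l+1$ contribution on the RHS with the denominator $l!$ instead of $(l+1)!$.

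Second I would invoke the natural $\Sym(l+1)$-action on $\Psi^{r', l+1}$ that permutes the extension edges $\{r+r'+1, \ldots, r+r'+l+1\}$, relabeling the intermediate profiles $\vp_1, \ldots, \vp_l$ accordingly so that Definition \ref{def:labeled tree}(c) is preserved. By Lemma \ref{lem:A properties}(a), $A_\vpb(T, \vect t)$ decomposes as a product of commuting factors, and the factors $A_{\vpb, r'+i}(T, t_i)$ depend symmetrically on the pair $(t_i, i)$; combined with the $\Sym(r+r'+l+1)$-invariance of the extended form $\eta$ and the $\widetilde{\ed}$-symmetry of $\Lambda^{\boxtimes}$, this makes $\ii_\vpb$ equivariant under $\Sym(l+1)$ acting simultaneously on profiles and on $(t_1, \ldots, t_{l+1})$. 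For $\sigma \in \Sym(l+1)$ with $\sigma(j) = l+1$, pulling back by $\sigma$ identifies the $\{t_j = 0\}$-face of $\widetilde{N}_\vpb \times I \times I^{l+1}$ with the $\{t_{l+1} = 0\}$-face of $\widetilde{N}_{\sigma\cdot\vpb} \times I \times I^{l+1}$. Summing over all $l+1$ positions thus produces $(l+1)$ copies of the $j = l+1$ contribution, converting the denominator $\frac{1}{l!}$ to $\frac{1}{(l+1)!}\sum_{j=1}^{l+1}$, which matches the RHS.

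The main obstacle will be the combinatorial bookkeeping, specifically the identification $\boldsymbol{s}(\vp'_l)! = \boldsymbol{s}(\vp_{l+1})!$ under the bijection $\vpb \leftrightarrow (\vpb', \vp_{l+1})$. Since contracting an even-even edge merges two even vertices, one must verify that the merged branch-list structure does not introduce spurious automorphisms and that the splitting of the merged list into its two constituents is uniquely recorded by $\vp_{l+1}$. A secondary concern is confirming that the $\Sym(l+1)$-action lifts compatibly to the blow-ups $\widetilde{N}_\vpb$ and to the integrands $\ii_\vpb$; this should follow from the equivariance of the construction of $A_\vpb$ (Lemma \ref{lem:A properties}(a)) together with the $E_\kf^{r, r'+l+1}$-equivariance of the sections $\zeta_\bullet$ and $s_\bullet$ established in Proposition \ref{prop:splitting construction}.
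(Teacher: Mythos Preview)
Your two-step strategy matches the paper's: pass to the $\{t_{l+1}=0\}$ face via (\ref{eq:psi leq l and t=00003D0}), then invoke $\Sym(l+1)$-equivariance. The gap is precisely where you flag it as ``the main obstacle,'' and your argument there does not go through.

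The identity $\boldsymbol{s}(\vp'_l)! = \boldsymbol{s}(\vp_{l+1})!$ is false in general. Contracting the even-even edge $e_{r+r'+l+1}$ merges two even vertices carrying $s_i$ and $s_j$ branches into a single vertex carrying $s_i+s_j$ branches, so $\boldsymbol{s}(\vp'_l)!/\boldsymbol{s}(\vp_{l+1})! = \binom{s_i+s_j}{s_i}$, not $1$. Your remark that ``the merging of their branch lists corresponds bijectively to the extension data encoded in $\vp_{l+1}$'' does not repair this: at best it says the splitting is recorded in $\vp_{l+1}$, not that only one $\vp_{l+1}$ records a given boundary face. Correspondingly, the coproduct $\coprod_{\vp_{l+1}:\cnt\vp_{l+1}=\vp'_l}\partial^{\vp_{l+1}}\widetilde{N}_{\vpb'}$ need not be a disjoint decomposition of $\partial_{-}\widetilde{N}_{\vpb'}$, since $\partial^{\vp_{l+1}}F_{\vp}$ depends only on the image of $F_{\vp_{l+1}}$ in $\mm_\basic^{r+r'+l+1}$ and hence only on the $\Sym(\vect s(\vp_{l+1}))$-orbit of $\vp_{l+1}$. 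A naive bijection $\vpb\leftrightarrow(\vpb',\vp_{l+1})$ therefore cannot match the weights.

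The paper handles this by mapping both $\coprod_{\vpb'\in\Psi^{r',l}}\partial_{-}F_{\vpb'}$ and $\coprod_{\vpb\in\Psi^{r',l+1}}F_{\vpb}$ \'etale onto the common target $\partial_{-}^{l+1}F_{\basic}^{r+r'}$ and observing that over any point the essential fibers have sizes $\boldsymbol{s}(\vp_l)!$ and $\boldsymbol{s}(\vp_{l+1})!$ respectively. These fiber sizes cancel against the reciprocal weights in the two sums, so both reduce to the same integral over the common target, giving (\ref{eq:full flags}); your $\Sym(l+1)$-equivariance step then finishes the proof exactly as in the paper.
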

\begin{proof}
We can use (\ref{eq:psi leq l and t=00003D0}) to replace the left
hand side of (\ref{eq:lemma 12-1}) by 
\begin{equation}
\sum_{\vpb\in\Psi^{r',l+1}}\frac{1}{l!\cdot\boldsymbol{s}\left(\vp_{l+1}\right)!}\int_{\widetilde{N}_{\vpb}\times I\times I^{l}\times\left\{ 0\right\} }\ii_{\vpb}.\label{eq:full flags}
\end{equation}
To see this, consider the maps
\begin{align*}
\coprod_{\vpb\in\Psi^{r',l}}\partial_{-}F_{\vpb} & \xrightarrow{f}\partial_{-}^{l+1}F_{\basic}^{r+r'}\\
\coprod_{\vpb\in\Psi^{r',l+1}}F_{\vpb} & \xrightarrow{f_{+}}\partial_{-}^{l+1}F_{\basic}^{r+r'}
\end{align*}
For every point $\pt\xrightarrow{p}\partial_{-}^{l+1}F_{\basic}^{r+r'}$,
the essential fibers $f^{-1}\left(p\right)$ and $f_{+}^{-1}\left(p\right)$
are either both empty or have sizes $\boldsymbol{s}\left(\vp_{l}\right)!$
and $\boldsymbol{s}\left(\vp_{l+1}\right)!$, respectively. Finally,
(\ref{eq:full flags}) is equal to the right hand side of (\ref{eq:lemma 12-1})
by $\Sym\left(l+1\right)$-equivariance.
\end{proof}
\begin{lem}
\label{lem:S canceled by t=00003D1}For $r',l\geq0$, we have
\begin{multline}
\sum_{\vpb\in\Psi^{r'+1,l}}\frac{1}{\left(r'+1\right)!\,l!\,\boldsymbol{s}\left(\vp_{l}\right)!}\sum_{i=1}^{r'+1}\int_{\left(\widetilde{\delta}_{\vpb}^{i}\right)^{-1}\left(S\left(N_{0}\right)\right)\times I\times I^{l}}\ii_{\vpb}=\\
=\sum_{\vpb\in\Psi^{r',l+1}}\frac{1}{r'!\,\left(l+1\right)!\,\boldsymbol{s}\left(\vp_{l}\right)!}\sum_{j=1}^{l+1}\int_{\widetilde{N}_{\vpb}\times I\times I^{j-1}\times\left\{ 1\right\} \times I^{l-j}}\ii_{\vpb}\label{eq:eq:lemma 12}
\end{multline}
\end{lem}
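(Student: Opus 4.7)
The plan is to construct a bijection between the indexing pairs and verify the integrands agree, in parallel with the proof of Lemma \ref{lem:partial- cancels t=00003D0}. Given $(\vpb, i) \in \Psi^{r'+1, l} \times [r'+1]$, the top $\vp_l \in \pc_{\basic}^{r, r'+1+l}$ has $r'+1$ even-even edges at positions $r+1, \ldots, r+r'+1$ and $l$ further even-even edges introduced along the flag; the index $i$ selects one of the first group. For each $j \in [l+1]$, transposing via $\Sym(r'+1+l)$ the $i$-th even-even edge with the $(r'+j)$-th produces a new top $\vp'_{l+1}$ in which the distinguished edge now sits at flag position $j$; iterated contraction yields $\vpb' \in \Psi^{r', l+1}$. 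The combinatorial weights then match: the LHS $\sum_i$ gives a factor of $(r'+1)/((r'+1)!\,l!) = 1/(r'!\,l!)$, while the RHS $\sum_j$ gives $(l+1)/(r'!\,(l+1)!) = 1/(r'!\,l!)$, both reducing to $1/(r'!\,l!\,\boldsymbol{s}(\vp_l)!)$ after the $\boldsymbol{s}$-factor.

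Next, the identification of indexing data should lift to a diffeomorphism between the domains of integration and an equality of integrands. By Lemma \ref{lem:A properties}(a), at $t_j=1$ the factor
\[
A_{\vpb', r'+j}(T, 1) = \id + (1-T)\, s_{\vp'_{l+1}}^{r'+j}\, \hat{\delta}_{\vp'_{l+1}}^{r'+j}
\]
should coincide, after the transposition identifying $\vp'_{l+1}$ with $\vp_l$, with $A_{\vpb, i}(T) = \id + (1-T)\, s_{\vp_l}^{i}(\hat{\delta}_{\vp_l}^{i} - \delta_{\vp_l}^{i})$ along the locus $\delta_{\vpb}^i = 0$. That locus is cut out on the LHS precisely by restriction to $(\widetilde{\delta}_{\vpb}^i)^{-1}(S(N_0) \times \{0\})$, i.e.\ the exceptional fiber at infinity of the blowup in the $i$-th coordinate. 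The identity of $A_{\vpb}^{-1}(T, \vect t)^* \eta_{\vpb}$ with the $t_j=1$ specialization of $A_{\vpb'}^{-1}(T, \vect t)^* \eta_{\vpb'}$ should then follow from the pseudo-coherence identity (\ref{eq:pseudo-coherence}) of Proposition \ref{prop:splitting construction}. The $i$-th $-\theta_0$ factor from $(-\theta_0)^{r'+1}$ on the LHS, restricted to $S(N_0)$ and integrated along the now-trivial blowup direction, accounts for the dimension drop and matches the reduced power $(-\theta_0)^{r'}$ on the RHS.

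The main work lies in careful sign bookkeeping. The contributing pieces include: the outward boundary orientation of $S(N_0) \times \{0\} \hookrightarrow \widetilde{N}_0 = S(N_0) \times [0, \infty)$ on the LHS; the orientation of the point $\{1\} \subset \partial I$ on the RHS; the permutation sign from the $\Sym(r'+1+l)$-transposition, interacting with $\xi_{\vp_0}$ through (\ref{eq:xi Sym(r) sign}); and the relabeling of $I$-factors caused by inserting a new flag step at position $j$. I expect these to cancel as intended, with the pseudo-coherence identity of Proposition \ref{prop:splitting construction} doing most of the structural work, since it was engineered precisely to control the boundary-interior mismatch between $\hat{\delta}$ and $\delta$. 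As with Lemma \ref{lem:partial- cancels t=00003D0}, once correspondence and signs are in hand, $\Sym$-equivariance from Section \ref{subsec:Splitting-sess} reduces verification to a single representative on each side.
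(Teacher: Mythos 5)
Your proposal follows essentially the same route as the paper: the correspondence between pairs $(\vpb,i)$ with $\vpb\in\Psi^{r'+1,l}$ and pairs $(\vpb',j)$ with $\vpb'\in\Psi^{r',l+1}$ is exactly the paper's observation that every depth-$(r'+1)$ flag is uniquely a truncation $\vpb^{\geq1}$ of a depth-$r'$, length-$(l+1)$ flag (with $\Sym(r'+1)$ and $\Sym(l+1)$ handling the choices of $i$ and $j$, and the weights matching as you computed), and your identification of the singular stratum $(\widetilde{\delta}_{\vpb}^{i})^{-1}(S(N_0))$ with the $t_j=1$ face via the attaching map landing in $\ker\delta^{i}$ is precisely the content of Lemma \ref{lem:A properties}(b) together with Eqs.\ (\ref{eq:decompose ii}) and (\ref{eq:psi geq 1 and t =00003D 1}), with the fiber integration of the $i$-th $-\theta_0$ factor over $S(N_0)$ accounting for the drop from $(-\theta_0)^{r'+1}$ to $(-\theta_0)^{r'}$. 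The only cosmetic difference is that you attribute the operator identity directly to the pseudo-coherence (\ref{eq:pseudo-coherence}), whereas the paper packages it into Lemma \ref{lem:A properties}(b); the underlying mechanism is the same.
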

\begin{proof}
We can write every $\vpb\in\Psi^{r'+1,l}$ uniquely as $\vpb^{\geq1}$
for $\vpb\in\Psi^{r',l+1}$, and use $\Sym\left(r'+1\right)$ invariance
and Eqs (\ref{eq:decompose ii},\ref{eq:psi geq 1 and t =00003D 1})
to express the left hand side of (\ref{eq:lemma 12}) as 
\[
\sum_{\vpb^{\geq1}\in\Psi^{r'+1,l}}\frac{1}{r'!\,l!\,\boldsymbol{s}\left(\vp_{l}\right)!}\int_{\left(\widetilde{\delta}_{\vpb^{\geq1}}^{1}\right)^{-1}\left(S\left(N_{0}\right)\right)\times I\times I^{l}}\left(-1\right)^{r'}\left(\beta_{\vpb^{\geq1}}^{1}\right)^{*}\ii_{\vpb^{\geq1}}^{\neq1}\cdot\left(\widetilde{\delta}_{\vpb^{\geq1}}^{1}\right)^{*}\left(-\theta_{0}\right)=
\]

\[
=\sum_{\vpb^{\geq1}\in\Psi^{r'+1,l}}\frac{1}{r'!\,l!\,\boldsymbol{s}\left(\vp_{l}\right)!}\int_{N_{\vpb}^{\neq1}\times I\times I^{l}}\left(\partial^{\vpb^{\geq1}}\widetilde{\gamma}_{\vp_{1}}\right)^{*}\ii_{\vpb^{\geq1}}^{\neq1}=\sum_{\vpb^{\geq1}\in\Psi^{r'+1,l}}\frac{1}{r'!}\frac{1}{l!}\int_{N_{\vpb}\times I\times I^{l}\times\left\{ 1\right\} }\ii_{\vpb}
\]
which by $\Sym\left(l\right)$-equivariance is equal to the right
hand side of (\ref{eq:lemma 12}).
\end{proof}

We turn now to resumming the right hand side of (\ref{eq:prop 9}).
Consider $\vp_{+}\in\pc_{\basic}^{r,1}$ with $\cnt\vp_{+}=\underline{\vp}$.
Since $\left(\id_{N_{0}}\times\For_{\vp_{+}}\right)\hat{\delta}_{\vp_{+}}=\check{\delta}_{\vp_{+}}f_{\vp_{+}}$
we get an induced cartesian map $\partial^{\vp_{+}}N_{\underline{\vp}}\xrightarrow{h_{\vp_{+}}}\ker\check{\delta}_{\vp_{+}}$
lying over $\For_{\vp_{+}}$, sitting in a map of short exact sequences

\[
\xymatrix{0\ar[r] & \partial^{\vp_{+}}N_{\underline{\vp}}\ar[rr]^{\hat{\gamma}_{\vp_{+}}=A^{-1}\gamma_{\vp_{+}}}\ar[d]_{h_{\vp_{+}}} &  & N_{\vp_{+}}\ar[rr]^{\hat{\delta}_{\vp_{+}}=\delta_{\vp_{+}}A}\ar[d]_{f_{\vp_{+}}} &  & N_{0}\times F_{\vp_{+}}\ar[r]\ar[d]^{\id\times\For_{\vp_{+}}} & 0\\
0\ar[r] & \ker\check{\delta}_{\vp_{+}}\ar[rr] &  & \check{N}_{\vp_{+}}\ar[rr] &  & N_{0}\times\check{F}_{\vp_{+}}\ar[r] & 0
}
.
\]
where $A=A_{\left(\vp_{+}\right)}\left(0\right)$. We have $\zeta_{\vp_{+}}\hat{\gamma}_{\vp_{+}}=\gamma_{M_{\vp_{+}}}\partial^{\vp_{+}}\zeta_{\underline{\vp}}$
and $f_{\vp_{+}}\zeta_{\vp_{+}}=\check{\zeta}_{\vp_{+}}f_{\vp_{+}}$
(in contrast to $\S$\ref{subsubsec:no pulled back splitting}) so
\begin{equation}
h_{\vp_{+}}=h_{M_{\vp_{+}}}\oplus\check{\gamma}_{S_{\vp_{+}}},\label{eq:bconds sum}
\end{equation}
for $h_{M_{\vp_{+}}},\check{\gamma}_{S_{\vp_{+}}}$ as in (\ref{eq:can theta_M})
and (\ref{eq:can theta_S}).
\begin{prop}
\label{prop:resumming}We have
\begin{equation}
\sum_{\vpb\in\Psi^{0,l}}\frac{\xi_{\underline{\vp}}}{l!\,\,\boldsymbol{s}\left(\vp_{l}\right)!}\int_{N_{\vpb}\times I^{l}}e^{DA_{\vpb}^{-1}\left(0,t_{1},...,t_{l}\right)^{*}\eta_{\vpb}}\pi_{\vpb}^{*}\omega_{\vpb}=\frac{\xi_{\underline{\vp}}}{\boldsymbol{s}\left(\underline{\vp}\right)}\int_{F_{\underline{\vp}}}e_{M}^{-1}\cdot e_{S}^{-1}\cdot\check{\omega}_{r}\label{eq:prop 14}
\end{equation}
where $e_{M}$, $e_{S}$ are canonical Euler forms for $M_{\underline{\vp}},S_{\underline{\vp}}$.
\end{prop}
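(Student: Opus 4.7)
The plan is to interpret the entire sum on the left-hand side of (\ref{eq:prop 14}) as a single integral over a ``corner-filled'' bundle, and then to evaluate it by fiber integration against the splitting constructed in $\S$\ref{subsec:Splitting-sess}. First I would glue the pieces $N_{\vpb}\times I^l$, as $\vpb=(\underline{\vp},\vp_1,\ldots,\vp_l)$ ranges over $\Psi^{0,l}$ and $l\geq 0$, into a single orbifold
\begin{equation*}
P_{N_{\underline{\vp}}} \;\longrightarrow\; P_{F_{\underline{\vp}}} \;:=\; \bigcup_{l\geq 0}\partial^{l}F_{\underline{\vp}}\times_{\Sym(l)}I^{l}.
\end{equation*}
The gluing identifies the $\{t_{l+1}=0\}$ face of $N_{\vpb}\times I^{l+1}$ with the corner $\partial^{\vp_{l+1}}N_{\vpb^{\leq l}}\times I^{l}$ of $N_{\vpb^{\leq l}}\times I^{l}$ using Lemma~\ref{lem:A properties}(b), and the orbit-counting argument in the proof of Lemma~\ref{lem:partial- cancels t=00003D0} converts the weight $\tfrac{1}{l!\,\boldsymbol{s}(\vp_l)!}$ into $\tfrac{1}{\boldsymbol{s}(\underline{\vp})!}$ after gluing. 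By Lemma~\ref{lem:A properties}(b) again, the integrands $e^{DA_{\vpb}^{-1}(0,\vec{t})^{*}\eta_{\vpb}}\,\pi_{\vpb}^{*}\omega_{\vpb}$ descend to a well-defined equivariant form on $P_{N_{\underline{\vp}}}$, and the left-hand side becomes $\tfrac{\xi_{\underline{\vp}}}{\boldsymbol{s}(\underline{\vp})!}\int_{P_{N_{\underline{\vp}}}}e^{D\eta^{P}}\omega^{P}$.

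\textbf{Factoring through the splitting.} Next I would exploit Proposition~\ref{prop:splitting construction} to decompose the fiber of $P_{N_{\underline{\vp}}}\to F_{\underline{\vp}}$ as $M_{\underline{\vp}}\oplus S_{\underline{\vp}}$, where the $I^{l}$ factor parametrizes the $S$-directions corresponding to the smoothings added along the flag $\vpb$. The key observation is that $A_{\vpb}(0,\vec{t})$ acts as the identity on $M$ and by an invertible homothety depending on $\vec{t}$ along $S$; the modified map $\hat{\delta}_{\vp_{l}}=\delta_{M_{\vp_{l}}}\circ\zeta_{\vp_{l}}$ factors through the forgetful map $f_{\vp_{l}}$ precisely because $\zeta_{\vp_l}$ is a section of $\mu$ covering the modified-coherence (\ref{eq:pseudo-coherence}). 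Consequently the quadratic form $\eta^{P}$ restricted to each fiber splits as $\eta^{M}+\eta^{S}$; Gaussian fiber integration along $M$ produces the equivariant Segre form $\sigma_{M}=\pi_{M*}(e^{D\eta^{M}})$, and the cubical integration along the $I^{l}$-corners of $P_{N_{\underline{\vp}}}$, combined with the Gaussian along the $S$-directions, produces the equivariant Segre form $\sigma_{S}$ associated with an angular form on $S(S_{\underline{\vp}})$.

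\textbf{Canonicality and conclusion.} The boundary condition (\ref{eq:bconds sum}), $h_{\vp_{+}}=h_{M_{\vp_{+}}}\oplus\check{\gamma}_{S_{\vp_{+}}}$, guarantees that the resulting angular forms satisfy (\ref{eq:can theta_M}) and (\ref{eq:can theta_S}), so $\sigma_{M}=e_{M}^{-1}+D\epsilon_{M}$ and $\sigma_{S}=e_{S}^{-1}+D\epsilon_{S}$ with $\epsilon_M,\epsilon_S$ pulled back from the deeper strata via Lemma~\ref{lem:euler*segre =00003D 1}. Since $D\omega=0$ and the boundary contributions of $\epsilon_{M},\epsilon_{S}$ have already been absorbed into the neighbouring flag contributions, Stokes' theorem annihilates the exact corrections and the formula (\ref{eq:prop 14}) follows.

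\textbf{Main obstacle.} The hard part will be controlling the combinatorics of the gluing and verifying that the sum over flags with the weights $\tfrac{1}{l!\,\boldsymbol{s}(\vp_{l})!}$ together with the cubical $I^{l}$-integrations reproduces \emph{precisely} the Segre form $\sigma_{S}$ of $S_{\underline{\vp}}$ (and not some other cohomologous representative whose boundary behavior fails to match (\ref{eq:can theta_S})). In particular, one must match the join construction of the canonical angular form on $S(S_{\underline{\vp}})=S(\bigoplus_{i,j}\mathbb{L}_{\star_{i,j}}^{\vee}\boxtimes\mathbb{L}_{\star'_{i,j}}^{\vee})$ from $\S$\ref{subsec:Equivariant-open-Gromov-Witten} against the iterated cubical gluing, and verify that the corner-filling of $P_{F_{\underline{\vp}}}$ correctly corresponds to the blow-up $S(S_{\underline{\vp}})\times[0,\infty)\to S_{\underline{\vp}}$ used in Lemma~\ref{lem:euler*segre =00003D 1} to relate $\sigma_S$ and $e_S$.
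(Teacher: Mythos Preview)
Your strategy diverges from the paper's in a way that introduces a real gap. The paper does \emph{not} attempt to show that the glued integrand on $P_{N_{\underline{\vp}}}$ itself factors along $M_{\underline{\vp}}\oplus S_{\underline{\vp}}$. Instead it introduces an auxiliary split quadratic localizing form $q=q_{M}\oplus q_{S}$ (constructed to satisfy the boundary conditions $q_{M}|_{\partial^{\vp_{+}}}\in\im h_{M_{\vp_{+}}}^{*}$ and $q_{S}|_{\partial^{\vp_{+}}}\in\im\check{\gamma}_{S_{\vp_{+}}}^{*}$ by the same inductive scheme as Proposition~\ref{prop:splitting construction}), and proves a separate Lemma (``paddings collapse'') showing
\[
\sum_{l\geq0}\sum_{\vpb\in\Psi^{0,l}}\frac{1}{l!\,\boldsymbol{s}(\vp_{l})!}\int_{N_{\vpb}\times I^{l}}e^{DA_{\vpb}^{-1}(0,\vec{t})^{*}\eta_{\vpb}}\pi_{\vpb}^{*}\omega_{\vpb}
=\frac{1}{\boldsymbol{s}(\underline{\vp})!}\int_{N_{\underline{\vp}}}e^{Dq}\pi_{\underline{\vp}}^{*}\check{\omega}_{r}.
\]
That lemma is proved by interpolating $(1-u)A_{\vpb}^{-1}(0,\vec{t})^{*}\eta_{\vpb}+u\,q$ over an extra interval $I_{u}$ and running the same Stokes/boundary-cancellation bookkeeping as in Proposition~\ref{prop:regularizing}; the point is that the $t_{j}=1$ faces vanish because there the integrand is pulled back along $h_{\vp_{+}}$, which drops dimension. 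Once one lands on the single Gaussian $\int_{N_{\underline{\vp}}}e^{Dq}$, the projection formula and Lemma~\ref{lem:euler*segre =00003D 1} give $\int_{F_{\underline{\vp}}}e_{M}^{-1}e_{S}^{-1}\check{\omega}_{r}$ directly.

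The gap in your plan is the assertion that ``$A_{\vpb}(0,\vec{t})$ acts as the identity on $M$ and by an invertible homothety along $S$'' and hence $\eta^{P}$ splits as $\eta^{M}+\eta^{S}$. The operator $A_{\vpb,j}(0,t_{j})=\id+t_{j}s_{\vp_{l}}^{j}\hat{\delta}_{\vp_{l}}^{j}$ is a shear in the $N_{0}$-direction, not a homothety along $S_{\underline{\vp}}$, and there is no reason the original localizing form $\eta_{\vpb}=\check{\eta}_{r}^{(2)}$ respects the $\check{\zeta}$-splitting at all. So your fiber integration step, where the $I^{l}$-cubical integration together with the $S$-Gaussian is supposed to produce exactly $\sigma_{S}$, has no mechanism behind it; you correctly flag this as the main obstacle, and the paper's resolution is precisely to sidestep it by deforming to a form $q$ that \emph{is} split. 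In the paper the $I^{l}$-variables are absorbed as homotopy parameters in a Stokes argument and never contribute to $\sigma_{S}$.
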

\begin{proof}
Using a similar argument as in the proof of Proposition \ref{prop:splitting construction},
construct quadratic localizing forms $q_{M}$ and $q_{S}$ for $M_{\underline{\vp}}$
and $S_{\underline{\vp}}$, respectively, with $q_{M}|_{\partial^{\vp_{+}}M_{\underline{\vp}}}\in\im\left(h_{M_{\vp_{+}}}\right)$
and $q_{S}|_{\partial^{\vp_{+}}S_{\underline{\vp}}}\in\im\left(\check{\gamma}_{S_{\vp_{+}}}\right)$
for every $\vp_{+}\in\pc_{\basic}^{r,1}$ s.t. $\cnt\vp_{+}=\underline{\vp}$.
By (\ref{eq:bconds sum}) $q=q_{M}\oplus q_{S}$ is a quadratic localizing
form for $N_{\underline{\vp}}=M_{\underline{\vp}}\oplus S_{\underline{\vp}}$
which satisfies the hypothesis of Lemma \ref{lem:paddings collapse}
below. On the other hand, we have 
\begin{multline*}
\int_{N_{\underline{\vp}}}e^{Dq}\pi_{\underline{\vp}}^{*}\check{\omega}_{r}=\int_{F_{\underline{\vp}}}\left(\pi_{M_{\underline{\vp}}}\right)_{*}\left(e^{Dq_{M}}\right)\cdot\left(\pi_{S_{\underline{\vp}}}\right)_{*}\left(e^{Dq_{S}}\right)\check{\omega}_{r}=\int_{F_{\underline{\vp}}}e_{M}^{-1}\cdot e_{S}^{-1}\cdot\check{\omega}_{r},
\end{multline*}
where the first equality uses the projection formula and the second
uses Lemma \ref{lem:euler*segre =00003D 1}. This completes the proof.
\end{proof}
\begin{lem}
\label{lem:paddings collapse}We have 
\begin{equation}
\sum_{\vpb\in\Psi^{0,l}}\frac{1}{l!\,\boldsymbol{s}\left(\vp_{l}\right)!}\int_{N_{\vpb}\times I^{l}}e^{DA_{\vpb}^{-1}\left(0,t_{1},...,t_{l}\right)^{*}\eta_{\vpb}}\pi_{\vpb}^{*}\omega_{\vpb}=\frac{1}{\boldsymbol{s}\left(\underline{\vp}\right)!}\int_{N_{\underline{\vp}}}e^{Dq}\pi_{\underline{\vp}}^{*}\check{\omega}_{r}\label{eq:paddings collapse}
\end{equation}
where $q$ is any quadratic localizing form on $N_{\underline{\vp}}$
such that $q|_{\partial^{\vp_{+}}N_{\underline{\vp}}}\in\im\left(h_{\vp_{+}}^{*}\right)$
for every $\vp_{+}$ with $\cnt\vp_{1}=\underline{\vp}$.
\end{lem}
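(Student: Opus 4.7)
\medskip

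\textbf{Proof plan for Lemma \ref{lem:paddings collapse}.} The plan is to recognize both sides as Segre-type integrals attached to $N_{\underline{\vp}}$, and to pass from one to the other by constructing a specific quadratic localizing form $q$ that encodes the corrections produced by the isotopies $A_\vpb$. First I would establish that the right-hand side is independent of the choice of quadratic localizing form $q$ satisfying the hypotheses. Given two such forms $q, q'$, the interpolating primitive $P_2(q,q')$ constructed in $\S$\ref{subsec:Equivariant-Segre-forms.} is rapidly decaying, so $\pi_{\underline{\vp} *}\!\left(P_2(q,q')\right)$ is a well-defined form $\epsilon$ on $F_{\underline{\vp}}$, and on each boundary stratum $\partial^{\vp_+}\!F_{\underline{\vp}}$, the hypothesis $q,q'|_{\partial^{\vp_+}N}\in\im(h_{\vp_+}^*)$ forces $\epsilon|_{\partial^{\vp_+}}$ to be pulled back along $\For_{\vp_+}$, whose target has strictly smaller dimension; hence the boundary contribution in Stokes vanishes and $\int_{N_{\underline{\vp}}}e^{Dq}\pi^*\omega$ does not depend on $q$. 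This reduces the problem to exhibiting one convenient $q$ for which the identity can be verified by a direct Stokes computation.

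To build such a $q$, I would choose, for each stratum $\partial^\vpb N_{\underline{\vp}}$ of $N_{\underline{\vp}}$, a collar neighborhood parameterized by a smooth cutoff $\rho_\vpb\in[0,1]$ supported near $\partial^\vpb N_{\underline{\vp}}$, and define $q$ by transporting $\eta_{\underline{\vp}}$ through the isotopy: in the collar of $\partial^\vpb$ we pull back $\eta_\vpb$ via $A_\vpb(0,\rho_{\vp_1}\!,\dots,\rho_{\vp_l})^{-1}$. The multiplicative decomposition $A_\vpb=\prod_{j}A_{\vpb,j}$ from Lemma~\ref{lem:A properties}(a), together with the compatibility between truncations expressed in Lemma~\ref{lem:A properties}(b), guarantees that modifications associated to nested strata commute, so the local pieces glue into a globally defined $\tb$-invariant quadratic form. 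At a depth-one corner $\partial^{\vp_+}N_{\underline{\vp}}$ we have $\rho_{\vp_+}=1$, and then $\hat{\gamma}_{\vp_+}=A^{-1}\gamma_{\vp_+}$ factors through the subspace $\ker\hat{\delta}_{\vp_+}\xrightarrow{\sim}\ker\check{\delta}_{\vp_+}$ via $h_{\vp_+}$; the decomposition (\ref{eq:bconds sum}) then implies that $q|_{\partial^{\vp_+}N_{\underline{\vp}}}\in\im(h_{\vp_+}^*)$ as required.

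Having built $q$, I would compute $\int_{N_{\underline{\vp}}}e^{Dq}\pi^*\omega$ by expanding the cutoffs and peeling them off one stratum at a time. Writing $q=\eta_{\underline{\vp}}$ away from the collars and using the primitive formula $e^{Dq}-e^{D\eta_{\underline{\vp}}}=DP_2(\eta_{\underline{\vp}},q)$, Stokes converts the correction on the depth-one collar of $\partial^{\vp_+}N_{\underline{\vp}}$ into an integral of $A_{(\vp_0,\vp_+)}^{-1}(0,t_1)^*\eta_{(\vp_0,\vp_+)}$ over $\partial^{\vp_+}N_{\underline{\vp}}\times I$, matching the $l=1$ term on the LHS of (\ref{eq:paddings collapse}). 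Iterating on deeper corners, each additional stratum $\partial^\vpb$ of length $l$ produces exactly the summand indexed by $\vpb\in\Psi^{0,l}$ on the LHS, with the combinatorial weights $\tfrac{1}{l!\,\boldsymbol{s}(\vp_l)!}$ arising from the $\Sym(l)\times\Sym(\boldsymbol{s})$ symmetry of the cover $\coprod_\vpb\partial^\vpb F\to\partial^l F$.

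The main obstacle I expect is verifying that the iterative collar modifications can be carried out coherently at corners of arbitrary codimension, and that the telescoping Stokes calculation produces precisely the right combinatorial factors without cross-terms between nested collars. The two properties in Lemma~\ref{lem:A properties}(a)(b) are designed exactly for this: (a) gives commuting elementary pieces $A_{\vpb,j}$ so that collars associated to different edges of a flag may be cut off independently, and (b) ensures the restriction identities at $t_l=0$ (matching shorter flags inside longer ones) and at $t_1=1$ (matching the boundary passage $\gamma_{\vp_1}$), which are precisely the cancellations already exploited in Lemmas \ref{lem:partial- cancels t=00003D0}--\ref{lem:S canceled by t=00003D1}. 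Once these compatibilities are in place, the identification of the telescoping sum with the LHS is a straightforward bookkeeping of essential fibers of the maps $\coprod_{\vpb}F_\vpb\to\partial^l F_{\underline{\vp}}^{r+r'}$ of sizes $\boldsymbol{s}(\vp_l)!$.
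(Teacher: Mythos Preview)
Your approach is genuinely different from the paper's, and Step~3 contains a gap.

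The paper does not construct a specific $q$ from collars at all. Instead it fixes \emph{any} $q$ satisfying the boundary hypothesis and introduces one more interpolation parameter $u\in I$: on $N_{\vpb}\times I_u\times I^l$ it considers the form
\[
e^{D\bigl[(1-u)\,A_{\vpb}^{-1}(0,t_1,\dots,t_l)^*\eta_{\vpb}\;+\;u\,(i_{N_{\underline{\vp}}}^{\partial^{\vp_l}})^*q\bigr]}\,\pi_{\vpb}^*\omega_{\vpb},
\]
applies Stokes' theorem, and reads off the two sides of \eqref{eq:paddings collapse} at $u=0$ and $u=1$. The faces $t_j=0$ cancel against $\partial_-N_{\vpb}$ exactly as in Lemma~\ref{lem:partial- cancels t=00003D0}; the faces $t_j=1$ vanish because at $t_j=1$ both $A_{\vpb}^{-1}(0,\dots,1,\dots)^*\eta_{\vpb}$ and $(i^{\partial})^*q$ lie in $\im(h_{\vp_+}^*)$, hence so does any convex combination, and $h_{\vp_+}$ decreases total dimension; the $u=1$ face is independent of $\boldsymbol t$, so only $l=0$ survives. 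No collar construction, no iterative peeling, no $P_2$ at all.

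Your Step~1 is sound, and the paper's argument implicitly contains it. The problems are in Steps~2 and~3. In Step~2 the operators $A_{\vpb}$ live on the corner $N_{\vpb}=\partial^{\vpb}N_{\underline{\vp}}$, not on a collar of it; extending them inward requires extending $s_{\vp_l}^{j}$ and $\hat\delta_{\vp_l}^{j}$ off the boundary, which is extra work you have not addressed and which Lemma~\ref{lem:A properties} does not provide. More seriously, in Step~3 your sentence ``Stokes converts the correction on the depth-one collar \dots\ into an integral of $A^{-1}(0,t_1)^*\eta$ over $\partial^{\vp_+}N_{\underline{\vp}}\times I$'' does not follow from the primitive identity $e^{Dq}-e^{D\eta}=DP_2(\eta,q)$. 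Stokes gives you $\int_{\partial N}P_2(\eta,q)\,\omega$, which is a boundary integral of the specific form $P_2$, not an integral over $\partial N\times I$ of the family $e^{DA^{-1}(0,t)^*\eta}$. Converting one into the other is precisely an interpolation-in-$t$ argument, which is what the paper's extra parameter $u$ accomplishes in one stroke. In effect, to close your Step~3 you would have to reintroduce the paper's interpolation anyway, at which point the collar construction of $q$ becomes superfluous.
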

\begin{proof}
Consider 
\[
0=\sum_{\vpb\in\Psi^{0,l}}\frac{1}{l!\,\boldsymbol{s}\left(\vp_{l}\right)!}\int_{N_{\vpb}\times I_{u}\times I_{\left(t_{1},...,t_{l}\right)}^{l}}D\left[e^{D\left[\left(1-u\right)A_{\vpb}^{-1}\left(0,t_{1},...,t_{l}\right)^{*}\eta_{\vpb}+u\left(i_{N_{\underline{\vect\phi}}}^{\partial^{\vp_{l}}}\right)^{*}q\right]}\pi_{\vpb}^{*}\omega_{\vpb}\right]
\]

\[
=\sum_{\vpb\in\Psi^{0,l}}\frac{1}{l!\,\boldsymbol{s}\left(\vp_{l}\right)!}\int_{\partial\left(N_{\vpb}\times I_{u}\times I_{\left(t_{1},...,t_{l}\right)}^{l}\right)}e^{D\left[\left(1-u\right)A_{\vpb}^{-1}\left(0,t_{1},...,t_{l}\right)^{*}\eta_{\vpb}+u\left(i_{N_{\underline{\vect\phi}}}^{\partial^{\vp_{l}}}\right)^{*}q\right]}\pi_{\vpb}^{*}\omega_{\vpb}
\]
Write 
\begin{align*}
\partial\left(N_{\vpb}\times I_{u}\times I_{\left(t_{1},...,t_{l}\right)}^{l}\right)= & \left(\left(\partial_{+}N_{\vpb}\coprod\partial_{-}N_{\vpb}\right)\times I^{l+1}\right)\coprod\\
\coprod & N_{\vpb}\times\left\{ 0\right\} \times I^{l}\coprod N_{\vpb}\times\left\{ 1\right\} \times I^{l}\\
\coprod & N_{\vpb}\times I\times\partial I^{l}
\end{align*}
as in the proof of Proposition \ref{prop:regularizing}, the contribution
of $\partial_{+}N_{\vpb}$ vanishes and the contribution of $\coprod_{\vpb}\partial_{-}N_{\vpb}$
cancels with the contribution of 
\[
\coprod_{\vpb}\left(N_{\vpb}\times I\times\coprod_{j=1}^{l}\left(I^{j-1}\times\left\{ 0\right\} \times I^{l-j}\right)\right).
\]
The contribution of 
\[
\coprod_{\vpb=\left(\underline{\vp},\vp_{+}\right)\in\Psi^{0,1}}\left(N_{\vpb}\times I\times\left\{ 1\right\} \right).
\]
vanishes, since the integrand is pulled back along $h_{\vp_{+}}$,
which decreases the dimension of the total space. Similarly, the contribution
of $\coprod_{\vpb\in\Psi^{0,l}}\left(N_{\vpb}\times I\times I^{j-1}\times\left\{ 1\right\} \times I^{l-j}\right)$
vanishes. 

The contribution of $\coprod_{\vpb}N_{\vpb}\times0\times I^{l}$ gives
the left hand side of (\ref{eq:paddings collapse}).

The contribution of $\coprod_{\vpb\in\Psi^{0,l}}N_{\psi}\times\left\{ 1\right\} \times I^{l}$
vanishes if $l>0$ since the integrand is independent of $t_{1},...,t_{l}$.
If $l=0$ the contribution is 
\[
-\frac{1}{\boldsymbol{s}\left(\underline{\vp}\right)!}\int_{N_{\underline{\vp}}}e^{Dq}\pi_{\underline{\vp}}^{*}\check{\omega}_{r}=-\int_{F_{\underline{\vp}}}\sigma\cdot\check{\omega}_{r}.
\]
The claim immediately follows.
\end{proof}

\begin{proof}
[Proof of Proposition \ref{thm:resummation}]This follows immediately
from Propositions \ref{prop:regularizing} and \ref{prop:resumming}.
\end{proof}

\section{Appendix: orbifolds with corners}

\subsection{Review of notions from \cite{equiv-OGW-invts}}

We begin by a review of the appendix of \cite{equiv-OGW-invts}, where
more details can be found. Let $\manc$ denote the category of manifolds
with corners\footnote{by \emph{corners} we always mean ``ordinary'' corners, modeled on
$[0,\infty)^{k}\times\rr^{n-k}$; though we base our discussion on
\cite{joyce-generalized}, we will \emph{not} use manifolds with generalized
corners in this paper.} with smooth maps in the sense of \cite{joyce-generalized}. Let $\mathbf{PEG}$
be the category of proper étale groupoids in $\manc$.  A morphism
$\left(X_{1}\rightrightarrows X_{0}\right)\xrightarrow{R}\left(Y_{1}\rightrightarrows Y_{0}\right)$
in $\mathbf{PEG}$ is a \emph{refinement }if it is fully faithful
and essentially surjective, and the maps $R_{0}:X_{0}\to Y_{0},\;R_{1}:X_{1}\to Y_{1}$
of the object and morphisms spaces are étale. The category $\mathbf{Orb}$
of orbifolds with corners is the 2-localization of $\mathbf{PEG}$
by refinements. We usually denote orbifolds by caligraphic letters
$\xx,\yy,\mm$... They are given by proper étale groupoids. Maps $\xx\to\yy$
are given by fractions $F_{\bullet}|R_{\bullet}$ with $X_{\bullet}\xleftarrow{R_{\bullet}}X_{\bullet}'$
a refinement and $X'_{\bullet}\xrightarrow{F_{\bullet}}Y_{\bullet}$
a smooth functor. We refer the reader to \cite{pronk} for further
details, including the definition of the 2-cells, the composition
operations, etc. We say $f,f':\xx\to\yy$ are \emph{homotopic }if
there exists a 2-cell $f\Rightarrow f'$. 
\begin{defn}
\label{def:properties of orbi-maps}We say $f$ is \emph{weakly smooth,
smooth, strongly-smooth, étale, interior, b-normal, submersive, b-submersive,
simple }or\emph{ perfectly simple }if $F_{0}$ has the corresponding
property as a map of manifolds with corners. It is easy to check
that these properties are preserved by 2-cells (and thus are properties
of the homotopy class of $f$). The map $f$ is called a\emph{ b-fibration}
if it is b-normal and b-submersive (cf. \cite[Definition 4.3]{joyce-generalized}).

For $i=1,2$ let $f^{i}:\xx^{i}\to\yy$ be a strongly smooth map given
by the fraction $F^{i}|R^{i}$. $f^{1},f^{2}$ are said to be \emph{transverse
}(respectively, \emph{strongly transverse}) if $F_{0}^{1},F_{0}^{2}$
are transverse (resp. strongly transverse) in the sense of \cite[Definitions 6.1, 6.10]{joyce-fibered}.
\end{defn}
A map\emph{ }$f:\xx\to\yy$ is called a \emph{diffeomorphism} if it
is an equivalence in $\mathbf{Orb}$.

A manifold with corners $T$ defines an orbifold $\underline{T}$
given by the trivial groupoid $T\rightrightarrows T$. This gives
a full and faithful functor $\manc\to\text{\textbf{Orb}}$ and we
say a stack $\xx$ ``is a manifold'' if it lies in the essential
image of this functor. For a fixed orbifold $\xx=X_{1}\rightrightarrows X_{0}$
the category of arrows $\underline{T}\to\xx$ forms a geometric stack,
where the map $\underline{X_{0}}\to\xx$ is an \emph{atlas} for this
stack:
\begin{defn}
Let $\xx$ be an orbifold with corners. \emph{An atlas} \emph{for
$\xx$ }is a map $p:\underline{M}\to\xx$ where $M$ is some manifold
with corners, such that for any other map $f:\underline{N}\to\xx$
from a manifold with corners, $\underline{M}\times_{\xx}\underline{N}$
is a manifold with corners and the projection $\underline{M}\times_{\xx}\underline{N}\overset{p'}{\longrightarrow}\underline{N}$
is surjective and étale.
\end{defn}
Conversely, any such atlas defines an orbifold equivalent to $\xx$. 

If $G$ is a Lie group we define the 2-category of $G$-orbifolds,
$G$-equivariant maps, and $G$-2-cells as in \cite[Defnition 2.1]{stacky-action}.
We define the stack of fixed points $\xx^{G}$ and the quotient stack
$\xx_{G}=\xx/G$ as in \cite[Definition 2.3]{stacky-action} (sometimes
we refer to these as \emph{the homotopy fixed points} and \emph{the
stacky quotient}, respectively). If $\mm$ is a $G$-orbifold, we
denote the action of $g\in G$ by $g.^{\mm}:\mm\to\mm$ or just $g.:\mm\to\mm$.

To make the paper readable, we abuse notation and refer to maps which
are canonically isomorphic as equal. For example we may write $g.h.=\left(gh\right).$
The same goes for orbifolds which are canonically equivalent (that
is, with a given equivalence, or with an equivalence which is specified
up to a unique 2-cell). For example we may write
\[
\left(\mm_{1}\times_{L}\mm_{2}\right)\times_{L}\mm_{3}=\mm_{1}\times_{L}\left(\mm_{2}\times_{L}\mm_{3}\right).
\]

The notion of a sheaf on an orbifold $\xx$ is the same as the notion
of a sheaf on the underlying topological orbifold (see \cite{moerdijk-classifying-groupoids,pronk}).
A vector bundle $E$ on an orbifold with corners $\xx=X_{1}\overset{s,t}{\rightrightarrows}X_{0}$
is given by $\left(E_{0},\phi\right)$ where $E_{0}$ is a smooth
vector bundle on $X_{0}$ and 
\[
\phi:s^{*}E_{0}\to t^{*}E_{0}
\]
is an isomorphism satisfying some obvious compatibility requirements
with the groupoid structure. The sections of $\left(E_{0},\phi\right)$
form a sheaf over $\xx$. A \emph{local system }on an orbifold $\xx$
is a sheaf which is locally isomorphic to the constant sheaf $\underline{\zz}$.
We extend the conventions set forth in \cite[\S1.1, \S 6.1]{twA8}
to proper étale groupoids with corners in the obvious way\footnote{There we worked with $\cc$-valued local systems, whereas here we
work with $\zz$-valued local systems.}. In particular, for every vector bundle $E$ on $X_{\bullet}$ there's
a local system $\Or\left(E\right)$ on $X_{\bullet}$. The \emph{orientation
local system }of $X_{\bullet}$ is $\Or\left(TX_{\bullet}\right)$.
We have a local system isomorphism 
\begin{equation}
\iota_{\xx}^{\partial}:\Or\left(T\partial X{}_{\bullet}\right)\to\Or\left(TX_{\bullet}\right)\label{eq:boundary ls iso}
\end{equation}
lying over $i_{X_{\bullet}}^{\partial}:\partial X_{\bullet}\hookrightarrow X_{\bullet}$,
defined by appending the outward normal vector at the beginning of
the oriented base. Maps of local systems are always assumed to be
cartesian, so to specify a local system map ${\lc_{1}\xrightarrow{\ff}\lc_{2}}$
over ${\xx_{1}\xrightarrow{f}\xx_{2}}$ is equivalent to giving an
isomorphism $\lc_{1}\to f^{-1}\lc_{2}$.

Differential forms on $\xx$ are the global sections of the sheaf
of sections of $\bigwedge T\xx$; in particular they are invariant
under the local action of the isotropy group at each point. 

\subsection{Additional notions for manifolds with corners.}
\begin{defn}
A map $f:X\to Y$ of manifolds wtih corners is called a \emph{closed
immersion} if for every $p\in X$ there exists an open neighbourhood
$p\in U\subset X$, an open neighbourhood $f\left(U\right)\subset V\subset Y$,
and a submersion $h:V\to\rr^{N}$ for some integer $N\geq0$ such
that the following square is cartesian 
\[
\xymatrix{U\ar[r]^{f|_{U}}\ar[d] & V\ar[d]^{h}\\
0\ar[r] & \rr^{N}
}
\]
(it follows that $N=\dim Y-\dim X$). Note the fiber product exists
by \cite[Lemma 37]{equiv-OGW-invts} since $h$ is (vacuously) b-normal,
and $0\to\rr^{N}$ is strongly smooth and interior.
\end{defn}
\begin{rem}
\label{rem:b-submersive is enough}Any b-submersion to a manifold
without boundary is automatically a strongly smooth submersion, so
it suffices to assume that $h$ is a b-submersion. 
\end{rem}

\begin{defn}
A map $f:X\to Y$ of manifolds with corners is called \emph{a closed
embedding }if it is a closed immersion, has a closed image, and induces
a homeomorphism on its image.
\end{defn}

\begin{defn}
A map $f:X\to Y$ of manifolds with corners is an \emph{open embedding}
if it is étale and injective.
\end{defn}
\begin{lem}
If $i:X\to Y$ and $f:\mathcal{W}\to\yy$ are smooth maps of manifolds
with corners, with $i$ either a closed or an open embedding, and
$f\left(W\right)\subset i\left(X\right)$, then there is a unique
smooth map $g:W\to X$ with $f=i\circ g$. 
\end{lem}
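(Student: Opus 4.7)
The plan is to handle uniqueness first, then existence in each of the two cases, reducing existence to a local question via the given characterizations of closed and open embeddings.

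For uniqueness, I would observe that a closed embedding is a homeomorphism onto its (closed) image and an open embedding is injective; in either case $i$ is set-theoretically injective, so any $g$ satisfying $f=i\circ g$ is determined on underlying points by $g = i^{-1}\circ f$ (where the inverse is taken set-theoretically on $i(X)$). Smoothness of $g$ is then the only content left to verify, and since the underlying continuous map is forced, it suffices to exhibit local smooth lifts and use uniqueness to glue them.

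For the open embedding case, $i$ is étale and injective, hence a diffeomorphism onto the open subset $i(X)\subset Y$. Then $i^{-1}:i(X)\to X$ is smooth, and since $f(W)\subset i(X)$ with $i(X)$ open, $f$ factors through $i(X)$ as a smooth map, so $g=i^{-1}\circ f$ is smooth as a composition of smooth maps of manifolds with corners.

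For the closed embedding case, I would work locally around a point $q\in W$ with $p:=i^{-1}(f(q))$. Using the definition of closed immersion, choose a neighbourhood $p\in U\subset X$, a neighbourhood $i(U)\subset V\subset Y$, and a b-submersion $h:V\to\rr^N$ fitting into a cartesian square with $i|_U$ as the pullback of $0\hookrightarrow\rr^N$. Since $i$ is a closed embedding (i.e.\ a homeomorphism onto a closed image), $i(U)$ is open in $i(X)$, so after shrinking $V$ we may assume $i(X)\cap V=i(U)$. Then shrink to a neighbourhood $q\in W'\subset W$ with $f(W')\subset V$; by hypothesis $f(W')\subset i(U)$, so $h\circ f|_{W'}=0$. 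By the universal property of the cartesian square in $\manc$ (invoking \cite[Lemma 37]{equiv-OGW-invts} and Remark~\ref{rem:b-submersive is enough} to know the fibre product really is $U$), the pair $(0,f|_{W'})$ induces a unique smooth map $g_{W'}:W'\to U$ with $i\circ g_{W'}=f|_{W'}$. These local smooth lifts agree on overlaps by the already-established set-theoretic uniqueness, and therefore patch to a global smooth map $g:W\to X$ with $f=i\circ g$.

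The step I expect to require the most care is the reduction to the cartesian square in the closed embedding case: one must ensure that after shrinking, $i(X)\cap V$ really equals $i(U)=h^{-1}(0)$, which uses precisely the fact that $i$ is a homeomorphism onto its image (not merely a closed immersion), and one must confirm that the universal property of the pullback in $\manc$ applies with the outgoing map into $0\in\rr^N$, which follows because the hypotheses of \cite[Lemma 37]{equiv-OGW-invts} are met by $h$ being b-submersive into a boundaryless target.
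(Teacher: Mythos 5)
Your proof is correct, but it takes a genuinely different route from the paper: the paper disposes of this lemma in one line by citing Corollary 4.11 of \cite{joyce-generalized}, whereas you reconstruct the statement from first principles using only the local characterizations of closed and open embeddings given in the appendix. Your decomposition (set-theoretic uniqueness from injectivity, then local smooth lifts via the universal property of the cartesian square $U = V\times_{\rr^N}0$, glued by uniqueness) is sound; you correctly identify and resolve the one delicate point, namely arranging $i(X)\cap V = i(U) = h^{-1}(0)$ after shrinking $V$, which is exactly where the homeomorphism-onto-closed-image hypothesis is used and not merely the closed-immersion property. The only thing worth making explicit is that the fibre product supplied by \cite[Lemma 37]{equiv-OGW-invts} is a categorical fibre product in $\manc$ (i.e.\ satisfies the universal property against arbitrary smooth test maps), not just a set-theoretic one with a manifold structure; this is true in Joyce's framework but is the load-bearing fact in your closed-embedding case. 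The trade-off is the usual one: the paper's citation is shorter and inherits Joyce's full generality, while your argument is self-contained and makes visible exactly which pieces of the definitions are used.
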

\begin{proof}
This is Corollary 4.11 of \cite{joyce-generalized}.
\end{proof}
If we assume in addition that $f$ is a closed or an open embedding,
and that $f\left(W\right)=i\left(X\right)$, then $g$ is a diffeomorphism.

\subsection{Additional notions for orbifolds with corners.}
\begin{defn}
A map $F|R:\xx\to\yy$ of orbifolds with corners is a \emph{closed
immersion} if $F_{0}$ is a closed immersion. In this case, the same
holds for any map homotopic to $F|R$.
\end{defn}

\begin{defn}
\label{def:closed embedding}A map $f:\xx\to\yy$ of orbifolds with
corners is a \emph{closed} (respectively, \emph{open}) \emph{embedding}
if for some (hence any) atlas $p:\underline{M}\to\yy$, the 2-pullback
$\underline{M}\fibp{p}{f}\xx$ is a manifold with corners  and the
map $\underline{M}\fibp{p}{f}\xx\to\underline{M}$ is a closed (resp.
open) embedding of manifolds with corners.
\end{defn}
If $f:\xx\to\yy$ is a closed embedding we may refer to $\xx$ as
a \emph{suborbifold} of $\yy$.

\begin{defn}
\label{def:tubular ngbhd}Let $f:\xx\to\yy$ be a closed immersion
of orbifolds with corners, with normal bundle $N_{f}:=f^{*}T\yy/T\xx$.
A \emph{tubular neighbourhood} \emph{for $f$ }consists of a pair
of open embeddings 
\[
N_{f}\overset{j}{\longleftarrow}V\overset{\gamma}{\longrightarrow}\yy
\]
such that (i) there exists $i_{0}':\xx\to V$ such that $i_{0}=j\circ i_{0}'$
for $i_{0}:\xx\to N$ the closed embedding of the zero section, and
$f=\gamma\circ i_{0}'$. 

(ii) the map 
\[
N=\left(i_{0}\right)^{*}TN/T\xx=\left(i_{0}'\right)^{*}TV/T\xx\overset{\overline{d\gamma}}{\longrightarrow}f^{*}T\yy/T\xx=:N
\]
is the identity.

If we assume in addition that $f:\xx\to\yy$ is a $\tb$-equivariant
map, an \emph{equivariant tubular neighbourhood for }$f$ is a tubular
neighbourhood for $f$ such that $j$ and $\gamma$ are equivariant.
\end{defn}
\begin{defn}
\label{def:l-tubular ngbhd}If $\xx=\coprod\xx_{n}$ is an l-orbifold
with corners (cf. $\S$\ref{subsec:closed and open fps}) and $\yy$
is an orbifold with corners of dimension $r$, a closed embedding
$f:\xx\to\yy$ is just a coproduct of closed embeddings $\coprod_{n\geq0}f_{n}$.
The total space of the normal l-bundle $\coprod_{n\geq0}N_{f_{n}}$
is an (ordinary) orbifold with corners $N$ and a tubular neighborhood
for $f$ is given by a pair of maps of orbifolds with corners $N\xleftarrow{j}\mathcal{V}\xrightarrow{\gamma}\yy$,
satisfying the conditions above locally.
\end{defn}
\begin{lem}
\label{lem:tubular exists}Any closed embedding admits a tubular neighbourhood;
a $\tb$-equivariant embedding admits a $\tb$-equivariant tubular
neighbourhood.
\end{lem}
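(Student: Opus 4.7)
The plan is to reduce the statement to the case of a $\tb$-equivariant closed embedding of $\tb$-manifolds with corners, construct the tubular neighbourhood locally via a b-tangent exponential-type flow, and then glue the pieces using an invariant partition of unity.

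First I would pass to a proper \'etale groupoid presentation. Choose atlases $X_\bullet \rightrightarrows X_0$ and $Y_\bullet \rightrightarrows Y_0$ such that $f$ is represented by a closed embedding of groupoids; after averaging, we may assume the $\tb$-action lifts to compatible smooth $\tb$-actions on $X_0$ and $Y_0$, and the embedding $F_0 : X_0 \to Y_0$ is $\tb$-equivariant and closed in the sense of manifolds with corners. By the local model for closed immersions, around every $p \in X_0$ there is a $\tb$-invariant open $p \in V_p \subset Y_0$ and a $\tb$-equivariant submersion $h_p : V_p \to V := \rr^{N}$ (with appropriate $\tb$-representation structure on $V$) such that $V_p \cap X_0 = h_p^{-1}(0)$. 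Via $h_p$, the normal bundle $N_f|_{V_p \cap X_0}$ gets identified with $(V_p \cap X_0) \times V$, giving a local $\tb$-equivariant tubular neighbourhood model.

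Next I would globalize using a b-tangent flow. Fix a $\tb$-invariant Riemannian metric on $Y_0$ (obtained by averaging any metric over the compact group $\tb$), and let $\sigma_0 : N_f \to f^* TY_0$ be the resulting orthogonal splitting. This splitting is $\tb$-equivariant but need not be b-tangent, so normal geodesics may exit the corner locus. Apply the argument of Lemma \ref{lem:b-tangent section} to modify $\sigma_0$ to a $\tb$-equivariant b-tangent section $\sigma : N_f \to f^* TY_0$ that still provides a splitting of $f^* TY_0 \to N_f$. Because $\sigma$ is b-tangent, its exponential flow (using any $\tb$-invariant spray on $Y_0$ whose projection behaves well at corners, e.g.\ obtained by averaging a spray assembled by partition of unity from the local models $h_p$) is defined on an open $\tb$-invariant neighbourhood $\mathcal{V} \subset N_f$ of the zero section and lands inside $Y_0$, defining a map $\gamma : \mathcal{V} \to Y_0$ with $d\gamma|_{N_f} = \id$. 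Shrinking $\mathcal{V}$ further we may assume $\gamma$ is an open embedding, producing an equivariant tubular neighbourhood $N_f \xleftarrow{j} \mathcal{V} \xrightarrow{\gamma} Y_0$ at the atlas level.

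Finally I would descend to the orbifold level. The groupoid action $X_1 \rightrightarrows X_0$ acts on $N_f$ through its linearization, and by averaging $\gamma$ over the (finite, since the groupoid is proper \'etale) isotropy using a $\tb$-invariant cutoff $\rho$ satisfying $t_* s^* \rho \equiv 1$, we obtain a well-defined $\tb$-equivariant map of groupoids $\mathcal{V}_\bullet \to Y_\bullet$ representing a $\tb$-equivariant tubular neighbourhood $\gamma : \mathcal{V} \to \yy$. The non-equivariant statement is the special case $\tb = \{e\}$. The main obstacle is ensuring the exponential flow respects the corner stratification; this is precisely why b-tangency of $\sigma$ is essential, and the construction in Lemma \ref{lem:b-tangent section} (together with the b-normality argument in the proof of Proposition \ref{prop:localizing eta exists}) provides the required existence.
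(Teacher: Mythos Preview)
The paper's proof is a one-line citation: use geodesic flow for a $\tb$-invariant b-metric in the sense of Melrose, noting that a closed embedding in the paper's sense is exactly an interior p-submanifold, for which Melrose constructs tubular neighbourhoods. The point of the b-metric is that its geodesic flow lives on the b-tangent bundle ${}^{b}TY$, so geodesics automatically respect the corner stratification.

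Your approach is in the right spirit but has a gap at the key step. You produce a b-tangent splitting $\sigma : N_f \to f^{*}TY_0$ along $X_0$ and then exponentiate using ``any $\tb$-invariant spray on $Y_0$ whose projection behaves well at corners''. But b-tangency of the \emph{initial} velocity $\sigma(v)$ does not, by itself, keep the geodesic inside $Y_0$; that requires the spray to preserve tangency to every boundary face, which is exactly the nontrivial content you are trying to prove. The phrase ``behaves well at corners'' is doing all the work, and a partition-of-unity average of local sprays need not have this property (it is not a convex condition). The clean way to enforce it is precisely Melrose's b-metric. Alternatively, if you want to imitate the argument of Lemma~\ref{lem:N_T->N_0 compatible flows}, note that there $\sigma$ is a b-tangent map $V \times W \to TW$ defined on an open neighbourhood $W \subset Y_0$ of the submanifold, so one flows along the everywhere-b-tangent vector field $\sigma(v,\cdot)$; in your setup $\sigma$ is only defined over $X_0$, so you would first need to extend it (via the local submersions $h_p$) to a strongly-surjective $\delta : TW \to V \times W$ and then invoke Lemma~\ref{lem:b-tangent section} on $W$, not on $X_0$.
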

\begin{proof}
This is proven using geodesic flow along a b-metric, which may be
taken to be $\tb$-invariant, see \cite{melrose} (note that our notion
of a closed embedding of manifolds with corners corresponds to what
Melrose calls ``interior p-submanifold'').
\end{proof}

The following lemma is useful for extending constructions from the
boundary inward. Let $\xx$ be an orbifold with corners given by a
proper étale groupoid $X_{\bullet}=X_{1}\overset{s,t}{\rightrightarrows}X_{0}$.
Let $E$ be a vector bundle over $\xx$ given by $\left(E_{0},s^{*}E_{0}\xrightarrow{\phi}t^{*}E_{0}\right)$.
A section of $E$ is given by a section $\sigma:X_{0}\to E_{0}$ with
$t^{*}\sigma=\phi\circ s^{*}\sigma$. We can pull back bundles and
sections along maps of groupoids. 

Let $i_{1}$ (respectively, $i_{2}$) be the map $\partial^{2}\xx\to\partial\xx$
that remembers the first (respectively, second) local boundary component
(so $i_{1}=i_{\partial\xx}^{\partial}$).
\begin{lem}
\label{lem:extend inwards}Let ${\sigma_{\partial}}$ be a section
of $\left(i_{\xx}^{\partial}\right)^{*}E$ such that $i_{1}^{*}\sigma_{\partial}=i_{2}^{*}\sigma_{\partial}$.
Then there exists a section $\sigma$ of $E$ with $\left(i_{\xx}^{\partial}\right)^{*}\sigma=\sigma_{\partial}$.

If we assume that, in addition, $\xx$ is a $\tb$-orbifold, $E$
is a $\tb$-vector bundle, and $\sigma_{\partial}$ is a $\tb$-equivariant
section, then there exists a $\tb$-equivariant section $\sigma$
with $\left(i_{\xx}^{\partial}\right)^{*}\sigma=\sigma_{\partial}$.
\end{lem}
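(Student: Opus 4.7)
The plan is to reduce to a local extension problem on manifold charts, solve it by inclusion-exclusion across boundary faces, and patch via a partition of unity; the $\tb$-equivariant version then follows by Haar averaging.

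First I would unpack the hypothesis. The map $i_\xx^\partial: \partial\xx \to \xx$ is a finite étale cover of the depth-$\geq 1$ locus, the fiber over a depth-$c$ point consisting of the $c$ local boundary components. The condition $i_1^*\sigma_\partial = i_2^*\sigma_\partial$ says $\sigma_\partial$ is invariant under the $\zz/2$ swap of adjacent components at every codimension-two corner. Iterating this across adjacent transpositions at higher corners yields $\Sym(c)$-invariance on $\partial^c\xx$, so $\sigma_\partial$ descends unambiguously to a section of the appropriate pullback of $E$ on every depth-$c$ stratum of $\xx$.

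Next I would pick a groupoid representation $X_1 \overset{s,t}{\rightrightarrows} X_0$ of $\xx$ and an open cover $\{U_\alpha\}$ of $X_0$ such that each $U_\alpha$ is diffeomorphic to an open subset of $[0,\infty)^{c_\alpha} \times \rr^{n-c_\alpha}$ and $E_0|_{U_\alpha}$ is trivial. For $S \subseteq [c_\alpha]$, write $F_S = \{x_i = 0 : i \in S\} \cap U_\alpha$ and let $p_S: U_\alpha \to F_S$ be the retraction zeroing the coordinates indexed by $S$. The restriction $\sigma_\partial|_{F_S}$ is well defined by the first paragraph, and I would produce a local extension by the inclusion-exclusion formula
\[
\tau_\alpha := \sum_{\emptyset \neq S \subseteq [c_\alpha]} (-1)^{|S|+1}\, p_S^*\bigl(\sigma_\partial|_{F_S}\bigr).
\]
A direct computation (using $p_S|_{F_i} = p_{S \cup \{i\}}$ and the identity $\sum_{\{i\} \subseteq T \subseteq S \cup \{i\}} (-1)^{|T|+1} = \delta_{S, \emptyset}$) shows $\tau_\alpha|_{F_i} = \sigma_\partial|_{F_i}$ for each $i$, so $\tau_\alpha$ restricts correctly on every local boundary face of $U_\alpha$.

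Finally, choose a partition of unity $\{\rho_\alpha\}$ on $X_0$ subordinate to the cover and set $\sigma_0 := \sum_\alpha \rho_\alpha \tau_\alpha$, a global section of $E_0$ that restricts to $\sigma_\partial$ on $\partial X_0$. To obtain a section of $E$ on $\xx$, average $\sigma_0$ over the groupoid action: properness and étaleness of $X_\bullet$ provide a fiberwise-finite averaging operator over $s, t$ that produces a $\phi$-invariant section with the same boundary value (since $\sigma_\partial$ is already $\phi$-invariant). For the equivariant statement, average the result against a Haar measure on the compact group $\tb$; the boundary value $\sigma_\partial$, being already $\tb$-equivariant, is unchanged. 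The main technical point is the combinatorial check that the inclusion-exclusion formula restricts correctly to each face; everything else is a routine partition-of-unity argument.
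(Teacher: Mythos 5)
Your proof is correct, and the substantive part takes a genuinely different route from the paper's. The outer skeleton is the same in both arguments: reduce to the local model $\rr_{k}^{n}$ via a partition of unity, patch, and get invariance by averaging over the groupoid and then over $\tb$. The difference is the local extension step on $\rr_{k}^{n}$: the paper invokes Whitney's extension theorem (citing Joyce) and explicitly omits the argument as ``lengthy,'' whereas you write down the closed-form inclusion--exclusion extension $\tau_\alpha=\sum_{\emptyset\neq S}(-1)^{|S|+1}p_S^*\bigl(\sigma_\partial|_{F_S}\bigr)$ and verify the face conditions by an elementary M\"obius-type cancellation (your grouping by $T=S\cup\{i\}$ is exactly right). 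This is the better tool here: since the datum being extended is an honest section on the boundary faces rather than a full jet, no Whitney machinery is needed, and your formula is manifestly smooth and linear in $\sigma_\partial$, which makes the partition-of-unity patching and both averaging steps transparent. Two small points to pin down: (i) the charts $U_\alpha$ must be chosen stable under the coordinate retractions $p_S$ (e.g.\ products of half-open and open intervals), since for an arbitrary open subset of $[0,\infty)^{c}\times\rr^{n-c}$ the map $p_S$ need not land in $F_S\cap U_\alpha$; and (ii) the well-definedness of $\sigma_\partial|_{F_S}$ for $|S|\geq 2$ is precisely where the hypothesis $i_1^*\sigma_\partial=i_2^*\sigma_\partial$ enters, as you note --- iterating it over adjacent faces gives $\Sym(c)$-invariance at depth-$c$ points because adjacent transpositions generate $\Sym(c)$. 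Neither point is a gap; both are routine to arrange.
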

\begin{proof}
Using a partition of unity, i.e. a smooth function $\rho:X_{0}\to\rr$
with $t_{*}s^{*}\rho=1$, we reduce to the case $\xx=\rr_{k}^{n}$
(a manifold with corners). The claim for this case follows from Whitney's
theorem \cite{whitney}, though the argument we have is lengthy. Since
the result seems to be well-known to experts (see \cite[Proposition 4.41(b)]{joyce-generalized}),
we omit it.

The second claim follows from the first by averaging out the $\tb$-action
on any extension $\sigma$ of $\sigma_{\partial}$.
\end{proof}
\begin{lem}
\label{lem:extending forms inward}Let $\xx$ be a $\tb$-orbifold.
If $\theta_{\partial}\in\Omega\left(\partial\xx,\rr\right)^{\tb}$
satisfies $i_{1}^{*}\theta_{\partial}=i_{2}^{*}\theta_{\partial}$
then there exists $\theta\in\Omega\left(\xx,\rr\right)^{\tb}$ with
$i^{*}\theta=\theta_{\partial}$. 
\end{lem}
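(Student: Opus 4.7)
The plan is to apply Lemma \ref{lem:extend inwards} to the vector bundle $E=\bigwedge^{p}T^{*}\xx$ of $p$-forms, for each $p\geq0$. Since a $p$-form on $\xx$ (respectively, on $\partial\xx$) is a section of $\bigwedge^{p}T^{*}\xx$ (resp.\ $\bigwedge^{p}T^{*}\partial\xx$), and the pullback on forms factors through the natural surjection $(i_{\xx}^{\partial})^{*}\bigwedge^{p}T^{*}\xx\twoheadrightarrow\bigwedge^{p}T^{*}\partial\xx$ dual to $T\partial\xx\hookrightarrow(i_{\xx}^{\partial})^{*}T\xx$, the main task is to lift $\theta_{\partial}$ to a $\tb$-invariant section $\sigma_{\partial}$ of $(i_{\xx}^{\partial})^{*}E$ such that (i) its tangential projection equals $\theta_{\partial}$ and (ii) $i_{1}^{*}\sigma_{\partial}=i_{2}^{*}\sigma_{\partial}$ as required by Lemma \ref{lem:extend inwards}. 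Once this is in place, Lemma \ref{lem:extend inwards} produces a section $\sigma\in\Gamma(\bigwedge^{p}T^{*}\xx)^{\tb}=\Omega(\xx,\rr)^{\tb}$ restricting to $\sigma_{\partial}$, and setting $\theta:=\sigma$ gives $(i_{\xx}^{\partial})^{*}\theta=\theta_{\partial}$ by (i).

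First I would fix a $\tb$-invariant b-metric on $\xx$ (existing by averaging), giving a $\tb$-equivariant splitting $(i_{\xx}^{\partial})^{*}T^{*}\xx\simeq T^{*}\partial\xx\oplus\nu^{*}$ with $\nu$ the canonically oriented normal line bundle. The naive lift, zero in the $\nu^{*}$ direction, does not in general satisfy (ii): in local b-coordinates $(x_{1},x_{2},y)$ at a codim-$2$ corner $C$ with local faces $F_{j}=\{x_{j}=0\}$, decompose $\theta_{\partial}|_{F_{j}}=\alpha_{j}+\beta_{j}\wedge dx_{3-j}$ with $\alpha_{j},\beta_{j}$ not involving $dx_{3-j}$; the naive lift from $F_{j}$ at $C$ is $\alpha_{j}|_{C}+\beta_{j}|_{C}\wedge dx_{3-j}$, and the two lifts agree at $C\in\bigwedge^{p}T_{C}^{*}\xx$ only if $\alpha_{1}|_{C}=\alpha_{2}|_{C}$ (supplied by the hypothesis $i_{1}^{*}\theta_{\partial}=i_{2}^{*}\theta_{\partial}$) \emph{and} $\beta_{1}|_{C}=\beta_{2}|_{C}=0$ (not supplied). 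The fix is to correct the lift on $F_{j}$ by adding $\tilde{\beta}_{3-j}^{F_{j}}\wedge dx_{j}$, where $\tilde{\beta}_{3-j}^{F_{j}}$ is a smooth $(p-1)$-form on $F_{j}$ extending $\beta_{3-j}|_{C}$ inward from $C$; the existence of such an extension is a lower-dimensional instance of the same extension problem, handled by a subordinate application of Lemma \ref{lem:extend inwards}. With the correction, inspection shows that the corrected lifts from $F_{1}$ and $F_{2}$ agree at $C$, so (ii) holds at codim $2$.

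At corners of codimension $k\geq 3$ the construction iterates: the inductive compatibility of $\sigma_{\partial}$ at codim $k-1$ supplies consistent boundary data for the further normal-component corrections needed at codim $k$. $\tb$-equivariance is preserved throughout by choosing $\tb$-invariant ingredients (the b-metric, the inward extensions) and averaging at each step. The hard part is this inductive organization of the lift, with the hypothesis $i_{1}^{*}\theta_{\partial}=i_{2}^{*}\theta_{\partial}$ being used precisely to match the various normal-component corrections at every codim-$2$ juncture; once $\sigma_{\partial}$ is produced, the proof is completed by a single invocation of Lemma \ref{lem:extend inwards}.
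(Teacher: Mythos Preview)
Your high-level strategy is sound, and your codimension-2 analysis is correct: the naive (zero-normal) lift fails exactly in the way you describe, and the single correction $\tilde{\beta}_{3-j}^{F_j}\wedge dx_j$ repairs condition (ii) there. The gap is in the sentence ``at corners of codimension $k\ge 3$ the construction iterates.'' On a face $F_j$ the normal correction is $\gamma_j\wedge dx_j$ with $\gamma_j\in\Omega^{p-1}(F_j)$; the matching condition at each codimension-2 corner $C_{ji}\subset\partial F_j$ fixes only the part of $\gamma_j|_{C_{ji}}$ \emph{not} involving $dx_i$ (equal to the $dx_j$-coefficient of $\theta_\partial|_{F_i}$) and couples the $dx_i$-part anti-symmetrically to $\gamma_i$. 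At a codimension-3 corner $C_{jim}$, the two boundary prescriptions for $\gamma_j$ coming from $C_{ji}$ and $C_{jm}$ must agree as sections of $\bigwedge^{p-1}T^*F_j|_{C_{jim}}$; this involves matching their $dx_m$- and $dx_i$-components, together with the coupled constraints to $\gamma_i,\gamma_m$, and you have not argued why (or how, using the remaining freedom) this can be arranged. In effect, constructing the lift $\sigma_\partial$ is itself an inductive extension problem of the same depth as the lemma, so the advertised ``single invocation'' of Lemma \ref{lem:extend inwards} is illusory.

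The paper sidesteps the lift entirely. After the same partition-of-unity reduction to $\rr_k^n$ with trivial $\tb$-action, it fixes a multi-index $J\subset[n]$ and treats the \emph{scalar} coefficient $f$ of $dx^J$ in the unknown $\theta$. Pulling back $dx^J$ to the face $\{x_i=0\}$ is nonzero precisely when $i\notin J$, so $\theta_\partial$ prescribes $f$ on exactly those boundary faces (and all their subfaces). One then climbs the face lattice: setting $D_0=\{I\subset[k]:\exists\,i\in[k]\setminus J,\,i\notin I\}$ and $D_{r+1}=\{I:\forall a\in I,\,I\setminus\{a\}\in D_r\}$, a short induction (each step an application of Lemma \ref{lem:extend inwards} for scalar functions) extends $f$ from $D_r$ to $D_{r+1}$, and the recursion terminates at $D_\infty=2^{[k]}$. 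This never needs a b-metric or a tangent/normal splitting, and the only compatibility checks are for scalar functions, which is why it is cleaner than building $\sigma_\partial$.
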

\begin{proof}
As in the proof of Lemma \ref{lem:extend inwards}, we may assume
without loss of generality that 
\[
\xx=\rr_{k}^{n}=\left\{ \left(x_{1},...,x_{n}\right)|x_{j}\geq0\text{ for }1\leq j\leq k\right\} 
\]
and $\tb$ acts trivially. Moreover, we may focus on some $J=\left\{ j_{1},...,j_{r}\right\} \subset\left[n\right]$,
and consider the coefficient $f$ of $dx^{J}$ in $\theta$. Let $D_{0}\subset2^{\left[k\right]}$
be defined by
\[
D_{0}=\left\{ I\subset\left[k\right]|\exists i\in\left[k\right]\backslash J|I\subset\left[k\right]\backslash\left\{ i\right\} \right\} 
\]
This represents the faces of $\rr_{k}^{n}$ for which the value of
$f$ is specified by $\theta_{\partial}$: the subset $S\subset\left[k\right]$
represents the face $F_{S}=\left\{ x_{a}=0|a\in\left[k\right]\backslash S\right\} $.
Define recursively
\[
D_{i+1}=\left\{ I\subset\left[k\right]|\forall a\in I:I\backslash\left\{ a\right\} \in D_{i}\right\} ,
\]
and prove by induction on $i$ that these are closed under taking
subsets and (using the previous lemma) that $f$ extends to $D_{i+1}\backslash D_{i}$.
We have
\[
\emptyset\in D_{\infty}\text{ and }D_{\infty}=\left\{ I\subset\left[k\right]|\forall a\in I:I\backslash\left\{ a\right\} \in D_{\infty}\right\} 
\]
which implies $D_{\infty}=2^{\left[k\right]}$ (consider $S\in2^{\left[k\right]}\backslash D_{\infty}$
with $\left|S\right|$ minimal). This shows that $f$ extends to $\rr_{k}^{n}$
and completes the proof.
\end{proof}

\bibliographystyle{amsabbrvc}
\bibliography{localization}

\end{document}